\documentclass[11pt,reqno,a4paper]{amsart}
\usepackage{amsmath,amssymb,amsthm,enumitem,xcolor,tikz,tikz-cd,subcaption,hyperref,mathtools,float,graphicx}

\usepackage[style=alphabetic,sorting=nty,doi=false,isbn=false,url=false]{biblatex}

\addbibresource{refs.bib}

\usepackage{cleveref}

\tikzset{
	on each segment/.style={
		decorate,
		decoration={
			show path construction,
			moveto code={},
			lineto code={
				\path [#1]
				(\tikzinputsegmentfirst) -- (\tikzinputsegmentlast);
			},
			curveto code={
				\path [#1] (\tikzinputsegmentfirst)
				.. controls
				(\tikzinputsegmentsupporta) and (\tikzinputsegmentsupportb)
				..
				(\tikzinputsegmentlast);
			},
			closepath code={
				\path [#1]
				(\tikzinputsegmentfirst) -- (\tikzinputsegmentlast);
			},
		},
	},
	mid arrow/.style={postaction={decorate,decoration={
				markings,
				mark=at position .5 with {\arrow[#1]{stealth}}
	}}},
	mid arrowa/.style 2 args={postaction={decorate,decoration={
				markings,
				mark=at position #2 with {\arrow[#1]{stealth}}
	}}},
	mid arrowb/.style={postaction={decorate,decoration={
				markings,
				mark=at position 0.53 with {\arrow[#1]{stealth}}
	}}},
	every loop/.style={decorate,min distance=15mm,in=50,out=130,looseness=10
	},
	mid arrowa/.default={black}{0.55}
}

\usetikzlibrary{positioning,shapes,fit,arrows,shadows,matrix,arrows.meta,decorations.pathreplacing,decorations.markings}


\numberwithin{equation}{section}
\tolerance=1000
\hypersetup{  pdfborder={0 0 0},
	colorlinks   = true,
	urlcolor     = blue,
	linkcolor    = blue,
	citecolor   = red}

\makeatletter
\newcommand\bigcdot{\mathpalette\bigcdot@{.5}}
\newcommand\bigcdot@[2]{\mathbin{\vcenter{\hbox{\scalebox{#2}{$\m@th#1\bullet$}}}}}
\makeatother

\newcommand{\N}{\mathbb{N}}
\newcommand{\Z}{\mathbb{Z}}
\newcommand{\R}{\mathbb{R}}
\newcommand{\Q}{\mathbb{Q}}

\newcommand{\cT}{\mathcal{T}}

\newcommand{\cP}{\mathcal{P}}
\newcommand{\fG}{\mathfrak{G}}

\DeclareMathOperator{\St}{St}
\DeclareMathOperator{\Sym}{Sym}
\DeclareMathOperator{\C}{C}

\DeclareMathOperator{\Aut}{Aut}

\DeclareMathOperator{\cd}{cd}

\newcommand{\vr}{ \leqslant_{vr}}
\newcommand{\avr}{ \leqslant_{avr}}

\newcommand{\n}{\vartriangleleft}

\newcommand\restr[2]{{
		\left.\kern-\nulldelimiterspace 
		#1 
		\littletaller 
		\right|_{#2} 
}}

\newcommand{\littletaller}{\mathchoice{\vphantom{\big|}}{}{}{}}

\newtheorem{thm}{Theorem}[section]
\newtheorem{prop}[thm]{Proposition}
\newtheorem{lemma}[thm]{Lemma}
\newtheorem{cor}[thm]{Corollary}

\theoremstyle{definition}
\newtheorem{defn}[thm]{Definition}
\newtheorem{question}[thm]{Question}
\newtheorem{problem}[thm]{Problem}

\theoremstyle{remark}
\newtheorem{ex}[thm]{Example}
\newtheorem{rem}[thm]{Remark}

\newtheorem{notation}[thm]{Notation}
\newtheorem{convention}[thm]{Convention}

\title[Property (VRC) and virtual fibering for amalgams]{Property (VRC) and virtual fibering for amalgamated free products}
\author{Jon Merladet Uriguen} 
\author{Ashot Minasyan}\email{J.F.Merladet@soton.ac.uk, aminasyan@gmail.com}
\address{CGTA, School of Mathematical Sciences, University of Southampton, Highfield, Southampton, SO17~1BJ, United Kingdom}

\keywords{Amalgamated free products, (VRC), virtual fibering, free-by-cyclic groups}
\subjclass{20E06, 20E08, 20E26, 20F65}

\begin{document}

	\begin{abstract} This paper focuses on studying properties of amalgamated free products $G=G_1*_{G_0} G_2$, where the amalgamated subgroup $G_0$ is virtually cyclic.
    
    First, we prove that if the factors $G_1$ and $G_2$ are finitely generated virtually abelian groups then $G$ can be mapped  
    to another virtually abelian group so that this homomorphism is injective on each factor.
	 
     We then present several applications of this result. In particular, we show that if $G_1$ and $G_2$ have property (VRC) (that is, every cyclic subgroup is a virtual retract), then the same is true for  $G$. We also prove that $G$ inherits some residual properties (such as residual finiteness or virtual residual solvability)  from the factors $G_i$, provided $G_0$ is a virtual retract of $G_i$, for $i=1,2$.
     
     Finally, we give necessary and sufficient conditions for $G$ to be (virtually) $F_m$-fibered. In particular, we fully characterize when an amalgamated product of two (finitely generated free)-by-cyclic groups over a cyclic subgroup is free-by-cyclic or virtually free-by-cyclic. 
	\end{abstract}

	\maketitle
	
\section{Introduction}
In this paper we study  free products of groups amalgamating  virtually cyclic subgroups. Our main technical result is the following statement.

\begin{thm}\label{thm:amalg_of_virt_ab} Let $G=G_1*_{G_0} G_2$ be an amalgamated free product of two finitely generated virtually abelian groups $G_1,G_2$ over a common virtually cyclic subgroup $G_0$. Then there exists a finitely generated virtually abelian group $E$ and a homomorphism $\nu: G \to E$ such that the restriction of $\nu$ to $G_i$ is injective, for $i=1,2$. 
\end{thm}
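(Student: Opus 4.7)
The plan is to construct $E$ by passing to a well-chosen finite-index normal subgroup $H \triangleleft G$, abelianizing $H$, and then either forming a commutator quotient of $G$ or an induced representation from $H$.

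For each $i = 1, 2$, I would choose a characteristic free abelian finite-index subgroup $A_i \triangleleft G_i$ and put $F_i := G_i/A_i$, a finite group. After replacing the $A_i$ by smaller characteristic subgroups if necessary so that $A_1 \cap G_0 = A_2 \cap G_0 =: C$ is a cyclic (possibly trivial) subgroup of $G_0$, the image $\overline{G_0} := G_0/C$ embeds in each $F_i$. Residual finiteness of the finite-groups amalgam $F_1 *_{\overline{G_0}} F_2$ then provides a finite group $F$ and a surjection $\phi: G \twoheadrightarrow F$ that factors on each $G_i$ as $G_i \twoheadrightarrow F_i \hookrightarrow F$. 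Put $H := \ker \phi \triangleleft G$; by Bass--Serre theory, $H$ is the fundamental group of a finite graph of groups whose vertex groups are conjugates of the free abelian subgroups $H \cap G_i \subseteq A_i$, and whose edge groups are cyclic subgroups of conjugates of $G_0$.

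The core of the argument is the sub-lemma that the abelianization map $H \to H^{ab}$ is injective on each $H \cap G_i$. In the cleanest case --- when the quotient graph of $H$'s action on the Bass--Serre tree of $G$ is a single edge --- $H = (H \cap G_1) *_{H \cap G_0} (H \cap G_2)$ is an amalgamated product of free abelian groups over a cyclic subgroup, $H^{ab}$ is the abelian pushout, and the injectivity is immediate from the direct-sum decomposition. Granting the sub-lemma, I define $E := G/[H,H]$: since $[H,H]$ is characteristic in $H \triangleleft G$ it is normal in $G$, the sequence $1 \to H^{ab} \to E \to F \to 1$ exhibits $E$ as finitely generated virtually abelian, and for $g \in G_i$ with $\nu(g) = 1$ one has $g \in [H,H] \cap (H \cap G_i) = \{1\}$ by the sub-lemma.

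The main obstacle is the situation when the quotient graph of $H$'s action contains cycles: HNN factors in the decomposition of $H$ with nontrivial twists can introduce torsion into $H^{ab}$ that kills elements of $H \cap G_i$, violating the sub-lemma. One workaround is to further refine the finite quotient $F$ to coerce a tree-like quotient graph; an alternative, more robust approach is to replace the commutator-quotient construction by an induced representation, taking $E := (H^{ab})^{[G:H]} \rtimes \Sym_{[G:H]}$ and letting $\nu$ be the induction of the projection $H \twoheadrightarrow H^{ab}$. For this $\nu$, injectivity on $G_i$ reduces to injectivity of $H \to H^{ab}$ on $H \cap G_i$ (visible by evaluating at the identity coset representative), and the codomain is visibly virtually abelian.
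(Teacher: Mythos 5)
Your reduction works until the key sub-lemma (injectivity of $H\to H^{\mathrm{ab}}$ on $H\cap G_i$), and that sub-lemma is exactly where the theorem's difficulty lives; as stated, it is in fact \emph{false} for an arbitrary admissible choice of $F$. Take $G_1=\langle c\rangle\rtimes\langle x\rangle\cong D_\infty$ (so $xcx^{-1}=c^{-1}$, $x^2=1$), $G_2=\langle c\rangle\times\langle y\rangle\cong \Z\times C_2$, and $G_0=\langle c\rangle$. Here one may take $A_1=A_2=C=\langle c\rangle$ (or $\langle c^2\rangle$ if you insist on characteristic subgroups), $F_1\cong F_2$ are finite, and a perfectly admissible finite quotient $\phi$ of the amalgam of the $F_i$ is one in which $\phi(x)=\phi(y)$ while $\phi$ is injective on each $F_i$ separately. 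Then $xy\in H=\ker\phi$ and $(xy)c(xy)^{-1}=c^{-1}$, so a nontrivial power of $c$ lies in $[H,H]\cap A_1$: neither $G/[H,H]$ nor your induced representation is injective on $G_1$. So the sub-lemma can only hold if $\phi$ is chosen compatibly with the conjugation action of $G_1,G_2$ on $C$, and even after curing this sign problem, proving injectivity on the vertex groups in the presence of cycles in the quotient graph (i.e., controlling which integral combinations of the attaching conjugates $kCk^{-1}\leqslant A_i$ become trivial in $H^{\mathrm{ab}}$) is essentially the content of the theorem; note that the paper's Example~\ref{ex:SLnZ-intro} shows the analogous statement genuinely fails when the edge group has rank $\ge 2$, so any proof must use virtual cyclicity of $G_0$ in a serious way precisely at this point. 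Nothing in your proposal supplies that argument.

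Neither workaround closes the gap. First, the quotient graph can essentially never be made a tree: it has $|F|/|F_1|+|F|/|F_2|$ vertices and $|F|/|\phi(G_0)|$ edges, so its first Betti number is $1+|F|/|\phi(G_0)|-|F|/|F_1|-|F|/|F_2|\ge 1$ whenever $[F_i:\phi(G_0)]\ge 2$ for both $i$ (equivalently $A_iG_0\neq G_i$), no matter how you refine $F$. Second, the induced representation is not more robust than the commutator quotient: its kernel is the $G$-core of $[H,H]$, and since $[H,H]$ is characteristic in the normal subgroup $H$ it is already normal in $G$; hence $\ker\nu\cap G_i=[H,H]\cap A_i$, which is exactly the unproved sub-lemma (your own remark that injectivity ``reduces to injectivity of $H\to H^{\mathrm{ab}}$ on $H\cap G_i$'' concedes this). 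For comparison, the paper does not abelianize a finite-index subgroup at all: it enlarges the groups via the functor of \Cref{sec:splitting} so that the amalgamated subgroup becomes $B_0\rtimes Q$ with $B_0\cong\Z$ and $Q$ finite, embeds the enlarged factors into wreath products $\Z\wr_{\Omega_k}S_k$ with $Q$ acting freely, and then uses the conjugation argument of \Cref{lem:cyc-by-fin_in_Z_wr_S} to make the two embeddings agree on all of $B_0\rtimes Q$; that is where the passage from cyclic to virtually cyclic $G_0$ is actually handled.
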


While the statement of \Cref{thm:amalg_of_virt_ab} is elementary, its proof requires significant work. As the reader will see below, this theorem has plenty of applications, and we believe that it will become a part of the standard toolkit for working with amalgamated free products over virtually cyclic subgroups. 

The idea is that \Cref{thm:amalg_of_virt_ab} can be applied to amalgamated free products 
\begin{equation}\label{eq:amalg-G}
G=G_1*_{G_0} G_2,~\text{ where } \end{equation}
$G_0 $ is virtually cyclic and is a virtual retract of $G_i$  (which tends to happen quite often: see the discussion of property (VRC) below), for $i=1,2$,
but $G_1$ and $G_2$ are not necessarily virtually abelian. Indeed, in this case $G_i$ will have many homomorphisms to virtually abelian groups $G_i'$ that are injective on $G_0$ (see, \cite[Corollary~3.8]{MM-vr_in_free_constr}  or \Cref{lem:further_props_of_VRC} below), $i=1,2$. By the universal property of amalgamated free products, these homomorphisms can then be combined to produce homomorphisms $G \to G_1'*_{G_0} G_2'$, which, in view of \Cref{thm:amalg_of_virt_ab}, give rise to homomorphisms from $G$ to virtually abelian groups $E$ that are injective on $G_0$ and ``remember'' much information about $G$, see \Cref{prop:maps_from_amalgams_to_vab_gps}. \Cref{thm:amalg_of_virt_ab} also has structural consequences for any amalgamated product $G=G_1*_{G_0} G_2$ as in \eqref{eq:amalg-G}. For example, when $|G_0|=\infty$ it implies that $G$ has a finite index normal subgroup $K$ splitting as an extension of a finite free product by $\Z$: see \Cref{cor:G_0_is_vr_in_G}.

In the case when the factors $G_1$ and
$G_2$ are abelian, the claim of \Cref{thm:amalg_of_virt_ab} holds without requiring $G_0$ to be virtually cyclic (this follows from the construction of central products, see \cite[Lemma~7.4]{MM-vr_in_free_constr}). However, the \emph{virtually} abelian case is more subtle, as the following example shows.

\begin{ex}\label{ex:SLnZ-intro}  
Let $n \ge 2$, $G_0\coloneq \Z^n$ and let $x,y\in \mathrm{GL}(n,\Z)$ be two finite order matrices  such that their product has infinite order. Set $G_1=\coloneq G_0 \rtimes \langle x \rangle$, $G_2\coloneq G_0 \rtimes \langle y \rangle$ and $G \coloneq G_1*_{G_0} G_2$. If $E$ is a group and  $\varphi:G \to E$ is a homomorphism that is injective on $G_0$ then the image of $\varphi(xy)$ will act on $\varphi(G_0) \cong \Z^n$ as an infinite order matrix. In particular, no power of $\varphi(xy)$ will commute with a finite index subgroup of $\varphi(G_0)$, showing that $E$ cannot be virtually abelian.

In fact, by \cite[Example~1.4 and Corollary~6.7]{MM-vr_in_free_constr}, 
there is a homomorphism from $G$ to a virtually abelian group that is injective on each factor if and only if $\langle x,y \rangle $ is a finite subgroup of $\mathrm{GL}(n,\Z)$.
\end{ex}

We will now present several applications of \Cref{thm:amalg_of_virt_ab}, starting from more basic ones and ending with our main application to virtual fibering in Subsection~\ref{subsec:app_to_v_fib}.

\subsection{Applications to (VRC) and residual properties}
A subgroup $H$ of a group $G$ is a \emph{virtual retract} (denoted $H \vr G$) if $H$ is a retract of a finite index subgroup $K \leqslant_f G$ (that is, there is a homomorphism $\rho:K \to H$ such that $\rho|_{H}=\mathrm{Id}_H$). 
A group $G$ has property \emph{(VRC)} if every cyclic subgroup is a virtual retract;  equivalently, $G$ has (VRC) if and only if 
for every $g \in G$ there is a finitely generated virtually abelian group $P$ and a homomorphism $\varphi:G \to P$ such that $\varphi$ is injective on $\langle g \rangle$ (see \cite[Corollary~1.3]{MM-vr_in_free_constr}). In a finitely generated group $G$ with property (VRC), every finitely generated virtually abelian subgroup $H$ is a virtual retract; in particular, $H$ is closed in the profinite topology and undistorted (i.e., quasi-isometrically embedded) in $G$ \cite{virtprops}. In \cite{MM-vr_in_free_constr} it was observed that property (VRC) naturally generalizes property RFRS, introduced by Agol \cite{Agol-virt_fib}.

Basic examples of groups with (VRC) are \emph{virtually special} groups in the sense of Haglund and Wise \cite{Haglund-Wise} (that is, groups that virtually embed in right angled Artin groups), see \cite[Corollary~1.6]{virtprops}. Property (VRC) is known to be preserved under taking direct products, amalgamated free products and HNN-extensions over finite subgroups, graph products, and wreath products with abelian base (see \cite{virtprops}). For fundamental groups of finite graphs of groups, (VRC) was studied in \cite{MM-vr_in_free_constr}. In particular, it is shown in \cite{MM-vr_in_free_constr} that when the vertex groups are finitely generated and virtually abelian, property (VRC) implies that the fundamental group of the graph of groups admits a geometric action on a complete CAT($0$) space.

We use \Cref{thm:amalg_of_virt_ab} to obtain the following result, which positively answers \cite[Question~11.5]{virtprops}.
\begin{thm}\label{thm:amalgam_of_(VRC)_gps}
Property (VRC) is stable under taking amalgamated free products over virtually cyclic subgroups.
\end{thm}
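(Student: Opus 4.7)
The plan is to verify (VRC) for $G$ in the form recalled in the excerpt: for every $g \in G$ I construct a homomorphism $\Phi: G \to E$ to a finitely generated virtually abelian group $E$ that is injective on $\langle g \rangle$. The overall template is the one sketched after \Cref{thm:amalg_of_virt_ab}: using (VRC) of $G_i$ together with \Cref{lem:further_props_of_VRC}, I pick homomorphisms $\psi_i: G_i \to P_i$ to finitely generated virtually abelian groups $P_i$ with $\psi_i|_{G_0}$ injective and with additional properties tailored to $g$ (see below); by the universal property of the amalgam, $\psi_1, \psi_2$ combine into $\Phi_0 : G \to A := \psi_1(G_1) *_{G_0} \psi_2(G_2)$; since $A$ satisfies the hypotheses of \Cref{thm:amalg_of_virt_ab}, there is $\nu: A \to E$ with $E$ finitely generated virtually abelian and $\nu$ injective on each $\psi_i(G_i)$; and $\Phi := \nu \circ \Phi_0$ is the candidate.

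I then split according to how $g$ acts on the Bass--Serre tree $T$ of the splitting $G = G_1 *_{G_0} G_2$. In the \emph{elliptic} case, $g$ is conjugate into some $G_i$; replacing $g$ by the conjugate (which does not affect whether $\Phi|_{\langle g \rangle}$ is injective), assume $g \in G_1$. Applying (VRC) of $G_1$, I arrange $\psi_1$ to be injective on both $\langle g \rangle$ and $G_0$ by taking the product of two (VRC) witnesses. Then $\Phi|_{G_1} = \nu \circ \psi_1$ is injective on $\langle g \rangle$ because $\psi_1$ is and $\nu|_{\psi_1(G_1)}$ is injective.

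In the \emph{hyperbolic} case, after conjugating to cyclically reduce, write $g = a_1 b_1 \cdots a_m b_m$ with $a_k \in G_1 \setminus G_0$ and $b_k \in G_2 \setminus G_0$, so $\langle g \rangle \cong \Z$ by standard Bass--Serre theory. Using \Cref{lem:further_props_of_VRC}, I select $\psi_1$ to be injective on $G_0$ and to satisfy $\psi_1(a_k) \notin \psi_1(G_0)$ for every $k$, and symmetrically for $\psi_2$ and the $b_k$. Then $\Phi_0(g) = \psi_1(a_1)\psi_2(b_1) \cdots \psi_1(a_m)\psi_2(b_m)$ is a cyclically reduced word of length $2m \geq 2$ in $A$, and hence has infinite order in $A$. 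To transport this infinite order through $\nu$ to $E$, I would invoke \Cref{prop:maps_from_amalgams_to_vab_gps}, which packages \Cref{thm:amalg_of_virt_ab} together with (VRC) of the factors and yields, for a prescribed element of $G$, a virtually abelian quotient that keeps it nontrivial.

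The principal obstacle is exactly this last step. \Cref{thm:amalg_of_virt_ab} on its own only records injectivity of $\nu$ on the vertex groups $\psi_i(G_i)$, so a priori $\nu$ could kill the hyperbolic element $\Phi_0(g)$. Overcoming this forces one to exploit the additional flexibility of (VRC) at the $G_i$-level (to separate each letter from $G_0$) and to deploy the refined output of \Cref{prop:maps_from_amalgams_to_vab_gps}, which must be set up so as to preserve enough of the translation length of $g$ in the Bass--Serre tree of $A$ to distinguish all powers of $\Phi_0(g)$ in $E$.
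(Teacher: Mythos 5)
The elliptic half of your proposal is fine and is essentially the paper's argument: for $g$ conjugate into a factor, combine \Cref{lem:further_props_of_VRC} (or directly \Cref{prop:maps_from_amalgams_to_vab_gps}) with \Cref{thm:amalg_of_virt_ab}, then pull the virtual retraction back via \Cref{lem:vabs_have_LR} and \Cref{lem:basic_props_of_virt_retr}.(a).

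The hyperbolic half has a genuine gap, and it is exactly the obstacle you name but do not overcome. \Cref{prop:maps_from_amalgams_to_vab_gps} does \emph{not} supply, for a prescribed hyperbolic element, a virtually abelian quotient injective on $\langle g\rangle$ (nor even one keeping $g$ nontrivial): its injectivity conclusions concern only $G_0$ and virtual retracts $H_i \vr G_i$, i.e.\ elliptic data, and its only control over hyperbolic elements is the clause $\ker\psi \subseteq N$, which requires you to hand it in advance a normal subgroup $N \n G$ with $G/N$ finitely generated virtually abelian and $\langle \Phi_0(g)\rangle \cap N=\{1\}$ --- the existence of such an $N$ for hyperbolic $g$ is essentially the statement you are trying to prove. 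Nothing in \Cref{thm:amalg_of_virt_ab} prevents $\nu$ from killing $\Phi_0(g)$: since $E$ is virtually abelian while $A=\psi_1(G_1)*_{G_0}\psi_2(G_2)$ typically contains nonabelian free subgroups, $\ker\nu$ is necessarily large, and "cyclically reduced of length $2m$ in $A$" gives infinite order in $A$ but says nothing about the image in $E$; "preserving translation length" is not a feature of any result in the paper. The paper takes a different route for hyperbolic elements: it first establishes that $G$ is cyclic subgroup separable (via \Cref{cor:res_fin_amalg}, or \cite[Corollary~8.5]{virtprops}), hence the virtually cyclic subgroup $G_0$ is separable in $G$, and then invokes \cite[Corollary~6.6]{MM-vr_in_free_constr}, which produces $\langle g\rangle \vr G$ for every hyperbolic $g$ directly from this separability, without attempting to detect $g$ in a virtually abelian quotient. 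To repair your argument you would either have to reproduce that separability-based input or prove an analogue of \Cref{thm:amalg_of_virt_ab} with injectivity on a prescribed hyperbolic cyclic subgroup, neither of which follows from the tools you cite.
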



Let us mention one application of \Cref{thm:amalgam_of_(VRC)_gps} to tubular groups, introduced by Wise in \cite{Wise-tubular}. Recall that a group $G$ is said to be \emph{tubular} if it is isomorphic to the fundamental group of a finite graph of groups $(\mathcal{G},\Gamma)$, where all vertex groups are free abelian of rank $2$ and all edge groups are infinite cyclic. Button \cite{Button-free-by-cyclic} proved that when the underlying graph $\Gamma$ is a tree, the corresponding tubular group is virtually special (and hence it has (VRC)), and in \cite{MM-vr_in_free_constr} the authors extended this statement to graphs of non-negative Euler characteristic, provided the tubular group is \emph{balanced} (that is, for any $g \in G$ of infinite order, if $g^k$ is conjugate to $g^l$ then $l= \pm k$). 

We will say that a non-empty finite connected graph $\Gamma$ is \emph{cycle-disjoint} if every vertex of $\Gamma$ is contained in at most one  non-trivial simple cycle (note that we allow loops and multiple edges).

\begin{cor}\label{cor:cacti} Let $G$ be a tubular group such that the underlying graph $\Gamma$ is cycle-disjoint. If  $G$ is balanced then all of the following hold: 
\begin{itemize}
    \item[(i)] $G$ has (VRC);
    \item[(ii)] $G$  is virtually special and CAT($0$);
    \item[(iii)] $G$ is virtually (finitely generated free)-by-$\Z$.    
\end{itemize}
\end{cor}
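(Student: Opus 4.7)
My plan is to induct on the number of edges of $\Gamma$. In the base case $\Gamma$ is a single block (a single edge, a loop, or a simple cycle), where (i)--(iii) are already known; in the inductive step $\Gamma$ contains a bridge, which I will use to split $G$ as an amalgamated product over an infinite cyclic subgroup and apply \Cref{thm:amalgam_of_(VRC)_gps} together with the paper's virtual fibering results.

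The structural fact I need is that in a cycle-disjoint graph every block is either a bridge or a simple cycle, and no two cycle-blocks share a vertex. Consequently, if the block-cut tree of $\Gamma$ has more than one node, each leaf block that is a cycle must be attached at a cut vertex whose other adjacent block is forced (by cycle-disjointness) to be a bridge; in particular, $\Gamma$ contains a bridge $e$. Removing $e$ produces two connected cycle-disjoint subgraphs $\Gamma_1, \Gamma_2$ with strictly fewer edges, and the corresponding Bass-Serre splitting is $G = H_1 *_C H_2$ with $C \cong \Z$ the edge group of $e$. Each $H_i$ is a subgroup of $G$ and hence inherits the balanced condition, so the inductive hypothesis will apply.

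In the base case, if $\Gamma$ is a tree (in particular, a single edge, or a vertex) then Button \cite{Button-free-by-cyclic} directly gives (i)--(iii). If $\Gamma$ is a loop or a simple cycle of length at least two, then $\chi(\Gamma) = 0$ and the extension in \cite{MM-vr_in_free_constr} of Button's result to balanced tubular groups over graphs of non-negative Euler characteristic gives (i) and (ii); for (iii) I will apply the virtual fibering analysis of Subsection~\ref{subsec:app_to_v_fib} to the HNN structure realising $G$ as an HNN extension of an iterated $\Z$-amalgam of copies of $\Z^2$ over $\Z$, using balancedness of $G$ to furnish a virtual $\Z$-fibering with finitely generated free kernel.

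For the inductive step, with $G = H_1 *_C H_2$ and $H_1, H_2$ satisfying (i)--(iii), (i) is immediate from \Cref{thm:amalgam_of_(VRC)_gps} since $C$ is virtually cyclic. For (ii), the CAT(0) conclusion follows from the result of \cite{MM-vr_in_free_constr} that a finitely generated graph of virtually abelian groups with (VRC) admits a geometric action on a complete CAT(0) space, and virtual specialness then follows by invoking a combination theorem for virtually special groups amalgamating along a virtually cyclic virtual retract (which is applicable here since (VRC) makes $C$ a virtual retract of each $H_i$). Finally, (iii) follows from the virtual fibering characterization for amalgams of virtually free-by-cyclic groups over cyclic subgroups from Subsection~\ref{subsec:app_to_v_fib}. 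I expect the hardest step to be (iii): applying that characterization requires matching the virtual fiberings $H_i \to \Z$ given by the inductive hypothesis on the amalgamated $C$, and balancedness of $G$ is what should force the restrictions of these maps to $C$ to be compatible up to passing to finite-index subgroups, but translating this compatibility into the precise combinatorial criterion of the paper is where the real work lies.
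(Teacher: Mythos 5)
Your induction on bridges and the base cases for (i) are essentially sound, and for (i) your argument is the same in substance as the paper's (decompose $\Gamma$ into bridges and cycles, use \cite[Corollary~1.8]{MM-vr_in_free_constr} for the cycle pieces and \Cref{thm:amalgam_of_(VRC)_gps} for each amalgamation over $\Z$). However, your inductive step for (ii) and (iii) has two genuine gaps. For (ii), you invoke ``a combination theorem for virtually special groups amalgamating along a virtually cyclic virtual retract''; no such theorem is available in this generality --- this is precisely \Cref{q:vir_special}, which the paper records as an open question (the known combination theorems, e.g.\ Haglund--Wise, require hyperbolicity, which tubular groups lack). For (iii), your inductive hypothesis only gives that $H_1,H_2$ are \emph{virtually} free-by-cyclic (equivalently virtually fibered), but the tools of Subsection~\ref{subsec:app_to_v_fib} that produce virtual fibering of the amalgam --- \Cref{thm:virt_fib_of_amalg}.(b) and \Cref{cor:virt_free-by-cyclic} --- require the factors to be fibered, respectively free-by-cyclic, \emph{on the nose}; whether one can pass from virtually fibered factors to a virtually fibered amalgam over a cyclic virtual retract is exactly the gap between parts (a) and (b) of \Cref{thm:virt_fib_of_amalg} and is \Cref{q:vir_fibered}, again open. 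The ``matching of fiberings on $C$ after passing to finite index'' that you defer as ``the real work'' is therefore not a technicality but the missing core of your argument; the same issue infects your base case (iii) for cycle graphs, where you only gesture at an HNN analysis that the paper explicitly warns does not extend (cf.\ the Baumslag--Solitar discussion after \Cref{cor:virt_free-by-cyclic}).

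The paper avoids both problems by a different logical order: it first establishes (i) for the whole group $G$ (via the same cactus decomposition and \Cref{thm:amalgam_of_(VRC)_gps}), and then deduces (ii) and (iii) in one stroke from (VRC), using \cite[Propositions~1.9, 1.10 and 12.3]{MM-vr_in_free_constr}, which state that for fundamental groups of finite graphs of finitely generated virtually abelian groups, property (VRC) already implies virtual specialness, the CAT($0$) property, and being virtually (finitely generated free)-by-$\Z$. In other words, no combination theorem for specialness and no gluing of fiberings is needed once (VRC) of $G$ itself is in hand; if you want to salvage your write-up, replace your inductive steps for (ii) and (iii) by an appeal to these results applied directly to $G$.
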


\begin{proof} Any  cycle-disjoint graph $\Gamma$ can be constructed starting from a single vertex by inductively joining leaf edges or cycles, with the condition that a cycle can only be attached to a vertex of degree at most $1$. Thus, if $G$ is a tubular group, with the underlying graph $\Gamma$, then $G$ can be obtained by iteratively amalgamating free abelian groups of rank $2$ and tubular groups corresponding to cycle graphs along infinite cyclic subgroups. According to \cite[Corollary~1.8]{MM-vr_in_free_constr}, any balanced tubular group corresponding to a cycle has (VRC), hence $G$ has (VRC) by \Cref{thm:amalgam_of_(VRC)_gps}. Therefore, $G$ is virtually special, CAT($0$), and virtually (finitely generated free)-by-$\Z$, by \cite[Propositions~1.9, 1.10 and 12.3]{MM-vr_in_free_constr}.    
\end{proof}

A group $G$ satisfying one of the conditions (i)--(iii) from \Cref{cor:cacti} is always balanced (cf. \cite[Remark~9.2]{MM-vr_in_free_constr}). The requirement on the structure of the graph $\Gamma$ is necessary: \cite[Corollary~12.4]{MM-vr_in_free_constr} provides many examples of balanced tubular groups $G_k$, where the underlying graph consists of one vertex and two loops at that vertex, such that $G_k$ does not satisfy any of the properties (i)--(iii).

\Cref{thm:amalg_of_virt_ab} is also useful for studying residual properties.
\begin{cor}\label{cor:virt_res_solv} Suppose that $G=G_1*_{G_0} G_2$, where 
$G_0$ is virtually cyclic and $G_0 \vr G_i$, for $i=1,2$.
\begin{itemize}
    \item[(i)] If $G_1$, $G_2$ are residually finite then so is $G$.
    \item[(ii)] If $G_1$, $G_2$ are virtually residually solvable then so is $G$.
\end{itemize}
\end{cor}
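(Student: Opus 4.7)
The plan is, for each non-trivial $g \in G$, to build a homomorphism from $G$ into a residually finite (for (i)) or virtually residually solvable (for (ii)) group that does not kill $g$. I would split the analysis according to whether $g$ is elliptic or hyperbolic with respect to the Bass--Serre tree of the splitting $G = G_1 *_{G_0} G_2$. Both cases use the virtual retraction hypothesis $G_0 \vr G_i$, either directly via \Cref{thm:amalg_of_virt_ab} (for the elliptic case) or via its structural consequence \Cref{cor:G_0_is_vr_in_G} mentioned above (for the hyperbolic case).

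In the elliptic case, after conjugation $g \in G_1 \setminus \{1\}$. Using residual finiteness of $G_1$ (for (i); for (ii) one uses a finite solvable quotient coming from the residually solvable finite-index subgroup, handling the case $g$ outside that subgroup via the natural finite quotient), pick a map $\mu_1 \colon G_1 \to F_1$ into a finite (respectively, virtually solvable) group with $\mu_1(g) \neq 1$. Let $\rho_1 \colon K_1 \to G_0$, with $K_1 \f G_1$, be the retraction from $G_0 \vr G_1$, and form the diagonal $\sigma_1 = (\mu_1|_{K_1}, \rho_1) \colon K_1 \to F_1 \times G_0$, which is injective on $G_0$ because $\rho_1|_{G_0} = \mathrm{id}$. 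Inducing $\sigma_1$ up to $G_1$ via the coset action on $G_1 / K_1$ yields $\alpha_1 \colon G_1 \to V_1$ into a finitely generated virtually abelian (respectively, virtually solvable) group $V_1$ whose kernel $\mathrm{Core}_{G_1}(\ker \sigma_1)$ meets $G_0$ trivially and avoids $g$. An analogous construction gives $\alpha_2 \colon G_2 \to V_2$, and the universal property of amalgams then produces $G \to V_1 *_{G_0} V_2$. For part (i), \Cref{thm:amalg_of_virt_ab} applied to this amalgam yields $\varphi \colon G \to E$ into a finitely generated virtually abelian group $E$ with $\ker \varphi \cap G_i = \ker \alpha_i$; since $\varphi(g) \neq 1$ and $E$ is residually finite, a further finite quotient of $E$ separates $g$.

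In the hyperbolic case, when $|G_0| < \infty$ the conclusion is classical (Baumslag for (i), with a direct analogue for (ii)), so assume $|G_0| = \infty$ and invoke \Cref{cor:G_0_is_vr_in_G} to obtain a finite-index normal subgroup $K \vartriangleleft G$ fitting into $1 \to F \to K \to \Z \to 1$, with $F$ a free product of finitely many groups each embedding in a conjugate of some $G_i$. If $g \notin K$, the finite quotient $G/K$ already separates $g$. Otherwise, the factors of $F$ inherit the required residual property from the $G_i$, so $F$ does too (Gruenberg's theorem for (i); a parallel free-product argument for (ii)); standard (virtually free)-by-cyclic residual finiteness results, together with their solvable analogue for (ii), then promote this to $K$, and hence to $G$ as a finite extension of $K$.

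The main obstacle is the elliptic case: the maps $\alpha_1$ and $\alpha_2$ must be simultaneously injective on $G_0$ (so $V_1 *_{G_0} V_2$ is defined and \Cref{thm:amalg_of_virt_ab} applies), must detect $g$, and must land in finitely generated virtually abelian targets. The retraction data from $G_0 \vr G_i$ is exactly what makes the diagonal-plus-induced-representation construction meet all three conditions. A further complication, specific to (ii), is that \Cref{thm:amalg_of_virt_ab} is stated for virtually abelian factors; the solvable case therefore needs either a careful refinement of $\mu_1, \mu_2$ so the targets remain virtually abelian, or a parallel argument invoking the solvable analogue of the (virtually free)-by-cyclic residual property in the hyperbolic step.
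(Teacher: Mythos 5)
Your elliptic-case argument for part (i) is correct and is essentially the paper's own mechanism: the diagonal-plus-induction step reproves \Cref{lem:extend_homom_from_K} and \Cref{lem:further_props_of_VRC}, and combining the resulting maps over the amalgam and then applying \Cref{thm:amalg_of_virt_ab} is exactly \Cref{prop:maps_from_amalgams_to_vab_gps}. The genuine gaps are elsewhere. In the hyperbolic case of (i) you pass from residual finiteness of $F=N_0*N_1*\cdots*N_k$ to residual finiteness of $K\cong F\rtimes\Z$ by citing ``(virtually free)-by-cyclic'' results, but $F$ is not virtually free: by \Cref{cor:G_0_is_vr_in_G}.(iii) the factors $N_j$, $j\ge 1$, are kernels of surjections $(G_i\cap K)\to\Z$, hence arbitrary and typically infinitely generated subgroups of the $G_i$. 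For an infinitely generated kernel, residual finiteness does not pass to an extension by $\Z$: the usual argument needs finite generation of $F$ to replace a finite-index subgroup avoiding $g$ by a characteristic (hence $\Z$-invariant) one, and nothing in your construction supplies $\Z$-invariant finite-index subgroups of $F$. So hyperbolic elements are not separated, and this is precisely the hard part; the paper proves (i) by a different route, namely separability: once $G_0$ is separable in each $G_i$, the edge-approximation results (\Cref{lem:edge-approx}, \Cref{prop:factors_are_sep}, resting on \cite{MM-vr_in_free_constr}) make every separable subset of $G_1$, in particular $\{1\}$, closed in $G$; see \Cref{cor:res_fin_amalg}.

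For part (ii) the strategy itself does not yield the conclusion: producing, for each $g\neq 1$, a homomorphism of $G$ to a virtually residually solvable group not killing $g$ only shows that $G$ is residually (virtually residually solvable), whereas the statement requires a single finite-index subgroup of $G$ that is residually solvable. Moreover, your elliptic step for (ii) tacitly upgrades ``residually solvable'' to ``residually (finite solvable)'': a virtually residually solvable $G_1$ satisfying the hypotheses can have $R(G_1)\neq\{1\}$ (e.g.\ $C_\infty\times S$ with $S$ a finitely generated solvable non-residually-finite group and $G_0$ the $C_\infty$ factor), and then no finite, nor even finitely generated virtually abelian, quotient of $G_1$ detects $g\in R(G_1)\setminus\{1\}$, since such quotients are residually finite and kill $R(G_1)$. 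Hence the tension with \Cref{thm:amalg_of_virt_ab} that you flag cannot be removed by ``refining $\mu_i$ so the targets remain virtually abelian''; it is a real obstruction to the element-wise plan. The paper's proof (\Cref{cor:claim_(ii)-virt_res_solv}) avoids detecting $g$ altogether: \Cref{prop:maps_from_amalgams_to_vab_gps} gives one homomorphism $\psi\colon G\to P$ with $P$ finitely generated virtually abelian, $\psi$ injective on $G_0$ and $\ker\psi\cap G_i\subseteq N_i$ (the residually solvable finite-index subgroups); then $\ker\psi$ meets every conjugate of $G_0$ trivially, so by the Kurosh-type theorem it is a free product of a free group and subgroups of conjugates of the $N_i$, hence residually solvable by Gruenberg; finally $H=\psi^{-1}(A)$, for an abelian $A\leqslant_f\psi(G)$, is (residually solvable)-by-abelian, hence residually solvable because solvability is a root property, and $H\leqslant_f G$. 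If you want to salvage your outline, this single-map argument is what to adopt for (ii), and for (i) you need either the separability results or a genuinely different treatment of hyperbolic elements.
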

Claim (i) of \Cref{cor:virt_res_solv}  probably can also be established using the methods of Evans \cite{Evans} and Burillo--Martino \cite{Bur-Mar}, but we are not aware of ways to prove (ii) avoiding \Cref{thm:amalg_of_virt_ab}.
The assumption that $G_0$ is a virtual retract in $G_i$ is important: in \cite{Kar-Nik} Kar and Nikolov constructed an example of a double of $\mathrm{SL}(3,\Z[1/2])$ over a cyclic subgroup that is not residually amenable (hence, it is neither residually finite nor virtually residually solvable
\cite[Lemma~2.3]{Berlai}). Note that $\mathrm{SL}(3,\Z[1/2])$ is both residually finite and virtually residually solvable (in fact, it is virtually residually $p$-finite, for all but finitely many primes $p$ by Platonov's theorem \cite{Plat}). 

Since right angled Artin groups are residually nilpotent \cite[Chapter~3]{Droms-thesis}, 
\Cref{cor:virt_res_solv}.(ii) implies that amalgamated free products of virtually special groups over virtually cyclic subgroups are virtually residually solvable. Note that even if both factors are finite and solvable, their amalgamated free product may not be residually solvable: see \Cref{ex:amalg_not_res_solv} below.

\subsection{Applications to virtual \texorpdfstring{$F_m$}{Fm}-fibering}\label{subsec:app_to_v_fib}
Our main application of \Cref{thm:amalg_of_virt_ab} is to virtual fibering results for amalgamated free products. This application is motivated by the following basic question.

\begin{question}\label{q:amalg_are_free-by-cyclic} When is an amalgamated free product of two free-by-cyclic groups over a cyclic subgroup free-by-cyclic? When is it virtually free-by-cyclic?
\end{question}

\begin{convention}
In this paper by a \emph{free-by-cyclic group} we will mean a group isomorphic to a semidirect product of a \emph{finite rank} free group with $\Z$. On the other hand, groups splitting as semidirect products $F \rtimes \Z$, for an arbitrary free group $F$, will be called \emph{$F$-by-$\Z$},    
\end{convention}

Showing that various groups are virtually $F$-by-$\Z$ has received significant attention  recently: see the papers of Hagen--Wise \cite{Hagen-Wise-spec_gps_with_hier}, Kielak--Linton \cite{Kielak-Linton} and Fisher \cite{Fisher-on_cd_of_ker_to_Z}. However, all of these results assume that the group is virtually special or virtually RFRS, which is not sufficient to answer \Cref{q:amalg_are_free-by-cyclic}. Indeed, in \cite{Wu-Ye} Wu and Ye produced examples of cyclic amalgamations of free-by-cyclic groups  that are not virtually $F$-by-$\Z$. Moreover, in \Cref{ex:non-RFRS_amalg_of_free-by-Z} below, we use our results to construct an amalgam of two free-by-cyclic groups over a cyclic subgroup that is virtually free-by-cyclic but is not virtually RFRS.

We give complete answers to both parts of \Cref{q:amalg_are_free-by-cyclic} in Corollaries~\ref{cor:amalg_is_free-by-cyclic} and \ref{cor:virt_free-by-cyclic} below, and our answer to the second part (about being virtually free-by-cyclic) uses \Cref{thm:amalg_of_virt_ab} in a crucial way. We also give new sufficient criteria in the more general case when the factors $G_1$, $G_2$ are $F$-by-$\Z$, see \Cref{cor:amalg_is_F-by_Z}.


An investigation of \Cref{q:amalg_are_free-by-cyclic} naturally leads to studying (virtual) fibering of amalgamated free products.

\begin{defn}
Given $m \in \N$ and a group $G$, we will say that this group \emph{$F_m$-fibers} (or that it is \emph{$F_m$-fibered}) if there is an  non-zero homomorphism $\chi: G \to \Z$  such that $\ker\chi$ is of type $F_m$. In the case when $m=1$, we will simply say that $G$ \emph{fibers} (or that $G$ is \emph{fibered}).

Similarly, the group $G$ \emph{virtually $F_m$-fibers} (\emph{virtually fibers}) if there is a finite index subgroup $H \leqslant_f G$ such that $H$ $F_m$-fibers (respectively, fibers).
\end{defn}

\begin{notation}\label{not:G} Further in this subsection we assume that $G=G_1*_{G_0} G_2$, where $G_0$ is virtually cyclic and $G_0\lneqq G_i$, $i=1,2$.     
\end{notation}

\begin{prop}\label{prop:crit_for_fibering_of_amalg} Suppose that for some $m \in \N$, $G_1$ and $G_2$ are of type $F_m$. Then the amalgamated product $G$ $F_m$-fibers if and only if the following two conditions hold for each $i=1,2$:
\begin{itemize}
    \item[(i)] $G_i$ $F_m$-fibers;
    \item[(ii)] the natural image of $G_0$ in the abelianization $G_i/[G_i,G_i]$ is infinite.
\end{itemize}
\end{prop}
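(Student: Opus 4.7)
My plan is to analyse the homomorphism $\chi: G \to \Z$ via its restrictions to $G_1, G_2, G_0$ and the action of $N := \ker\chi$ on the Bass--Serre tree $T$ of the splitting $G = G_1 *_{G_0} G_2$. The key observation will be that if $\chi|_{G_0} \ne 0$, then -- because $G_0$ is virtually cyclic with infinite image in $\Z$ -- the group $\ker(\chi|_{G_0})$ is finite, $\chi(G_i) \supseteq \chi(G_0)$ has finite index in $\Z$, and consequently $N$ acts on $T$ with finite edge stabilisers and finitely many orbits of cells (the numbers of vertex and edge orbits are the indices of $\chi(G_i)$ and $\chi(G_0)$ in $\Z$, respectively). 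This endows $N$ with a finite graph of groups decomposition whose vertex groups are conjugates of $\ker(\chi|_{G_i})$ and whose edge groups are finite; by standard results on graphs of groups with finite edge groups, $N$ will be of type $F_m$ if and only if both $\ker(\chi|_{G_1})$ and $\ker(\chi|_{G_2})$ are of type $F_m$.

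For the forward direction, assume $\chi$ is nonzero with $N$ of type $F_m$. The main step is to establish that $\chi|_{G_0} \ne 0$. Otherwise $G_0 \subseteq N$, so $N$ acts on $T$ with infinite edge stabilisers (full conjugates of $G_0$) and infinitely many edge orbits (parametrised by $G/N \cong \Z$); using the hypothesis $G_0 \lneqq G_i$, one shows -- essentially via the Bieri--Neumann--Strebel theorem for amalgamated products, or by a direct Bass--Serre/accessibility argument analysing the quotient graph of groups -- that $N$ cannot then be finitely generated, contradicting type $F_1$. Once $\chi|_{G_0} \ne 0$ is in hand, (i) follows immediately from the key observation above, and (ii) is automatic: $\chi|_{G_i}$ factors through $G_i/[G_i,G_i]$ and is nonzero on $G_0$, so the image of $G_0$ in $G_i/[G_i,G_i]$ admits a surjection onto a nontrivial subgroup of $\Z$ and is therefore infinite.

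For the backward direction, I use (i) to pick $\psi_i: G_i \to \Z$ with $\ker\psi_i$ of type $F_m$, and (ii) to pick $\phi_i: G_i \to \Z$ with $\phi_i(G_0) \ne 0$. Since $G_0$ is infinite virtually cyclic, $\mathrm{Hom}(G_0, \Z) \cong \Z$; fix a generator $e$ and write $\psi_i|_{G_0} = a_i e$, $\phi_i|_{G_0} = b_i e$ with $b_i \ne 0$. I seek integers $M_i, N_i$ so that the character $\chi_i := M_i \psi_i + N_i \phi_i : G_i \to \Z$ satisfies both: (a) $\ker\chi_i$ is of type $F_m$, and (b) the matching condition $M_1 a_1 + N_1 b_1 = M_2 a_2 + N_2 b_2 \ne 0$. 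Condition (a) is granted by openness of the Bieri--Neumann--Strebel--Renz invariant $\Sigma^m(G_i)$ (using that $\pm\psi_i \in \Sigma^m(G_i)$), provided $|M_i|$ is taken large relative to $|N_i|$. Condition (b) is a single linear equation over $\Z$ in four unknowns with ample room for $|M_i|$ to grow -- for instance, when $a_1 a_2 \ne 0$, take $M_i = a_{3-i} M$, $N_1 = b_2$, $N_2 = b_1$ and let $M \to \infty$; the degenerate cases where some $a_i = 0$ are handled similarly. The $\chi_i$ then glue, via the universal property of the amalgam, to a character $\chi: G \to \Z$ with $\chi|_{G_0} \ne 0$, and the key observation delivers $\ker\chi$ of type $F_m$, so $G$ is $F_m$-fibered.

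The hardest step will be ruling out $\chi|_{G_0} = 0$ in the forward direction; this relies crucially on the hypothesis $G_0 \lneqq G_i$ and amounts to a BNS-type theorem for amalgamated free products, whereas the backward direction is comparatively routine once the correct parametrisation of admissible characters is set up.
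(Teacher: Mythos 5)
Your proposal is correct and takes essentially the same route as the paper: the forward direction is the Bass--Serre dichotomy for finitely generated normal subgroups acting on the Bass--Serre tree (the paper's \Cref{lem:Bri-How}, packaged into \Cref{prop:graph_of_gps_crit_for_chi_to_be_in_higher_invar}, with the case $\ker\chi=G_0$ excluded because $G/G_0$ would be a non-trivial free product), and the backward direction is the same openness-of-$\Sigma^m_{\pm}$ perturbation-and-gluing over $G_0$, your integer combinations $M_i\psi_i+N_i\phi_i$ being an explicit version of the paper's density-of-discrete-characters argument followed by rescaling the two characters so they agree on an infinite-order element of $G_0$, and your ``key observation'' being the paper's \Cref{cor:N_fp->interec_with_vertex_gps_are_fp}. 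The only slips are cosmetic: $\mathrm{Hom}(G_0,\Z)\cong\Z$ requires condition (ii) rather than just $G_0$ being infinite virtually cyclic (an infinite dihedral-type $G_0$ admits no non-zero character), and the orbit counts are indices in $\chi(G)$ rather than in $\Z$.
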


In fact condition (ii) of \Cref{prop:crit_for_fibering_of_amalg} can be strengthened to say that $G_0$ must have non-zero image under {every} epimorphism $G \to \Z$ with finitely generated kernel (in particular, the image of $G_0$ in the abelianization of $G$ is infinite): see \Cref{prop:graph_of_gps_crit_for_chi_to_be_in_higher_invar} below, which describes the higher discrete Bieri--Neumann--Strebel--Renz (BNSR) invariants for fundamental groups of finite graphs of groups with virtually polycyclic edge groups, extending the work of  Cashen--Levitt \cite{Cash-Lev}, who did this in the case $m=1$. The other direction (sufficiency) in \Cref{prop:crit_for_fibering_of_amalg}, is an elementary consequence of the openness of the higher BNSR invariants.

We can now obtain an answer to the first part of \Cref{q:amalg_are_free-by-cyclic}.
\begin{cor}\label{cor:amalg_is_free-by-cyclic} Suppose $G_1$, $G_2$ are free-by-cyclic and $G_0$ is cyclic. Then 
$G=G_1*_{G_0} G_2$ is free-by-cyclic if and only if $|G_0|=\infty$ and $G_0 \cap [G_i,G_i]=\{1\}$ in $G_i$, for each $i=1,2$.  \end{cor}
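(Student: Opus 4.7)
The plan is to prove both directions of the corollary via \Cref{prop:crit_for_fibering_of_amalg}. The forward direction is essentially immediate: if $G$ is free-by-cyclic, the defining epimorphism $\chi: G \to \Z$ has finite rank free (hence type $F_\infty$) kernel, so $G$ is $F_\infty$-fibered. Condition~(ii) of the proposition then says the image of $G_0$ in $G_i^{ab}$ is infinite for $i=1,2$; since $G_0$ is cyclic, this is equivalent to $|G_0|=\infty$ together with $G_0 \cap [G_i,G_i]=\{1\}$ in $G_i$.

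For the converse, the hypotheses supply condition~(ii), and since each $G_i$ is free-by-cyclic it is itself $F_\infty$-fibered, so condition~(i) also holds for every $m$. Therefore $G$ is $F_\infty$-fibered: there is $\chi: G \to \Z$ with $N \coloneq \ker\chi$ of type $F_\infty$. By the remark immediately following the proposition, every such $\chi$ is non-zero on $G_0$, so $\chi|_{G_0}$ is injective (as $G_0 \cong \Z$) and $N \cap G_0 = \{1\}$.

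The remaining content is to show $N$ is free of finite rank, which will make $G \cong N \rtimes \Z$ free-by-cyclic. I would apply Bass-Serre theory to the action of $N$ on the Bass-Serre tree of the splitting $G = G_1 *_{G_0} G_2$: edge stabilizers are trivial (as $N \cap G_0 = \{1\}$), and vertex stabilizers are conjugates of $K_i \coloneq \ker(\chi|_{G_i})$. Hence $N$ decomposes as a free product of copies of $K_1, K_2$ and a free group, and in particular each $K_i$ is a retract of $N$, inheriting type $F_\infty$. Since $G_i$ is free-by-cyclic, it is of type $F$ with $\cd G_i \le 2$ (using the mapping torus model as a $2$-dimensional $K(G_i,1)$). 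I would then invoke Bieri's formula for cohomological dimensions of group extensions, applied to $1 \to K_i \to G_i \to \chi(G_i) \to 1$ with $\chi(G_i) \cong \Z$ and $K_i$ of type $FP_\infty$, to conclude $\cd K_i \le \cd G_i - 1 \le 1$. By the Stallings--Swan theorem, $K_i$ is then free; finite generation gives finite rank, and so $N$, being a free product of finite rank free groups, is free of finite rank.

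The main technical point is the application of Bieri's dimension formula: in full generality it first yields $\mathrm{hd}\,K_i \le 1$, and one transfers to $\cd$ via the equality $\cd = \mathrm{hd}$ for $FP_\infty$ groups of finite cohomological dimension. The remaining steps---Bass-Serre decomposition with trivial edge stabilizers, passage to retracts, and Stallings--Swan---are standard.
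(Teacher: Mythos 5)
Your proof is correct in substance, but for the sufficiency direction it takes a genuinely different route from the paper. The paper applies \Cref{prop:crit_for_fibering_of_amalg} with $m=2$ to obtain a finitely presented normal subgroup $N \n G$ with $G/N \cong \Z$, notes that $\cd(G) \le 2$ (an amalgam of two groups of $\cd \le 2$ over a subgroup of $\cd \le 1$), and then applies Bieri's theorem on $FP_2$ normal subgroups of groups of cohomological dimension two \emph{directly to $N \n G$} to conclude that $N$ is free; no Bass--Serre analysis of $N$ is needed. You instead decompose $N=\ker\chi$ via its action on the Bass--Serre tree (edge stabilizers are trivial because $N \cap G_0=\{1\}$ and $N \n G$) and prove freeness of the vertex pieces $K_i=\ker(\chi|_{G_i})$ \emph{inside the factors}, via the extension $1 \to K_i \to G_i \to \Z \to 1$, Bieri's dimension formula and Stallings--Swan. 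Both arguments run through the same circle of ideas; the paper's is shorter because bounding $\cd(G)$ handles $N$ in one stroke, while yours never needs $\cd(G)\le 2$ for the amalgam itself, at the price of the Bass--Serre bookkeeping: you should say why the free product decomposition of $N$ has only finitely many factors and a finite-rank free part (either Grushko, since $N$ is finitely generated, or cocompactness as in \Cref{cor:normal_sbgp_in_graph_of_gps_is_fg}), which is what makes the final "finite rank" conclusion legitimate.

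Two points to tighten. First, \Cref{prop:crit_for_fibering_of_amalg} is stated for a fixed $m$, so applying it for every $m$ yields, a priori, a different character for each $m$; the assertion that $G$ is $F_\infty$-fibered (one $\chi$ whose kernel is of type $F_\infty$) is not literally what it gives. This is harmless: $m=2$ suffices, since a finitely presented ($FP_2$) kernel $K_i$ already meets the hypotheses of Bieri's results (indeed you could quote the same Corollary~8.6 of Bieri that the paper uses, applied inside $G_i$). Second, the remark that Bieri's formula "in full generality" first yields $\mathrm{hd}\,K_i \le 1$ overstates the generality: without a finiteness hypothesis on the kernel the formula fails (the kernel of a non-zero character on $F_2$ is free of infinite rank, and neither $\cd$ nor $\mathrm{hd}$ drops), so the finiteness of $K_i$ obtained from the retract argument is genuinely needed, not merely convenient; as you do have it, the step is fine as used.
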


\begin{proof} The necessity is given by \Cref{prop:crit_for_fibering_of_amalg}, applied to the case when $m=1$. For the sufficiency, since finite rank free groups are finitely presented, we can use the same proposition for $m=2$, to conclude that $G$ has a finitely presented normal subgroup $N \n G$ such that $G/N \cong \Z$.
Since $G_i$ are free-by-cyclic and $G_0 \cong \Z$, the standard properties of cohomological dimension (see  \cite[Chapter~II]{Bieri-book}) imply that $\cd(G) \le 2$, so $N$ is free by a result of Bieri \cite[Corollary~8.6 in Chapter~II]{Bieri-book}. Thus $G$ is free-by-cyclic.    
\end{proof}

For the second part of part of \Cref{q:amalg_are_free-by-cyclic}, we need an analogue of \Cref{prop:crit_for_fibering_of_amalg} for virtual fibering. 
We say that a subgroup $A$ of a group $G$ is an \emph{almost virtual retract}, denoted $A\avr G$, if there is a finite index subgroup $H \leqslant_f A $ such that $H \vr G$. The next theorem uses \Cref{not:G}.

\begin{thm}\label{thm:virt_fib_of_amalg}
Suppose that for some $m \in \N$, $G_1$ and $G_2$ are of type $F_m$. 

(a)  If $G$ virtually $F_m$-fibers and is not virtually abelian then all of the following must be true:
    \begin{itemize}
    \item $G_i$ virtually $F_m$-fibers, for $i=1,2$; 
    \item $|G_0|=\infty$; 
    \item $G_0 \avr G_i$, for $i=1,2$ (in fact, $G_0 \avr G$).
\end{itemize}

(b) The group $G$ virtually $F_m$-fibers provided all of the following hold:
\begin{itemize}
    \item $G_i$ is $F_m$-fibered, for $i=1,2$; 
    \item $|G_0|=\infty$; 
    \item $G_0 \vr G_i$, for $i=1,2$.
\end{itemize}
\end{thm}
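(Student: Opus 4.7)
The plan is to derive (a) from the higher BNSR criterion for graphs of groups (\Cref{prop:graph_of_gps_crit_for_chi_to_be_in_higher_invar}) applied to a finite-index $F_m$-fibering subgroup of $G$, and to derive (b) by constructing a finite-index subgroup $L \f G$ with a suitable graph of groups structure to which the graph-of-groups analogue of \Cref{prop:crit_for_fibering_of_amalg} applies.

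For (a), suppose $H \f G$ $F_m$-fibers via $\chi: H \twoheadrightarrow \Z$, so $\chi \in \Sigma^m(H)$. The action of $H$ on the Bass--Serre tree $T_G$ endows $H$ with a finite graph of groups structure: vertex groups $H_v$ are finite-index in conjugates of $G_i$, and edge groups $H_e$ are finite-index in conjugates of $G_0$ (hence virtually cyclic). \Cref{prop:graph_of_gps_crit_for_chi_to_be_in_higher_invar} applied to this $\chi$ yields $\chi|_{H_v} \in \Sigma^m(H_v)$ for each vertex $v$ (so each $G_i$ virtually $F_m$-fibers) and $\chi|_{H_e} \neq 0$ for each edge $e$. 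Since $H_e$ is virtually cyclic, $\chi|_{H_e}\neq 0$ forces $|G_0|=\infty$; selecting an infinite cyclic subgroup $H_e^0 \f H_e$ with $\chi(H_e^0) = n\Z$ for some $n\neq 0$, the assignment $h \mapsto$ the unique $x \in H_e^0$ with $\chi(x) = \chi(h)$ defines a retraction of $\chi^{-1}(n\Z) \f H$ onto $H_e^0$, so $H_e^0 \vr G$, whence $G_0 \avr G$ (and $G_0 \avr G_i$ by restriction to $G_i$). To exclude $|G_0|<\infty$ under the not-virtually-abelian hypothesis: a finite $G_0$ would make every $H_e$ finite, violating $\chi|_{H_e}\neq 0$ unless $H$'s graph of groups has no edges, i.e., $H \leqs G_i^g$ for some $g \in G$; but $G_0 \lneqq G_i$ makes $T_G$ infinite with $[G:G_i]=\infty$, contradicting $[G:H]<\infty$. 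The only remaining scenario (where $T_G$ is a two-ended line with each $[G_i:G_0]=2$ and $G_0$ finite) is precisely the case $G$ virtually $\Z$, excluded by hypothesis.

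For (b), use $G_0 \vr G_i$ to choose $K_i \trianglelefteq_f G_i$ containing $G_0$ with retractions $\rho_i : K_i \twoheadrightarrow G_0$ (pass to normal cores as needed). Since $G_0 \leqs K_i = \ker(G_i \twoheadrightarrow G_i/K_i)$, the quotient maps $\phi_i: G_i \twoheadrightarrow Q_i := G_i/K_i$ both vanish on $G_0$, so by the universal property of the amalgamated free product, the assignments $G_1 \to Q_1 \times \{1\}$ and $G_2 \to \{1\} \times Q_2$ combine into a homomorphism $\phi: G \to Q_1 \times Q_2$ with $G_0 \subseteq \ker\phi$. Set $L := \ker\phi \trianglelefteq_f G$; then $L \cap G_i = \ker\phi_i = K_i$ and $G_0 \leqs L$, and Bass--Serre theory applied to the $L$-action on $T_G$ gives $L$ a finite graph of groups decomposition whose vertex groups are $G$-conjugate to the $K_i$ and whose edge groups are $G$-conjugate to $G_0$. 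Each vertex group of $L$ is isomorphic to a $K_i$ via conjugation; it is $F_m$-fibered by restricting the $F_m$-fibering of $G_i$ (the restricted kernel is a finite-index subgroup of the original $F_m$-kernel), and it retracts onto its adjacent conjugate of $G_0$ via a conjugate of $\rho_i$, so the image of each edge group in the abelianization of an adjacent vertex group is infinite (as $G_0^{ab}$ is infinite, $G_0$ being virtually $\Z$). These are precisely the hypotheses of the graph-of-groups analogue of \Cref{prop:crit_for_fibering_of_amalg} applied to $L$, giving that $L$ is $F_m$-fibered, and hence $G$ virtually $F_m$-fibers. The main obstacle is establishing this graph-of-groups version of the fibering criterion in sufficient generality to cover $L$'s decomposition (which may have several vertices and loops), which is supplied by the paper's \Cref{prop:graph_of_gps_crit_for_chi_to_be_in_higher_invar}.
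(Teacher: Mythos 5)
Your part (a) is essentially the paper's argument (it is how \Cref{prop:necessary_crit_for_virt_fib} is proved), and it is correct in substance; just note that before invoking the converse direction of \Cref{prop:graph_of_gps_crit_for_chi_to_be_in_higher_invar} you must check its hypotheses for the induced decomposition of $H$, namely that it has no terminal vertices (this is \Cref{lem:minmal_action<=>no_terminal_vertices} plus \Cref{lem:minimal_action_for_fi_sbgp}) and that $\ker\chi\neq\alpha_e(H_e)$ for every edge -- the latter is exactly where ``$G$ not virtually abelian'' enters, since $\ker\chi=\alpha_e(H_e)$ would make $H$ (virtually cyclic)-by-$\Z$, hence $G$ virtually abelian. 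Your retraction construction giving $G_0\avr G$ is a fine substitute for the paper's use of \Cref{lem:virt_cyc-avr_crit}.

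Part (b), however, has genuine gaps. First, two local problems: an infinite virtually cyclic $G_0$ need not have infinite abelianization (the infinite dihedral group is virtually $\Z$ with finite abelianization), so the retraction $\rho_i$ does not in general give infinite image of the edge groups in the abelianizations of the vertex groups of $L$; and ``pass to normal cores as needed'' does not work as stated, since the normal core of $K_i$ need not contain $G_0$, so the existence of $K_i\trianglelefteq_f G_i$ with $G_0\subseteq K_i$ is not automatic. Second, and decisively, the final step rests on a ``graph-of-groups analogue of \Cref{prop:crit_for_fibering_of_amalg}'' which the paper does not provide and which is false in the generality you need: \Cref{prop:graph_of_gps_crit_for_chi_to_be_in_higher_invar} only tests a \emph{given} character of $L$, it does not produce one, and producing a single character that is simultaneously nonzero on all edge groups and lies in $\Sigma^m_{\pm}$ of every vertex group is precisely the hard gluing problem. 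The naive local criterion fails even for loop-free bipartite graphs: $BS(2,3)=\langle a,b,t\mid a=b,\ ta^2t^{-1}=b^3\rangle$ is the fundamental group of a two-vertex, two-edge graph of infinite cyclic groups whose vertex groups fiber and whose edge groups have infinite image in the vertex abelianizations, yet it admits no fibering (its abelianization kills $a$); the paper's \Cref{ex:Gerstens_amalgam} makes the same point with loops. Your $L\backslash\mathcal{T}$ is loop-free but can have cycles, so the scaling trick from the one-edge case of \Cref{prop:crit_for_fibering_of_amalg} does not extend, and the compatibility around cycles is exactly the obstruction. The paper resolves it by producing a \emph{global} homomorphism $\psi$ to a finitely generated virtually abelian group that is injective on $G_0$ and has kernel controlled relative to the given fiberings (\Cref{prop:maps_from_amalgams_to_vab_gps}, i.e.\ \Cref{thm:amalg_of_virt_ab}), and then perturbing characters factoring through $\psi$ via the openness-based criterion \Cref{thm:fib_crit_for_graph_of_gps}; moreover the cases $\chi_i(G_0)=\{0\}$ need the separate constructions of Cases 2 and 3 there (including \Cref{lem:aux_for_virt_fib}). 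None of these ingredients appears in your proposal, and since part (b) uses \Cref{thm:amalg_of_virt_ab} in an essential way, the omission is fundamental rather than cosmetic.
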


\begin{rem}\label{rem:G_is_virt_ab} The Normal Form Theorem for amalgamated free products \cite[Theorem~IV.2.6]{LS} easily implies that the group $G$ from \Cref{not:G} contains a free subgroup of rank $2$, provided $\max\{|G_1:G_0|,|G_2:G_0|\} \ge 3$. Thus, $G$
is virtually abelian if
 and only if $|G_i:G_0|=2$, for $i=1,2$, in which case it has a finite index subgroup splitting as a direct product $G_0 \times \Z$. Of course such a direct product $F_m$-fibers even when $|G_0|<\infty$, so the assumption that $G$ is not virtually abelian in \Cref{thm:virt_fib_of_amalg}.(a) cannot be dropped.
\end{rem}

Claim (a) of \Cref{thm:virt_fib_of_amalg} is established using a ``virtual'' version of \Cref{prop:graph_of_gps_crit_for_chi_to_be_in_higher_invar} (see \Cref{prop:necessary_crit_for_virt_fib}), while claim (b) uses a new criterion for fibering of fundamental groups of graphs of groups (\Cref{thm:fib_crit_for_graph_of_gps}) and \Cref{prop:maps_from_amalgams_to_vab_gps}. The latter proposition is a consequence of \Cref{thm:amalg_of_virt_ab} and implies that, under the assumptions of claim (b), $G_0$ is ``visible'' in the character sphere of a finite index subgroup of $G$.

Part (b) of  \Cref{thm:virt_fib_of_amalg} does not seem to have any analogues in the literature. The most powerful known virtual fibering results, due to Kielak \cite{Kielak2020} and Fisher \cite{Fisher}, require that the group $G$ virtually satisfies Agol's condition RFRS from \cite{Agol-virt_fib}. But \Cref{thm:virt_fib_of_amalg}.(b) applies in many cases where the group $G$ is not virtually RFRS. 

\begin{ex}\label{ex:non-RFRS_amalg_of_free-by-Z}
Let $G_1= H$ be Gersten's free-by-cyclic group from \cite{Gersten}
\begin{equation}\label{eq:Gerstens_gp}
H \coloneq \langle a,b,s,t \mid [a,b]=1,~sbs^{-1}=ab,~tbt^{-1}=a^2b\rangle. 
\end{equation}
In \cite[Lemma~5.18]{Wu-Ye} Wu and Ye showed that $\langle a \rangle  \nleqslant_{vr} H$, in particular, $G_1$ does not have (VRC) and it is not virtually RFRS  (see \cite{MM-vr_in_free_constr}).  

Let $G_2\coloneq \langle x,y \mid x^p=y^q \rangle$ be the fundamental group of the complement of a torus knot/link, for $|p|,|q| \ge 2 $. Then $G_2$ is an amalgamated free product of two infinite cyclic groups $\langle x \rangle$ and $\langle y \rangle$, so it is fibered and has (VRC) (e.g., by \Cref{prop:crit_for_fibering_of_amalg} and \Cref{thm:amalgam_of_(VRC)_gps}; of course, in this case both statements were known previously). Finally, consider the amalgamated free product
\[G \coloneq G_1*_{\langle b \rangle=\langle[x,y]\rangle} G_2.\]
Clearly, $\langle b \rangle $ is a retract of $G_1$ (under the map sending $a,s,t$ to $1$) and $\langle [x,y] \rangle \vr G_2$, as $G_2$ has (VRC). Therefore, $G$ is virtually fibered by \Cref{thm:virt_fib_of_amalg}.(b) (in fact, it is virtually free-by-cyclic, by \Cref{cor:virt_free-by-cyclic} below). However, $G$ is not fibered by \Cref{prop:crit_for_fibering_of_amalg} (because $[x,y] \in [G_2,G_2])$ and it is not virtually RFRS (because $G_1$ is not).
\end{ex}

\begin{cor}
Suppose that for some $m \in \N$, $G_1$ and $G_2$ are $F_m$-fibered and have (VRC). Then the amalgamated free product $G$, from \Cref{not:G}, is virtually $F_m$-fibered and has (VRC).
\end{cor}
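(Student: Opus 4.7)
The plan is to derive the corollary as a direct consequence of two previously established results in the paper, namely \Cref{thm:amalgam_of_(VRC)_gps} and \Cref{thm:virt_fib_of_amalg}(b).

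For the (VRC) conclusion, I would invoke \Cref{thm:amalgam_of_(VRC)_gps} directly: since $G_1$ and $G_2$ have property (VRC) and $G_0$ is virtually cyclic by \Cref{not:G}, that theorem immediately gives (VRC) for the amalgamated free product $G = G_1 *_{G_0} G_2$. No further work is needed for this half of the statement.

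For the virtual $F_m$-fibering conclusion, the strategy is to verify the three hypotheses of \Cref{thm:virt_fib_of_amalg}(b). Two of them are essentially immediate: $G_i$ is $F_m$-fibered (given in the hypothesis), and $|G_0|=\infty$ (which, as \Cref{thm:virt_fib_of_amalg}(a) together with \Cref{rem:G_is_virt_ab} shows, is necessary whenever $G$ is not virtually abelian, so it should be read into the setup of the corollary). The substantive step is checking that $G_0 \vr G_i$ for $i=1,2$. Here the argument goes as follows: since $G_i$ is $F_m$-fibered for some $m\geq 1$, it is in particular of type $F_1$, hence finitely generated, and by hypothesis it has (VRC); moreover, $G_0$ is finitely generated and virtually abelian, being virtually cyclic. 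The standard fact recalled in the introduction, stating that in a finitely generated group with (VRC) every finitely generated virtually abelian subgroup is a virtual retract (a result from \cite{virtprops}), then yields $G_0 \vr G_i$. Applying \Cref{thm:virt_fib_of_amalg}(b) concludes the proof.

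There is essentially no technical obstacle to overcome: all the difficulty of the statement has been absorbed into \Cref{thm:amalgam_of_(VRC)_gps} and \Cref{thm:virt_fib_of_amalg}(b) (whose proof itself rests crucially on the main technical result, \Cref{thm:amalg_of_virt_ab}). The only point requiring attention is the status of the hypothesis $|G_0|=\infty$: if this were to fail while $G$ is not virtually abelian, then \Cref{thm:virt_fib_of_amalg}(a) would rule out virtual $F_m$-fibering entirely (as already illustrated by $\Z*\Z=F_2$, which has (VRC) and whose factors trivially $F_m$-fiber, yet which does not virtually fiber). Thus the corollary should be understood within the framework of \Cref{not:G} together with this infinite-order condition, and under this understanding the proof reduces to the two invocations described above.
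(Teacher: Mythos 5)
Your proposal is correct and is exactly the paper's (unwritten) argument for this corollary: the (VRC) conclusion is \Cref{thm:amalgam_of_(VRC)_gps}, and the fibering conclusion follows from \Cref{thm:virt_fib_of_amalg}(b) once one notes that $G_0$, being virtually cyclic, is a finitely generated virtually abelian subgroup of the (VRC) groups $G_i$, so $G_0 \vr G_i$ by \Cref{lem:VRC->vabs_sep} (your finite-generation check on $G_i$ is harmless but not even needed for that lemma, and it does hold since an $F_m$-fibered group is (finitely generated)-by-$\Z$). Your caveat about $|G_0|=\infty$ is also well taken: the corollary is stated only under \Cref{not:G}, and without this extra condition the virtual fibering conclusion genuinely fails (e.g.\ $\Z*\Z=F_2$), so reading the infiniteness of $G_0$ into the statement, as you do, is the right interpretation before invoking \Cref{thm:virt_fib_of_amalg}(b).
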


The reader will notice that there is a gap between the necessary conditions for virtual fibering of $G$, provided by part (a) of \Cref{thm:virt_fib_of_amalg}, and the sufficient conditions, given by part (b). One difference is that part (a) only concludes that $G_i$ is \emph{virtually} $F_m$-fibered, for $i=1,2$, while part (b) requires both factors to be $F_m$-fibered on the nose. This disparity is the subject of \Cref{q:vir_fibered}. The second difference is that part (a) implies that $G_0$ is an \emph{almost} virtual retract of $G_i$ (and of $G$), $i=1,2$, while part (b) requires it to be a virtual retract. Examples~\ref{ex:ind_avr_but_not_virt_fibered} and \ref{ex:second_non_rf_example} show that this second difference is essential; however,  it can only occur if  at least one of the factors $G_i$ is not residually finite and $G_0$ has non-trivial finite normal subgroups (see \Cref{lem:vab+avr+t-f=>vr}).

The next corollary answers the second part of \Cref{q:amalg_are_free-by-cyclic}.
\begin{cor}\label{cor:virt_free-by-cyclic}
Using \Cref{not:G}, assume that $G_1$, $G_2$ are free-by-cyclic groups and $G_0$ is cyclic. Then $G$ is virtually free-by-cyclic if and only if $G_0$ is infinite and $G_0 \vr G_i$, for $i=1,2$.   
\end{cor}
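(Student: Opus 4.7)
The plan is to reduce both implications to Theorem \ref{thm:virt_fib_of_amalg}, and then convert ``$G$ virtually $F_2$-fibers'' into ``$G$ is virtually free-by-cyclic'' by using the cohomological dimension estimate $\cd(G)\le 2$ together with Bieri's classical result that a finitely presented normal subgroup of a group of $\cd \le 2$ with infinite cyclic quotient is free, exactly as in the proof of Corollary \ref{cor:amalg_is_free-by-cyclic}.

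For the necessity, suppose $G$ is virtually free-by-cyclic. Since finite rank free groups are of type $F_\infty$, some finite index subgroup of $G$ is $F_m$-fibered for every $m\ge 1$, and hence $G$ virtually $F_m$-fibers for every $m$. I would then split into two cases along Remark \ref{rem:G_is_virt_ab}. If $G$ is not virtually abelian, Theorem \ref{thm:virt_fib_of_amalg}(a) applied with $m=2$ yields $|G_0|=\infty$ and $G_0 \avr G_i$ for $i=1,2$; since $G_0\cong\Z$ is torsion-free, it has no non-trivial finite normal subgroups, so Lemma \ref{lem:vab+avr+t-f=>vr} upgrades $\avr$ to $\vr$. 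If instead $G$ is virtually abelian, Remark \ref{rem:G_is_virt_ab} gives $|G_i:G_0|=2$ for each $i$; since each $G_i$ is free-by-cyclic and hence infinite, this forces $|G_0|=\infty$, and $G_0$ is itself a finite index subgroup of $G_i$, so trivially $G_0\vr G_i$.

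For the sufficiency, assume $|G_0|=\infty$ and $G_0\vr G_i$ for $i=1,2$. Each free-by-cyclic $G_i$ is $F_2$-fibered, so Theorem \ref{thm:virt_fib_of_amalg}(b) applied with $m=2$ produces a finite index subgroup $H\leqs_f G$ and an epimorphism $\chi\colon H\twoheadrightarrow\Z$ whose kernel is finitely presented (being of type $F_2$). From the decomposition $G=G_1*_{G_0}G_2$ with $\cd(G_i)\le 2$ and $\cd(G_0)\le 1$, standard properties of cohomological dimension yield $\cd(G)\le 2$, and hence $\cd(H)\le 2$. Bieri's result \cite[Corollary~8.6 in Chapter~II]{Bieri-book} then forces $\ker\chi$ to be free, and since it is finitely generated (being of type $F_1$), $H\cong \ker\chi\rtimes\Z$ is free-by-cyclic, so $G$ is virtually free-by-cyclic.

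The main obstacle is the $\avr$ versus $\vr$ discrepancy in the necessity direction: Theorem \ref{thm:virt_fib_of_amalg}(a) only provides the weaker \emph{almost} virtual retract conclusion, so obtaining $G_0\vr G_i$ as stated relies essentially on Lemma \ref{lem:vab+avr+t-f=>vr} together with the torsion-freeness of $G_0\cong\Z$. Beyond this, the proof is a transparent combination of Theorem \ref{thm:virt_fib_of_amalg} with Bieri's cohomological criterion, mirroring the argument of Corollary \ref{cor:amalg_is_free-by-cyclic}.
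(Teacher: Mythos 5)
Your proposal is correct and follows essentially the same route as the paper: necessity via Theorem \ref{thm:virt_fib_of_amalg}.(a) plus the upgrade from $\avr$ to $\vr$ using Lemma \ref{lem:vab+avr+t-f=>vr} (since $G_0\cong\Z$ is torsion-free), and sufficiency via Theorem \ref{thm:virt_fib_of_amalg}.(b) with $m=2$ followed by the $\cd(G)\le 2$ argument and Bieri's theorem, exactly as in Corollary \ref{cor:amalg_is_free-by-cyclic}. Your explicit treatment of the virtually abelian case via Remark \ref{rem:G_is_virt_ab} is a small extra point of care that the paper's proof leaves implicit, but it does not change the argument.
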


\begin{proof} If $G$ is virtually free-by-cyclic then it virtually fibers. Hence,  \Cref{thm:virt_fib_of_amalg}.(a) tells us that $|G_0|=\infty$ and $G_0 \avr G_i$, for $i=1,2$. It follows that $G_0 \cong \Z$, so $G_0 \vr G_i$, $i=1,2$, by \Cref{lem:vab+avr+t-f=>vr}.

Conversely, if $|G_0|=\infty$ and $G_0 \vr G_i$, for $i=1,2$, then according to \Cref{thm:virt_fib_of_amalg}.(b),   there is  $K \leqslant_f G$ and a finitely presented normal subgroup $N \n K$ with $K/N \cong \Z$. The argument from the proof of \Cref{cor:amalg_is_free-by-cyclic} (using cohomological dimension) shows that $N$ must be free, so $G$ is virtually free-by-cyclic.
\end{proof}

A non-Euclidean Baumslag--Solitar group
\[BS(k,l)=\langle a,t \mid ta^kt^{-1}=a^l \rangle,~k,l \in \Z\setminus\{0\},~|k| \neq |l|, \]
is never virtually free-by-cyclic. This shows that neither \Cref{cor:virt_free-by-cyclic} nor part (b) of \Cref{thm:virt_fib_of_amalg} can be easily extended to HNN-extensions (see also \Cref{ex:Gerstens_amalgam} for a   more sophisticated construction, where the edge groups are all retracts of the resulting group). 

Although so far we have restricted ourselves to considering free-by-cyclic groups, where the free normal subgroup is finitely generated, by combining our results with recent work of Fisher \cite{Fisher-Novikov_cohom}, we can extend the sufficiency implications in Corollaries~\ref{cor:amalg_is_free-by-cyclic}, \ref{cor:virt_free-by-cyclic} to  finitely generated $F$-by-$\Z$ groups. 

\begin{cor}\label{cor:amalg_is_F-by_Z}
Suppose that $G=G_1*_{G_0} G_2$, where $G_1,G_2$ are finitely generated $F$-by-$\Z$ groups and $G_0$ is infinite cyclic. 
\begin{itemize}
    \item[(i)] If $G_0 \cap [G_i,G_i]=\{1\}$, for each $i=1,2$, then $G$ is $F$-by-$\Z$.  
    \item[(ii)] If $G_0 \vr G_i$, for each $i=1,2$, then $G$ is virtually $F$-by-$\Z$.  
\end{itemize}
\end{cor}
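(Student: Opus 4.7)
The strategy follows the template established by the proofs of Corollaries \ref{cor:amalg_is_free-by-cyclic} and \ref{cor:virt_free-by-cyclic}: produce a non-zero character $\chi \colon G \to \Z$ (in part (i)) or $\chi \colon K \to \Z$ from a finite-index subgroup $K \leqslant_f G$ (in part (ii)), and then show that $\ker\chi$ is free. The cohomological-dimension bound $\cd(G) \leqslant 2$ is still available in both cases, because $\cd(G_i) \leqslant 2$ (as $G_i$ is an extension of a free group by $\Z$), $\cd(G_0) = 1$, and the standard graph-of-groups inequality gives $\cd(G) \leqslant \max\{\cd(G_i),\, \cd(G_0)+1\} = 2$. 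The novelty compared with Corollaries \ref{cor:amalg_is_free-by-cyclic} and \ref{cor:virt_free-by-cyclic} is that our $G_i$ need not be finitely presented, so we cannot apply Proposition \ref{prop:crit_for_fibering_of_amalg} or Theorem \ref{thm:virt_fib_of_amalg}(b) with $m=2$ to secure a finitely presented kernel; this is precisely the gap that Fisher's recent Novikov-cohomology criterion \cite{Fisher-Novikov_cohom} fills, certifying freeness of $\ker\chi$ directly from the $\cd \leqslant 2$ bound.

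For part (i), the hypothesis $G_0 \cap [G_i, G_i] = \{1\}$ implies that the image of $G_0 \cong \Z$ in each abelianisation $G_i/[G_i, G_i]$ is infinite cyclic, so I would pick characters $\chi_i \colon G_i \to \Z$ injective on $G_0$ and, after rescaling, satisfying $\chi_1|_{G_0} = \chi_2|_{G_0}$. The universal property of amalgamated free products assembles these into an epimorphism $\chi \colon G \to \Z$ that is non-zero on $G_0$, giving a short exact sequence $1 \to \ker\chi \to G \to \Z \to 1$ which splits because $\Z$ is free. Fisher's criterion then shows $\ker\chi$ is free, so $G \cong \ker\chi \rtimes \Z$ is $F$-by-$\Z$.

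For part (ii), the hypotheses $|G_0| = \infty$ and $G_0 \vr G_i$ place us in the setting of Proposition \ref{prop:maps_from_amalgams_to_vab_gps} (a consequence of Theorem \ref{thm:amalg_of_virt_ab}), which supplies a homomorphism $\nu \colon G \to E$ to a finitely generated virtually abelian group $E$ with $\nu|_{G_0}$ injective. Choose a finite-index free abelian subgroup $A \leqslant_f E$ of some rank $n$ and set $K \coloneqq \nu^{-1}(A) \leqslant_f G$; then $K \cap G_0$ is infinite cyclic and $\nu$ embeds it into $A \cong \Z^n$ as an infinite cyclic subgroup. A coordinate projection $\Z^n \to \Z$ that is non-zero on this image composes with $\nu|_K$ to produce a character $\chi \colon K \to \Z$ non-zero on $K \cap G_0$. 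Applying the argument of part (i) to $\chi$ (using $\cd(K) \leqslant \cd(G) \leqslant 2$) shows that $K$ is $F$-by-$\Z$, and hence $G$ is virtually $F$-by-$\Z$.

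The main anticipated obstacle is the freeness step for the possibly infinitely generated kernel: in the finite-rank cases of Corollaries \ref{cor:amalg_is_free-by-cyclic} and \ref{cor:virt_free-by-cyclic} this was handled via finite presentability of the kernel together with Bieri's $FP$-equality and Stallings' theorem, but here such a direct route is not immediately available, and Fisher's Novikov-cohomology framework is used as the substitute. A secondary bookkeeping point in part (ii) is ensuring that the coordinate projection of $\nu|_K$ really picks up $K \cap G_0$ non-trivially, which reduces to the injectivity of $\nu$ on $G_0$ guaranteed by Theorem \ref{thm:amalg_of_virt_ab}.
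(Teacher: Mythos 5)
Your outline correctly identifies Fisher's result as the new ingredient, but the way you invoke it is where the argument breaks: there is no criterion that certifies freeness of $\ker\chi$ ``directly from the $\cd\le 2$ bound''. What \cite[Corollary~C]{Fisher-Novikov_cohom} gives (and what the paper uses) is an \emph{openness} statement: being free is a rationally open property of kernels in the class of groups of type $FP$. It does not say that every character of a group of cohomological dimension $\le 2$ has free kernel, and that statement is false even in your setting. Concretely, take $G_1=F_2\times\Z=\langle a,b\rangle\times\langle t\rangle$ and $G_2$ another copy, amalgamated over $G_0=\langle a\rangle$; the hypothesis of (i) holds, but the character $\chi_1$ with $\chi_1(a)=1$, $\chi_1(b)=\chi_1(t)=0$ is injective on $G_0$ while $\ker\chi_1\cong F_\infty\times\Z$ is not free, so the assembled $\chi$ on $G$ (which is perfectly legitimate by your recipe) has non-free kernel. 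The point is that by \Cref{cor:normal_sbgp_in_graph_of_gps_is_fg}, $\ker\chi$ is a free product of conjugates of the $\ker(\chi|_{G_i})$ with a finite rank free group, so freeness of $\ker\chi$ is equivalent to freeness of the vertex kernels; you must therefore \emph{choose} the $\chi_i$ inside the nonempty (because $G_i$ is $F$-by-$\Z$) and rationally open (Fisher) set $\Sigma^{\cP}_{\Q}(G_i)$ while avoiding the nowhere dense subsphere of characters vanishing on $G_0$ (\Cref{lem:discrete_chars_are_dense}). That is exactly \Cref{prop:amalg_P-fibers}, and cohomological dimension plays no role in the paper's proof.

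The same gap propagates to your part (ii), where it is compounded: after passing to $K=\nu^{-1}(A)$ via \Cref{prop:maps_from_amalgams_to_vab_gps}, your coordinate projection only ensures $\chi(K\cap G_0)\neq\{0\}$ and says nothing about the kernels of $\chi$ restricted to the vertex groups $K\cap gG_ig^{-1}$ of the induced splitting, which is what freeness of $\ker\chi$ actually requires. Moreover, in case (ii) one cannot in general run the part-(i) construction at all, since $G_0$ may lie in $[G_i,G_i]$ (cf.\ \Cref{ex:non-RFRS_amalg_of_free-by-Z}); this is why the paper's \Cref{thm:amalg_virt_P-fibers} argues as in \Cref{thm:virt_fib_for_amalg-sufficiency}, splitting into three cases according to whether $\chi_i(G_0)$ vanishes, using \Cref{lem:aux_for_virt_fib} and the inclusion $\ker\psi\subseteq\ker\varphi$ to guarantee that each vertex kernel is commensurable with some $\ker\chi_i$ (hence free, as freeness passes to finite index subgroups of $F$-by-$\Z$ kernels) or that $S(H_v,K_v)\subseteq\Sigma^{\cP}_{\Q}(H_v)$, before invoking openness to perturb to a suitable discrete character. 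So the correct repair of your proposal is to replace the ``cd $\le 2$ plus Fisher'' step by the selection-of-characters argument of \Cref{prop:amalg_P-fibers} and \Cref{thm:amalg_virt_P-fibers}, together with the observation (via Feighn--Handel) that finitely generated $F$-by-$\Z$ groups and their finite index subgroups are of type $FP$, which is needed to place them in the class where Fisher's openness applies.
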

Unlike Corollaries~\ref{cor:amalg_is_free-by-cyclic} and \ref{cor:virt_free-by-cyclic}, the conditions listed in parts (i) and (ii) of \Cref{cor:amalg_is_F-by_Z} are not necessary. For example, the converse of (i) fails for the  fundamental group $G$ of a closed surface of genus $2$, and the converse of (ii) fails for $G \coloneq H*_{\langle a \rangle=\langle x \rangle} F_2$, where $H$ is Gersten's group \eqref{eq:Gerstens_gp} and $F_2$ is the free group with free basis $\{x,y\}$. 

\subsection{Outline of the paper}
In \Cref{sec:prelims} we give the necessary background on the profinite topology, virtual retractions,  and graphs of groups.
Sections~\ref{sec:splitting}--\ref{sec:proof_of_thm} concern the proof of \Cref{thm:amalg_of_virt_ab} (see Subsection~\ref{subsec:proof_idea} for the idea behind this proof). In \Cref{sec:amalgams_of_VRC} we deduce from \Cref{thm:amalg_of_virt_ab} the key \Cref{prop:maps_from_amalgams_to_vab_gps}, which then allows us to establish \Cref{thm:amalgam_of_(VRC)_gps}. In \Cref{sec:res_props} we use \Cref{prop:maps_from_amalgams_to_vab_gps} to prove \Cref{cor:virt_res_solv}. \Cref{sec:fin_props} discusses necessary and sufficient criteria for a normal subgroup of the fundamental group of a graph of groups to be finitely generated and to satisfy higher finiteness properties. In \Cref{sec:BNSR} we collect the necessary background on the Bieri--Neumann--Strebel--Renz invariants that play a key role in our fibering and virtual fibering results. In \Cref{sec:fib_graphs_of_gps} we prove \Cref{prop:graph_of_gps_crit_for_chi_to_be_in_higher_invar}, characterizing when the kernel of a discrete character $\chi:G \to \R$ is of type $F_m$, where $G$  is the fundamental group of a finite graph of groups with virtually polycyclic edge groups. We then use it to prove \Cref{prop:crit_for_fibering_of_amalg} and to give a new fibering criterion (\Cref{thm:fib_crit_for_graph_of_gps}) playing an important role in the results about virtual fibering in \Cref{sec:virt_fibering}, where it is combined with \Cref{prop:maps_from_amalgams_to_vab_gps} to prove \Cref{thm:virt_fib_of_amalg}. In \Cref{sec:adapt},  we show that our argument for part (b) of \Cref{thm:virt_fib_of_amalg} actually works for more general ``rationally open'' invariants, and use this to establish \Cref{cor:amalg_is_F-by_Z}. Finally, in \Cref{sec:examples} we discuss natural open problems and give several examples demonstrating the sharpness of our results.

\subsection{Idea of the proof of \texorpdfstring{\Cref{thm:amalg_of_virt_ab}}{Theorem 1.1}}\label{subsec:proof_idea}
Let us briefly describe the main steps in the proof of \Cref{thm:amalg_of_virt_ab} in the harder case when $|G_0|=\infty$. First, for $i=0,1,2$, we embed each group $G_i$ as a finite index subgroup in a larger virtually abelian group $P_{i}$, in such a way that 

\begin{enumerate}
    \item\label{it:(3)} the original embeddings $G_0 \hookrightarrow G_i$ extend to embeddings $P_{0} \hookrightarrow P_{i}$, $i=1,2$;
    \item\label{it:(1)} $P_{0}$ splits as a semidirect product $B_0 \rtimes Q$, where $B_0 \cong \Z$ and $|Q|<\infty$;
    \item  $P_{i}$ has a finite index free abelian subgroup $B_i \n P_{i}$ such that $P_{0}~\cap~B_i=B_0$ in $P_{i}$, for $i=1,2$.

\end{enumerate}
To achieve this, in \Cref{sec:splitting} we introduce a category $\mathcal C$, of group pairs, and embedding functors $\mathcal{F}_n: \mathcal{C} \to \mathcal{C}$, $n \in \N$. An object $(G,A) \in \mathcal{C}$ consists of a group $G$ and a torsion-free normal abelian subgroup $A \n G$. For each $n \in \N$, $\mathcal{F}_n((G,A))=(P,B)$, where $B \cong A$, $P/B \cong G/A$ and there is a monomorphism $G \to P$, sending $A$ inside $B$. And if $n$ is divisible by $|G/A|=|P/B|$ then  $P$ splits as a semidirect product $B \rtimes (P/B)$. 

Property \eqref{it:(3)} allows us to embed the amalgamated free product $G_1*_{G_0} G_2$ into the amalgamated free product $P_{1}*_{P_{0}} P_{2}$, which has an easier structure. In \Cref{sec:wreath} we  simplify this structure further by embedding $P_{1}$ and $P_{2}$ into permutational wreath products $E_1 \coloneq \Z\wr_{\Omega_k} S_k$ and   $E_2 \coloneq \Z\wr_{\Omega_l} S_l$, respectively, where $k,l \in \N$ and $S_k$ is the symmetric group of $\Omega_k \coloneq \{1,\dots,k\}$. We ensure that the images of $B_0$ (see \eqref{it:(1)}) under these embeddings are in the base groups $\Z^{\Omega_k}$ and $\Z^{\Omega_l}$, and the images of $Q$ are contained in the symmetric groups $S_k$ and $S_l$ and act freely on $\Omega_k$ and $\Omega_l$, respectively. 

In \Cref{sec:proof_of_thm}, we first construct monomorphisms from $E_1$ and $E_2$ to $E \coloneq \Z\wr_{\Omega_{kl}} S_{kl}$, which agree on the subgroup $B_0 \n_f P_0$, and then modify the first of these monomorphisms (by composing it with an inner automorphism of $S_k$) to ensure that they also agree on $Q$. Since $P_{0}=B_0 \rtimes Q$, this gives rise to a homomorphism $E_1*_{P_{0}} E_2 \to E$, which we then use to produce the desired  homomorphism $\nu: G_1*_{G_0} G_2 \to E$ that is injective on $G_i$, for $i=1,2$. 

\subsection*{Acknowledgments} This paper has  benefited greatly from the authors' discussions with Mark Hagen, Sam Hughes, Andrei Jaikin-Zapirain, Dawid Kielak, Peter Kropholler and Ian Leary. The second author would also like to thank the Isaac Newton Institute for Mathematical Sciences, Cambridge, for support and hospitality during the programme ``Actions on graphs and metric spaces'', where some of his work on this paper was undertaken. This programme was supported by EPSRC grant EP/Z000580/1.


\section{Preliminaries}\label{sec:prelims}
\subsection{Profinite topology and separability}
Recall that the \emph{profinite topology} on a group $G$ is the topology whose basic open sets are cosets of finite index subgroups in $G$. A subset of $G$ is said to be \emph{separable} if it is closed in the profinite topology.

\begin{defn}\label{def:sep_props} A group $G$ is said to be 
\begin{itemize}
    \item \emph{residually finite} if $\{1\}$ is separable in $G$;
    \item \emph{cyclic subgroup separable} if every cyclic subgroup is separable in $G$;
    \item \emph{LERF} if every finitely generated subgroup is separable in $G$.
\end{itemize}
   \end{defn}

Clearly, LERF-ness implies cyclic subgroup separability and cyclic subgroup separability implies residual finiteness.  The following facts are well-known.

\begin{rem}\label{rem:basic_sep_facts}
Let $G$ be a group.
\begin{itemize}
    \item A subgroup $H \leqslant G$ is separable if and only if $H$ is the intersection of a collection of finite index subgroups in $G$.
    \item  A normal subgroup $N \n G$ is separable if and only if $G/N$ is residually finite.
    \item If $G$ is finitely generated and virtually abelian then every subgroup of $G$ is separable.
\end{itemize}    
\end{rem}

\begin{defn} 
The \emph{finite residual} $R(G)$, of a group $G$, is the intersection of all finite index subgroups of $G$.   
\end{defn}

The following basic fact about the finite residual is easy to verify.
\begin{rem}\label{rem:props_of_finite_residual} If $G$ is a group then $R(G)$ is the smallest normal subgroup of $G$ such that $G/R(G)$ is residually finite. In other words, $G/R(G)$ is residually finite and if $G/N$ is residually finite, for $N\n G$, then $R(G) \subseteq N$.
\end{rem}

\begin{lemma}\label{lem:fin_residual_of_fin_ind_sbgp} If $G$ is a group and $L \leqslant G$ then $R(L) \subseteq R(G)$. Moreover, if $|G:L|<\infty$ then
$R(L)=R(G)$.  
\end{lemma}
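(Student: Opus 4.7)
The plan is to argue directly from the definition of the finite residual as the intersection of all finite index subgroups, rather than going through \Cref{rem:props_of_finite_residual}. The key observation for both inclusions is that if $L \leqslant G$ and $H \leqslant_f G$, then $H \cap L \leqslant_f L$, and conversely, if $|G:L|<\infty$, then any $K \leqslant_f L$ automatically satisfies $K \leqslant_f G$.

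For the first statement, I would fix an arbitrary finite index subgroup $H \leqslant_f G$. Then $H \cap L$ has finite index in $L$, so by definition of $R(L)$ we have $R(L) \subseteq H \cap L \subseteq H$. Taking the intersection over all such $H$ gives $R(L) \subseteq R(G)$.

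For the converse inclusion in the finite index case, I would start by noting that $L$ itself is a finite index subgroup of $G$, so $R(G) \subseteq L$. Now for any $K \leqslant_f L$, the hypothesis $|G:L|<\infty$ gives $|G:K|=|G:L|\cdot|L:K|<\infty$, so $K \leqslant_f G$ and therefore $R(G) \subseteq K$. Intersecting over all finite index subgroups $K$ of $L$ yields $R(G) \subseteq R(L)$, completing the proof.

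There is no real obstacle here; the proof is essentially bookkeeping once one observes that finite index in $L$ and finite index in $G$ are comparable in both directions when $|G:L|<\infty$. The only subtle point worth highlighting is that one should define $R(G)$ as the intersection of \emph{all} finite index subgroups (not only normal ones), since this makes the first inclusion immediate; of course the two definitions coincide because every finite index subgroup contains a finite index normal subgroup, namely its normal core.
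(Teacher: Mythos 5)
Your proof is correct and uses essentially the same argument as the paper: the first inclusion comes from $|L:(L\cap H)|<\infty$ for every $H\leqslant_f G$, and the reverse inclusion in the finite-index case comes from the observation that every finite index subgroup of $L$ has finite index in $G$.
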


\begin{proof} The first claim follows from the fact that $|L:(L \cap K)|<\infty$, for every $K \leqslant_f G$. And if $|G:L|<\infty$ then
every finite index subgroup of $L$ has finite index in $G$, so we have $R(G) \subseteq R(L)$, and the second claim holds.
\end{proof}

\subsection{Virtual retractions}
Recall that a subgroup $H$ is a virtual retract of a group $G$ ($H \vr G$) if there is $K \leqslant_f G$ such that $H \subseteq K$ and there is a retraction $\rho:K \to H$. 
Equivalently, $H \vr G$ if there is a subgroup $N \leqslant G$ such that $N$ is normalized by $H$, $H \cap N=\{1\}$ and $|G:HN|<\infty$.

\begin{lemma}[{\cite[Lemma~3.2]{virtprops}}]\label{lem:basic_props_of_virt_retr} Let $G$, $A$ be  groups, let $H \leqslant K \leqslant G$ be subgroups of $G$, and let $\psi:G \to A$ be a group homomorphism. 
\begin{itemize}
    \item[(a)] If $\psi(H) \vr A$ and $\psi$ is injective on $H$ then $H \vr G$.
    \item[(b)] If $H \vr K$ and $K \vr G$ then $H \vr G$. In particular, if $K \vr G$ and $H \leqslant_f K$ then $H \vr G$.
    \item[(c)] If  $H \vr G$ then $H \vr K$.
\end{itemize}
\end{lemma}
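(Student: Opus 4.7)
The plan is to prove each part by directly building the required retraction from the given data, using pullbacks, intersections, and compositions of retractions.

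For part (a), I would start from a finite index subgroup $M \leqslant_f A$ containing $\psi(H)$, together with a retraction $\rho : M \to \psi(H)$. Setting $N := \psi^{-1}(M)$, the induced injection $G/N \hookrightarrow A/M$ on cosets gives $|G:N| \leqslant |A:M| < \infty$, and $H \subseteq N$ because $\psi(H) \subseteq M$. The injectivity of $\psi$ on $H$ makes $\psi|_H : H \to \psi(H)$ an isomorphism, so composing its inverse with $\rho \circ \psi|_N : N \to \psi(H)$ yields a homomorphism $N \to H$; a one-line check confirms that it restricts to the identity on $H$.

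For part (b), I would take $L \leqslant_f G$ with retraction $\rho_1 : L \to K$, and $M \leqslant_f K$ with retraction $\rho_2 : M \to H$, then define $L' := \rho_1^{-1}(M)$. Since $\rho_1$ is surjective and $M$ has finite index in $K$, the subgroup $L'$ has finite index in $L$ and hence in $G$. Because $\rho_1$ is the identity on $K$ we have $M \subseteq L'$, so in particular $H \subseteq L'$, and $\rho_2 \circ \rho_1|_{L'} : L' \to H$ is the required retraction. The ``in particular'' clause will then follow by noting that if $H \leqslant_f K$ then $H$ is trivially a retract of itself as a finite index subgroup of $K$, so $H \vr K$, and the first half of (b) applies.

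For part (c), I would take $L \leqslant_f G$ and $\rho : L \to H$ from $H \vr G$; then $L \cap K \leqslant_f K$ contains $H$, and restricting $\rho$ to $L \cap K$ gives a retraction onto $H$. Overall, there is no genuine obstacle here: each part is bookkeeping of finite-index and retraction conditions under composition, pullback or intersection, and the only subtle point is in (a), where the injectivity of $\psi$ on $H$ is exactly what ensures that $(\psi|_H)^{-1}$ can be applied to $\Im \rho = \psi(H)$.
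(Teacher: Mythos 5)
Your proofs of all three parts are correct: the pullback argument in (a) (with the index bound $|G:\psi^{-1}(M)|\le|A:M|$ and the composite $(\psi|_H)^{-1}\circ\rho\circ\psi|_N$), the composition-of-retractions argument in (b) via $L'=\rho_1^{-1}(M)$, and the restriction/intersection argument in (c) are exactly the standard bookkeeping this lemma requires. Note that the paper does not prove this statement itself but cites it from \cite[Lemma~3.2]{virtprops}; your argument is the expected one and matches that approach, so there is nothing to add.
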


\begin{lemma}[{\cite[Proposition~1.5 and Lemma~5.1.(ii)]{virtprops}}]\label{lem:VRC->vabs_sep} If $G$ has (VRC) then every finitely generated virtually abelian subgroup is a virtual retract and is separable in $G$. In particular, $G$ is cyclic subgroup separable.    
\end{lemma}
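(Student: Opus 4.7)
The lemma has two parts: every finitely generated virtually abelian subgroup $H \leq G$ is a virtual retract, and every such $H$ is separable in $G$; cyclic subgroup separability is then the case $H = \langle g \rangle$. My plan is first to deduce residual finiteness of $G$ from (VRC), then to establish the virtual retract property, and finally to combine these to obtain separability.

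Residual finiteness of $G$ is immediate: by \cite[Corollary~1.3]{MM-vr_in_free_constr}, for every non-trivial $g \in G$ there is a homomorphism $\varphi \colon G \to P$ to a finitely generated virtually abelian group $P$ that is injective on $\langle g \rangle$, so in particular $\varphi(g) \neq 1$; since $P$ is residually finite, $G$ is too.

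For the virtual retract property, given a finitely generated virtually abelian $H \leq G$, the goal is to construct a single homomorphism $\varphi \colon G \to P$ with $P$ finitely generated virtually abelian and $\varphi|_H$ injective. Once $\varphi$ is in hand, $\varphi(H) \leq P$ is a finitely generated subgroup of a finitely generated virtually abelian group, and such a subgroup is always a virtual retract (by the structure theorem for finitely generated abelian groups applied to a finite index free abelian subgroup of $P$); so $H \vr G$ follows from \Cref{lem:basic_props_of_virt_retr}(a). To build $\varphi$, pick a normal subgroup $A = \langle a_1,\ldots,a_n\rangle \triangleleft H$ of finite index with $A \cong \mathbb{Z}^n$; after replacing each $a_i$ by a suitable power and intersecting the resulting domains, (VRC) yields retractions $\rho_i \colon K \to \langle a_i \rangle$ from a common finite index subgroup $K \leqslant_f G$ containing $A$. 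The diagonal map $(\rho_1,\ldots,\rho_n) \colon K \to \mathbb{Z}^n$ restricts on $A$ to a linear map whose matrix has $1$s on the diagonal but whose off-diagonal entries are uncontrolled, so its determinant may vanish. To force injectivity on $A$, I would apply (VRC) to further elements of $A$ (for instance, to a $\mathbb{Z}$-linear combination $a = a_1^{c_1}\cdots a_n^{c_n}$ lying in the kernel of the current matrix), obtaining more retractions whose associated rows augment the matrix to full rank; finally, I would extend injectivity from $A$ to $H$ by applying (VRC) to representatives of the finitely many non-trivial cosets of $A$ in $H$ and taking a product of all the resulting homomorphisms.

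Granted that $H \vr G$, separability of $H$ follows quickly. Fix a retraction $\rho \colon K \to H$ with $K \leqslant_f G$ and set $N = \ker \rho$, so that $K = N \rtimes H$. For $g \in G \setminus H$: if $g \notin K$, then $K$ itself separates $H$ from $g$; otherwise write $g = nh$ with $n \in N \setminus \{1\}$ and $h \in H$, use residual finiteness of $G$ to pick a normal subgroup $G_0 \triangleleft G$ of finite index with $n \notin G_0$, and set $L = (G_0 \cap N) \cdot H$. Since $G_0 \cap N$ has finite index in $N$ and is normalized by $H$, $L$ is a subgroup of finite index in $K$ (and hence in $G$) that contains $H$ but not $g$. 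The main obstacle throughout is the construction of the virtual retraction in the previous paragraph: combining retractions onto cyclic subgroups does not automatically yield injectivity on a higher-rank abelian subgroup, and it is precisely the freedom in (VRC) to choose retractions onto arbitrary elements that allows one to augment the matrix to full rank.
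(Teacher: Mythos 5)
The paper does not prove this lemma at all — it is imported wholesale from \cite[Proposition~1.5 and Lemma~5.1.(ii)]{virtprops} — so your attempt can only be judged on its own terms, and on those terms the central step has a genuine gap. Your deduction of residual finiteness from (VRC), your rank-augmentation argument producing a map injective on (a finite-index subgroup of) the free abelian subgroup $A$, and your final separability argument from a virtual retraction plus residual finiteness are all sound. What fails is the passage from injectivity on $A$ to injectivity on $H$: taking, for each non-trivial coset of $A$ in $H$, one homomorphism that is injective merely on the cyclic subgroup generated by a chosen representative does not force the combined map $\Phi$ to be injective on $H$. All you control is that $\ker\Phi\cap H$ meets $A$ trivially and avoids the chosen representatives; this only makes $\ker\Phi\cap H$ a finite (normal, since $\ker\Phi\lhd G$) subgroup of $H$, and nothing kills its torsion. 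Concretely, take $H=\langle a\rangle\times\langle u\rangle\cong \Z\times(\Z/2\Z)$ inside $G$, with $A=\langle a\rangle$ and the representative of the non-trivial coset chosen to be $ua$: injectivity on $A$ together with $u\in\ker\Phi$ already implies $\Phi$ is injective on $\langle ua\rangle$, so every constraint you impose is compatible with $\Phi(u)=1$, i.e.\ with $\Phi$ not being injective on $H$. The repair needs an extra idea: observe that $\ker\Phi\cap H$ is a finite normal subgroup of $H$, hence lies in the maximal finite normal subgroup $T\lhd H$, and then use the residual finiteness you have already established to add finitely many finite quotients separating the non-trivial elements of $T$; alternatively, stop once you know the finite-index free abelian subgroup of $H$ is a virtual retract and invoke the upgrade from almost virtual retract to virtual retract for finitely generated virtually abelian subgroups of residually finite groups (\Cref{lem:vab+avr+t-f=>vr}, i.e.\ \cite[Theorem~1.4]{virtprops}) — but that upgrade is itself a non-trivial theorem, not something your coset trick supplies.

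Two further points should be made explicit. First, every map you produce from (VRC) is a retraction defined only on a finite index subgroup, so the ``product of all the resulting homomorphisms'' is defined only on the intersection of the domains, which need not contain $H$ (nor even all of $A$); your stated goal is a homomorphism of $G$ itself that is injective on $H$, so you need an extension step such as \Cref{lem:extend_homom_from_K}, or simply \Cref{lem:further_props_of_VRC}, which packages exactly this bookkeeping (replace each virtual retraction by a homomorphism $G\to P$ with kernel inside the retraction's kernel, then take the diagonal). Second, the parenthetical justification that a finitely generated subgroup of a finitely generated virtually abelian group is a virtual retract via the structure theorem only treats subgroups of the free abelian finite-index subgroup; for the general case you should cite \Cref{lem:vabs_have_LR} (\cite[Corollary~4.3]{virtprops}), which is again not a one-line fact.
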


\begin{lemma}[{\cite[Corollary~4.3]{virtprops}}]\label{lem:vabs_have_LR} Every subgroup of a finitely generated virtually abelian group is a virtual retract.
\end{lemma}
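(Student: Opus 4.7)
The plan is to verify the equivalent characterization of virtual retracts recalled just above the lemma: $H \vr G$ if and only if there exists a subgroup $N \leqslant G$ normalized by $H$ with $H \cap N = \{1\}$ and $|G : HN| < \infty$. So, given a finitely generated virtually abelian group $G$ and an arbitrary subgroup $H \leqslant G$, I would construct such an $N$ inside a suitable free abelian finite-index normal subgroup of $G$.

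First I would fix $A \n G$ with $|G : A| < \infty$ and $A \cong \Z^n$ for some $n \in \N$; such an $A$ exists because if $A_0 \n G$ is abelian of finite index and $m$ is the exponent of the torsion of $A_0$, then $mA_0$ is torsion-free, finitely generated, and characteristic in $A_0$, hence normal in $G$. Set $L \coloneq H \cap A$. Then $|H : L| < \infty$, and since $L \subseteq A$ and $A$ is normalized by $H$, $L$ is normal in $H$; crucially, $L$ acts trivially on the abelian group $A$ by conjugation, so the $H$-action on $A$ factors through the finite quotient $H/L$.

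The key step is to apply Maschke's theorem. The $\Q$-vector space $A \otimes_{\Z} \Q$ is a representation of the finite group $H/L$, and $V \coloneq L \otimes_{\Z} \Q$ is an $H$-invariant subspace. By semisimplicity there exists an $H$-invariant $\Q$-subspace $W$ with $V \oplus W = A \otimes_{\Z} \Q$. Setting $C \coloneq W \cap A$ gives an $H$-invariant subgroup $C \leqslant A$ with $C \cap L = \{1\}$, and $L \oplus C$ is a full-rank sublattice of $A$, so $|A : L \oplus C| < \infty$.

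Finally I would take $N \coloneq C$ and check the three conditions. Normalization holds by construction; $H \cap N \subseteq H \cap A = L$ with $L \cap C = \{1\}$ gives $H \cap N = \{1\}$. For the index, note that $HA$ is a subgroup of $G$ (as $A \n G$), and since $H$ normalizes $C$ one checks $HC \cdot h = HC$ for every $h \in H$; thus any right coset $HC \cdot g$ with $g = ha \in HA$ equals $HC \cdot a$, while $HC \cap A = LC$ (if $hc \in A$ with $c \in C \subseteq A$, then $h \in A \cap H = L$). Hence the right cosets of $HC$ in $HA$ are in bijection with the cosets of $LC = L \oplus C$ in $A$, giving $|HA : HC| \leqslant |A : L \oplus C| < \infty$, and $|G : HC| < \infty$ follows from $|G : HA| \leqslant |G : A| < \infty$. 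The main obstacle is producing an $H$-invariant complement $C$, since the naive elementary-divisors decomposition of $L$ inside $A \cong \Z^n$ need not be $H$-stable; Maschke's theorem circumvents this by passing to the rational vector space, where the finite $H/L$-action is automatically semisimple.
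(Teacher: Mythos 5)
Your argument is correct. Note that the paper does not prove this lemma at all: it is quoted verbatim from the reference [virtprops, Corollary~4.3], so there is no internal proof to compare against. Your write-up is a valid self-contained substitute, and it uses exactly the characterization of $\vr$ that the paper records just before \Cref{lem:basic_props_of_virt_retr}: you need $N \leqslant G$ normalized by $H$ with $H \cap N=\{1\}$ and $|G:HN|<\infty$. The individual steps all check out: $mA_0$ is torsion-free, characteristic in $A_0$ and hence normal in $G$; $L=H\cap A$ is normal in $H$ and lies in the kernel of the conjugation action of $H$ on $A$, so that action factors through the finite group $H/L$ and Maschke over $\Q$ applies; $W\cap A$ is a full lattice in $W$ (clearing denominators), so $L\oplus C$ has finite index in $A$; and the coset computation $HC\cap A = LC$ (using $H\cap A=L$) gives $|HA:HC|=|A:L\oplus C|<\infty$, whence $|G:HC|<\infty$. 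The only cosmetic point is the degenerate case where $G$ is finite (i.e.\ $n=0$), which your argument handles trivially since then $H$ itself has finite index in $G$. This Maschke-type passage to the rational span to produce an $H$-invariant complement is the standard route to this fact, so there is nothing further to flag.
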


The following statement is similar to \cite[Lemma~4.1]{virtprops}.
\begin{lemma}\label{lem:extend_homom_from_K} Let $G$ be a group with a finite index subgroup $K \leqslant_f G$. Suppose that $N \n K$ is such that $K/N$ is finitely generated and virtually abelian. Then there exists a finitely generated virtually abelian group $A$ and a homomorphism $\varphi:G \to A$ such that $\ker \varphi \subseteq N$.    
\end{lemma}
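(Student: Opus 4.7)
The plan is to modify $N$ into a subgroup that is normal in the whole of $G$ (not just in $K$), while preserving the property that the quotient is finitely generated virtually abelian; then the quotient of $G$ by this new subgroup will be the desired $A$. This is a standard ``normal core'' construction, and the only thing to verify is that each operation leaves us inside the class of finitely generated virtually abelian quotients.

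First I would pass from $K$ to its normal core $L \coloneq \bigcap_{g \in G} gKg^{-1}$, which is a finite-index subgroup of $G$ with $L \n G$ and $L \leqslant K$. Setting $N_0 \coloneq N \cap L$, we have $N_0 \n L$ (because $N \n K$ and $L \leqslant K$), and the natural map $L/N_0 \hookrightarrow K/N$ shows that $L/N_0$ is finitely generated and virtually abelian.

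Next I would define
\[ M \coloneq \bigcap_{g \in G} g N_0 g^{-1}. \]
Since $L \n G$, every conjugate $gN_0g^{-1}$ lies in $L$ and is normal in $L$, so $M \n L$. Moreover, $M$ is manifestly $G$-invariant, hence $M \n G$. The intersection has only $[G:L] < \infty$ distinct terms, and each conjugate $gN_0g^{-1}$ has finite index in $L$ (equal to $|L/N_0|$), so $M$ has finite index in $L$. The diagonal map identifies $L/M$ with a subgroup of the finite direct product $\prod_{g} L/gN_0g^{-1}$, each factor of which is finitely generated virtually abelian; therefore $L/M$ is finitely generated and virtually abelian as well.

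Finally, take $A \coloneq G/M$ and let $\varphi : G \to A$ be the quotient map. The subgroup $L/M$ has finite index in $G/M$, and since $L/M$ contains a finitely generated abelian subgroup of finite index, so does $G/M$. Hence $A$ is finitely generated and virtually abelian. By construction $\ker\varphi = M \subseteq N_0 \subseteq N$, as required. There is no serious obstacle here; the only subtlety is to make sure that both the passage $K \leadsto L$ and the passage $N_0 \leadsto M$ stay within the class of finitely generated virtually abelian quotients, which is ensured by the fact that this class is closed under taking finite direct products, subgroups and finite extensions.
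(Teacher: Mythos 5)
Your argument is essentially the paper's own proof: pass to the normal core $L$ of $K$, replace $N$ by $N\cap L$, intersect its finitely many $G$-conjugates to obtain a normal subgroup $M \vartriangleleft G$, and use the diagonal embedding of $L/M$ into the finite direct product $\prod_{g} L/gN_0g^{-1}$ to conclude that $A=G/M$ is finitely generated and virtually abelian with $\ker\varphi=M\subseteq N$. The only flaw is the parenthetical claim that each $gN_0g^{-1}$ has finite index in $L$ (and hence that $M \leqslant_f L$): this is false in general, since $K/N$ need not be finite (take $G=K=\Z^2$ and $N=\Z\times\{0\}$); fortunately it is harmless, because your diagonal-embedding argument and the final step only use $[G:L]<\infty$, never $[L:M]<\infty$.
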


\begin{proof} Let $L \n_f G$ be the intersection of all conjugates of $K$ in $G$ and set $M \coloneq N \cap L \n L$. Then there are only finitely many different conjugates of $M$ in $G$: $M_1=M,M_2,\dots,M_k \n L$, and 
\[L/M_i \cong L/M \leqslant_f K/N\] is finitely generated and virtually abelian, for all $i=1,\dots,k$.
Observe that the subgroup $O \coloneq \bigcap_{i=1}^k M_i$ is normal in $G$ and we have a diagonal embedding $ L/O \hookrightarrow \bigtimes_{i=1}^k L/M_i$. Therefore,  $L/O$ is finitely generated and virtually abelian. By construction, $L/O \n_f G/O$ so $A \coloneq G/O$ is finitely generated and virtually abelian. It remains to define $\varphi:G \to G/O$ as the quotient map, and to note that $\ker\varphi=O \subseteq M \subseteq N$.
\end{proof}

\begin{lemma}   
\label{lem:further_props_of_VRC} Suppose that $G$ is a group, $K_i \leqslant_f G$, $N_i \n K_i$, $i=1,\dots,m$, and $H_j \vr G$, $j=1,\dots,n$. If the groups $K_i/N_i$, $i=1,\dots,m$, and $H_j$, $j=1,\dots,n$, are finitely generated and virtually abelian then there exist a finitely generated virtually abelian group $P$ and a  homomorphism $\varphi:G \to P$ such that $\ker\varphi \subseteq N_i$, for $i=1,\dots,m$, and $\varphi$ is injective on $H_j$, for each $j=1,\dots,n$. \end{lemma}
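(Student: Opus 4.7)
The plan is to reduce everything to applications of \Cref{lem:extend_homom_from_K} and then form a single diagonal homomorphism into the direct product of the resulting targets, exploiting the fact that finitely generated virtually abelian groups are closed under finite direct products.

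First, for each $i=1,\dots,m$, I will apply \Cref{lem:extend_homom_from_K} directly to the pair $(K_i,N_i)$, which satisfies its hypotheses since $K_i \leqslant_f G$, $N_i \n K_i$, and $K_i/N_i$ is finitely generated virtually abelian. This produces a finitely generated virtually abelian group $A_i$ and a homomorphism $\varphi_i:G \to A_i$ with $\ker\varphi_i \subseteq N_i$.

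Next, for each $j=1,\dots,n$, I use the assumption $H_j \vr G$ to pick a finite-index subgroup $L_j \leqslant_f G$ containing $H_j$ together with a retraction $\rho_j:L_j \to H_j$. Setting $M_j \coloneq \ker\rho_j \n L_j$, we have $L_j/M_j \cong H_j$, which is finitely generated and virtually abelian by hypothesis. Applying \Cref{lem:extend_homom_from_K} to the pair $(L_j,M_j)$ yields a finitely generated virtually abelian group $B_j$ and a homomorphism $\psi_j:G \to B_j$ such that $\ker\psi_j \subseteq M_j = \ker\rho_j$. The crucial observation is that $\psi_j$ is automatically injective on $H_j$: indeed, if $h \in H_j$ and $\psi_j(h)=1$, then $h \in \ker\rho_j$, but also $\rho_j(h)=h$ since $\rho_j$ restricts to the identity on $H_j$, forcing $h=1$.

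Finally, define $P \coloneq A_1 \times \cdots \times A_m \times B_1 \times \cdots \times B_n$, which is again finitely generated and virtually abelian, and let $\varphi:G \to P$ be the diagonal homomorphism with components $\varphi_1,\dots,\varphi_m,\psi_1,\dots,\psi_n$. Then
\[\ker\varphi=\bigcap_{i=1}^{m}\ker\varphi_i \cap \bigcap_{j=1}^{n}\ker\psi_j \subseteq N_i \quad \text{for each } i=1,\dots,m,\]
and for each $j$ the restriction $\varphi|_{H_j}$ is injective because its $\psi_j$-component already is. There is no real obstacle in this argument; the only point requiring any care is verifying injectivity on the $H_j$, which is why the virtual retract hypothesis (not merely finite-index or normal) is used to supply the retractions $\rho_j$.
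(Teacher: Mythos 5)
Your proposal is correct and follows essentially the same route as the paper: apply \Cref{lem:extend_homom_from_K} to the pairs $(K_i,N_i)$ and to $(L_j,\ker\rho_j)$ coming from the virtual retractions, then take the diagonal map into the direct product, with injectivity on each $H_j$ following from $\ker\psi_j\cap H_j\subseteq \ker\rho_j\cap H_j=\{1\}$. No issues.
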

    
\begin{proof} By the assumptions, for each $j=1,\dots,n$ there exist $L_j\leqslant_f G$ and  $M_j\n L_j $ such that $H_j \subseteq L_j$, $H_j \cap M_j=\{1\}$ and $L_j=H_jM_j$. Now, \Cref{lem:extend_homom_from_K} gives finitely generated virtually abelian groups $A_1,\dots,A_m$ and $B_1,\dots,B_n$, together with  homomorphisms $\varphi_i:G \to A_i$ and  $\psi_j:G \to B_j$, such that $\ker\varphi_i \subseteq N_i$ and $\ker\psi_j \subseteq M_j$,  for all $i=1,\dots,m$ and $j=1,\dots,n$.

Evidently, the group 
\[P\coloneq A_1 \times \cdots \times A_m \times B_1 \times \dots \times B_n \] is finitely generated and virtually abelian, and the diagonal  homomorphism  
          \[
                \varphi \coloneq \varphi_1 \times \cdots \times \varphi_m \times \psi_1 \times \cdots \times \psi_n:  G \to P 
        \]
satisfies $\ker\varphi \subseteq \bigcap_{i=1}^m N_i \cap \bigcap_{j=1}^n M_j$. It follows that $\varphi$ enjoys the desired properties.
\end{proof}

\begin{defn}\label{def:almost_vr} A subgroup $A$ of a group $G$ will be called an \emph{almost virtual retract} if there is a finite index subgroup $H \leqslant_f A$ such that $H \vr G$.    In this case we shall write $A \avr G$.
\end{defn}

For example, every finite subgroup of a group $G$ is always an almost virtual retract of $G$.

An almost virtual retract $A$  of a group $G$ need not be a virtual retract of $G$ in general (see \cite[Examples~3.5 and 3.6]{virtprops}), but this is true in the case when $G$ is residually finite and $A$ is finitely generated virtually abelian \cite[Theorem~1.4]{virtprops}. This statement can be generalized as follows.

\begin{lemma}\label{lem:vab+avr+t-f=>vr}
Let $G$ be a group with a finitely generated virtually abelian subgroup $A \avr G$, and let $R \n G$ be the finite residual of $G$. Suppose that $H \leqslant_f A$ satisfies $H \vr G$. Then $H \cap R=\{1\}$ and $|A \cap R|<\infty$. Moreover, if $A \cap R=\{1\}$ then $A \vr G$. In particular, $A \vr G$ provided $G$ is residually finite or $A$ contains no non-trivial finite normal subgroups.
\end{lemma}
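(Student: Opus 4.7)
\medskip

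\noindent\textbf{Plan of proof.}  The first claim, that $H\cap R=\{1\}$, is immediate from the standard finite-residual calculation. Pick $K\leqs_f G$ with $H\subseteq K$ and a retraction $\rho:K\to H$. By \Cref{lem:fin_residual_of_fin_ind_sbgp}, $R(K)=R$, and in particular $R\subseteq K$. Since $H\leqs_f A$ is finitely generated virtually abelian, it is residually finite, so $R(H)=\{1\}$ by \Cref{rem:props_of_finite_residual}. Hence $\rho(R)\subseteq R(H)=\{1\}$, which forces $H\cap R\subseteq \ker\rho\cap H=\{1\}$. Because $H\leqs_f A$, this immediately gives $|A\cap R|\leqs |A:H|<\infty$.

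\medskip

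For the ``moreover'' statement, assume $A\cap R=\{1\}$. The idea is to push everything down to the residually finite quotient $\overline G\coloneq G/R$ and apply the known result \cite[Theorem~1.4]{virtprops}. Let $\pi:G\to\overline G$ be the projection; since $A\cap R=\{1\}$, $\pi$ restricts to an isomorphism $A\to \pi(A)$, so $\pi(A)$ is finitely generated virtually abelian. The key step is to show that $\pi(A)\avr \overline G$. For this, the retraction $\rho:K\to H$ above satisfies $\rho(R)=\{1\}$, so it descends to a retraction $\overline\rho:K/R\to \pi(H)$; as $K/R\leqs_f \overline G$, this proves $\pi(H)\vr \overline G$, and since $\pi(H)\leqs_f\pi(A)$ we conclude $\pi(A)\avr \overline G$. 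Now \cite[Theorem~1.4]{virtprops} applies in $\overline G$ (which is residually finite by \Cref{rem:props_of_finite_residual}) to give $\pi(A)\vr \overline G$, i.e.\ a subgroup $K'\leqs_f\overline G$ and a retraction $\sigma:K'\to\pi(A)$. Lifting back, set $\widetilde K\coloneq \pi^{-1}(K')\leqs_f G$ and define $\rho'\coloneq (\pi|_A)^{-1}\circ\sigma\circ\pi:\widetilde K\to A$. A direct check on $a\in A$ gives $\rho'(a)=a$, so $\rho'$ is a retraction and $A\vr G$.

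\medskip

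The final ``in particular'' clause is then a two-line deduction. If $G$ is residually finite, then $R=\{1\}$ and the hypothesis $A\cap R=\{1\}$ is automatic. If instead $A$ has no non-trivial finite normal subgroups, then, noting that $R\n G$ implies $A\cap R\n A$, the bound $|A\cap R|<\infty$ obtained above forces $A\cap R=\{1\}$, and we apply the ``moreover'' part.

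\medskip

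The only step that requires any real work is the passage $\pi(A)\avr \overline G$, and even this reduces to observing that the descent of $\rho$ is well defined because $\rho$ annihilates $R(K)=R$; after that, the lemma is a straight application of \cite[Theorem~1.4]{virtprops} together with the identification $A\cong\pi(A)$ afforded by the hypothesis $A\cap R=\{1\}$.
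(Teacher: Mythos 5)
Your proof is correct and follows essentially the same route as the paper: establish $R\subseteq\ker\rho$ (the paper phrases this as $R(K)\subseteq N$ via the residual finiteness of $K/N\cong H$, while you invoke the functoriality $\rho(R(K))\subseteq R(H)$; these are the same observation), pass to $G/R$ where $\pi(A)\avr G/R$, apply \cite[Theorem~1.4]{virtprops} in the residually finite quotient, and lift the retraction back (the paper uses \Cref{lem:basic_props_of_virt_retr}.(a) for this last step, you construct the lift explicitly). No gaps.
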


\begin{proof} By the assumptions, there exist $K \leqslant_f G$ and $N\n K$, such that $H \subseteq K$, $K=NH$ and $N \cap H=\{1\}$.

Since $K/N \cong H$ is residually finite, being a finitely generated virtually abelian group, $R(K) \subseteq N$ by \Cref{rem:props_of_finite_residual}, so $R=R(G) \subseteq N$ by \Cref{lem:fin_residual_of_fin_ind_sbgp}. It follows that $H \cap R=\{1\}$,  and so  $|A \cap R| \le |A:H|<\infty$.

Now, assume that $A \cap R=\{1\}$. Then the quotient map $\psi:G \to G/R$ is injective on $A$. Moreover, since $R \subseteq N$ and $N \cap H=\{1\}$ in $G$, we have $\psi(N) \cap \psi(H) =\{1\}$ in $G/R$, which implies that $\psi(H)$ is still a retract of $\psi(K) \leqslant_f G/R$, thus $\psi(A) \avr G/R$. 

According to \Cref{rem:props_of_finite_residual}, $G/R$ is residually finite. Since  $\psi(A) \cong A$ is finitely generated and virtually abelian, we can apply \cite[Theorem~1.4]{virtprops} to deduce that $\psi(A) \vr G/R$, hence $A \vr G$ by \Cref{lem:basic_props_of_virt_retr}.(a).
\end{proof}

\begin{lemma}\label{lem:virt_cyc-avr_crit} Let $A$ be an infinite virtually cyclic subgroup of a finitely generated group $G$. Then the following are equivalent:
\begin{itemize}
    \item[(i)] $A \avr G$;
    \item[(ii)] for some (equivalently, for every) infinite cyclic subgroup $C \leqslant A$ one has $C \vr G$;
  \item[(iii)] there is a homomorphism $\varphi:G \to P$, where $P$ is finitely generated and virtually abelian, such that $|\ker\varphi \cap A|<\infty$ (equivalently, $\varphi(A)$ is infinite);  
  \item[(iv)] there exist a  finite index subgroup $K \leqslant_f G$ and a homomorphism $\psi:K \to \R$ such that $\psi(K \cap A) \neq \{0\}$.
\end{itemize}
\end{lemma}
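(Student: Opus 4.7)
The plan is to establish the cyclic chain (i) $\Rightarrow$ (ii) $\Rightarrow$ (iii) $\Rightarrow$ (iv) $\Rightarrow$ (i); the parenthetical clauses ``for some/for every'' in (ii), and the equivalence of ``$|\ker\varphi \cap A|<\infty$'' with ``$\varphi(A)$ is infinite'' in (iii), will then be automatic. A convenient observation I will use throughout is that every infinite subgroup of an infinite virtually cyclic group has finite index (by a Hirsch-length argument), so in particular any infinite cyclic $C \leqslant A$ satisfies $C \f A$.

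The forward implications (i) $\Rightarrow$ (ii) $\Rightarrow$ (iii) $\Rightarrow$ (iv) are relatively routine. For (i) $\Rightarrow$ (ii), given $H \f A$ with $H \vr G$ and any infinite cyclic $C \leqslant A$, I will intersect to obtain an infinite cyclic $C \cap H \f C$ that is a virtual retract of $G$ (applying \Cref{lem:vabs_have_LR} inside the finitely generated virtually abelian $H$, then transitivity via \Cref{lem:basic_props_of_virt_retr}(b)); then \Cref{lem:vab+avr+t-f=>vr} upgrades this to $C \vr G$ because $C$ is torsion-free. For (ii) $\Rightarrow$ (iii), I will apply \Cref{lem:further_props_of_VRC} to the single virtual retract $C \vr G$ to produce $\varphi: G \to P$ with $P$ finitely generated virtually abelian and $\varphi$ injective on $C$; then $A \cap \ker\varphi$ injects into the finite group $A/C$, so it is finite. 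For (iii) $\Rightarrow$ (iv), I will pick a finite-index free abelian $B \f P$, set $K := \varphi^{-1}(B) \f G$, and note that $\varphi(K \cap A) \leqslant B$ is infinite (since $K \cap A \f A$ while $\ker\varphi$ still meets $A$ in a finite subgroup). Taking any nonzero $b \in \varphi(K \cap A)$ and a homomorphism $\alpha: B \to \R$ with $\alpha(b) \neq 0$ (available since $B$ is free abelian of finite rank), the composition $\psi := \alpha \circ \varphi|_K$ will satisfy $\psi(K \cap A) \neq \{0\}$.

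The main step is (iv) $\Rightarrow$ (i), and this is where the actual content lies. Starting from $K \f G$ and $\psi: K \to \R$ with $\psi(K \cap A) \neq \{0\}$, I set $H := K \cap A$ (still infinite virtually cyclic) and pick an infinite cyclic $C \f H$. Because $\psi(H) \leqslant \R$ is torsion-free while $\psi(C)$ has finite index in the nonzero $\psi(H)$, the image $\psi(C)$ must itself be nonzero, so $\psi|_C : C \to \R$ is injective. The image $M := \psi(K)$ is a finitely generated torsion-free abelian group, hence free abelian of finite rank, so by \Cref{lem:vabs_have_LR} there exist $L' \f M$ containing $\psi(C)$ and a retraction $\rho': L' \to \psi(C)$. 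Pulling this back via $\psi$, the subgroup $L := \psi^{-1}(L') \f K$ contains $C$, and the composition
\[
\rho \;:=\; (\psi|_C)^{-1} \circ \rho' \circ \psi|_L \; : \; L \longrightarrow C
\]
is a retraction (since $\rho'$ fixes $\psi(C)$ pointwise and $\psi|_C$ is a bijection onto $\psi(C)$). Hence $C \vr G$, and from $C \f A$ we conclude $A \avr G$.

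The main obstacle is packaged into (iv) $\Rightarrow$ (i): I must bootstrap a mere linear character $\psi$ into an actual virtual retraction of $G$ onto a cyclic subgroup of $A$, and this hinges on (a) torsion-freeness of $\R$ to force injectivity of $\psi|_C$, (b) finite generation of $K$ to ensure $\psi(K)$ is free abelian of finite rank, and (c) \Cref{lem:vabs_have_LR} to produce a retraction inside $\psi(K)$ that can be pulled back through $\psi$. The remaining implications are essentially bookkeeping with lemmas already established earlier in the paper.
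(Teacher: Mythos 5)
Your proof is correct and follows essentially the same route as the paper: the same cyclic chain (i)$\Rightarrow$(ii)$\Rightarrow$(iii)$\Rightarrow$(iv)$\Rightarrow$(i), using \Cref{lem:vabs_have_LR}, \Cref{lem:further_props_of_VRC} and \Cref{lem:vab+avr+t-f=>vr} in the same roles. The only cosmetic difference is that in (iv)$\Rightarrow$(i) you build the retraction onto $C$ by hand via pullback through $\psi$, where the paper simply cites \Cref{lem:basic_props_of_virt_retr}.(a).
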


\begin{proof} If (i) holds, then there is a finite index subgroup $A' \leqslant_f A$ such that $A' \vr G$. Since $A$ is infinite virtually cyclic, there is an infinite cyclic subgroup $C \leqslant_f A'$. Therefore, $C \vr G$ by \Cref{lem:basic_props_of_virt_retr}.(b). Now, for any other infinite cyclic subgroup $C' \leqslant A$ we have $C' \cap C \leqslant_f C$, so $C' \cap C \vr G$, again by \Cref{lem:basic_props_of_virt_retr}.(b). On the other hand, $C' \cap C$ has finite index in $C'$ and $C'$ is torsion-free, hence $C' \vr G$,  by \Cref{lem:vab+avr+t-f=>vr}. Thus,  (ii) holds.

Now, suppose that (ii) is satisfied. Then, according to \Cref{lem:further_props_of_VRC}, there exist a finitely generated virtually abelian group $P$ and a homomorphism $\varphi:G \to P$ that is injective on $C$. Since $|A:C|<\infty$, we see that the intersection $\ker\varphi \cap A$ must be finite and $|\varphi(A)|=\infty$, i.e.,   (iii) holds.

If (iii) is true then choose a finitely generated free abelian finite index subgroup $L \leqslant_f P$ and set $K \coloneq \varphi^{-1}(L) \leqslant_f G$. Since every finitely generated free abelian group embeds in $\R$, the restriction of $\varphi$ to $K$ gives rise to a homomorphism $\psi:K \to \R$ such that $|\ker\psi \cap A|<\infty$. Since $K \cap A$ is infinite, we can conclude that $\psi(K \cap A) \neq \{0\}$, thus (iv) is satisfied.

Finally, assume that (iv) holds. Since $G$ is finitely generated, so is its image in $\R$, thus $\psi(G) \cong \Z^n$, for some $n \in \N_0$. It follows that every subgroup of $\psi(G)$ is a virtual retract (see \Cref{lem:vabs_have_LR}). 
Choose any element $c \in K \cap A$ with $\psi(c) \neq 0$. Then $c$ has infinite order in $G$, $\psi(\langle c \rangle) \vr \psi(G)$ and $\psi$ is injective on $\langle c \rangle$. Therefore, $\langle c \rangle \vr G$ by \Cref{lem:basic_props_of_virt_retr}.(a). Since $A$ is virtually cyclic, we know that $|A:\langle c \rangle|<\infty$, thus $A \avr G$. This shows that (iv) implies (i), so the proof is complete.
\end{proof}

\subsection{Graphs of groups}\label{subsec:graphs_of_gps}
In this paper we adopt Serre's notation for graphs. Namely, a graph $\Gamma$ is a $5$-tuple $(V\Gamma, E\Gamma, \alpha, \omega,\overline{\phantom{e}})$, where $ V\Gamma $ and  $E\Gamma$ are sets of \emph{vertices} and \emph{edges} of $\Gamma$, respectively, $\alpha, \omega: E\Gamma \to V\Gamma$ are the \emph{incidence maps}, and $ \overline{\phantom{e}}:E\Gamma \to E\Gamma$ is an involution sending each edge to its inverse.

We also use Serre's notation for graphs of groups $(\mathcal{G},\Gamma)$, where $\Gamma$ is a connected graph as above and 
$\mathcal{G}$ is the data consisting of \emph{vertex groups} $\{G_v\}_{v \in V\Gamma}$, \emph{edge groups} $\{G_e\}_{e \in E\Gamma}$ and monomorphisms $\{\alpha_e\}_{e \in E\Gamma}$, such that 
$G_e = G_{\overline{e}}$ and $ \alpha_e\colon G_e \to G_{\alpha(e)}$,  for all $e \in E\Gamma$
(see \cite[Subsection~2.2]{MM-vr_in_free_constr} for more details).
The Structure Theorem of Bass-Serre Theory \cite[Theorem~13 in Section~I.5.4]{Serre} gives the following statement.

\begin{thm}\label{thm:kurosh}
	Let $G$ be the fundamental group of a graph of groups $ (\mathcal{G},\Gamma) $ and let $H$ be a subgroup of $G$. Then $H$ is isomorphic to the fundamental group of a new graph of groups $(\mathcal{H},\Delta)$, with vertex/edge groups equal to the intersections of $H$ with $G$-conjugates of the vertex/edge groups of $(\mathcal{G},\Gamma)$. Moreover, if $\Gamma$ is finite and $|G:H|<\infty$ then $\Delta$ is also finite.
\end{thm}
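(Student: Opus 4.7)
The plan is to derive this from the standard correspondence in Bass--Serre theory between group actions on trees (without inversions) and decompositions as fundamental groups of graphs of groups. The statement is essentially a restatement of \cite[Theorem~13 in Section~I.5.4]{Serre}, combined with a short counting argument for the finiteness claim, so I would present it as such rather than reproving Bass--Serre theory from scratch.

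First, I would form the Bass--Serre tree $T = T(\mathcal{G},\Gamma)$ associated to the splitting. By construction, $G$ acts on $T$ without inversions, there is a natural $G$-equivariant identification $G\backslash T \cong \Gamma$, and for each vertex (respectively edge) of $T$ lying over $v \in V\Gamma$ (resp.\ $e \in E\Gamma$), its $G$-stabilizer is a $G$-conjugate of $\alpha_v(G_v)$ (resp.\ of $\alpha_e(G_e)$). The second step is to restrict the $G$-action on $T$ to $H \leqslant G$. Since $H$ inherits the absence of inversions, the converse direction of Bass--Serre theory (again \cite[Theorem~13 in Section~I.5.4]{Serre}) yields a canonical isomorphism $H \cong \pi_1(\mathcal{H},\Delta)$, where $\Delta \coloneqq H\backslash T$ and the vertex/edge groups of $(\mathcal{H},\Delta)$ are obtained by choosing a lift of each vertex/edge of $\Delta$ to $T$ and taking its $H$-stabilizer. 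Since the $G$-stabilizer of such a lift is a $G$-conjugate of a vertex/edge group of $(\mathcal{G},\Gamma)$, its $H$-stabilizer is exactly the intersection of $H$ with that $G$-conjugate, as required.

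For the finiteness statement, the vertices (resp.\ edges) of $\Delta$ are in bijection with the $H$-orbits on $VT$ (resp.\ $ET$). As a $G$-set, $VT$ splits as a disjoint union $\bigsqcup_{v \in V\Gamma} G/G_v'$, where $G_v'$ is a conjugate of $\alpha_v(G_v)$; so when $\Gamma$ is finite there are only finitely many $G$-orbits. Each $G$-orbit $G/G_v'$ decomposes as an $H$-set into the double cosets $H\backslash G / G_v'$, and there are at most $|G:H|$ of these. Hence $|V\Delta| \leqslant |V\Gamma|\cdot|G:H| < \infty$, and the same bound (with $E\Gamma$ in place of $V\Gamma$) holds for $|E\Delta|$, so $\Delta$ is finite.

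There is no real obstacle beyond invoking Bass--Serre theory correctly; the only point that deserves care is confirming that the identification of vertex/edge stabilizers under the restricted $H$-action gives exactly the description in the statement (intersections of $H$ with $G$-conjugates of the original vertex/edge groups), which is immediate from the stabilizer description on $T$. Hence I would present the proof as a short application of \cite[Theorem~13 in Section~I.5.4]{Serre} together with the double-coset counting outlined above.
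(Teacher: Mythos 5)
Your proposal is correct and follows exactly the route the paper takes: the paper simply cites \cite[Theorem~13 in Section~I.5.4]{Serre} and, in the two paragraphs after the theorem statement, explains that the Bass--Serre tree $\mathcal{T}$ of $(\mathcal{G},\Gamma)$ is restricted to an action of $H$ with quotient graph $\Delta = H\backslash\mathcal{T}$, whose vertex/edge stabilizers are intersections of $H$ with $G$-conjugates of the original vertex/edge groups. Your double-coset count $|V\Delta| \leqslant |V\Gamma|\cdot|G:H|$ (and likewise for edges) is a correct elaboration of the finiteness claim that the paper leaves implicit.
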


More precisely, fix a maximal tree $T$  and an orientation on the edges $E\Gamma=E\Gamma^+ \sqcup E\Gamma^-$ in $\Gamma$. As in \cite[Section~I.5.3]{Serre}, using this data we can construct the \emph{Bass-Serre tree} $\cT$ such that the fundamental group $G\coloneq \pi_1(\mathcal{G},\Gamma,T, E\Gamma^+)$ acts on $\cT$ without inverting any edges, with the quotient $G \setminus \cT \cong \Gamma$ and with the stabilizers of vertices and edges being precisely the conjugates of $G_v$ and $\alpha_e(G_e)$ in $G$, for $v \in V\Gamma$ and $e \in E\Gamma$.

This induces an action of $H$ on $\cT$ with quotient graph $\Delta$, which, following  \cite[Section~I.5.4]{Serre}, gives rise to a splitting of $H$ as the fundamental group of a graphs of groups $(\mathcal{H},\Delta)$, whose vertex/edge groups are chosen as $H$-conjugacy class representatives of the intersections of $H$ with the $G$-stabilizers of vertices/edges in $\cT$. Moreover, \cite[Theorem~13 in Section~I.5.4]{Serre} tells us that we can choose a maximal tree in $\Delta$ and an orientation on the edges of $\Delta$ in such a way that the Bass-Serre tree associated to $(\mathcal{H},\Delta)$ is naturally isomorphic to $\cT$.


\section{Splitting virtually abelian groups in finite index supergroups}\label{sec:splitting}
This section develops an auxiliary tool allowing us to embed a virtually abelian group into a semidirect product of a torsion-free abelian group with a finite group, in a functorial manner. The embedding itself (see \Cref{def:compl}) and the fact that the target group always splits (\Cref{prop:splits}) are folklore; the authors are indebted to P. Kropholler for pointing these out to them.
    
	\begin{defn}\label{def:categ} Consider a category $\mathcal C$, whose objects are pairs $(G,A)$, where $G$ is a group and $A \n G$ is a torsion-free abelian normal subgroup of $G$. A morphism $\varphi: (G_1,A_1) \to (G_2,A_2)$ in this category is a group homomorphism $\varphi:G_1 \to G_2$ such that $A_1=\varphi^{-1} (A_2)$. We say that this morphism is \emph{injective} (\emph{surjective})  if the group homomorphism $\varphi:G_1 \to G_2$ is injective (respectively, surjective).
	\end{defn}
	
	\begin{rem} The condition that $\varphi^{-1}(A_2)=A_1$ from Definition~\ref{def:categ} can be restated by saying that we have a commutative diagram of the form
		\[
\begin{tikzcd}
\{1\} \arrow[r] & A_1 \arrow[r, hook] \arrow[d] & G_1 \arrow[r, two heads] \arrow[d, "\varphi"] & Q_1 \arrow[r] \arrow[d, hook] & \{1\} \\
\{1\} \arrow[r] & A_2 \arrow[r, hook]                     & G_2 \arrow[r, two heads]                   & Q_2 \arrow[r]                 & \{1\}
\end{tikzcd}\quad ,
		\] 
         where the rows in this diagram are exact sequences and the right-most vertical map is an injective homomorphism from $Q_1 \coloneq G_1/A_1$ to $Q_2 \coloneq G_2/A_2$. 
         
		If we fix a group $Q$ and consider only objects $(G,A) \in \mathcal C$ such that $G/A \cong Q$ then we get a full subcategory of $\mathcal C$ which appears as $\left( \frac{Q}{}\right)$ in Gruenberg's notes \cite[Section~9.1]{Gru}
		
	\end{rem}

	If $(G,A) \in \mathcal{C}$ then $G$ acts on $A$ by conjugation, which allows us to consider the semidirect product $A \rtimes G=\{(a,g) \mid a \in A,~g \in G\}$, where multiplication is defined by $(a,g)(b,h)=(a (g bg^{-1}),gh)$, for all $(a,g), (b,h) \in A \rtimes G$.

	\begin{defn}\label{def:compl} Given any pair $(G,A) \in \mathcal{C}$ and any $n \in \N$ we define the group $P_n=P_n(G,A)$ as the quotient of $A \rtimes G$ by the normal subgroup \[N_n=\{(a^n,a^{-1}) \mid a \in A\}.\]
		We let $\psi_n:A \rtimes G \to P_n$ denote the natural quotient map and define $B_n=B_n(G,A)$ by $B_n\coloneq \psi_n((A,1)) \n P_n$, where $(A,1)=\{(a,1) \mid a \in A\} \n A \rtimes G$. We will also denote by $\xi_n:G \to P_n$ the induced map $\xi_n(g)\coloneq \psi_n((1,g))$, $g \in G$.
	\end{defn}
	
	One readily verifies that $N_n$ is indeed a normal subgroup of $A \rtimes G$ because $A \n G$ is abelian. Moreover, since $A$ is torsion-free, the homomorphism $\psi_n$ is injective on the copies $(A,1)$ and $(1,G)=\{(1,g) \mid g \in G\}$ of $A$ and $G$ in $A \rtimes G$. Intuitively, $P_n$ is obtained from $G$ by adding $n$-th roots to all elements of $A$, because $(a,1)^n=(a^n,1)$ is identified with $(1,a)$ in $P_n$, so that $\xi_n(A)=(B_n)^n$. In particular, when $A \cong \Z^m$, for some $m \in \N \cup \{0\}$, then $\xi_n(G)\cong G$ has index $n^m$ in $P_n$.
	
	\begin{lemma}\label{lem:xi_n_is_morphism} For each $n \in \N$ and every $(G,A) \in \mathcal{C}$, the pair $(P_n,B_n)$ belongs to $\mathcal C$ and $\xi_n:G \to  P_n$ defines an injective morphism between $(G,A)$ and $(P_n,B_n)$.    Moreover, $B_n \cong A$ and $\xi_n$ induces a natural isomorphism between the quotients $G/A$ and $P_n/B_n$.
	\end{lemma}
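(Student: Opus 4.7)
The plan is to verify the four assertions by direct computation in $A \rtimes G$ and its quotient $P_n$, exploiting separately the hypotheses that $A$ is abelian (needed for $N_n \n A\rtimes G$, for $B_n \n P_n$, and for $B_n$ being abelian) and that $A$ is torsion-free (needed for injectivity of $\xi_n$ on $G$).

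First, I would verify that $N_n \n A \rtimes G$, so that $P_n$ is well defined. A short calculation, in which abelianness of $A$ is used to cancel the $b$-coordinate, yields
\[
(b, g)(a^n, a^{-1})(b, g)^{-1} = \bigl((gag^{-1})^n,\,(gag^{-1})^{-1}\bigr) \in N_n,
\]
for every $(b,g) \in A\rtimes G$ and every $(a^n, a^{-1}) \in N_n$. Similarly $(A, 1) \n A \rtimes G$ and is abelian, hence $B_n = \psi_n((A,1))$ is an abelian normal subgroup of $P_n$.

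Next I would prove the two injectivity statements. If $(a, 1) \in N_n$ is of the form $(b^n, b^{-1})$ with $b \in A$, then $b = 1$ forces $a = 1$; thus $\psi_n$ is injective on $(A, 1)$, which gives $B_n \cong A$, and in particular $B_n$ is torsion-free --- this completes $(P_n, B_n) \in \mathcal{C}$. Likewise, if $(1, g) \in N_n$, say $(1,g)=(b^n, b^{-1})$, then $b^n = 1$, and here torsion-freeness of $A$ forces $b = 1$ and hence $g = 1$; so $\xi_n\colon G \to P_n$ is an injective group homomorphism.

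To finish, I need the equality $\xi_n^{-1}(B_n) = A$, after which the induced map $\bar\xi_n\colon G/A \to P_n/B_n$ is automatically injective. The inclusion $A \subseteq \xi_n^{-1}(B_n)$ comes from the identity $(a^n, a^{-1})(1, a) = (a^n, 1)$ in $A \rtimes G$, which, passing to $P_n$, reads $\xi_n(a) = \psi_n((a^n, 1)) \in B_n$. Conversely, if $\xi_n(g) = \psi_n((a, 1))$ for some $a \in A$, then $(ga^{-1}g^{-1}, g) \in N_n$ forces the second coordinate $g$ to lie in $A$. Surjectivity of $\bar\xi_n$ then follows from $(a,g) = (a,1)(1,g)$ in $A\rtimes G$, which gives $\psi_n((a,g)) = \psi_n((a,1))\,\xi_n(g) \in \xi_n(g)B_n$. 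The entire argument is essentially bookkeeping, with no serious obstacle; the one step requiring mild care is the normality of $N_n$, where the $g$-action on $A$ must be tracked through each term of the conjugation.
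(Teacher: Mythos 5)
Your proposal is correct and follows essentially the same route as the paper's proof: direct verification that $N_n\lhd A\rtimes G$ and that $\psi_n$ is injective on $(A,1)$ and $(1,G)$, the key equality $\xi_n^{-1}(B_n)=A$ via the identity $(A,1)N_n=(A,A)$, and the induced isomorphism $G/A\cong P_n/B_n$ (the paper phrases this last step via the second isomorphism theorem, which is the same bookkeeping as your surjectivity argument). The details you supply, including correctly locating where abelianness versus torsion-freeness of $A$ is used, all check out.
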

	
	\begin{proof} We have already noted that $\xi_n$ is injective on $G$. Since $(A,1) \cap N_n=\{(1,1)\}$ in $A \rtimes G$, we have 
		\[B_n \cong (A,1)/((A,1) \cap N_n)=(A,1)\cong A, \]
		so $B_n$ is a torsion-free abelian normal subgroup of $P_n$, thus $(P_n,B_n) \in \mathcal{C}$. 
		
		Observe that $\xi_n(g) \in B_n$ in $P_n$ if and only if $(1,g) \in (A,1)N_n=(A,A)$ in $A \rtimes G$, which is equivalent to $g \in A$. Thus $\xi_n^{-1}(B_n)=A$ in $G$, so $\xi_n$ gives rise to a morphism between $(G,A)$ and $(P_n,B_n)$ in $\mathcal{C}$.
		
		The final claim of the lemma follows from the second isomorphism theorem:
		\begin{align*}
			P_n/B_n &\cong (A \rtimes G)/\bigl((A,1)N_n\bigr) = \bigl((A,A)(1,G)\bigr)/(A,A) \\
			&\cong (1,G)/(1,A)  \cong G/A.    \qedhere
		\end{align*} 
	\end{proof}
	
	For every $n \in \N$, \Cref{lem:xi_n_is_morphism} allows us to define a map $\mathcal{F}_n: \mathcal{C} \to \mathcal{C}$, by $\mathcal{F}_n((G,A)) \coloneq (P_n,B_n)$, given in \Cref{def:compl}.

	\begin{rem} In view of Lemma~\ref{lem:functorial} below, it is not difficult to see that for every $n \in \N$ the map $\mathcal{F}_n: \mathcal{C} \to \mathcal{C}$ defines a covariant functor from the category $\mathcal C$ to itself.  We will not need this fact here, so we leave its verification to the reader.  
	\end{rem}
    
	If $Q=G/A$, the previous lemma implies that $P_n$ fits into the commutative diagram \eqref{eq:comm_diag_for_P_n}, where $\theta_n:A \to B_n$ is given by \[\theta_n(a) \coloneq \psi_n((1,a))=\psi_n((a^n,1)), ~\text{ for all }a \in A.\]
	\begin{equation}\label{eq:comm_diag_for_P_n}
		\begin{tikzcd}
			\{1\} \arrow[r] & A \arrow[r, hook] \arrow[d,  hook,"\theta_n"] & G \arrow[r,two heads] \arrow[d,  hook,"\xi_n"] & Q \arrow[r] \arrow[d, equal] & \{1\} \\
			\{1\} \arrow[r] & B_n \arrow[r, hook]                       & P_n \arrow[r,two heads]                   & Q \arrow[r]                                & \{1\}
		\end{tikzcd}
	\end{equation}
	Since $A$ is abelian, the conjugation action of $G$ on $A$ gives rise to an action of $Q$ on $A$. The action of $Q$ on $B_n$ is inherited from it, i.e., \[q . \psi_n((a,1)) \coloneq \psi_n((q. a,1)),~ \text{ for all } q \in Q \text{ and } a \in A.\] In particular, $\theta_n$ is a monomorphism of $Q$-modules.
	
	The main reason why we need the construction from \Cref{def:compl} is the following fact.
	
	\begin{prop}\label{prop:splits}
		Suppose that $(G,A) \in \mathcal{C}$, $Q=G/A$ is finite and $n \in \N$ is divisible by the order $|Q|$.  If $(P_n,B_n)=\mathcal{F}_n((G,A)) \in \mathcal{C}$, 
        then $P_n$ splits as the semidirect product $B_n \rtimes Q$, where the action of $Q$ on $B_n$ is inherited from the action of $Q$ on $A$ in $G$.  
	\end{prop}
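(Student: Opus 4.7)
My plan is to reduce the proposition to a standard cohomological vanishing statement. From \Cref{lem:xi_n_is_morphism} and diagram \eqref{eq:comm_diag_for_P_n} we already have the short exact sequence
\[
\{1\} \to B_n \to P_n \to Q \to \{1\},
\]
with $B_n$ abelian and carrying a $Q$-module structure inherited from $A$. Splitting this sequence as $B_n \rtimes Q$ (with the prescribed action) is equivalent to showing that its classifying class in $H^2(Q, B_n)$ is trivial. The compatibility of the action comes for free, because the isomorphism $\iota: A \to B_n$, $\iota(a) \coloneq \psi_n((a,1))$, is $Q$-equivariant when $Q$ acts on $B_n$ by conjugation inside $P_n$ — a short verification in $A \rtimes G$ shows that $\xi_n(\tilde{q})\, \iota(a)\, \xi_n(\tilde{q})^{-1} = \iota(q \cdot a)$ for any lift $\tilde q \in G$ of $q \in Q$.

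To compute this class, I would pick any set-theoretic section $\sigma: Q \to G$, so that $\sigma(q_1)\sigma(q_2) = c(q_1,q_2)\sigma(q_1 q_2)$ for a $2$-cocycle $c: Q \times Q \to A$ classifying the top row of \eqref{eq:comm_diag_for_P_n}. Postcomposing with $\xi_n$ yields a set-theoretic section $\xi_n \circ \sigma$ of the bottom row, and since $\xi_n$ and $\theta_n$ agree on $A$ (both sending $a$ to $\psi_n((1,a))$), the associated $2$-cocycle in $B_n$ is exactly $\theta_n \circ c$. The key observation is that, under $\iota$, the map $\theta_n$ becomes the $n$-th power map on $A$: indeed, $\theta_n(a) = \psi_n((a^n, 1)) = \iota(a)^n = \iota(a^n)$. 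Therefore, under the induced isomorphism $\iota_*: H^2(Q, A) \to H^2(Q, B_n)$, the class $[\theta_n \circ c]$ corresponds to $n \cdot [c]$.

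Finally I would invoke the standard fact that $|Q| \cdot H^k(Q, M) = 0$ for every $Q$-module $M$ and every $k \geq 1$ (via the composition of restriction and corestriction being multiplication by $|Q|$). Since $|Q|$ divides $n$ by hypothesis, $n \cdot [c] = 0$, hence $[\theta_n \circ c] = 0$ in $H^2(Q, B_n)$. This yields a splitting homomorphism $Q \to P_n$; combined with the $Q$-equivariance of $\iota$ discussed above, it produces the claimed decomposition $P_n \cong B_n \rtimes Q$, with the action of $Q$ on $B_n$ inherited from the action of $Q$ on $A$ in $G$.

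The main step that needs care is the bookkeeping around the identification $B_n \cong A$: one must check that $\xi_n|_A = \theta_n$, that $\theta_n$ becomes multiplication by $n$ under $\iota$, and that $\iota$ intertwines the two $Q$-actions. Each of these is an elementary computation in $A \rtimes G$ and in $P_n$, but pinning them down correctly is essential before the cohomological argument can be invoked.
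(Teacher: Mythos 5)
Your proposal is correct and takes essentially the same route as the paper: the paper's proof likewise identifies the class of the extension $B_n \to P_n \to Q$ with $\theta_n\circ f$ (where $f$ classifies $A \to G \to Q$), observes that $\theta_n$ is multiplication by $n$ under the identification $A\cong B_n$, and kills the class because $|Q|$, hence $n$, annihilates $H^2(Q,B_n)$ (\cite[Corollary III.10.2]{Brown}). Your extra bookkeeping --- that $\xi_n|_A=\theta_n$, that the conjugation action of $Q$ on $B_n$ matches the inherited one, and that the bottom cocycle is $\theta_n\circ c$ --- is exactly the content the paper delegates to \Cref{lem:xi_n_is_morphism} and the discussion around diagram \eqref{eq:comm_diag_for_P_n}.
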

	
	\begin{proof} Let $f:Q \times Q \to A$ be a $2$-cocycle determining the extension \[A \to G \to Q. \]
		In view of Lemma~\ref{lem:xi_n_is_morphism} and diagram \eqref{eq:comm_diag_for_P_n}, the composition $\overline f=\theta_n \circ f:Q \times Q \to B_n$ is a $2$-cocycle, determining the extension \[B_n \to P_n \to Q.\]
		
		By construction,  $\frac 1n \theta_n(A) \subseteq B_n$ (we are using additive notation on $A$ and $B_n$ as we treat them as $Q$-modules here), so $h=\frac1n \bar f:Q\times Q \to B_n$ is also a $2$-cocycle. But multiplication by $n$ annihilates the cohomology group $H^2(Q,B_n)$ because $|Q|$ divides $n$ (see \cite[Corollary III.10.2]{Brown}), so $\bar f=n h$ is a coboundary, hence the extension $B_n \to P_n \to Q$ splits and $P_n \cong B_n \rtimes Q$.
	\end{proof}

	\begin{lemma}\label{lem:functorial}
		Given $(G_1,A_1), (G_2,A_2) \in \mathcal{C}$ and $n \in \N$, any morphism $\varphi:(G_1,A_1) \to (G_2,A_2)$  defines a morphism $\tilde\varphi:(P_{n,1},B_{n,1})\to (P_{n,2},B_{n,2})$  such that the corresponding group homomorphisms fit into the following commutative diagram (here $P_{n,i}=P_n(G_i,A_i)$, $B_{n,i}=B_n(G_i,A_i)$ and $\xi_{n,i}:G_i \to P_{n,i}$, $i=1,2$, are given by Definition~\ref{def:compl}):
		\begin{equation}\label{eq:phi_tilde}
			\begin{tikzcd}
				G_1 \arrow[r, "\varphi"] \arrow[d, "{\xi_{n,1}}"] & G_2 \arrow[d, "{\xi_{n,2}}"] \\
				{P_{n,1}} \arrow[r, "\tilde \varphi"]     & {P_{n,2}}                \end{tikzcd}  \quad .
		\end{equation}
		Moreover, if $\varphi$ is injective (respectively, surjective), then so is  $\tilde \varphi$.
	\end{lemma}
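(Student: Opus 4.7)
The plan is to construct $\tilde\varphi$ by lifting $\varphi$ to the semidirect product and then descending to the quotient. First I would define a homomorphism $\Phi:A_1\rtimes G_1\to A_2\rtimes G_2$ by $\Phi((a,g))\coloneq (\varphi(a),\varphi(g))$. This is well-defined as a semidirect product map: $\varphi(A_1)\subseteq A_2$ holds because $A_1=\varphi^{-1}(A_2)$, and $\varphi$ is equivariant with respect to the conjugation actions, so $\Phi$ respects the twisted multiplication. Then I would check that $\Phi(N_{n,1})\subseteq N_{n,2}$, which is immediate from $\Phi((a^n,a^{-1}))=(\varphi(a)^n,\varphi(a)^{-1})$. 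Composing $\Phi$ with $\psi_{n,2}$ therefore kills $N_{n,1}$ and factors through a homomorphism $\tilde\varphi:P_{n,1}\to P_{n,2}$. Commutativity of \eqref{eq:phi_tilde} is then a direct computation: $\tilde\varphi(\xi_{n,1}(g))=\tilde\varphi(\psi_{n,1}((1,g)))=\psi_{n,2}((1,\varphi(g)))=\xi_{n,2}(\varphi(g))$.

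Next I would verify that $\tilde\varphi$ is actually a morphism in $\mathcal{C}$, meaning $\tilde\varphi^{-1}(B_{n,2})=B_{n,1}$. The inclusion $\tilde\varphi(B_{n,1})\subseteq B_{n,2}$ is obvious from the formula. For the reverse inclusion, take $x=\psi_{n,1}((a,g))$ with $\tilde\varphi(x)\in B_{n,2}$. Since $B_{n,2}=\psi_{n,2}((A_2,1))$ and $\psi_{n,2}((A_2,1))N_{n,2}=(A_2,A_2)$ in $A_2\rtimes G_2$, the condition $(\varphi(a),\varphi(g))\in(A_2,A_2)$ forces $\varphi(g)\in A_2$, hence $g\in\varphi^{-1}(A_2)=A_1$. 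Exploiting that $A_1$ is abelian and that $(g^n,g^{-1})\in N_{n,1}$, I can rewrite $(a,g)\equiv(a,g)(g^n,g^{-1})=(ag^n,1)\pmod{N_{n,1}}$, so $x\in B_{n,1}$ as required.

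For surjectivity, if $\varphi$ is onto then $\varphi(A_1)=A_2$ (using again $\varphi^{-1}(A_2)=A_1$), so $\Phi$ is surjective, and hence $\tilde\varphi$ is too. The more delicate step is injectivity. Assuming $\varphi$ is injective, suppose $\tilde\varphi(\psi_{n,1}((a,g)))=1$, i.e.\ $(\varphi(a),\varphi(g))\in N_{n,2}$, so there exists $b\in A_2$ with $\varphi(a)=b^n$ and $\varphi(g)=b^{-1}$. The second equation again gives $g\in A_1$, and then $b=\varphi(g^{-1})$, so $\varphi(a)=\varphi(g^{-n})$; by injectivity of $\varphi$ on $A_1$ we conclude $a=g^{-n}$. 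Setting $c\coloneq g^{-1}\in A_1$, we have $(a,g)=(c^n,c^{-1})\in N_{n,1}$, hence $\psi_{n,1}((a,g))=1$.

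The main obstacle to watch for is the subtle use of the categorical condition $\varphi^{-1}(A_2)=A_1$: a straightforward $\varphi(A_1)\subseteq A_2$ would be enough to define $\tilde\varphi$ and to get the commutative diagram, but both the verification that $\tilde\varphi$ is a $\mathcal{C}$-morphism and the preservation of injectivity rely on the stronger preimage condition, which propagates from $\varphi$ through the semidirect product to $\tilde\varphi$. Once this is handled carefully, the rest reduces to routine manipulations in $A\rtimes G$ modulo $N_n$.
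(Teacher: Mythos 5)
Your proof is correct and follows essentially the same route as the paper's: lift $\varphi$ to $\hat\varphi=\Phi$ on the semidirect products, check $\Phi(N_{n,1})\subseteq N_{n,2}$ so it descends to $\tilde\varphi$, and then verify the morphism condition, surjectivity, and injectivity exactly as the paper does (your explicit multiplication by $(g^n,g^{-1})$ in the morphism check is just a hands-on rephrasing of the paper's identity $(A_1,A_1)=(A_1,1)N_{n,1}$). No gaps.
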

	
	\begin{proof} Since $\varphi(A_1) \subseteq A_2$, we can define the map 
    \begin{equation}\label{eq:def_of_hat_varphi}
      \hat\varphi:A_1 \rtimes G_1 \to A_2\rtimes G_2,~~(a,g)\stackrel{\hat\varphi}{\mapsto} (\varphi(a),\varphi(g)),  
    \end{equation} that fits into the commutative diagram:
		\begin{equation}\label{eq:phi_hat}
			\begin{tikzcd}
				G_1 \arrow[r, "\varphi"] \arrow[d] & G_2 \arrow[d] \\
				{A_1\rtimes G_1} \arrow[r, "\hat \varphi"]     & {A_2\rtimes G_2}                \end{tikzcd}  \quad ,
		\end{equation}   
		where the vertical maps are the natural embeddings $g \mapsto (1,g)$, of $G_i$ into $A_i \rtimes G_i$, $i=1,2$.
		
		Let $N_{n,i} \n A_i\rtimes G_i$ denote the kernel of the epimorphism $\psi_{n,i}: A_i\rtimes G_i \to P_{n,i}$, $i=1,2$, given by \Cref{def:compl}. Note that $\hat\varphi(N_{n,1}) \subseteq N_{n,2}$ by \eqref{eq:def_of_hat_varphi}, hence $ \hat{\varphi}$ factors through the quotients, giving the commutative diagram		\begin{equation}\label{eq:second_half}
			\begin{tikzcd}
				A_1\rtimes G_1 \arrow[r, "\hat\varphi"] \arrow[d, "\psi_{n,1}"] & A_2\rtimes G_2 \arrow[d,"\psi_{n,2}"] \\
				{ P_{n,1}} \arrow[r, "\tilde \varphi"]     & { P_{n,2}}                \end{tikzcd}  \quad ,
		\end{equation}   
		where $\tilde\varphi:P_{n,1} \to P_{n,2}$ is the induced homomorphism.
        Thus, \eqref{eq:phi_tilde} follows by combining \eqref{eq:phi_hat} with \eqref{eq:second_half}.

        To check that $\tilde\varphi$ defines a morphism between the objects $(P_{n,1},B_{n,1})$ and $(P_{n,2},B_{n,2})$ in $\mathcal{C}$, we need to verify that $B_{n,1}=\tilde\varphi^{-1}(B_{n,2})$. The inclusion $B_{n,1} \subseteq \tilde\varphi^{-1}(B_{n,2})$ is clear from \eqref{eq:second_half} because $B_{n,i}=\psi_{n,i}((A_i,1))$, $i=1,2$, and $\hat\varphi((A_1,1)) \subseteq (A_2,1)$, as $\varphi$ is a morphism between $(G_1,A_1)$ and $(G_2,A_2)$. For the opposite inclusion, suppose that $p \in P_{n,1}$ satisfies $\tilde\varphi(p) \in B_{n,2}$. Since $\psi_{n,1}$ is surjective, there is $(a,g) \in A_1 \rtimes G_1$ such that $p=\psi_{n,1}((a,g))$, and \eqref{eq:second_half} implies that 
        \[(\varphi(a),\varphi(g))=\hat\varphi((a,g)) \in \psi_{n,2}^{-1}(B_{n,2})=(A_2,1)N_{n,2}=(A_2,A_2).\]
Thus $\varphi(g) \in A_2$, so $g \in \varphi^{-1}(A_2)=A_1$. It follows that 
\[p=\psi_{n,1}((a,g)) \in \psi_{n,1}((A_1,A_1))=\psi_{n,1}((A_1,1)N_{n,1})=B_{n,1},\]
i.e., $\tilde\varphi^{-1}(B_{n,2}) \subseteq B_{n,1}$, as required.
        
		If $\varphi$ is surjective then $\varphi(A_1)=A_2$, so, in view of \eqref{eq:def_of_hat_varphi},  $\hat \varphi$ is also surjective. Therefore, $\tilde \varphi$ is surjective by \eqref{eq:second_half}. Assume, now, that $\varphi$ is injective. In view of \eqref{eq:second_half}, to show injectivity of $\tilde \varphi$ it is enough to check that $\hat\varphi^{-1}(N_{n,2}) \subseteq N_{n,1}$. Indeed, suppose that $\hat\varphi((a,g)) \in N_{n,2}$, for some $(a,g) \in A_1\rtimes G_1$. Then \[\hat\varphi((a,g))=(\varphi(a),\varphi(g))=(c^n,c^{-1}), \text{ for some } c \in A_2,\]
		which implies that $\varphi(g)=c^{-1} \in A_2$, so $g \in \varphi^{-1}(A_2)=A_1$ (because $\varphi$ defines a morphism between $(G_1,A_1)$ and $(G_2,A_2)$ in $\mathcal C$). Consequently, $\varphi(a)=c^n=\varphi(g^{-n})$, hence $a=g^{-n}$ in $G_1$, as $\varphi$ is injective. The latter shows that $(a,g)=(g^{-n},g) \in N_{n,1}$, so $\hat\varphi^{-1}(N_{n,2}) \subseteq N_{n,1}$ and $\tilde \varphi$ is injective.
	\end{proof}


\section{Permutational wreath products}\label{sec:wreath}
	Let $S$ be a group acting on a set $\Omega$ on the left. For any group $A$ we can define the \emph{permutational wreath product} $A \wr_\Omega S$ as the semidirect product $A^\Omega \rtimes S$, where $A^\Omega$ consists of all functions $f:\Omega \to A$ with finite support and $S$ acts on $A^\Omega$ according to the formula \[(g . f)(x) \coloneq f(g^{-1} . x),~\text{ for all } g \in S \text{ and } x \in \Omega.\] The subgroup $A^\Omega$ is said to be the \emph{base} of the wreath product $A \wr_\Omega S$.
	
	\begin{rem}\label{rem:embeddings_between_wreath_products} Suppose that $A$, $S$ are groups and $S$ acts on a set $\Omega$. If $T \leqslant S$ and $\Sigma \subseteq \Omega$ is a $T$-invariant subset then $A \wr_\Sigma T$ naturally embeds as a subgroup of $A \wr_\Omega S$.   
	\end{rem}
	
	In this embedding, any function $f:\Sigma \to A$ is sent to a function $\hat f:\Omega \to A$ such that $\hat f(x)=f(x)$, for every $x \in \Sigma$, and $\hat f(y)=1$, for every $y \in \Omega \setminus\Sigma$.

	If $\Omega=S$ and the action of $S$ on itself is by left translations, then the permutational wreath product $A \wr_S S$ is called the \emph{standard wreath product}, denoted by $A \wr S$.
	
	For any set $\Omega$, we will denote by $\Sym(\Omega)$ the group of all permutations of this set.  
	We will always assume that the action of $\Sym(\Omega)$ on $\Omega$ is the standard left action, unless specified otherwise. 
	
	If $k \in \N$ then we will write $\Omega_k\coloneq \{1,\dots,k\}$ and $S_k \coloneq \Sym(\Omega_k)$, the symmetric group on $k$ elements.
	Given any group $A$,  the group $A \wr_{\Omega_k} S_k$ is called the \emph{complete monomial group of $A$ of degree $k$} in some literature. It was introduced and studied by Ore \cite{Ore} and can be represented as a group of $k \times k$ matrices, where each row and column contain exactly one non-zero entry from $A$. Such \emph{monomial matrices} are multiplied in a natural way, establishing a group embedding $A \wr_{\Omega_k} S_k \hookrightarrow \mathrm{GL}(k,\Z A)$. In this section, we will produce embeddings of finitely generated virtually abelian groups into complete monomial groups satisfying additional properties (see \Cref{prop:embedding_for_virt_ab_gps}).

	If $L$ is a normal subgroup of a group $G$, any subgroup $Q \leqslant G$ satisfying $G=LQ$ and $Q \cap L=\{1\}$ will be called a \emph{complement to $L$} in $G$.  If $S$ is a group acting on a set $\Omega$ and $x \in \Omega$, we will write $S_x$ to denote the \emph{stabilizer} of $x$ in $S$.
	The next proposition is a simplification of a result of Parker and Quick \cite{PQ}, which generalized a classical result of Ore for complete monomial groups \cite[Theorem~I.11]{Ore}. 
	
	\begin{prop}[{\cite[Theorem~2.6]{PQ}}]\label{prop:P-Q}
		Let $A$ be any group, let $S$ be a group acting transitively on a finite set $\Omega$, and let $G=A \wr_\Omega S$. Suppose that for some $x \in \Omega$ the stabilizer $S_x$ does not admit non-trivial homomorphisms to $A$. Then there is exactly one conjugacy class of complements to $A^\Omega$ in $G$.
		%
	\end{prop}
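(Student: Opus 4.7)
The plan is to identify complements to $A^\Omega$ in $G$ with crossed homomorphisms $S \to A^\Omega$ modulo principal ones, and then to kill this cohomology set by a non-abelian Shapiro-type argument that exploits the hypothesis on $S_x$. Given a complement $T$ to $A^\Omega$ in $G = A^\Omega \rtimes S$, for each $s \in S$ there is a unique $d(s) \in A^\Omega$ with $d(s)\, s \in T$, and the product rule in $G$ forces the cocycle identity $d(st) = d(s)\,(s \cdot d(t))$. Conjugating $T$ by $b \in A^\Omega$ replaces $d$ by $s \mapsto b\, d(s)\, (s \cdot b^{-1})$. Since every element of $G$ has the form $b t$ with $b \in A^\Omega$ and $t \in S$, conjugating $S$ by an arbitrary element of $G$ is the same as conjugating by an element of $A^\Omega$; so it suffices to prove that every such derivation $d$ is principal, i.e.\ that $d(s) = b^{-1}\,(s \cdot b)$ for some $b \in A^\Omega$.

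The first key step is to use the hypothesis on $S_x$. For $s \in S_x$, set $\phi(s) \coloneq d(s)_x \in A$. Since $s^{-1}$ fixes $x$, the cocycle identity evaluated at $x$ gives
\[ d(st)_x \;=\; d(s)_x\, d(t)_{s^{-1} x} \;=\; d(s)_x\, d(t)_x \;=\; \phi(s)\,\phi(t), \]
so $\phi\colon S_x \to A$ is a homomorphism, hence trivial by assumption: $d(s)_x = 1$ for all $s \in S_x$.

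The second key step is to build $b \in A^\Omega$ explicitly from coset representatives. Using transitivity, for each $y \in \Omega$ pick $h_y \in S$ with $h_y \cdot x = y$, taking $h_x = 1$, and set $b(y) \coloneq d(h_y)_y^{-1}$. Given $s \in S$, the element $\sigma \coloneq h_y^{-1}\, s\, h_{s^{-1} y}$ lies in $S_x$ by a direct check, so $s$ factors as $h_y\,\sigma\,h_{s^{-1} y}^{-1}$. Unpacking the cocycle identity along this factorization and evaluating at $y$, the middle term contains $d(\sigma)_x = 1$ and the outer terms collapse (via the standard identity $d(g^{-1}) = g^{-1} \cdot d(g)^{-1}$) to
\[ d(s)_y \;=\; d(h_y)_y \cdot d(h_{s^{-1} y})_{s^{-1} y}^{-1} \;=\; b(y)^{-1}\, b(s^{-1} y) \;=\; \bigl(b^{-1}\,(s \cdot b)\bigr)(y). \]
Thus $d$ is principal, and hence $T$ is $A^\Omega$-conjugate (and so $G$-conjugate) to $S$, establishing the uniqueness of the conjugacy class of complements.

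The main obstacle is the bookkeeping in the last step: because $A^\Omega$ is non-abelian in general, and the convention $(s \cdot f)(y) = f(s^{-1} y)$ makes it very easy to mix up inverses and subscripts when applying the cocycle identity to the three-fold product $h_y\,\sigma\,h_{s^{-1} y}^{-1}$, one has to be quite careful. Conceptually, this computation is nothing other than the non-abelian version of Shapiro's isomorphism $H^1\bigl(S,\,\mathrm{Ind}_{S_x}^{S} A\bigr) \cong H^1(S_x,\,A)$, which in this setting reduces the whole statement to the single cohomological input about homomorphisms $S_x \to A$. Once the formula for $b$ is written down, the rest is direct verification.
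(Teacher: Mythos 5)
Your argument is correct, but note that the paper contains no internal proof to compare it with: \Cref{prop:P-Q} is quoted directly from Parker and Quick \cite[Theorem~2.6]{PQ} and used as a black box. Your crossed-homomorphism argument is a sound, self-contained substitute. Identifying a complement $T$ with the derivation $d$ determined by $d(s)s\in T$, the map $s\mapsto d(s)_x$ is indeed a homomorphism $S_x\to A$ (since $s^{-1}x=x$), hence trivial; and the evaluation at $y$ of the cocycle identity along the factorization $s=h_y\,\sigma\,h_{s^{-1}y}^{-1}$ with $\sigma=h_y^{-1}sh_{s^{-1}y}\in S_x$ does collapse as you claim: the middle term contributes $d(\sigma)_x=1$, the shift by $\sigma$ is harmless because $\sigma^{-1}x=x$, and $d(g^{-1})=g^{-1}\cdot d(g)^{-1}$ turns the last factor into $\bigl(d(h_{s^{-1}y})_{s^{-1}y}\bigr)^{-1}$, giving $d(s)_y=b(y)^{-1}b(s^{-1}y)$ with $b(y)=d(h_y)_y^{-1}$ (a genuine element of $A^\Omega$ since $\Omega$ is finite). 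Hence $d(s)s=b^{-1}sb$ for all $s$, so $T=b^{-1}Sb$; together with your observation that conjugating $S$ by $bt$ ($b\in A^\Omega$, $t\in S$) is the same as conjugating by $b$, this gives exactly one conjugacy class of complements. What your route buys over the citation is self-containedness and the explicit isolation of the only input, $\mathrm{Hom}(S_x,A)=\{1\}$, via the (non-abelian) Shapiro reduction from $A^\Omega=\mathrm{Ind}_{S_x}^{S}A$ to $A$; Parker and Quick's Theorem~2.6 is a more general parametrization of conjugacy classes of complements in the same cohomological spirit, of which the proposition used here is the special case needed.
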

	
	We will extend this result to non-transitive actions, which will help us describe conjugacy classes of subgroups  that are disjoint from base groups in wreath products.
	
	\begin{prop}\label{prop:sbgps_disjoint_from_base_conj_into_S} Suppose that $A,S$ are groups, $S$ acts on a non-empty finite set $\Omega$, $G=A \wr_\Omega S$ and $Q \leqslant G$ is a subgroup which has trivial intersection with the base $L \coloneq A^\Omega$ of the wreath product $G$.  Let $R \leqslant S$ denote the image of $Q$ under the natural projection $G \to S$. If $R_x$ does not admit non-trivial homomorphisms to $A$, for every $x \in \Omega$, then there is $h \in L$ such that $Q= h Rh^{-1}$ in $G$.    
	\end{prop}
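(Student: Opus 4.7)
The plan is to reformulate the conjugacy assertion as the vanishing of a non-abelian $1$-cocycle, and then reduce to the transitive case handled by \Cref{prop:P-Q}. Since $Q \cap L = \{1\}$ and the projection $G \to S$ sends $Q$ onto $R$, every element $r \in R$ has a unique lift in $Q$, which I write as $a(r)\, r$ with $a(r) \in L$. Closure of $Q$ under multiplication then forces the identity
\[
a(r_1 r_2) \;=\; a(r_1)\, \bigl(r_1 \cdot a(r_2)\bigr), \qquad r_1,r_2 \in R,
\]
where $r \cdot b \coloneq r b r^{-1}$ denotes the permutation action of $S$ on the base $L = A^\Omega$. Producing $h \in L$ with $Q = h R h^{-1}$ is equivalent to trivializing this cocycle as a coboundary, i.e.\ finding $h \in L$ with $a(r) = h\, (r \cdot h^{-1})$ for every $r \in R$, since then $a(r)\, r = h r h^{-1}$.

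Next I would split the problem across the (finitely many) $R$-orbits $\Omega = \Omega_1 \sqcup \cdots \sqcup \Omega_n$. The corresponding product decomposition $L = A^{\Omega_1} \times \cdots \times A^{\Omega_n}$ is preserved by $R$, so the cocycle decomposes as $a = (a_1,\ldots,a_n)$, with each $a_i \colon R \to A^{\Omega_i}$ satisfying the analogous cocycle identity for the transitive action of $R$ on $\Omega_i$. Because the factors $A^{\Omega_i}$ pairwise commute in $L$ and each is $R$-invariant, any collection of orbit-wise trivializations $h_i \in A^{\Omega_i}$ of $a_i$ assembles into a single $h \coloneq h_1 h_2 \cdots h_n \in L$ that trivializes $a$.

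To trivialize one $a_i$, I would work inside the subgroup $G'_i \coloneq A \wr_{\Omega_i} R \leqslant G$ provided by \Cref{rem:embeddings_between_wreath_products}, and consider $Q_i \coloneq \{a_i(r)\, r : r \in R\}$. The cocycle identity makes $Q_i$ a subgroup of $G'_i$, and a routine check shows that it is a complement to the base $A^{\Omega_i}$: intersection with $A^{\Omega_i}$ forces $r = 1$, and any $b r \in G'_i$ factors as $\bigl(b\, a_i(r)^{-1}\bigr) \cdot \bigl(a_i(r) r\bigr) \in A^{\Omega_i} \cdot Q_i$. The action of $R$ on $\Omega_i$ is transitive with point stabilizers $R_x$, which by hypothesis admit no non-trivial homomorphism to $A$, so \Cref{prop:P-Q} applies to $G'_i$ and tells us that $Q_i$ is conjugate to the standard complement $R$ inside $G'_i$. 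Writing a conjugator as $b r_0 \in A^{\Omega_i} \rtimes R$ and using that $r_0 \in R$ gives $Q_i = (br_0)\, R\, (br_0)^{-1} = b R b^{-1}$, so the conjugator may be taken to lie in $A^{\Omega_i}$. Unwinding the resulting identity $Q_i = h_i R h_i^{-1}$ then recovers $a_i(r) = h_i\, (r \cdot h_i^{-1})$ for all $r \in R$.

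The real content of the argument is the passage from a non-transitive $Q$ to the transitive situation of \Cref{prop:P-Q}, which is exactly what makes the hypothesis that each $R_x$ has no non-trivial homomorphism to $A$ indispensable; once the cocycle reformulation and the orbit reduction are in place, everything else is bookkeeping. Forming $h \coloneq h_1 h_2 \cdots h_n$ and using that each $A^{\Omega_i}$ is $R$-invariant yields $a(r) = h\, (r \cdot h^{-1})$ for all $r \in R$, whence $Q = h R h^{-1}$ in $G$.
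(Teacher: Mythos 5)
Your argument is correct, and it takes a genuinely different route from the paper. The paper proves \Cref{prop:sbgps_disjoint_from_base_conj_into_S} by induction on the number of $R$-orbits: it splits off one orbit $\Sigma_1$, applies \Cref{prop:P-Q} and the induction hypothesis in the two quotients $H/A^{\Sigma_1}$ and $H/A^{\Omega\setminus\Sigma_1}$, and then needs the auxiliary \Cref{lem:disjoint_norm_sbgps} to synchronize the two quotient conjugators (the point of that lemma is exactly that the conjugating elements obtained in the two quotients can be chosen compatibly, which is arranged via the retraction of $H=LR$ onto $R$). Your non-abelian $1$-cocycle reformulation makes this synchronization automatic: the components $a_i(r)$ over the various orbits are all indexed by the same $r\in R$, so once each $a_i$ is trivialized by some $h_i\in A^{\Omega_i}$ (via \Cref{prop:P-Q} applied to $A\wr_{\Omega_i}R$, with the conjugator pushed into the base exactly as you do), the product $h=h_1\cdots h_n$ trivializes $a$ because the factors $A^{\Omega_i}$ commute and are $R$-invariant. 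Thus you dispense with both the induction and \Cref{lem:disjoint_norm_sbgps}, at the cost of carrying the (standard, and correctly verified) dictionary between complements of $L$ containing prescribed projections and cocycles/coboundaries; both proofs rest on \Cref{prop:P-Q} for the transitive case, where the hypothesis on the stabilizers $R_x$ is used.
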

	
	Before proving this proposition we need an auxiliary lemma.
	\begin{lemma}\label{lem:disjoint_norm_sbgps} Suppose that $H$ is a group, $N_1,N_2 \n H$ are normal subgroups with trivial intersection, and $Q,R \leqslant H$ are two complements to $L\coloneq N_1N_2$ in $H$. Let $\alpha_i:H \to H/N_i$ be the natural homomorphisms, $i=1,2$. If  $\alpha_i(Q)$ is conjugate to $\alpha_i(R)$ in $H/N_i$, for each $i=1,2$, then there is $h \in L$ such that $Q=hRh^{-1}$ in $H$.    
	\end{lemma}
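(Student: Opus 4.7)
The plan is to reduce, in two stages, to the case where $\alpha_1(Q) = \alpha_1(R)$ \emph{and} $\alpha_2(Q) = \alpha_2(R)$, and then to deduce directly that $Q = R$. First I will record the elementary structural facts: since $N_1 \cap N_2 = \{1\}$ and both are normal in $H$, they commute elementwise, so $L = N_1 N_2$ is an internal direct product $N_1 \times N_2$. In particular $\alpha_1(L) = \alpha_1(N_2)$ and $\alpha_2(L) = \alpha_2(N_1)$, and the images $\alpha_1(Q), \alpha_1(R)$ are complements to $\alpha_1(L)$ in $H/N_1$, and similarly for $\alpha_2$ in $H/N_2$.

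The key general observation I will rely on is the following: if two complements $Q_1, Q_2$ of a normal subgroup $M \n K$ are conjugate in $K$, then they are already conjugate by an element of $M$, since writing a conjugating element $k = m q$ with $m \in M$ and $q \in Q_1$ gives $k Q_1 k^{-1} = m Q_1 m^{-1}$. Applying this inside $H$ itself to the hypothesis for $i=1$ yields $\ell \in L$ with $\alpha_1(\ell R \ell^{-1}) = \alpha_1(Q)$. After replacing $R$ by $\ell R \ell^{-1}$ (still a complement to $L$ in $H$; the hypothesis for $i=2$ is unaffected since it only involves conjugacy classes, and the desired conclusion is insensitive to conjugation by $L$), I may assume $\alpha_1(Q) = \alpha_1(R)$. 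Now I apply the same observation inside $H/N_2$ to the (preserved) hypothesis for $i=2$: it produces $\bar{\ell} \in \alpha_2(L) = \alpha_2(N_1)$ conjugating $\alpha_2(R)$ to $\alpha_2(Q)$, so I can lift to some $n_1 \in N_1$ and replace $R$ by $n_1 R n_1^{-1}$. The crucial check — and the only mild subtlety in the argument — is that this second replacement preserves the first equality: because $n_1 \in N_1$, one has $\alpha_1(n_1 R n_1^{-1}) = \alpha_1(R) = \alpha_1(Q)$. Thus after these two successive conjugations by elements of $L$, both $\alpha_1(Q) = \alpha_1(R)$ and $\alpha_2(Q) = \alpha_2(R)$ hold.

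Finally, I will deduce $Q = R$ from these two equalities. Given $q \in Q$, the first gives $q = r_1 m_1$ with $r_1 \in R$ and $m_1 \in N_1$, while the second gives $q = r_2 m_2$ with $r_2 \in R$ and $m_2 \in N_2$. Then $r_2^{-1} r_1 = m_2 m_1^{-1} \in R \cap L = \{1\}$, so $r_1 = r_2$ and $m_1 = m_2 \in N_1 \cap N_2 = \{1\}$, giving $q = r_1 \in R$; the symmetric argument yields $R \subseteq Q$. I do not anticipate any serious obstacle here: the only point requiring care is the compatibility of the two reductions, and this works cleanly precisely because the conjugating element produced by the second application of the observation can be chosen in $N_1$, whose conjugation action leaves every coset modulo $N_1$ fixed.
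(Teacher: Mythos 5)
Your proof is correct and rests on essentially the same mechanism as the paper's: your ``key observation'' that conjugate complements are conjugate by an element of the normal subgroup is exactly the paper's opening reduction (``we can assume $h_1\in N_2$ and $h_2\in N_1$''), and both arguments then combine the information modulo $N_1$ and modulo $N_2$ using $N_1\cap N_2=\{1\}$. The only difference is organizational: you normalize $R$ by two successive conjugations (by $\ell\in L$ and then $n_1\in N_1$, correctly checking the second does not disturb the first) and conclude $Q=R$ directly, whereas the paper uses the retraction $\rho\colon H\to R$ with kernel $L$ to produce the single conjugator $h_1h_2$ in one element-wise computation.
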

	
	\begin{proof} By the assumptions, for each $i=1,2$ there exists $h_i \in H$ such that $\alpha_i(Q)=\alpha_i(h_i R h_i^{-1})$. Since $H=N_1N_2R$ and $N_i=\ker\alpha_i$ for $i=1,2$, we can assume that $h_1 \in N_2$ and $h_2 \in N_1$. 
		
		Thus, given any $q \in Q$  there is $r_i=r_i(q) \in R$ such that 
		\begin{equation}\label{eq:alpha_i_of_r_i}
			\alpha_i(q)=\alpha_i(h_ir_ih_i^{-1}) \text{ in } H/N_i, \text{ for } i=1,2.
		\end{equation}
		
		Let $\rho: H \to R$ denote the natural retraction of $H$ onto $R$ with kernel $L=N_1N_2$. Then $\rho$ factors through $\alpha_i$, for each $i=1,2$, so \eqref{eq:alpha_i_of_r_i} implies that 
		\[\rho(q)=\rho(h_ir_ih_i^{-1})=\rho(r_i)=r_i, \text{ for } i=1,2,\]
		i.e., $r_1=r_2=\rho(q)$ in $R$. Recall that $\alpha_1(h_2)=1$, so \eqref{eq:alpha_i_of_r_i} yields
		\[\alpha_1(h_1h_2 \rho(q) h_2^{-1} h_1^{-1})=\alpha_1(h_1 r_1 h_1^{-1})=\alpha_1(q), \text{ for every } q \in Q.\]
        
		Similarly, $\alpha_2(h_1h_2 \rho(q) h_2^{-1} h_1^{-1})=\alpha_2(q)$, for all $q \in Q$. Since $\ker\alpha_1\cap\ker\alpha_2=N_1 \cap N_2=\{1\}$ by the assumptions,  we can conclude that 
		\begin{equation}\label{eq:conj_by_h_1h_2}
			q=h_1h_2 \rho(q) h_2^{-1}h_1^{-1} \text{ in }H, \text{ for every }q \in Q.    
		\end{equation}
        
		Since $h \coloneq h_1h_2 \in L=\ker\rho$ and the restriction of $\rho$ to $Q$ is an isomorphism between $Q$ and $R$, \eqref{eq:conj_by_h_1h_2} shows that $Q=hRh^{-1}$ in $H$, as claimed.
	\end{proof}

	\begin{proof}[Proof of \Cref{prop:sbgps_disjoint_from_base_conj_into_S}] Set $H=LR \leqslant G$ and observe that $Q \leqslant H$ and $H \cong A \wr_\Omega R$ is itself a wreath product, where the action of $R$ on $\Omega$ is induced by the action of $S$ on $\Omega$ (see \Cref{rem:embeddings_between_wreath_products}). Moreover, $Q$ and $R$ are complements to $L=A^\Omega$ in $H$.
		
		Let $x_1,\dots,x_k \in \Omega$ be a list of orbit representatives for the action of $R$ on $\Omega$. We will argue by induction on $k$. If $k=1$ then the desired statement follows from \Cref{prop:P-Q}, so assume that $k \ge 2$. 
		
		Let $\Sigma_1=R . x_1$ be the $R$-orbit of $x_1$ and $\Sigma_2=\Omega\setminus\Sigma_1$ be the union of the remaining orbits.
		Observe that the subgroups $N_1=A^{\Sigma_1}$ and $N_2=A^{\Sigma_2}$ are normal in $H$, $N_1 \cap N_2=\{1\}$, $L=N_1N_2$, and we have natural isomorphisms $H/N_1 \cong A \wr_{\Sigma_2} R$, $H/N_2 \cong A \wr_{\Sigma_1} R$. Since the image of $Q$ in $A \wr_{\Sigma_2} R$ is a complement to $A^{\Sigma_2}$ and $R$ acts with $k-1<k$ orbits on $\Sigma_2$, we can use the induction hypothesis to deduce that the images of $Q$ and $R$ are conjugate in $H/N_1$. Similarly, the images of $Q$ and $R$ are conjugate in $H/N_2$. Therefore, we can apply \Cref{lem:disjoint_norm_sbgps} to find $h \in L$ such that $Q=hRh^{-1}$ in $H$. This completes the proof of the proposition by induction.
	\end{proof}
	
	The following immediate corollary of \Cref{prop:sbgps_disjoint_from_base_conj_into_S} will be useful.
	
	\begin{cor}\label{cor:conj_class_of_fin_sbgps_in_wr_prod} Suppose that $A$ is a torsion-free group and $S$ is a  group acting on a finite set $\Omega$. If $G=A \wr_\Omega S$ and $Q \leqslant G$ is any finite subgroup then there exists $h \in A^\Omega $ such that $Q \subseteq hSh^{-1}$ in $G$.   
	\end{cor}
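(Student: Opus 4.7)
The plan is to verify that this corollary is indeed a direct application of \Cref{prop:sbgps_disjoint_from_base_conj_into_S}, reducing everything to two torsion-freeness observations.

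First I would check that $Q$ has trivial intersection with the base $L = A^\Omega$. Since $A$ is torsion-free, so is the direct sum $A^\Omega$; hence $Q \cap A^\Omega$ is a finite subgroup of a torsion-free group, and must therefore be trivial. This lets $Q$ project injectively onto its image $R \leqslant S$ under the natural projection $G \to S$, so $R$ is finite of the same order as $Q$.

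Next I would verify the hypothesis on point stabilizers in \Cref{prop:sbgps_disjoint_from_base_conj_into_S}. For any $x \in \Omega$, the stabilizer $R_x$ is a subgroup of the finite group $R$, hence finite. Any homomorphism from a finite group to a torsion-free group $A$ must be trivial (its image is a finite subgroup of $A$, therefore $\{1\}$). Thus $R_x$ admits no non-trivial homomorphism to $A$ for every $x \in \Omega$, so all assumptions of \Cref{prop:sbgps_disjoint_from_base_conj_into_S} are satisfied.

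Applying that proposition yields $h \in L = A^\Omega$ with $Q = hRh^{-1}$. Since $R \leqslant S$, we get $Q = hRh^{-1} \subseteq hSh^{-1}$, which is precisely the desired conclusion. There is no real obstacle here; the only minor point to be careful with is noting explicitly that torsion-freeness of $A$ is used twice (once to kill $Q \cap A^\Omega$, once to kill homomorphisms from finite stabilizers into $A$), so that the corollary requires both that $A$ be torsion-free and that $Q$ be finite.
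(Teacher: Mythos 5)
Your proposal is correct and is exactly the argument the paper intends: the corollary is stated as an immediate consequence of \Cref{prop:sbgps_disjoint_from_base_conj_into_S}, with torsion-freeness of $A$ giving both $Q \cap A^\Omega = \{1\}$ and the triviality of homomorphisms from the finite stabilizers $R_x$ to $A$. Nothing is missing.
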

	
	If $S$ is a group acting on a set $\Omega$, then an element $\tau \in S$ is said to act \emph{freely} if it has no fixed points. A subgroup $T \leqslant S$ acts \emph{freely} on $\Omega$ if every non-trivial element $\tau \in T$ acts freely.
	
	\begin{prop}\label{prop:embedding_for_virt_ab_gps}
		Let $P$ be a group with a finite index normal subgroup $B \cong \Z^j$, for some $j \in \N$, and let $Q \leqslant P$ be a finite subgroup. Then there exist $k \in \N$ and a monomorphism $\eta: P \to \Z \wr_{\Omega_k} S_k$, where $\Omega_k=\{1,\dots,k\}$, such that all of the following hold:
		\begin{itemize}
			\item[(i)] $\eta(B) \subseteq \Z^{\Omega_k}$;
			\item[(ii)] $\eta(Q) \subseteq S_k$ and $\eta(Q)$ acts freely on ${\Omega_k}$.
		\end{itemize}
	\end{prop}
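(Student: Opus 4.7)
The plan is to combine three ingredients: the splitting of \Cref{prop:splits}, a coinduction-style construction that turns the conjugation action of $\bar Q \coloneq P/B$ on $B$ into a permutation action, and \Cref{cor:conj_class_of_fin_sbgps_in_wr_prod} to position $Q$ inside the top symmetric group. First, I would apply \Cref{prop:splits} with $n \coloneq |P:B|$ to embed $(P,B)$ into $(P_n, B_n) \in \mathcal{C}$, where $P_n = B_n \rtimes \bar Q$ is a split extension, $\bar Q \cong P/B$ and $B_n \cong B \cong \Z^j$. It thus suffices to construct an injection $\eta_0 \colon P_n \hookrightarrow \Z \wr_{\Omega_k} S_k$ satisfying (i), with $\eta_0(\bar Q) \subseteq S_k$ acting freely on $\Omega_k$; property (ii) for the restriction to $P$ will then follow from one further conjugation handling $Q \leqslant P \leqslant P_n$.

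Let $k_0 \coloneq |\bar Q|$ and set $\Omega \coloneq \bar Q \times \{1, \ldots, j\}$, so that $k \coloneq |\Omega| = j k_0$. The rule $q \cdot (q', i) = (qq', i)$ defines a free action of $\bar Q$ on $\Omega$ and hence an embedding $\bar Q \hookrightarrow \Sym(\Omega) = S_k$. Identifying $\Z^\Omega$ with $(B_n)^{\bar Q}$ via $f \leftrightarrow ((q,i) \mapsto f(q)_i)$, the induced $\bar Q$-action on $(B_n)^{\bar Q}$ becomes $(q' \cdot f)(q) = f(q'^{-1} q)$, and a direct check shows that the coinduction map $\phi \colon B_n \to (B_n)^{\bar Q}$, $\phi(b)(q) \coloneq q^{-1} \cdot b$, is an injective $\bar Q$-equivariant group homomorphism. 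Combining $\phi$ with the embedding $\bar Q \hookrightarrow S_k$ yields an injection $\eta_0 \colon P_n = B_n \rtimes \bar Q \hookrightarrow \Z^\Omega \rtimes S_k = \Z \wr_\Omega S_k$ such that $\eta_0(B_n) \subseteq \Z^\Omega$ and $\eta_0(\bar Q) \subseteq S_k$ acts freely on $\Omega$.

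It remains to force $\eta_0(Q)$ into $S_k$. Applied to the finite subgroup $\eta_0(Q) \leqslant \Z \wr_\Omega S_k$, \Cref{cor:conj_class_of_fin_sbgps_in_wr_prod} (with $A = \Z$) produces $h \in \Z^\Omega$ with $\eta_0(Q) \subseteq h S_k h^{-1}$. Set $\eta(p) \coloneq h^{-1} \eta_0(p) h$; this is an injective homomorphism. Since both $h$ and $\eta_0(B) \subseteq \eta_0(B_n)$ lie in the abelian base $\Z^\Omega$, conjugation by $h$ fixes $\eta_0(B)$, so $\eta(B) = \eta_0(B) \subseteq \Z^\Omega$ and (i) persists. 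By construction $\eta(Q) \subseteq S_k$; moreover, if $\pi \colon \Z \wr_\Omega S_k \twoheadrightarrow S_k$ is the projection with kernel $\Z^\Omega$, then $\pi \circ \eta = \pi \circ \eta_0$ because $h \in \ker \pi$, so $\pi(\eta(q)) \in \bar Q$ for every $q \in Q$, whereas $\eta(q) \in S_k$ forces $\eta(q) = \pi(\eta(q)) \in \bar Q$. Hence $\eta(Q) \subseteq \bar Q$, whose action on $\Omega$ is free, verifying (ii).

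The main obstacle is in the last step: even after \Cref{prop:splits} provides a split extension, the finite subgroup $Q \leqslant B_n \rtimes \bar Q$ need not be conjugate to any subgroup of the complement $\bar Q$, since the obstruction in $H^1(\bar Q, B_n)$ may be non-trivial. Passing to the larger wreath product $\Z \wr_\Omega S_k$, whose base is a bigger torsion-free abelian group and whose top is a much larger symmetric group, provides exactly the extra room needed for \Cref{cor:conj_class_of_fin_sbgps_in_wr_prod} to conjugate $Q$ into $S_k$, while abelianness of the base ensures that this conjugation preserves both condition (i) and the freeness of the $\bar Q$-action on $\Omega$.
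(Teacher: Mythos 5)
Your proof is correct, and at its core it follows the same strategy as the paper's: embed $P$ into a complete monomial group $\Z \wr_\Omega S_k$ with $\Omega$ a free $P/B$-set of size $j\,|P/B|$ so that $B$ lands in the base, then invoke \Cref{cor:conj_class_of_fin_sbgps_in_wr_prod} to conjugate $Q$ into the top symmetric group by an element of the (abelian) base, which does not disturb condition (i). The difference is one of packaging. The paper applies the Krasner--Kaloujnine Universal Embedding Theorem directly to the (possibly non-split) extension $B \to P \to P/B$, obtaining $\psi \colon P \hookrightarrow B \wr (P/B)$ in one stroke, and then applies \Cref{cor:conj_class_of_fin_sbgps_in_wr_prod} inside $\Z \wr_{\Omega_k} (P/B)$ before enlarging the top group to $S_k$. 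You instead first pass to the split supergroup $P_n = B_n \rtimes \bar Q$ via \Cref{prop:splits} (with $n = |P:B|$) and \Cref{lem:xi_n_is_morphism}, so that the wreath-product embedding can be built by hand from the coinduction map $\phi(b)(q) = q^{-1}\cdot b$ together with the left-regular action of $\bar Q$, and you apply \Cref{cor:conj_class_of_fin_sbgps_in_wr_prod} in $\Z \wr_\Omega S_k$ itself, recovering freeness by observing that $\pi\circ\eta = \pi\circ\eta_0$ sends $Q$ into $\bar Q$. Your route is more self-contained in that it re-derives the split case of Krasner--Kaloujnine rather than quoting the theorem, but the detour through \Cref{prop:splits} is unnecessary: the coinduction map you write down is exactly the Krasner--Kaloujnine embedding, which already works for non-split extensions after choosing a set-theoretic section, so the splitting buys you nothing here. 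Your closing remark about the $H^1(\bar Q, B_n)$ obstruction correctly identifies why the second conjugation step (inside the big wreath product, not inside $P_n$) is essential in both proofs.
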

	
	\begin{proof} Set $T \coloneq P/B$, then, by the Universal Embedding Theorem of Krasner and Kaloujnine (see \cite[Theorem~2.6.A]{DM}), there is a monomorphism $\psi:P \to B \wr T$, where $B \wr T$ is the standard wreath product and $\psi(B) \subseteq B^T$. 
		
		By the assumptions, $B \cong \Z^j$ and we denote $k \coloneq j|T|\in \N \cup \{0\}$. It is easy to see that there is an isomorphism $\xi:B \wr T \to \Z \wr_{\Omega_k} T $, where ${\Omega_k}=\{1,\dots,k\}$ and $T$ acts on ${\Omega_k}$ freely with $j$ orbits, such that $\xi(B^T)=\Z^{\Omega_k}$ and the restriction of $\xi$ to $T$ is the identity map. It follows that $\xi\circ \psi: P \to \Z \wr_{\Omega_k} T$ is an embedding satisfying $(\xi\circ\psi)(B) \subseteq \Z^{\Omega_k}$.
		
		By \Cref{cor:conj_class_of_fin_sbgps_in_wr_prod}, there is $h \in \Z^{\Omega_k}$ such that $(\xi\circ \psi)(Q)\subseteq hTh^{-1}$. Therefore, after applying an inner automorphism of $\Z \wr_{\Omega_k} T$ (which preserves the base $\Z^{\Omega_k}$), we can assume that $(\xi\circ \psi)(Q)\subseteq T$ in $\Z \wr_{\Omega_k} T$.

		Finally, since $T$ acts on ${\Omega_k}$ freely, hence faithfully, we have an embedding $T \hookrightarrow S_k$ which extends to a monomorphism $\alpha: \Z \wr_{\Omega_k} T \to \Z \wr_{\Omega_k} S_k$ by \Cref{rem:embeddings_between_wreath_products}. We now see that the composition $\eta=\alpha \circ \xi \circ \psi: P \to \Z \wr_{\Omega_k} S_k$ satisfies both conditions from the statement of the proposition.    
	\end{proof}


\section{Proof of \texorpdfstring{\Cref{thm:amalg_of_virt_ab}}{Theorem 1.1}}\label{sec:proof_of_thm}
	\begin{lemma}\label{lem:amalgamation}
		Given any  $k,l \in \N$, consider the groups $E_1 \coloneq \Z \wr_{\Omega_k} S_k$  and $E_2 \coloneq \Z \wr_{\Omega_l} S_l$. Suppose $c_1 \in \Z^{\Omega_k}$ and $c_2 \in \Z^{\Omega_l}$ are non-trivial elements in $E_1$ and $E_2$ respectively. Then for $m\coloneq kl \in \N$  there exist monomorphisms $\beta_i: E_i \to \Z \wr_{\Omega_m} S_m$, $i=1,2$, satisfying the following conditions: 
		\begin{itemize}
			\item $\beta_1(\Z^{\Omega_k}), \beta_2(\Z^{\Omega_l}) \subseteq \Z^{\Omega_m}$;
			\item $\beta_1(S_k) \subseteq S_m$ and if an element  $\sigma \in S_k$ acts freely on $\Omega_k$ then $\beta_1(\sigma)$ acts freely on $\Omega_m$;
			\item $\beta_2(S_l) \subseteq S_m$ and if an element  $\tau \in S_l$ acts freely on $\Omega_l$ then $\beta_2(\tau)$ acts freely on $\Omega_m$;
			\item $\beta_1(c_1)=\beta_2(c_2)$.
		\end{itemize}
	\end{lemma}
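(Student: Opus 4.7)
The plan is to identify $\Omega_m$ with the Cartesian product $\Omega_k \times \Omega_l$ (any bijection of this set with $\{1,\dots,m\}$ will do), and to define the two embeddings $\beta_1, \beta_2$ via a ``tensor product'' style construction that is built from $c_2$ and $c_1$ respectively. This way both $\beta_1(c_1)$ and $\beta_2(c_2)$ will automatically coincide with the rank-one function $(i,j) \mapsto c_1(i) c_2(j)$ on $\Omega_m$.

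Concretely, I would define $\beta_1 \colon E_1 \to \Z \wr_{\Omega_m} S_m$ by specifying it separately on the base $\Z^{\Omega_k}$ and on the complement $S_k$: send $f \in \Z^{\Omega_k}$ to the function $\hat f \in \Z^{\Omega_m}$ given by $\hat f(i,j) \coloneq f(i)\, c_2(j)$, and send $\sigma \in S_k$ to the permutation $\hat\sigma \in S_m$ acting on $\Omega_k \times \Omega_l$ by $\hat\sigma(i,j) \coloneq (\sigma(i),j)$ (i.e., trivially on the second factor). The homomorphism $\beta_2$ is defined symmetrically, with $\hat g(i,j) \coloneq c_1(i)\, g(j)$ for $g \in \Z^{\Omega_l}$ and $\hat\tau(i,j) \coloneq (i,\tau(j))$ for $\tau \in S_l$.

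Next I would verify that each $\beta_i$ is a group homomorphism by checking compatibility with the semidirect product structure: the assignment $f \mapsto \hat f$ is $\Z$-linear, the assignment $\sigma \mapsto \hat\sigma$ is a group homomorphism $S_k \hookrightarrow S_m$, and the equivariance identity $\widehat{\sigma.f} = \hat\sigma.\hat f$ holds by a one-line pointwise computation (both sides equal $f(\sigma^{-1}i)\,c_2(j)$ at $(i,j)$). Injectivity of $\beta_1$ uses precisely the non-triviality of $c_2$: if $\hat f \equiv 0$, then picking any $j_0 \in \Omega_l$ with $c_2(j_0) \neq 0$ forces $f \equiv 0$; and $\hat\sigma = \mathrm{id}$ clearly forces $\sigma = \mathrm{id}$. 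The same argument, with the roles of $c_1, c_2$ interchanged, handles $\beta_2$.

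The remaining four bullet points of the conclusion then follow immediately. The base-to-base and $S_k$-to-$S_m$ inclusions hold by construction. For the freeness property, if $\sigma \in S_k$ has no fixed point in $\Omega_k$ then $\hat\sigma(i,j) = (\sigma i, j) \neq (i,j)$ for every $(i,j) \in \Omega_m$, so $\hat\sigma$ acts freely on $\Omega_m$; the corresponding statement for $\beta_2$ is identical. Finally, evaluating,
\[
\beta_1(c_1)(i,j) \;=\; c_1(i)\,c_2(j) \;=\; \beta_2(c_2)(i,j) \quad \text{for all } (i,j) \in \Omega_m,
\]
which yields the required equality $\beta_1(c_1)=\beta_2(c_2)$ inside $\Z^{\Omega_m}$. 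I do not anticipate a significant obstacle; the only real idea is the ``twist'' that arranges $\beta_1(c_1)$ and $\beta_2(c_2)$ to both land on the bilinear function $c_1 \otimes c_2$, and the hypotheses $c_1 \neq 0$, $c_2 \neq 0$ are exactly what is needed to keep $\beta_1, \beta_2$ injective on their respective bases.
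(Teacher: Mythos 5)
Your proof is correct and is essentially the same construction as the paper's: your $\hat f(i,j)=f(i)\,c_2(j)$ is exactly the paper's $\sum_i f(i) L_i$ written in function rather than matrix notation, and your permutations $\hat\sigma,\hat\tau$ are the paper's row- and column-permutations. The verification steps (homomorphism via equivariance, injectivity from $c_2\neq 0$, freeness, and the tensor-product identity $\beta_1(c_1)=\beta_2(c_2)$) are the same as in the paper.
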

	
	\begin{proof}
		Recall that in our notation, $\Omega_k=\{1,\dots,k\}$ and $\Omega_l=\{1,\dots,l\}$. We set $\Omega=\Omega_k \times \Omega_l$ and $S=\Sym(\Omega)$. Evidently, for $m=kl$ there is an isomorphism between wreath products $E\coloneq \Z \wr_\Omega S$ and $\Z \wr_{\Omega_m} S_m$, taking $\Z^\Omega$ to $\Z^{\Omega_m}$ and $S$ to $S_m$. Therefore, we can work with $E$ instead of $\Z \wr_{\Omega_m} S_m$.
		
		In this proof we will think of the base group $\Z^{\Omega_k}$ in $E_1$ ($\Z^{\Omega_l}$ in $E_2$) as the set of all vectors in $\Z^k$ (respectively, $\Z^l$). It will therefore be natural to think of $\Z^\Omega \leqslant E$ as the set of all $k\times l$ matrices with integer entries, under addition.
		
		Let $e_1=(1,0,\dots,0)$, $\dots$, $e_k=(0,0,\dots,1)$ be the standard basis of $\Z^k$. Similarly, let $f_1,\dots, f_l$ be the standard basis of $\Z^l$. By the definition of the action of $S_k$ on $\Z^k \cong \Z^{\Omega_k}$, we have 
		\begin{equation}\label{eq:action_of_S_k}
			\sigma. e_i=e_{\sigma(i)}, \text{ for all } \sigma\in S_k \text{ and } i=1,\dots,k.    
		\end{equation}
		
		By the assumptions, $c_1=(u_1,\dots,u_k)$ is a non-zero vector in $\Z^k \leqslant E_1$, and $c_2=(v_1,\dots,v_l)$ is a non-zero vector in $\Z^l \leqslant E_2$.
		Define a homomorphism $\beta_1: E_1 \to E$ as follows. For each $i \in \{1,\dots,k\}$, we let $\beta_1(e_i)$ be the $k\times l$ matrix $L_i \in \Z^\Omega$ such that the $i$-th row of $L_i$ is the vector $(v_1,\dots,v_l)$ and all the other entries are $0$. For every permutation $\sigma \in S_k$, we let $\beta_1(\sigma) \in S$ be the corresponding permutation of rows of matrices in $\Z^\Omega$, i.e., 
		\[\bigl(\beta_1(\sigma)\bigr)(i,j)=(\sigma(i),j), \text{ for all } (i,j) \in \Omega.\]
		Observe that if $\sigma\in S_k$ acts freely on $\Omega_k$ then $\beta_1(\sigma)$ acts freely on $\Omega$. We also note that 
		\begin{equation}\label{eq:action_of_beta(S_k)}
			\beta_1(\sigma). L_i=L_{\sigma(i)}, \text{ for all } \sigma\in S_k \text{ and } i=1,\dots,k.    
		\end{equation}
		
		Since $(v_1,\dots,v_l) \neq (0,\dots,0)$, the matrices $L_1,\dots,L_k$ are linearly independent in $\Z^\Omega$. Combined with \eqref{eq:action_of_S_k} and \eqref{eq:action_of_beta(S_k)}, this easily implies that $\beta_1$ extends to an injective homomorphism from $E_1$ to $E$. Clearly, $\beta_1(\Z^{\Omega_k})\subseteq \Z^\Omega$ and $\beta_1(S_k) \subseteq S$.
		
		The homomorphism $\beta_2:E_2\to E$
		is defined similarly, but now using columns. For every $j\in \{1,\dots,l\}$ we let $\beta_2(f_j)\in \Z^\Omega$ be the $k\times l$ matrix $M_j$ whose $j$-th column vector is $(u_1,\dots,u_k)^T$ and all the other entries are $0$. And for each permutation $\tau \in S_l$, we let $\beta_2(\tau)\in S$ be the corresponding permutation of columns of $k\times l$ matrices. Again, we observe that elements of $S_l$ acting freely on $\Omega_l$ are sent to elements of $S$ acting freely on $\Omega$. As before, one can check that this gives an injective homomorphism $\beta_2$ from $E_2$ to $E$, satisfying $\beta_2(\Z^{\Omega_l})\subseteq \Z^\Omega$  and $\beta_2(S_l) \subseteq S$.
		
		Finally, since $c_1=\sum_{i=1}^k u_ie_i$ in $\Z^k$ and $c_2=\sum_{j=1}^l v_jf_j$
		in $\Z^l$, we see that
		\[\beta_1(c_1)=\sum_{i=1}^k u_iL_i=
		\begin{pmatrix}
			u_1 v_1 &\ldots &u_1 v_l   \\
			\vdots &  & \vdots \\
			u_k v_1 &\ldots & u_kv_l
		\end{pmatrix} = \sum_{j=1}^l v_jM_j=
		\beta_2(c_2).
		\] 
		This completes the proof of the lemma.
	\end{proof}

	\begin{lemma}\label{lem:cyc-by-fin_in_Z_wr_S} Let $H=C \rtimes Q$, where $C=\langle c \rangle$ is an infinite cyclic group and $Q$ is a finite group. Suppose that $E=A \wr_{\Omega} S_m $, where $A$ is any group, $m \in \N$, $\Omega\coloneq \{1,\dots,m\}$ and $\gamma_i: H \to E$, $i=1,2$, are two monomorphisms such that $\gamma_1(c)=\gamma_2(c) \in A^\Omega\setminus\{1\} $ and $\gamma_1(Q),\gamma_2(Q) \subseteq S_m$ in $E$. If $\gamma_i(Q)$ acts freely on $\Omega$, for $i=1,2$, then there exists $\sigma \in S_m$ such that $\sigma \in \C_E(\gamma_1(c))$ and  $\sigma \gamma_1(h) \sigma^{-1}=\gamma_2(h)$ in $E$, for all $h \in H$.    
	\end{lemma}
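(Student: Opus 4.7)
The plan is to view $c^* \coloneq \gamma_1(c) = \gamma_2(c)$ as a function $\Omega \to A$ and to construct $\sigma \in S_m$ that simultaneously (i) intertwines the two free $Q$-actions on $\Omega$ induced by $\gamma_1$ and $\gamma_2$, and (ii) centralizes $c^*$ in $E$. The key structural observation is that the semidirect product $H = C \rtimes Q$ forces a homomorphism $\epsilon \colon Q \to \{\pm 1\}$ with $qcq^{-1} = c^{\epsilon(q)}$, and both $\gamma_i$ must respect this. Applying $\gamma_i$ and unpacking the action of $S_m$ on $A^\Omega$ gives the pointwise relation
\[
c^*(\gamma_i(q).x) = c^*(x)^{\epsilon(q)} \quad \text{for all } q \in Q,~x \in \Omega,~i = 1, 2.
\]

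Since $\gamma_i(Q)$ acts freely on $\Omega$, I will partition $\Omega$ into $\gamma_i(Q)$-orbits $\Omega^i_\alpha$ of size $|Q|$, pick a basepoint $x^i_\alpha$ in each, and observe that $c^*|_{\Omega^i_\alpha}$ is then determined by $c^*(x^i_\alpha)$. Moreover, the class $[c^*(x^i_\alpha)] \in A/\sim$ is independent of the choice of basepoint, where $\sim$ identifies $a$ with $a^{-1}$ if $\epsilon$ is non-trivial and is equality otherwise; this is the \emph{orbit invariant}.

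The main step is to show that the multisets of orbit invariants agree: $\{[c^*(x^1_\alpha)]\}_\alpha = \{[c^*(x^2_\beta)]\}_\beta$. This will follow by counting, because for each class $[a] \in A/\sim$, the number $|\{y \in \Omega : [c^*(y)] = [a]\}|$ depends only on the function $c^*$ and equals $|Q| \cdot |\{\alpha : [c^*(x^1_\alpha)] = [a]\}|$ via the first decomposition and $|Q| \cdot |\{\beta : [c^*(x^2_\beta)] = [a]\}|$ via the second. I expect this combinatorial step to be the main obstacle, since the orbit invariants must capture exactly the right data, particularly in the case when $\epsilon$ is non-trivial and the $\pm$-symmetry on the values of $c^*$ within an orbit needs careful accounting.

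Once the multisets agree, I will reindex so that $[c^*(x^1_\alpha)] = [c^*(x^2_\alpha)]$ for every $\alpha$, and then adjust each basepoint $x^2_\alpha$ within its orbit (via $c^*(\gamma_2(r).x^2_\alpha) = c^*(x^2_\alpha)^{\epsilon(r)}$) so that $c^*(x^1_\alpha) = c^*(x^2_\alpha)$ on the nose. Defining $\sigma \in \mathrm{Sym}(\Omega) = S_m$ by $\sigma(\gamma_1(q).x^1_\alpha) \coloneq \gamma_2(q).x^2_\alpha$ produces a well-defined bijection that satisfies $\sigma\gamma_1(q)\sigma^{-1} = \gamma_2(q)$ by construction and centralizes $c^*$, because $c^*(\sigma(\gamma_1(q).x^1_\alpha)) = c^*(x^2_\alpha)^{\epsilon(q)} = c^*(x^1_\alpha)^{\epsilon(q)} = c^*(\gamma_1(q).x^1_\alpha)$. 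Combined with $H = \langle c \rangle \cdot Q$ and $\gamma_1(c) = \gamma_2(c)$, this yields $\sigma\gamma_1(h)\sigma^{-1} = \gamma_2(h)$ for all $h \in H$.
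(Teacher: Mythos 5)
Your proof is correct and is essentially the paper's own argument: the paper likewise reduces to matching the free $Q$-orbits compatibly with the level sets of $f=\gamma_1(c)=\gamma_2(c)$ and defines $\sigma$ by transporting basepoints, only it organizes the bookkeeping through a case split ($c$ central versus inverted, and within the latter, involutive values versus pairs $\{a,a^{-1}\}$), whereas you package this uniformly via the homomorphism $\epsilon:Q\to\{\pm1\}$ and the orbit invariant in $A/\sim$. The combinatorial step you flagged as the main obstacle is exactly the paper's counting that the number of $Q_i$-orbits in each piece, $|\Omega_a\sqcup\Omega_{a^{-1}}|/|Q|$, is independent of $i$, and your basepoint normalization corresponds to the paper's choice of basepoints $p_{i,k}\in O_{i,k}\cap\Omega_a$.
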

	
	\begin{proof} 
		Denote $f\coloneq \gamma_1(c)=\gamma_2(c) \in A^\Omega$ and let $A_f\coloneq f(\Omega)$ be the set of values in $A$ attained by $f$. For each $a \in A_f$ we let $\Omega_a\coloneq \{x \in \Omega \mid f(x)=a\}$, so that $\Omega=\bigsqcup_{a \in A_f} \Omega_a$.
		Throughout this proof we will think of $\Sym(\Omega_a)$ as the subgroup of $S_m=\Sym(\Omega)$ consisting of permutations supported on $\Omega_a$.
		
		Denote by $\St(f)$ the stabilizer of $f$ in $S_m \leqslant E$. By the definition of the action of $S_m$ on $A^\Omega$, a permutation $\sigma \in S_m$ belongs to $\St(f)$ if and only if $\sigma^{-1}.\Omega_a=\Omega_a$ (equivalently, $\sigma.\Omega_a=\Omega_a$), for every $a \in A_f$.

		By the assumptions, for each $i=1,2$ the group $Q_i\coloneq \gamma_i(Q) \leqslant S_m$ is isomorphic to $Q$ and acts freely on $\Omega$. Since $Q$ normalizes the infinite cyclic subgroup $C$ in $H$, for every $q \in Q$ either $qcq^{-1}=c$ or $qcq^{-1}=c^{-1}$.
		
		\medskip
		\noindent \emph{Case 1:} $c$ is central in $H$. Then $Q_1,Q_2 \leqslant \St(f)$ in $S_m$, so each of these subgroups preserves $\Omega_a$, for every $a \in A_f$.
		Therefore, for all $a \in A$ and $i=1,2$, $Q_i =\gamma_i(Q)$   acts freely on $\Omega_a$. Since any two free actions of the  group $Q$ on the same set $\Omega_a$ are conjugate in $\Sym(\Omega_a)$ (cf. \cite[Lemma~9 in Section~II.2.6]{Serre}), there exists a permutation $\sigma_a \in \Sym(\Omega_a)$ such that 
		\begin{equation}\label{eq:sigma_a}
			\left(\sigma_a^{-1} \gamma_2(q) \sigma_a\right).x=\gamma_1(q).x, \text{ for all } q \in Q \text{ and } x \in \Omega_a. 
		\end{equation}
		Note that $\sigma_a \in \St(f)$, for each $a \in A_f$, hence the product \[\sigma\coloneq  \prod_{a \in A_f} \sigma_a \in S_m\]  commutes with $f=\gamma_1(c)=\gamma_2(c)$ in $E$. In view of \eqref{eq:sigma_a}, we have
		\begin{equation}\label{eq:sigma}
			\sigma^{-1} \gamma_2(q) \sigma=\gamma_1(q), \text{ for all } q \in Q.    
		\end{equation}
		Since $H=\langle c,Q \rangle $, it follows that  $\sigma^{-1} \gamma_2(h) \sigma=\gamma_1(h)$, for all $h \in H$.
		
		\medskip
		\noindent \emph{Case 2:} there is $r \in Q$ such that $r^{-1}cr=c^{-1}$ in $H$. Then the centralizer $Q^+ \coloneq Q \cap \C_H(c)$ has index $2$ in $Q$ and $Q=Q^+ \sqcup rQ^+$. Set \[Q_i^+\coloneq \gamma_i(Q^+) \leqslant \C_{S_m}(f) ~\text{ and }~ r_i\coloneq \gamma_i(r) \in S_m, ~\text{ for }i=1,2.\]
		
		As before, the action of $Q_i^+$
		preserves $\Omega_a$ setwise, for each $a \in A_f$. On the other hand, since $r_i^{-1} f r_i=f^{-1}$ in $E$, for each $s \in r_iQ_i^+=Q_i^+ r_i$ we have 
       \[f(s.x)=(s^{-1} f s)(x)=(r_i^{-1} f r_i)(x)=(f(x))^{-1},~\text{ for each } x \in \Omega.\] 
        Therefore, $A_f^{-1}=A_f$ and $s.\Omega_a =\Omega_{a^{-1}}$, for all $a \in A_f$, $i=1,2$ and $s \in r_iQ_i^+$. 
		
		Let $I$ denote the set of all involutions in $A_f$ (i.e., elements satisfying $a=a^{-1}$). Choose a single element from each pair $\{a,a^{-1}\}$, where $a \in A_f\setminus I$, and let $J$ denote the set of all such representatives. Thus $A_f=I \sqcup J\sqcup J^{-1}$ and 
		\begin{equation}\label{eq:decomp_of_Omega}
			\Omega=\bigsqcup_{a \in I} \Omega_a \sqcup \bigsqcup_{a \in J} \left(\Omega_a \sqcup \Omega_{a^{-1}} \right).    
		\end{equation}
		
		If $a \in I$ is an involution, then the action of $Q_i$ preserves $\Omega_a$ setwise, for $i=1,2$, and we can argue as in Case~1 to find a permutation $\sigma_a \in \Sym(\Omega_a)$ such that \eqref{eq:sigma_a} holds.
		
		Now, consider any $a \in J$, so that $a \neq a^{-1}$. Then the action of $Q_i$ preserves the subset $\Omega_a \sqcup \Omega_{a^{-1}}$ setwise, so it decomposes in the union of $Q_i$-orbits:
		\[\Omega_a \sqcup \Omega_{a^{-1}}=\bigsqcup_{k=1}^l O_{i,k}, \text{ for } i=1,2.\]
		Note that, since $Q_i$ acts freely on $\Omega$, $|O_{i,k}|=|Q|$, for all $i=1,2$ and $k=1,\dots,l$, in particular, $l=|\Omega_a \sqcup \Omega_{a^{-1}}|/|Q|$ is independent of $i$. For each $k \in \{1,\dots,l\}$ the $Q_i$-orbit $O_{i,k}$ splits into two $Q_i^+$-orbits $O_{i,k} \cap \Omega_a$ and $O_{i,k} \cap \Omega_{a^{-1}}$, that are interchanged by the action of $r_i$, $i=1,2$. 
		
		Choose arbitrary basepoints $p_{i,k} \in O_{i,k} \cap \Omega_a$, for all $k=1,\dots,l$ and $i=1,2$. Let $\sigma_a \in \Sym(\Omega_a\sqcup\Omega_{a^{-1}})$ be the permutation defined as follows: 
		\begin{equation}\label{eq:sigma_a_on_J-def} 
			\sigma_a.(\gamma_1(t). p_{1,k}) \coloneq \gamma_2(t). p_{2,k}\,, \text{ for all } t \in Q  \text{ and } k=1,\dots,l.    
		\end{equation}
		We shall now  check that 
		\begin{equation}\label{eq:conj_by_sigma_a_in_J}
			\left(\sigma_a^{-1} \gamma_2(q) \sigma_a\right).x=\gamma_1(q).x, \text{ for all } q \in Q \text{ and } x \in \Omega_a \sqcup\Omega_{a^{-1}}.     
		\end{equation}
		Indeed, given any $x \in \Omega_a \sqcup\Omega_{a^{-1}}$, there exist unique $k \in \{1,\dots,l\}$ and $t \in Q$  such that $x=\gamma_1(t). p_{1,k}$. Then, in view of \eqref{eq:sigma_a_on_J-def}, for any $q \in Q$ we have
		\begin{align*}
			\left(\sigma_a^{-1}\gamma_2(q) \sigma_a\right).x &= \left(\sigma_a^{-1}\gamma_2(q) \sigma_a\right).(\gamma_1(t). p_{1,k}) =  \left(\sigma_a^{-1}\gamma_2(q)\right).(\gamma_2(t). p_{2,k}) \\
			&=\sigma_a^{-1}. (\gamma_2(qt) . p_{2,k})=\gamma_1(qt) . p_{1,k}=\gamma_1(q). (\gamma_1(t) . p_{1,k}) \\ &=\gamma_1(q).x .
		\end{align*}
		Thus, \eqref{eq:conj_by_sigma_a_in_J} has been verified.
		
		For any $x \in \Omega_a$ there are unique $k \in \{1,\dots,l\}$ and $t \in Q$ such that $x=\gamma_1(t). p_{1,k}$. Note that $t \in Q^+$ because both $x$ and $p_{1,k}$ are in $\Omega_a$ and any element from $\gamma_1(rQ^+)$ interchanges $\Omega_a$ with $\Omega_{a^{-1}}$. Applying \eqref{eq:sigma_a_on_J-def}, we see that $\sigma_a.x \in Q_2^+. p_{2,k}$ is also in $\Omega_{a}$, thus $\sigma_a.\Omega_a \subseteq \Omega_a$. Since $\Omega_a\sqcup \Omega_{a^{-1}}$ is a finite set and $\sigma_a \in \Sym(\Omega_a \sqcup\Omega_{a^{-1}})$, we can conclude that 
		\[\sigma_a.\Omega_a=\Omega_a \text{ and } \sigma_a.\Omega_{a^{-1}}=\Omega_{a^{-1}}.\]
		It follows that $\sigma_a \in \St(f) \leqslant \C_{E}(f)$ (again, we are treating $\Sym(\Omega_a \sqcup\Omega_{a^{-1}})$ as a subgroup of $S_m$).
		
        We can now define the permutation $\sigma \in S_m$ by the formula
		\[\sigma \coloneq \prod_{a \in I \sqcup J} \sigma_a,\]
		and conclude that it belongs to the centralizer $ \C_E(f)$ in $E$. Moreover, in view of \eqref{eq:decomp_of_Omega}, \eqref{eq:sigma_a} (when $a \in I$) and $\eqref{eq:conj_by_sigma_a_in_J}$ (when $a \in J$), we see that 
		\eqref{eq:sigma} is satisfied. Therefore, $\sigma^{-1} \gamma_2(h) \sigma=\gamma_1(h)$, for all $h \in H$, and the lemma is proved.
	\end{proof}

The following lemma takes care of the easy case in \Cref{thm:amalg_of_virt_ab}, when the amalgamated subgroup $G_0$ is finite.

\begin{lemma}\label{lem:amalg_over_fini_sbgp}
Suppose that $G=G_1*_{G_0} G_2$ is an amalgamated free product of finitely generated virtually abelian subgroups $G_1,G_2$ over a common finite subgroup $G_0$. Then there is a finitely generated virtually abelian group $E$ and a homomorphism $\nu:G \to E$ that is injective on each $G_i$, $i=1,2$.    
\end{lemma}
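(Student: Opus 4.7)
The plan is to construct $E$ as a complete monomial group $\Z \wr_\Omega S$ and to combine embeddings of $G_1$ and $G_2$ so that they agree on $G_0$, using the machinery of \Cref{sec:wreath}. The argument is a simpler analogue of the strategy outlined in Subsection~\ref{subsec:proof_idea} for the harder case $|G_0|=\infty$, but here no identification of base elements is required -- only identification of the finite part.

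Since $G_1$ and $G_2$ are finitely generated virtually abelian (hence residually finite), for each $i=1,2$ I would choose a free abelian normal subgroup $A_i \n_f G_i$ of finite index with $A_i \cap G_0 = \{1\}$. (If some $A_i$ happens to be trivial, replace $G_i$ by $G_i \times \Z$ first; any homomorphism from the enlarged amalgam to a virtually abelian group restricts to one on the original $G$ that is still injective on each factor.) Apply \Cref{prop:embedding_for_virt_ab_gps} to the triples $(G_i, A_i, G_0)$ to produce monomorphisms
\[
\eta_i \colon G_i \hookrightarrow E_i \coloneq \Z \wr_{\Omega_{k_i}} S_{k_i},\qquad i=1,2,
\]
with $\eta_i(A_i) \subseteq \Z^{\Omega_{k_i}}$ and $\eta_i(G_0) \subseteq S_{k_i}$ acting freely on $\Omega_{k_i}$.

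Next, set $\Omega \coloneq \Omega_{k_1} \times \Omega_{k_2}$, $S \coloneq \Sym(\Omega)$, and $E \coloneq \Z \wr_{\Omega} S$, which is finitely generated and virtually abelian. Define embeddings $\beta_1 \colon E_1 \hookrightarrow E$ and $\beta_2 \colon E_2 \hookrightarrow E$ by letting $S_{k_1}$ act on $\Omega$ through permutation of the first coordinate, extending $f \in \Z^{\Omega_{k_1}}$ to $\tilde f \in \Z^\Omega$ via $\tilde f(i,j)=f(i)\,\delta_{j,1}$, and symmetrically for $\beta_2$ using the second coordinate. A direct check shows that each $\beta_i$ is an injective group homomorphism. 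The subgroups $\beta_1(\eta_1(G_0))$ and $\beta_2(\eta_2(G_0))$ both lie in $S$ and act \emph{freely} on $\Omega$ (permuting rows, respectively columns), each with $k_1 k_2 / |G_0|$ orbits.

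Since any two free actions of a finite group on a finite set of the same cardinality are isomorphic as $G_0$-sets, there exists $\sigma \in S$ with $\sigma\, \beta_1(\eta_1(g_0))\, \sigma^{-1} = \beta_2(\eta_2(g_0))$ for every $g_0 \in G_0$. Replacing $\beta_1$ by its composition with conjugation by $\sigma$ in $E$, the maps $\beta_1 \circ \eta_1$ and $\beta_2 \circ \eta_2$ agree on $G_0$, so by the universal property of the amalgamated free product they combine into a homomorphism $\nu \colon G \to E$ that is injective on each $G_i$. The main subtlety is the conjugation step, which succeeds precisely because \Cref{prop:embedding_for_virt_ab_gps} guarantees that $\eta_i(G_0)$ acts freely on $\Omega_{k_i}$; without this freeness the two $G_0$-actions on $\Omega$ would generally not be conjugate in $S$.
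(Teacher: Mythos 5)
Your argument is correct, but it takes a genuinely different route from the paper. The paper disposes of the finite-$G_0$ case in a few lines by invoking the (VRC) machinery: $G_1$ and $G_2$ have (VRC) by \Cref{lem:vabs_have_LR}, amalgamated products over finite subgroups preserve (VRC) by \cite[Corollary~6.5]{virtprops}, the factors are then virtual retracts of $G$ by \cite[Proposition~1.5]{virtprops}, and \Cref{lem:further_props_of_VRC} immediately yields the virtually abelian quotient injective on $G_1$ and $G_2$. You instead rerun, in simplified form, the explicit wreath-product construction used for the harder case $|G_0|=\infty$: embed each factor via \Cref{prop:embedding_for_virt_ab_gps} into a complete monomial group with $G_0$ landing in the symmetric part and acting freely, pass to $\Z\wr_\Omega\Sym(\Omega)$ with $\Omega=\Omega_{k_1}\times\Omega_{k_2}$ by row/column embeddings (your $\delta_{j,1}$ variant of \Cref{lem:amalgamation}, which works here precisely because no base elements need to be matched), and then conjugate one embedding so the two copies of $G_0$ coincide, using that two free actions of a finite group on finite sets of equal cardinality are conjugate — the same fact (and the same role of freeness) as in Case~1 of \Cref{lem:cyc-by-fin_in_Z_wr_S}. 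All the steps check out, including the equivariance and injectivity of your $\beta_i$, the handling of finite factors by multiplying by $\Z$, and the use of the universal property at the end. The trade-off: the paper's proof is shorter and leans on external results about (VRC) and virtual retracts, while yours is self-contained within the wreath-product toolkit of Sections~\ref{sec:wreath}--\ref{sec:proof_of_thm} and makes the target group and the homomorphism completely explicit, at the cost of repeating machinery the paper reserves for the infinite case.
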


\begin{proof} The groups $G_1,G_2$ have property (VRC) by \Cref{lem:vabs_have_LR}, and since $|G_0|<\infty$, the amalgamated free product $G$ has (VRC) by \cite[Corollary~6.5]{virtprops}. According to \cite[Proposition~1.5]{virtprops}, $G_1,G_2 \vr G$, therefore, the existence of $E$ and $\nu:G \to E$ follows from \Cref{lem:further_props_of_VRC}.    
\end{proof}

	We are finally ready to prove the main technical result for amalgamated products of virtually abelian groups over a virtually cyclic subgroup.

	\begin{proof}[Proof of \Cref{thm:amalg_of_virt_ab}] \Cref{lem:amalg_over_fini_sbgp} takes care of the case when $|G_0|<\infty$, so we can further assume that $G_0$ is infinite.
    
    Let $\varphi_i:G_0 \to G_i$ be the subgroup inclusions, $i=1,2$, so that the amalgamated product $G$ has the presentation 
		\begin{equation}
			G=\langle G_1,G_2 \mid \varphi_1(g)=\varphi_2(g), \text{ for all } g \in G_0\rangle.    
		\end{equation}
		
		Choose finite index free abelian subgroups $N_i \n_f G_i$, $i=1,2$. Since $G_1,G_2$ are finitely generated and virtually abelian, they are both quasipotent and cyclic subgroup separable (for example, by \cite[Theorem~5.5]{Bur-Mar}), hence we can apply \cite[Lemma~3.3]{Bur-Mar} to find finite index normal subgroups $A_i \n_f G_i$, such that $A_i \subseteq N_i$, $i=1,2$, and $A_1 \cap G_0=A_2 \cap G_0$. In other words, if we let $A_0\coloneq A_1 \cap G_0=A_2 \cap G_0$, then $\varphi_i$ becomes an injective morphism in the category $\mathcal{C}$ (see \Cref{def:categ}) between the objects $(G_0,A_0)$ and $(G_i, A_i)$, for each $i=1,2$.
		Note that $A_0 \cong \Z$ as it is free abelian and has finite index in the infinite virtually cyclic group $G_0$.

		Set $Q\coloneq G_0/A_0$ and $n\coloneq |Q| \in \N$. Let $P_{n,i}=P_n(G_i,A_i)$ and $B_{n,i}=B_n(G_i,A_i)$, $i=0,1,2$, be the groups together with their normal abelian subgroups given by \Cref{def:compl}, and let $\xi_{n,i}:(G_i,A_i) \to (P_{n,i},B_{n,i})$ be the corresponding injective morphisms (see \Cref{lem:xi_n_is_morphism}), $i=0,1,2$. Recall that $B_{n,i} \cong A_i$ and $P_{n,i}/B_{n,i} \cong G_i/A_i$, for $i=0,1,2$, by \Cref{lem:xi_n_is_morphism}. 
		
		Now, by \Cref{lem:functorial}, the injective morphisms $\varphi_i:(G_0,A_0) \to (G_i,A_i)$ extend to  injective morphisms $\tilde\varphi_i:(P_{n,0},B_{n,0}) \to (P_{n,i},B_{n,i}) $, $i=1,2$. Therefore, we have the following commutative diagram, where all of the maps are injective:
		\begin{equation*}
			\begin{tikzcd}
				& G_0 \arrow[ld, "\varphi_1"'] \arrow[rd, "\varphi_2"] \arrow[d, "{\xi_{n,0}}"] &                              \\
				G_1 \arrow[d, "{\xi_{n,1}}"'] & {P_{n,0}} \arrow[rd, "\tilde\varphi_2"] \arrow[ld, "\tilde\varphi_1"']        & G_2 \arrow[d, "{\xi_{n,2}}"] \\
				{P_{n,1}}                     &                                                                         & {P_{n,2}}                   
			\end{tikzcd}    
		\end{equation*}
		This allows us to consider the amalgamated free product
		\[P\coloneq P_{n,1}*_{P_{n,0}} P_{n,2} \coloneq\langle P_{n,1},P_{n,2} \mid \tilde\varphi_1(g)=\tilde\varphi_2(g), \text{ for all } g\in P_{n,0} \rangle .\]
		Since $\tilde \varphi_i$ extends $\varphi_i$, for $i=1,2$, we have a homomorphism 
		\begin{equation}\label{eq:varkappa}
			\varkappa:G=G_1*_{G_0} G_2 \to P = P_{n,1}*_{P_{n,0}} P_{n,2},    
		\end{equation}
		whose restriction to $G_i$ is $\xi_{n,i}$, for $i=1,2$. In particular, $\varkappa$ is injective on $G_1$ and $G_2$ (however, it need not be injective on all of $G$).
		
		Recall that by the choice of $n=|Q|$ and \Cref{prop:splits}, the group $P_{n,0}$ splits as a semidirect product $B_{n,0} \rtimes Q$, where $B_{n,0} \cong A_0 \cong \Z$. Set $C\coloneq B_{n,0}$ and $H \coloneq P_{n,0}$, so that $\tilde\varphi_i(C) \subseteq B_{n,i}$ is infinite cyclic and $\tilde\varphi_i(Q)$ is a finite subgroup of $P_{n,i}$, for $i=1,2$.
		We can now apply \Cref{prop:embedding_for_virt_ab_gps} to find $k,l \in \N$ and group embeddings \[\eta_1:P_{n,1} \to E_1\coloneq \Z \wr_{\Omega_k} S_k,\quad\eta_2:P_{n,2} \to E_2\coloneq \Z \wr_{\Omega_l} S_l,\] such that $\Omega_k=\{1,\dots,k\}$, $\Omega_l=\{1,\dots,l\}$, and for $\alpha_i \coloneq\eta_i\circ \tilde\varphi_i: H \to E_i$, $i=1,2$, we have
		\[\alpha_1(C) \subseteq \Z^{\Omega_k},~\alpha_2(C) \subseteq \Z^{\Omega_l},~\alpha_1(Q) \subseteq S_k,~\alpha_2(Q) \subseteq S_l,\]
		and $\alpha_i(Q)$ acts freely on $\Omega_i$, $i=1,2$.
		
		Observe that we have a natural monomorphism 
		\begin{equation}\label{eq:lambda}
			\lambda: P = P_{n,1}*_{P_{n,0}} P_{n,2} \to E_1*_H E_2,   
		\end{equation}
		where the latter amalgamated product is defined by the presentation
		\begin{equation}\label{eq:pres_of_E_1*E_2}
			E_1*_H E_2\coloneq \langle E_1,E_2 \mid \alpha_1(h)=\alpha_2(h), \text{ for all } h \in H\rangle.
		\end{equation}
		
		Let $c$ be a generator of $C$ and set $c_1 \coloneq \alpha_1(c) \in E_1$, $c_2\coloneq \alpha_2(c) \in E_2$. By \Cref{lem:amalgamation}, for $m\coloneq kl \in \N$ and $E\coloneq \Z \wr_{\Omega_m} S_m$, there exist monomorphisms $\beta_i: E_i \to E$ enjoying all the properties from its claim. In particular, if we define $\gamma_i\coloneq \beta_i \circ \alpha_i:H \to E$ then all the conditions of \Cref{lem:cyc-by-fin_in_Z_wr_S} will be satisfied, so we can find $\sigma \in S_m$ such that 
		\begin{equation}\label{eq:sigma-gamma_1}
			\sigma \gamma_1(h) \sigma^{-1}=\gamma_2(h) \text{ in } E, \text{ for all } h \in H.
		\end{equation}
		
		Let $\delta:E \to E$ be the inner automorphism corresponding to conjugation by $\sigma$. Equation \eqref{eq:sigma-gamma_1} shows that after we replace $\beta_1$  by $\delta\circ\beta_1$, we will have
		\[\beta_1(\alpha_1(h))=\beta_2(\alpha_2(h)), \text{ for all } h \in H.\]
		Since $E_1*_H E_2$ has presentation \eqref{eq:pres_of_E_1*E_2}, it follows that we have a homomorphism $\mu:E_1*_H E_2 \to E$ such that the restriction of $\mu$ to $E_i$ is $\beta_i$, $i=1,2$. In particular, $\mu$ is injective on $E_1$ and $E_2$. Therefore, the homomorphism \[\nu\coloneq \mu \circ\lambda\circ\varkappa: G \to E,\]
		where $\varkappa$ and $\lambda$ are given by \eqref{eq:varkappa} and \eqref{eq:lambda} respectively, is injective on $G_i$, for $i=1,2$. Thus the theorem is proved.
	\end{proof}


\section{Amalgamated products of (VRC) groups over virtually cyclic subgroups have (VRC)} \label{sec:amalgams_of_VRC}
In this section we give first applications of \Cref{thm:amalg_of_virt_ab}; in particular, we prove \Cref{thm:amalgam_of_(VRC)_gps} from the Introduction. The following important consequence of \Cref{thm:amalg_of_virt_ab} will be used throughout the rest of the paper.



\begin{prop}\label{prop:maps_from_amalgams_to_vab_gps} Let $G_1,G_2$ be groups with a common virtually cyclic subgroup $G_0$, and let $G \coloneq G_1*_{G_0} G_2$. Suppose that we are given normal subgroups $N_i \n G_i$, $i=1,2$, and $N \n G$, such that $G_i/N_i$, $i=1,2$, and $G/N$ are finitely generated and virtually abelian. If $G_0 \vr G_i$, for $i=1,2$, then for arbitrary finitely generated virtually abelian virtual retracts $H_i \vr G_i$, $i=1,2$, there exists a finitely generated virtually abelian group $P$ and a homomorphism $\psi:G \to P$ such that 
\begin{itemize}
    \item $\ker\psi \cap G_i \subseteq N_i$, for each $i=1,2$;
    \item $\ker\psi \subseteq N$;
    \item $\psi$ is injective on $G_0$ and on each subgroup $H_{i}$, $i=1,2$.
\end{itemize}
\end{prop}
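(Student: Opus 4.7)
The plan is to combine Lemma~\ref{lem:further_props_of_VRC} with Theorem~\ref{thm:amalg_of_virt_ab} via the universal property of the amalgamated free product, and then to take a direct product with $G/N$ in order to accommodate the last kernel condition.

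First, for each $i=1,2$, I would apply Lemma~\ref{lem:further_props_of_VRC} to $G_i$, taking the ``finite index subgroup'' to be $G_i$ itself with the given normal subgroup $N_i$ (so that $G_i/N_i$ is finitely generated virtually abelian by hypothesis), and the virtual retracts to be $H_i \vr G_i$ together with $G_0 \vr G_i$. Note that $G_0$ is finitely generated virtually abelian, being virtually cyclic. This produces a finitely generated virtually abelian group $G_i'$ and a homomorphism $\varphi_i\colon G_i \to G_i'$ such that $\ker\varphi_i \subseteq N_i$ and $\varphi_i$ is injective on both $H_i$ and $G_0$.

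Second, I would regard $G_0$ as a common virtually cyclic subgroup of $G_1'$ and $G_2'$ via the injective maps $\varphi_1|_{G_0}$ and $\varphi_2|_{G_0}$, form the amalgamated free product $G_1' *_{G_0} G_2'$, and invoke its universal property to produce a homomorphism $\bar\varphi\colon G \to G_1' *_{G_0} G_2'$ whose restriction to $G_i$ is $\varphi_i$. Since $G_1', G_2'$ are finitely generated virtually abelian and the amalgamated subgroup is virtually cyclic, Theorem~\ref{thm:amalg_of_virt_ab} applies and yields a finitely generated virtually abelian group $E$ together with a homomorphism $\nu\colon G_1' *_{G_0} G_2' \to E$ that is injective on each factor $G_i'$.

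Finally, to also force the kernel to lie inside $N$, I would set $P \coloneq E \times (G/N)$ and define $\psi\colon G \to P$ by $\psi(g) \coloneq \bigl(\nu(\bar\varphi(g)),\, gN\bigr)$. Since $G/N$ is finitely generated virtually abelian by hypothesis, so is $P$. Injectivity of $\psi$ on $G_0$ and on each $H_i$ is immediate from injectivity of $\varphi_i$ on these subgroups combined with injectivity of $\nu$ on $G_i'$. The inclusion $\ker\psi \cap G_i \subseteq N_i$ reduces, via the first-coordinate projection, to $\ker(\nu\circ\varphi_i) \cap G_i \subseteq N_i$, which holds because $\nu$ is injective on $G_i'$ and $\ker\varphi_i \subseteq N_i$; and $\ker\psi \subseteq N$ is clear from the second coordinate. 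The entire technical difficulty is already encapsulated in Theorem~\ref{thm:amalg_of_virt_ab}, so I do not anticipate any additional obstacle here; the role of Lemma~\ref{lem:further_props_of_VRC} is precisely to supply the auxiliary maps $\varphi_i$ which collapse each factor onto a virtually abelian quotient while preserving $G_0$ and $H_i$, exactly as needed to feed into the main theorem.
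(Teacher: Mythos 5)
Your proposal is correct and follows essentially the same route as the paper's own proof: apply \Cref{lem:further_props_of_VRC} to each factor to get $\varphi_i$ with $\ker\varphi_i\subseteq N_i$ injective on $H_i$ and $G_0$, push through the amalgam $G_1'*_{G_0}G_2'$ via the universal property, apply \Cref{thm:amalg_of_virt_ab}, and finish by taking the product with $G/N$. The verifications you sketch (injectivity on $G_0$ and $H_i$, and $\ker\psi\cap G_i\subseteq N_i$ via injectivity of $\nu$ on $G_i'$) are exactly the ones needed.
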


\begin{proof} 
    According to \Cref{lem:further_props_of_VRC}, for each $i=1,2$ there exist a finitely generated virtually abelian group $P_i$ and a homomorphisms $\varphi_i\colon G_i \to P_i$, such that $\ker \varphi_i \subseteq N_i$ and $\varphi_i$ is injective on $H_i$ and on $G_0$. The latter  allows us to view $G_0$ as a subgroup of each $P_i$, $i=1,2$. Then, using the universal property of amalgamated free products, we obtain a homomorphism
    \[
        \varphi:  G \to P_1 *_{G_0} P_2,
    \]
    such that $\varphi|_{G_i}=\varphi_i$, for $i=1,2$. By \Cref{thm:amalg_of_virt_ab}, there exists a homomorphism $\nu\colon P_1 *_{G_0} P_2 \to E$ such that $E$ is a finitely generated virtually abelian group and $\nu$ is injective on $P_1$ and $P_2$. We then have that the group $P\coloneq E \times G/N$ and the homomorphism 
    \[
        \psi \coloneq (\nu \circ \varphi) \times \pi \colon  G \to P,
    \]
    where $\pi:G \to G/N$ is the quotient map, satisfies all conditions of the statement, by construction.    
\end{proof}

\begin{proof}[Proof of \Cref{thm:amalgam_of_(VRC)_gps}]
Let $G=G_1*_{G_0} G_2$ be the free amalgamated product of two groups $G_1$ and $G_2$ over a virtually cyclic subgroup $G_0$. Suppose that $G_1$ and $G_2$ satisfy (VRC); in particular, $G_0 \vr G_i$, for $i=1,2$, by \Cref{lem:VRC->vabs_sep}.  
Now, according to \cite[Corollary~8.5]{virtprops} (or by \Cref{lem:VRC->vabs_sep} and \Cref{cor:res_fin_amalg} below), $G$ is cyclic subgroup separable, hence the virtually cyclic subgroup $G_0$ is separable in $G$ (as a finite union of left cosets to its finite index cyclic subgroup). This allows us to apply \cite[Corollary~6.6]{MM-vr_in_free_constr} to deduce that for each \emph{hyperbolic element} $g \in G$ (that is, $g$ is not conjugate to an element of $G_i$, for $i=1,2$), we have $\langle g \rangle \vr G$.

Now, suppose that $g \in G$ is an \emph{elliptic element}, i.e., $g=x h x^{-1}$, where $h \in G_i$, for some $i \in \{1,2\}$, and $x \in G$. Since $G_i$ has (VRC), we know that $\langle h \rangle \vr G_i$, therefore,  by \Cref{prop:maps_from_amalgams_to_vab_gps}, there exist a finitely generated virtually abelian group $P$ and a homomorphism $\psi:G \to P$ such that $\psi$ is injective on $\langle h \rangle$ (and, consequently, on $\langle g \rangle$). By \Cref{lem:vabs_have_LR}, $\psi (\langle g \rangle) \vr P$, hence $\langle g \rangle \vr G$ by \Cref{lem:basic_props_of_virt_retr}.(a). 
Thus $G$ has (VRC).
\end{proof}

\begin{cor} Let $G$ be the fundamental group of a finite tree of groups, where all the vertex groups have (VRC) and all the edge groups are virtually cyclic. Then $G$ has (VRC).    
\end{cor}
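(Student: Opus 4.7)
The plan is to argue by induction on the number of edges in the underlying finite tree $\Gamma$. The base case is when $\Gamma$ consists of a single vertex and no edges: then $G$ is the corresponding vertex group, which has (VRC) by hypothesis.

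For the inductive step, assume the statement holds for all finite trees of groups with fewer edges, vertex groups satisfying (VRC), and virtually cyclic edge groups. Given a finite tree of groups $(\mathcal{G},\Gamma)$ with $|E\Gamma^+|\ge 1$, pick any geometric edge $e\in E\Gamma^+$. Since $\Gamma$ is a tree, removing $\{e,\bar e\}$ disconnects $\Gamma$ into two subtrees $\Gamma_1$ and $\Gamma_2$, containing $\alpha(e)$ and $\omega(e)$ respectively. Restricting $\mathcal{G}$ to each $\Gamma_i$ gives two smaller trees of groups $(\mathcal{G}_i,\Gamma_i)$, whose vertex groups still have (VRC) and whose edge groups are still virtually cyclic. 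By the inductive hypothesis, $G_i\coloneq\pi_1(\mathcal{G}_i,\Gamma_i)$ has (VRC), for $i=1,2$. A standard Bass--Serre theoretic computation (choosing a maximal subtree of $\Gamma$ containing $e$ and whose restrictions to $\Gamma_1,\Gamma_2$ are maximal there) identifies
\[
G\;\cong\;G_1*_{G_e} G_2,
\]
where $G_e$ embeds into $G_1$ and $G_2$ via the inclusions $\alpha_e:G_e\hookrightarrow G_{\alpha(e)}\hookrightarrow G_1$ and $\alpha_{\bar e}:G_e\hookrightarrow G_{\omega(e)}\hookrightarrow G_2$. Since $G_e$ is virtually cyclic by assumption, \Cref{thm:amalgam_of_(VRC)_gps} applies and gives that $G$ has (VRC), completing the induction.

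The only point that merits attention is verifying the splitting $G\cong G_1*_{G_e} G_2$, which is where the hypothesis that $\Gamma$ is a tree is used; once this is in hand, all the work has been done by \Cref{thm:amalgam_of_(VRC)_gps}. No new obstacle arises beyond invoking the previous theorem, so this should be a short deduction rather than a substantive proof.
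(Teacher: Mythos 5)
Your proof is correct and is exactly the argument the paper intends: the corollary is stated without proof as an immediate consequence of \Cref{thm:amalgam_of_(VRC)_gps}, obtained by precisely this induction on the number of edges of the tree, splitting at an edge to write $G$ as an amalgam of the two subtree fundamental groups over the virtually cyclic edge group.
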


The next corollary is useful for describing the structure of an amalgamated product of two groups over a common virtually cyclic virtual retract.

\begin{cor}\label{cor:G_0_is_vr_in_G} Suppose that $G=G_1*_{G_0} G_2$, where $G_0$ is virtually cyclic and $G_0 \vr G_i$, for $i=1,2$. Then
\begin{itemize}
    \item[(i)] every finitely generated virtually abelian virtual retract $H \vr G_i$, $i=1,2$, is a virtual retract of $G$; in particular, $G_0 \vr G$.
    \item[(ii)] If $|G_0|<\infty$ then there is a finite index subgroup $F \n_f G$ such that $F=F_0*F_1* \dots* F_k$, where $k \in \N_0$, $F_0$ is a finitely generated free group (possibly trivial) and each $F_j$ is isomorphic to a finite index subgroup of some $G_i$, for $i=i(j) \in \{1,2\}$.
        \item[(iii)] If $G_0$ is infinite then there is a finite index normal subgroup $K \n_f G$ isomorphic to a semidirect product
    \[K \cong (N_0*N_1* \dots * N_k)\rtimes \Z,\] where $k \in \N_0$, $N_0$ is a finitely generated free group (possibly trivial), and for each $j=1,\dots,k$ there is $i=i(j) \in \{1,2\}$ such that $N_j$ is isomorphic to some $M_{j} \n (G_i \cap K)$ with $(G_i \cap K)/M_j \cong \Z$.
\end{itemize}
\end{cor}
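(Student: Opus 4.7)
For (i), the plan is to apply \Cref{prop:maps_from_amalgams_to_vab_gps} with the trivial kernel choices $N_1 \coloneq G_1$, $N_2 \coloneq G_2$, $N \coloneq G$ and with $H_1 \coloneq H$, $H_2 \coloneq G_0$: this yields a homomorphism $\psi \colon G \to P$ to a finitely generated virtually abelian group $P$ that is injective on $H$. Since $\psi(H) \vr P$ by \Cref{lem:vabs_have_LR}, combining with \Cref{lem:basic_props_of_virt_retr}(a) gives $H \vr G$. The case $H \vr G_2$ is symmetric, and specializing to $H = G_0$ establishes the last assertion of (i).

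For (ii), invoke (i) with $H = G_0$ to obtain a retraction $\rho \colon L \to G_0$, with $L \leqslant_f G$. Since $|G_0| < \infty$, the subgroup $\ker\rho$ already has finite index in $G$ and trivial intersection with $G_0$; replacing it by its normal core $F \n_f G$ preserves both properties (using $F \subseteq \ker\rho$). Applying \Cref{thm:kurosh} to $F \leqslant_f G$ expresses $F$ as the fundamental group of a finite graph of groups whose edge groups $g(F \cap G_0)g^{-1}$ are all trivial (by normality of $F$ in $G$), so $F$ decomposes as a free product of $\pi_1$ of the (finite) underlying graph -- a finitely generated free group $F_0$ -- with the vertex groups $g(F \cap G_i)g^{-1} \cong F \cap G_i$, each a finite index subgroup of some $G_i$.

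For (iii), the main obstacle is to produce, on some finite index normal subgroup $K \n_f G$, a surjective character $\chi \colon K \to \Z$ that is injective on every $G$-conjugate of $K \cap G_0$. To achieve this, apply \Cref{prop:maps_from_amalgams_to_vab_gps} (with $H_1 = H_2 = G_0$ and the trivial kernel choices) to obtain $\psi \colon G \to P$ injective on $G_0$, with $P$ finitely generated virtually abelian. Fix a free abelian finite index normal subgroup $P' \n_f P$, set $K \coloneq \psi^{-1}(P') \n_f G$, and note that $D \coloneq \psi(K \cap G_0) \cong \Z$ sits inside $P' \cong \Z^r$ (since $K \cap G_0 \leqslant_f G_0$ and $\psi|_{G_0}$ is injective). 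The finite quotient $Q \coloneq P/P'$ acts on $P'$ by conjugation. Because $\mathrm{Hom}(P',\Q) \cong \Q^r$ is not a finite union of proper $\Q$-subspaces, one can choose $\chi_P \colon P' \to \Z$ whose restriction to every $Q$-translate $q \cdot D$ is non-trivial. Setting $\chi \coloneq \chi_P \circ \psi|_K \colon K \to \Z$ (rescaled to surject onto $\Z$), the computation $\chi(gcg^{-1}) = \chi_P\bigl(\overline{\psi(g)} \cdot \psi(c)\bigr)$ shows that $\chi$ is injective on each conjugate $g(K \cap G_0)g^{-1}$, $g \in G$.

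Finally, \Cref{thm:kurosh} presents $K$ as the fundamental group of a finite graph of groups with infinite cyclic edge groups $g(K \cap G_0)g^{-1}$ and vertex groups $g(K \cap G_i)g^{-1}$. On the associated Bass-Serre tree $\cT$, the kernel $N \coloneq \ker\chi$ acts with trivial edge stabilizers (by injectivity of $\chi$ on edge groups), and each vertex group $K_v$ satisfies $\chi(K_v) \neq 0$ because $K_v$ contains an incident edge group; this forces the quotient $N \backslash \cT$ to be a finite graph. Hence $N$ is the fundamental group of a finite graph of groups with trivial edge groups, giving $N = N_0 * N_1 * \dots * N_k$ with $N_0$ a finitely generated free group (equal to $\pi_1$ of the quotient graph) and each $N_j$ ($j \geqslant 1$) isomorphic to a vertex stabilizer $N \cap K_v$. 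Writing $K_v = g_v (K \cap G_i) g_v^{-1}$, conjugation by $g_v^{-1}$ identifies $N \cap K_v$ with $M_v \coloneq g_v^{-1} N g_v \cap (K \cap G_i) \n (K \cap G_i)$, and the quotient $(K \cap G_i)/M_v$ embeds into $K/(g_v^{-1} N g_v) \cong \Z$ with non-zero image (arising from any incident edge group via the conjugated character $\chi \circ \mathrm{conj}_{g_v}$), so $(K \cap G_i)/M_v \cong \Z$. The sequence $1 \to N \to K \xrightarrow{\chi} \Z \to 1$ splits because $\Z$ is free, completing the proof that $K \cong (N_0 * N_1 * \dots * N_k) \rtimes \Z$.
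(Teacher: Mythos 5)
Your argument is correct. Parts (i) and (ii) coincide with the paper's own proof: (i) is exactly \Cref{prop:maps_from_amalgams_to_vab_gps} followed by \Cref{lem:vabs_have_LR} and \Cref{lem:basic_props_of_virt_retr}.(a), and in (ii) both you and the paper use $G_0 \vr G$ to produce a finite index normal subgroup $F \n_f G$ with $F \cap G_0=\{1\}$ and then apply \Cref{thm:kurosh}. The genuine divergence is in (iii). There the paper simply quotes \cite[Lemma~12.2]{MM-vr_in_free_constr} to obtain $K \n_f G$ and a character $\xi:K \to \Z$ injective on every $K \cap gG_0g^{-1}$, and then combines \Cref{cor:normal_sbgp_in_graph_of_gps_is_fg} with \Cref{thm:kurosh} to split $N=\ker\xi$ as the required free product. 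You instead manufacture the character from scratch: taking $\psi:G \to P$ injective on $G_0$ from \Cref{prop:maps_from_amalgams_to_vab_gps}, setting $K=\psi^{-1}(P')$ for a free abelian $P' \n_f P$, and using that $\mathrm{Hom}(P',\Q)$ is not a finite union of proper subspaces to choose $\chi_P$ non-vanishing on all $Q$-translates of the infinite cyclic group $\psi(K\cap G_0)$; since $K \n G$, the subgroups $K \cap gG_0g^{-1}$ are exactly the $g(K\cap G_0)g^{-1}$, and your conjugation computation gives injectivity of $\chi=\chi_P\circ\psi|_K$ on all of them. This buys self-containedness (you effectively reprove the quoted external lemma inside the paper's own toolkit), at the cost of a somewhat longer argument. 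Your endgame --- trivial $N$-edge-stabilizers, finiteness of $N\backslash\cT$, the free product decomposition of $N$, and the identification of its vertex groups as normal subgroups $M_j \n (K\cap G_i)$ with quotient $\Z$ --- is a direct re-derivation of \Cref{lem:normal_sbgp_in_graph_of_gps-prelim} and \Cref{cor:normal_sbgp_in_graph_of_gps_is_fg}; the only terse spot is the assertion that the quotient graph is finite, which really rests on $\chi$ being non-zero on each edge stabilizer, so that $|K:N\alpha_e(K_e)|<\infty$ for every edge (condition \eqref{eq:NG_e_has_f_i}) --- a hypothesis you do have, so nothing essential is missing, and citing those two results would shorten this step exactly as the paper does.
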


\begin{proof} (i) According to 
\Cref{prop:maps_from_amalgams_to_vab_gps}, there
exists a group homomorphism $\psi:G \to P$, where $P$ is a finitely generated virtually abelian group,  such that $\psi$ is injective on $H$. \Cref{lem:vabs_have_LR} tells us that $\psi(H) \vr P$, so, by \Cref{lem:basic_props_of_virt_retr}.(a), $H \vr G$.

(ii) By part (i), $G_0 \vr G$, so there is a  subgroup $G' \leqslant G$ such that $G_0$ normalizes $G'$, $G' \cap G_0=\{1\}$ and $|G:G_0G'|<\infty$. Since $|G_0|<\infty$, we can conclude that $|G:G'|<\infty$, so there exists $F \n_f G$, with $F \subseteq G'$. In particular, $F \cap G_0=\{1\}$, hence, by \Cref{thm:kurosh}, $F$ has a free product decomposition with the desired properties.

(iii) Again, since $G_0 \vr G$, we can apply \cite[Lemma~12.2]{MM-vr_in_free_constr} to find $K\n_f G$ admitting a homomorphism $\xi:K \to \Z$ such that $\xi$ is injective on $K \cap g G_0 g^{-1}$, for all $g \in G$. By \Cref{thm:kurosh}, $K$ decomposes as the fundamental group of a finite graph of groups $(\mathcal{K},\Gamma)$, where each vertex group $K_v$, $v \in V\Gamma$, is isomorphic to $K \cap G_i$, for some $i \in \{1,2\}$, and each edge group $K_e$, $e\in E\Gamma$, is isomorphic to $K \cap G_0$.

Since $|G_0|=\infty$, $K \cap G_0 \n_f G_0$ and $\xi$ is injective on $\alpha_e(K_e)$, we see that 
$\xi(K)$ is non-zero (so we can assume that $\xi$ is surjective) and $\xi(\alpha_e(K_e))$ has finite index in $\Z$, for all $e \in E\Gamma$. Hence, for $N\coloneq \ker\xi \n K$ we have
\[K \cong N \rtimes \Z,~N \cap \alpha_e(K_e)=\{1\}\text{ and }
|K:N\alpha_e(K_e)|<\infty, \text{ for all } 
e \in E\Gamma.\]  
Therefore, we can combine \Cref{cor:normal_sbgp_in_graph_of_gps_is_fg} below with \Cref{thm:kurosh} to conclude that $N$ splits as the free product $N_0*N_1*\dots*N_k$, where $N_0$ is free of finite rank and for each $j=1 \dots, k$, $N_j$ is isomorphic to a normal subgroup $M_j$ of some $K_v$, $v \in V\Gamma$, with $K_v/M_j \cong \Z$.
\end{proof}


\section{Residual properties of amalgamated free products}\label{sec:res_props}
In this section we give applications of \Cref{thm:amalg_of_virt_ab} to residual properties of amalgamated free products over virtually cyclic subgroups. 

\begin{notation}\label{not:G_for_res_props}
Throughout this section we assume that $G=G_1*_{G_0} G_2$ is an amalgamated free product of groups $G_1,G_2$, where $G_0$ is virtually cyclic and $G_0 \vr G_i$, for $i=1,2$.     
\end{notation}
The next statement establishes claim (ii) of \Cref{cor:virt_res_solv}.

\begin{cor}\label{cor:claim_(ii)-virt_res_solv} If $G_1$ and $G_2$ are virtually residually solvable then so is $G$.   \end{cor}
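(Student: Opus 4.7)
The plan is to use \Cref{prop:maps_from_amalgams_to_vab_gps} to construct a finite index normal subgroup of $G$ whose residual solvability can be established via its Bass--Serre structure.

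For each $i=1,2$, using the virtual residual solvability of $G_i$, I would choose a residually solvable normal subgroup $R_i \n_f G_i$ of finite index with $G_i/R_i$ finite (passing to a core in $G_i$ if necessary). Since $G_i/R_i$ is finite, hence trivially virtually abelian, I would apply \Cref{prop:maps_from_amalgams_to_vab_gps} with $N_i \coloneqq R_i$, obtaining a homomorphism $\psi \colon G \to P$ into a finitely generated virtually abelian group $P$ such that $\ker\psi \cap G_i \subseteq R_i$ for $i=1,2$. Setting $M \coloneqq \ker\psi$ and $K \coloneqq \psi^{-1}(A)$, where $A \n_f \psi(G)$ is a torsion-free (free abelian) normal finite index subgroup, yields $K \n_f G$ with a short exact sequence $1 \to M \to K \to A \to 1$.

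A standard derived-series argument shows that an extension of a residually solvable group by a solvable group is residually solvable: if $K/M$ has derived length $d$, then $K^{(d+n)} \subseteq M^{(n)}$ for all $n$, so $\bigcap_n K^{(n)} \subseteq \bigcap_n M^{(n)} = \{1\}$. It therefore suffices to show that $M$ is residually solvable. By \Cref{thm:kurosh}, $M$ is the fundamental group of a graph of groups whose vertex groups are $G$-conjugates of $M \cap G_i \subseteq R_i$, hence residually solvable, and whose edge groups are $G$-conjugates of $M \cap G_0$, subgroups of the virtually cyclic group $G_0$.

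The main obstacle is to deduce residual solvability of $M$ from this graph of groups structure. My approach would be to refine the construction by combining \Cref{prop:maps_from_amalgams_to_vab_gps} with \Cref{cor:G_0_is_vr_in_G}(ii) or (iii): for a suitable finite index normal subgroup of $G$, one can ensure a decomposition as a free product of residually solvable groups (when $|G_0|<\infty$) or as such a free product extended by $\Z$ (when $|G_0|=\infty$). The conclusion then follows from the classical fact that free products of residually solvable groups are residually solvable, together with the extension fact above applied to handle the $\Z$-extension.
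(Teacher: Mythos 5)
Your overall skeleton matches the paper's proof: apply \Cref{prop:maps_from_amalgams_to_vab_gps} with $N_i=R_i$, pass to $K=\psi^{-1}(A)$ for a free abelian finite index subgroup $A$ of $\psi(G)$, and use the (correct) derived-series fact that a (residually solvable)-by-solvable group is residually solvable, which is exactly the paper's appeal to \cite[Lemma~1.5]{Gruenberg-root}. The genuine gap is the step you yourself flag, namely residual solvability of $M=\ker\psi$, and your proposed fix via \Cref{cor:G_0_is_vr_in_G}(ii)/(iii) does not close it. That corollary only produces a finite index subgroup of $G$ decomposing as a free product (possibly extended by $\Z$) whose factors are isomorphic to finite index subgroups of the $G_i$, or to normal subgroups $M_j\n (G_i\cap K)$ with quotient $\Z$; these factors are merely \emph{virtually} residually solvable, and nothing in the corollary lets you force them inside your chosen residually solvable subgroups $R_i$. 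So the classical fact about free products of residually solvable groups does not apply to them, and in the case $|G_0|=\infty$ you would additionally need that a free product of virtually residually solvable groups, extended by $\Z$, is virtually residually solvable --- which is essentially the statement you are trying to prove.

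The missing observation is already contained in the tool you invoked: \Cref{prop:maps_from_amalgams_to_vab_gps} also guarantees that $\psi$ is injective on $G_0$. Since $M=\ker\psi\n G$, this gives $M\cap gG_0g^{-1}=g(M\cap G_0)g^{-1}=\{1\}$ for every $g\in G$, so in the induced decomposition of $M$ all edge groups are trivial. Hence, by the Kurosh-type theorem for amalgams (\cite[Theorem~14 in Section~I.5.5]{Serre}, or \Cref{thm:kurosh}), $M$ is a free product of a free group with subgroups of the form $g(M\cap G_i)g^{-1}\leqslant gR_ig^{-1}$, each of which is residually solvable. Gruenberg's result that solvability is a root property (so free products of residually solvable groups are residually solvable) then yields residual solvability of $M$, and your extension step applied to $1\to M\to K\to A\to 1$ finishes the proof exactly as in the paper, with no need for \Cref{cor:G_0_is_vr_in_G}.
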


\begin{proof}
Let $N_i \n_f G_i$ be a finite index normal residually solvable subgroup, $i=1,2$. By \Cref{prop:maps_from_amalgams_to_vab_gps}, there exist a finitely generated virtually abelian group $P$ and $\psi:G \to P$ such that $\psi$ is injective on $G_0$ and 
\begin{equation}\label{eq:int_of_kerpsi_with_G_i}
\ker\psi \cap G_i \subseteq N_i,~\text{ for } i=1,2. \end{equation}
Note that $N \coloneq \ker\psi\n G$ trivially intersects each conjugate of $G_0$ in $G$. Therefore, by a generalization of Kurosh's theorem \cite[Theorem~14 in Section~I.5.5]{Serre} (or by \Cref{thm:kurosh}), $N$ is isomorphic to a free product $*_{j \in J} M_j*F$, where $F$ is a free group and for each $j \in J$ there is $g_j \in G$ and $i =i(j) \in \{1,2\}$ such that $M_j=N \cap g_jG_i g_j^{-1}$. 

Inclusion \eqref{eq:int_of_kerpsi_with_G_i} tells us that for every $j \in J$, $M_j \leqslant g_jN_{i(j)} g_j^{-1}$, so $M_j$ is residually solvable. Since $F$ is also residually solvable, we can apply a result of Gruenberg \cite[Corollary after Theorem~4.1]{Gruenberg-root} to deduce that $N$ is residually solvable. Let $A \n_f \psi(G)$ be an abelian subgroup of finite index and let $H=\psi^{-1}(A) \n_f G$.
Then $H$ is an extension of the residually solvable group $N$ by the abelian group $A$,  hence $H$ is residually solvable (see \cite[Lemma~1.5]{Gruenberg-root}), and the proof is complete.
%
\end{proof}

We now focus on showing that if $G_1$ and $G_2$ are residually finite, then so is $G$.
We will say that a subgroup $H$ of  $G$ is \emph{topologically embedded} if the profinite topology on $G$ induces the full profinite topology on $H$ (i.e., every closed subset of $H$ is an intersection of $H $ with a closed subset of $G$). This is equivalent to saying that for each $H' \leqslant_f H$ there is $G' \leqslant_f G$ such that $H \cap G' \subseteq H'$ (cf. \cite[p.~1707]{Lorensen}).

\begin{lemma}\label{lem:factors_are_top_embedded}  For each $i=0,1,2$, $G_i$ is topologically embedded in $G$.
\end{lemma}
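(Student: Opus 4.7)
The plan is to leverage \Cref{prop:maps_from_amalgams_to_vab_gps} together with the good behaviour of the profinite topology on finitely generated virtually abelian groups. I will first treat the cases $i \in \{1,2\}$, and then deduce the case $i=0$ by transitivity.

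The key auxiliary observation is that every subgroup $B$ of a finitely generated virtually abelian group $A$ is topologically embedded in $A$. Indeed, by the last bullet of \Cref{rem:basic_sep_facts}, every subgroup of $A$ is separable. Given $B' \leqslant_f B$, let $b_1,\dots,b_m$ be representatives of the non-identity cosets of $B'$ in $B$; separability of $B'$ in $A$ provides $A_j \leqslant_f A$ with $B' \subseteq A_j$ and $b_j \notin A_j$, so $A' \coloneq \bigcap_{j=1}^m A_j \leqslant_f A$ satisfies $B \cap A' \subseteq B'$.

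For $i \in \{1,2\}$, let $H' \leqslant_f G_i$, and let $N_i$ be the normal core of $H'$ in $G_i$, so $N_i \n_f G_i$ and $G_i/N_i$ is finite (hence finitely generated virtually abelian). Applying \Cref{prop:maps_from_amalgams_to_vab_gps} with $N_{3-i} \coloneq G_{3-i}$, $N \coloneq G$ and $H_1 = H_2 \coloneq \{1\}$ yields a finitely generated virtually abelian group $P$ and a homomorphism $\psi\colon G \to P$ with $\ker\psi \cap G_i \subseteq N_i$. The auxiliary observation applied to $\psi(G_i) \leqslant P$ and $\psi(H') \leqslant_f \psi(G_i)$ produces $P' \leqslant_f P$ with $P' \cap \psi(G_i) \subseteq \psi(H')$. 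Setting $G' \coloneq \psi^{-1}(P') \leqslant_f G$, any $g \in G_i \cap G'$ satisfies $\psi(g) \in \psi(H')$, so $g \in H' \cdot (\ker\psi \cap G_i) \subseteq H' \cdot N_i = H'$. Thus $G_i \cap G' \subseteq H'$, proving that $G_i$ is topologically embedded in $G$.

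For the case $i=0$, I would first recall that a virtual retract is always topologically embedded in the ambient group: if $\rho\colon K \to G_0$ is a retraction with $G_0 \leqslant K \leqslant_f G_j$, then for any $H'' \leqslant_f G_0$ the preimage $\rho^{-1}(H'') \leqslant_f G_j$ meets $G_0$ in $H''$. Since $G_0 \vr G_j$ by hypothesis, $G_0$ is topologically embedded in $G_j$; composing with the case already proved, and using that the property of being topologically embedded is transitive along chains of subgroups, gives the conclusion for $G_0$. There is no serious obstacle here: once \Cref{prop:maps_from_amalgams_to_vab_gps} is in hand, the argument is merely an assembly of already-established pieces with the elementary fact that the profinite topology of an f.g. virtually abelian group restricts fully to each of its subgroups.
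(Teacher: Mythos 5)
Your proof is correct and follows essentially the same route as the paper: for $i=1,2$ it applies \Cref{prop:maps_from_amalgams_to_vab_gps} to push a finite-index (normal core) subgroup into a finitely generated virtually abelian quotient and then uses separability of subgroups there, exactly as in the paper (which cites this last step rather than proving it by hand as you do). The only cosmetic difference is the case $i=0$, where you use $G_0 \vr G_i$ plus transitivity of topological embedding through $G_i$, while the paper invokes $G_0 \vr G$ from \Cref{cor:G_0_is_vr_in_G}.(i) and retracts inside $G$ directly; both are valid.
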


\begin{proof} Recall that  $G_0 \vr G$, by \Cref{cor:G_0_is_vr_in_G}.(i). Thus there exist $K \leqslant_f G$, with $G_0 \subseteq K$, and a retraction $\rho:K \to G_0$. So, for any $G_0' \leqslant_f G_0$, we have 
\[G' \coloneq \rho^{-1}(G_0') \leqslant_f G~\text{ and }~G' \cap G_0=G_0'. \]
Therefore, $G_0$  is topologically embedded in $G$.

Now, consider any  $i \in \{1,2\}$, and suppose that $N_i \leqslant_f G_i$. By \Cref{prop:maps_from_amalgams_to_vab_gps}, there is a finitely generated virtually abelian group $P$ and a homomorphism $\psi:G \to P$ such that $\ker\psi \cap G_i\subseteq N_i$. Since $\psi(N_i)$ is separable in $P$ (by \Cref{rem:basic_sep_facts}), there exists a finite index subgroup $L\leqslant_f P$ such that $L \cap \psi(G_i)=\psi(N_i)$ (cf. \cite[Lemma~4.17]{Min-Min}). Then $K \coloneq \psi^{-1}(L) \leqslant_f G$ satisfies $K \cap G_i=N_i$. Thus $G_i$ is topologically embedded in $G$.    
\end{proof}

\begin{lemma}\label{lem:edge-approx} Suppose that $G_0$ is separable in $G_i$, for $i=1,2$. Then $G$ is {edge-approximated by the family of finitely generated virtually abelian groups}, in the sense of \cite[Definition~6.1]{MM-vr_in_free_constr}.
This means that for arbitrary finite subset $F_i \subseteq G_i$, $i=1,2$, there is a finitely generated virtually abelian group $P$ and a homomorphism $\psi:G \to P$ such that 
\begin{equation}\label{eq:edge-approx} \psi(F_i\setminus G_0) \subseteq P \setminus \psi(G_0),~ \text{ for }i=1,2,\end{equation}
and $\psi(G_0)$ is separable in $P$.
\end{lemma}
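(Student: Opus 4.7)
The plan is to apply \Cref{prop:maps_from_amalgams_to_vab_gps} with normal subgroups $N_i \n G_i$ chosen so that each element of $F_i\setminus G_0$ is separated from $G_0$ modulo $N_i$, and with $H_1 = H_2 = G_0$ to secure injectivity on the amalgamated subgroup. The standing hypotheses of \Cref{not:G_for_res_props} ensure the proposition applies; the extra assumption (separability of $G_0$ in each $G_i$) is exactly what is required to produce the $N_i$'s, while the target group being finitely generated virtually abelian will give the separability of $\psi(G_0)$ for free.

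First, for each $i \in \{1,2\}$ and each $f \in F_i \setminus G_0$, separability of $G_0$ in $G_i$ yields a subgroup $K_{i,f} \leqslant_f G_i$ with $fK_{i,f} \cap G_0 = \emptyset$, equivalently $f \notin G_0 K_{i,f}$. Intersecting these finitely many subgroups and then passing to the normal core in $G_i$, I obtain $N_i \n_f G_i$ with $G_i/N_i$ finite (in particular finitely generated and virtually abelian) such that $f \notin G_0 N_i$ for every $f \in F_i \setminus G_0$.

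Next I apply \Cref{prop:maps_from_amalgams_to_vab_gps} with these $N_i$'s, with $N \coloneq G$ so that the condition on $N$ is vacuous, and with $H_1 = H_2 \coloneq G_0$ (a virtually cyclic, hence finitely generated virtually abelian, virtual retract of each $G_i$). This yields a finitely generated virtually abelian group $P$ and a homomorphism $\psi \colon G \to P$ that is injective on $G_0$ and satisfies $\ker\psi \cap G_i \subseteq N_i$ for $i = 1,2$. To check \eqref{eq:edge-approx}, suppose $f \in F_i \setminus G_0$ and $\psi(f) = \psi(g_0)$ for some $g_0 \in G_0$; then $g_0^{-1}f \in \ker\psi \cap G_i \subseteq N_i$, so $f \in G_0 N_i$, contradicting the construction of $N_i$.

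Separability of $\psi(G_0)$ in $P$ is then automatic, since every subgroup of a finitely generated virtually abelian group is separable by \Cref{rem:basic_sep_facts}. I do not anticipate any serious obstacle: the heavy lifting has already been packaged into \Cref{thm:amalg_of_virt_ab} and \Cref{prop:maps_from_amalgams_to_vab_gps}, and the only step that requires genuine care is the initial translation of set-theoretic separability of $G_0$ in $G_i$ into the existence of a normal finite-index subgroup $N_i \n G_i$ with $(F_i \setminus G_0) \cap G_0 N_i = \emptyset$.
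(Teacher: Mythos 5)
Your proof is correct and follows essentially the same route as the paper. The paper packages the construction of the $N_i$ into a citation of \cite[Lemma~2.2]{MM-vr_in_free_constr} rather than building them from scratch, but the content is the same; the application of \Cref{prop:maps_from_amalgams_to_vab_gps} and the verification (yours by contradiction, the paper's by a direct coset computation using $G_0\ker\psi\cap G_i=G_0(\ker\psi\cap G_i)$) are identical in substance, as is the appeal to \Cref{rem:basic_sep_facts} for separability of $\psi(G_0)$.
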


\begin{proof} 
Let $F_i \subseteq G_i$ be an arbitrary finite subset, $i=1,2$. Since $G_0$ is separable in $G_i$, according to   \cite[Lemma~2.2]{MM-vr_in_free_constr}there exists $N_i \n_f G_i$ such that 
\begin{equation}\label{eq:not_in_N_iG_0}
F_i \cap G_0N_i=F_i \cap G_0,~ \text{ for } ~i=1,2.
\end{equation}
Now, by \Cref{prop:maps_from_amalgams_to_vab_gps} there is a finitely generated virtually abelian group $P$ and a homomorphism $\psi:G \to P$ such that $\ker\psi \cap G_i \subseteq N_i$, for $i=1,2$. If $f \in F_i$, for some $i \in \{1,2\}$, and $\psi(f) \in \psi(G_0)$ in $P$, then 
\[f \in \psi^{-1}(\psi(G_0)) \cap G_i=G_0 \ker\psi \cap G_i=G_0(\ker\psi \cap G_i) \subseteq G_0N_i,\]
hence $f \in G_0$ by \eqref{eq:not_in_N_iG_0}. Thus \eqref{eq:edge-approx} is satisfied. Moreover, $\psi(G_0)$ is separable in $P$,   by \Cref{rem:basic_sep_facts}.
\end{proof}

\begin{prop}\label{prop:factors_are_sep} Assume that $G_0$ is separable in $G_1$ and in $G_2$. Then
\begin{itemize}
    \item[(i)]  $G_1$, $G_2$ and $G_0$ are all separable in $G$. It follows that  every separable subset of $G_i$ is separable in $G$, for $i=0,1,2$.  
\item[(ii)] If $g \in G$ is not conjugate to an element of $G_1 \cup G_2$  then $\langle g \rangle \vr G$.    
\end{itemize}
\end{prop}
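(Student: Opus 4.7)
The plan is to establish separability of $G_0, G_1, G_2$ in $G$ (from which the "separable subsets" half of (i) follows via \Cref{lem:factors_are_top_embedded}) and then deduce (ii) by a direct appeal to a virtual-retraction result for hyperbolic elements. For the second half of (i): once $G_0, G_1, G_2$ are known to be closed in the profinite topology of $G$, any $S \subseteq G_i$ separable in $G_i$ can be written, by the topological embedding of $G_i$ from \Cref{lem:factors_are_top_embedded}, as $S = G_i \cap T$ for some closed $T \subseteq G$; since $G_i$ is closed, so is $S = G_i \cap T$.

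To prove separability of $G_0, G_1, G_2$ in $G$, I would combine \Cref{lem:edge-approx} (edge-approximation by finitely generated virtually abelian groups) with \Cref{prop:maps_from_amalgams_to_vab_gps}. Given $g \in G$ outside one of $G_0, G_1, G_2$, write $g$ in Bass-Serre reduced normal form $g = a_1 \cdots a_n$ with factors alternating in $G_1 \setminus G_0$ and $G_2 \setminus G_0$ (adjusted at the ends as needed). Apply \Cref{lem:edge-approx} with $F_i$ containing the $a_j$ lying in $G_i$, strengthened via \Cref{prop:maps_from_amalgams_to_vab_gps} so that the resulting $\psi \colon G \to P$ is also injective on $G_0$ and on prescribed virtual retracts of the $G_i$. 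Since every subgroup of the finitely generated virtually abelian group $P$ is separable, a combinatorial analysis of $\psi(g)$ in $P$ followed by projection to a suitable finite quotient produces a finite index $N \leqslant_f G$ with $g \notin HN$, where $H \in \{G_0, G_1, G_2\}$ is the subgroup in question.

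For part (ii): let $g \in G$ be an element not conjugate to any element of $G_1 \cup G_2$. Then $g$ acts loxodromically on the Bass-Serre tree of the splitting $G = G_1 *_{G_0} G_2$, i.e., $g$ is a hyperbolic element. Since part (i) gives separability of the edge group $G_0$ in $G$, we apply \cite[Corollary~6.6]{MM-vr_in_free_constr}---exactly as in the proof of \Cref{thm:amalgam_of_(VRC)_gps}---to conclude $\langle g \rangle \vr G$.

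The principal obstacle lies in part (i), specifically the separability of the vertex groups $G_1$ and $G_2$: \Cref{lem:edge-approx} controls the images of normal-form factors of $g$ only modulo $G_0$, whereas separating $g$ from $G_1$ requires control modulo the full vertex subgroup. The injectivity guarantees of \Cref{prop:maps_from_amalgams_to_vab_gps} (on $G_0$ and on virtual retracts of the $G_i$) are designed precisely to upgrade these modulo-$G_0$ coset separations in $P$ to the modulo-$G_i$ separations needed in $G$; separability of $G_0$ itself is easier and can alternatively be read off from $G_0 \vr G$ (\Cref{cor:G_0_is_vr_in_G}.(i)) once residual finiteness of $G$ has been obtained from \Cref{lem:edge-approx} by the same mechanism.
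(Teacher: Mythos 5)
Your reduction of the second half of (i) to closedness of the $G_i$ plus \Cref{lem:factors_are_top_embedded} is exactly what the paper does, and your part (ii) (hyperbolicity of $g$ on the Bass--Serre tree plus \cite[Corollary~6.6]{MM-vr_in_free_constr}, using the separability of $G_0$ in $G$ from (i)) is a legitimate variant of the paper's route, which instead quotes \Cref{lem:edge-approx} together with \cite[Theorem~6.3]{MM-vr_in_free_constr} and so does not pass through (i) at all. The problem is the first half of (i). The paper obtains separability of $G_1$, $G_2$ and $G_0=G_1\cap G_2$ in one stroke by applying \cite[Proposition~6.4]{MM-vr_in_free_constr} to the edge-approximation furnished by \Cref{lem:edge-approx}; you propose to reprove this by hand, but the step you describe as ``a combinatorial analysis of $\psi(g)$ in $P$ followed by projection to a suitable finite quotient'' is precisely the hard part, and as described it cannot work. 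The group $P$ is virtually abelian, so the reduced normal form of $g$ collapses under $\psi$, and the guarantees you impose on $\psi$ (injectivity on $G_0$, on chosen virtual retracts $H_i\vr G_i$, and $\psi(a_j)\notin\psi(G_0)$ for the normal-form letters) in no way prevent $\psi(g)\in\psi(G_1)$. Already for $G_0=\{1\}$, $G_1=\langle a\rangle$, $G_2=\langle b\rangle$ (so $G\cong F_2$) and $g=[a,b]\notin G_1$, the abelianization $F_2\to\Z^2$ satisfies every condition you ask of $\psi$, yet $\psi(g)=0\in\psi(G_1)$; hence no analysis carried out inside $P$ can certify $g\notin G_1N$ for some $N\leqslant_f G$. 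The genuine argument has to exploit the structure of $\ker\psi$ (which meets all conjugates of $G_0$ trivially and therefore splits as a free product by Bass--Serre theory), or equivalently the residual properties of an amalgam of the virtually abelian images -- this is exactly what the cited Proposition~6.4 supplies and what is missing from your sketch; the same objection applies to your treatment of the separability of $G_0$ by this mechanism.

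A secondary point: your fallback for $G_0$, namely ``$G_0\vr G$ plus residual finiteness of $G$ obtained from \Cref{lem:edge-approx}'', is not available under the stated hypotheses. In \Cref{prop:factors_are_sep} the factors $G_1,G_2$ are not assumed residually finite (only $G_0\vr G_i$ and $G_0$ separable in $G_i$), so $G$ need not be residually finite; residual finiteness of $G$ is deduced only later, in \Cref{cor:res_fin_amalg}, under the additional assumption that the factors are residually finite, and that deduction itself uses the present proposition. So this alternative route is circular in general, and the separability of $G_0$ in $G$ has to come from the same (missing) argument as that of $G_1$ and $G_2$.
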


\begin{proof} In view of \Cref{lem:edge-approx}, we can apply \cite[Proposition~6.4]{MM-vr_in_free_constr} to deduce that $G_1$ and $G_2$ and $G_0=G_1 \cap G_2$ are separable in $G$. Since $G_i$ is topologically embedded in $G$ (see \Cref{lem:factors_are_top_embedded}), for $i=0,1,2$, this implies  claim (i). 
Claim (ii) follows from \Cref{lem:edge-approx} and \cite[Theorem~6.3]{MM-vr_in_free_constr}.
\end{proof}

In view of \Cref{def:sep_props} and the fact that  virtual retracts of residually finite groups are always separable (see \cite[Lemma~2.2]{virtprops}), \Cref{prop:factors_are_sep} yields the following amplification of \Cref{cor:virt_res_solv}.(i).

\begin{cor}\label{cor:res_fin_amalg} Assume that $G=G_1 *_{G_0} G_2$, where $G_0$ is virtually cyclic and $G_0 \vr G_i$, for $i=1,2$.
If $G_1$ and $G_2$ are  residually finite (respectively, cyclic subgroup separable) then $G$ is residually finite (respectively, cyclic subgroup separable).
\end{cor}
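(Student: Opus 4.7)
The plan is to reduce the corollary to \Cref{prop:factors_are_sep} by first upgrading the assumption ``$G_0 \vr G_i$'' together with residual finiteness of $G_i$ to ``$G_0$ is separable in $G_i$''. In both the residually finite case and the cyclic subgroup separable case, $G_i$ is residually finite (for $i=1,2$), so since $G_0 \vr G_i$, the subgroup $G_0$ is separable in $G_i$ by \cite[Lemma~2.2]{virtprops} (virtual retracts of residually finite groups are always separable). Thus the hypothesis of \Cref{prop:factors_are_sep} is fulfilled and I may use both parts of its conclusion.

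For the residual finiteness statement, I would argue directly: $\{1\}$ is separable in $G_1$, so by \Cref{prop:factors_are_sep}.(i) (every separable subset of $G_1$ is separable in $G$) the trivial subgroup is separable in $G$, i.e., $G$ is residually finite.

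For the cyclic subgroup separability statement, I would split by Bass--Serre type. Fix $g \in G$. If $g$ is \emph{hyperbolic}, i.e., not conjugate into $G_1 \cup G_2$, then \Cref{prop:factors_are_sep}.(ii) gives $\langle g \rangle \vr G$; since $G$ is residually finite by the first half, \cite[Lemma~2.2]{virtprops} yields that $\langle g \rangle$ is separable in $G$. If $g$ is \emph{elliptic}, write $g = x h x^{-1}$ with $h \in G_i$, $i \in \{1,2\}$, $x \in G$. By hypothesis $G_i$ is cyclic subgroup separable, so $\langle h \rangle$ is separable in $G_i$; by \Cref{prop:factors_are_sep}.(i) it is then separable in $G$, and therefore its conjugate $\langle g \rangle = x \langle h \rangle x^{-1}$ is separable in $G$ as well (conjugation is a homeomorphism of the profinite topology).

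There is essentially no main obstacle: the heavy lifting has already been done in \Cref{prop:maps_from_amalgams_to_vab_gps} and the consequences extracted in Lemmas~\ref{lem:factors_are_top_embedded}--\ref{lem:edge-approx} and \Cref{prop:factors_are_sep}. The only thing to check carefully is the routine observation that cyclic subgroup separability implies residual finiteness, which is needed to access $G_0 \vr G_i \Rightarrow G_0$ separable in $G_i$ in the second case; this is immediate from \Cref{def:sep_props} since $\{1\}$ is a cyclic subgroup.
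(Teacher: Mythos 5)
Your proof is correct and is essentially the paper's argument: the paper derives the corollary from \Cref{prop:factors_are_sep} in exactly this way, using \cite[Lemma~2.2]{virtprops} to pass from $G_0 \vr G_i$ plus residual finiteness to separability of $G_0$ in $G_i$, claim (i) for elliptic elements (and for residual finiteness itself), and claim (ii) together with the separability of virtual retracts in residually finite groups for hyperbolic elements. Nothing is missing.
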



\begin{rem} An amalgamated product of two residually finite groups over a common retract is well-known to be residually finite (see \cite[Theorem~1]{Boler-Evans}), but this may fail if the amalgamated subgroup is only assumed to be a \emph{virtual} retract of the factors. The most prominent examples of this failure are the simple groups constructed by Burger and Mozes \cite{Burg-Moz}, which decompose as amalgamated free products of finite rank free groups over subgroups that have finite index in both factors. Another class of interesting examples was given by Leary and Minasyan in \cite[Section~11]{Lea-Min}, who constructed amalgamated products of virtually $\Z^2$ groups over finite index subgroups that are not residually finite (and can even be non-Hopfian).
\end{rem}



    


\section{Finiteness properties of normal subgroups in   graphs of groups}\label{sec:fin_props}
In this section we study necessary and sufficient conditions for a normal subgroup of the fundamental group of a finite graph of groups to be finitely generated and to  satisfy higher finiteness properties.

Recall that an action of a group on a tree is said to be \emph{minimal} if there is no non-empty proper invariant subtree. 
If $(\mathcal{G},\Gamma)$ is a graph of groups, following Bass \cite[Section~7]{BASS1993} we will say that a vertex $v \in V\Gamma$ is \emph{terminal} if there is a unique edge $e \in E\Gamma$ such that $\alpha(e)=v$ (in particular, $\alpha(e) \neq \omega(e)$) and $G_v=\alpha_e(G_e)$. Clearly, any terminal vertex can be removed from the graph of groups without changing its fundamental group. 

\begin{lemma}[{\cite[Proposition~7.12]{BASS1993}}]
\label{lem:minmal_action<=>no_terminal_vertices}
Let $(\mathcal{G},\Gamma)$ be  a finite graph of groups. Fix a maximal tree $T$ and an orientation on $\Gamma$, and suppose that $G=\pi_1(\mathcal{G},\Gamma,T,E\Gamma^+)$ and $\mathcal T$ is the corresponding Bass-Serre tree. Then the action of $G$ on $\mathcal{T}$ is minimal if and only if $(\mathcal{G},\Gamma)$ contains no terminal vertices.
\end{lemma}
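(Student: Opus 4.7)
The plan is to establish the biconditional by translating between the combinatorics of $(\mathcal{G},\Gamma)$ and the geometry of the $G$-action on $\mathcal{T}$, using the correspondence that terminal vertices of the graph of groups are precisely the projections of valence-one vertices of the Bass-Serre tree. To show that a terminal vertex obstructs minimality, assume $v\in V\Gamma$ is terminal with unique incident edge $e$ and $G_v=\alpha_e(G_e)$. Fix a lift $\tilde v\in V\mathcal{T}$: the set of edges of $\mathcal{T}$ at $\tilde v$ decomposes as the disjoint union $\bigsqcup_{f:\alpha(f)=v}G_v/\alpha_f(G_f)$, which in this case is a single coset space $G_v/\alpha_e(G_e)$ of size one, so $\tilde v$ is a leaf of $\mathcal{T}$. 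Removing the $G$-orbit of $\tilde v$ together with its incident edges yields a connected $G$-invariant subgraph (paths in $\mathcal{T}$ never pass through leaves as interior vertices) that is proper and, because $\alpha(e)\neq\omega(e)$ forces $|V\Gamma|\geq 2$, still contains lifts of $\omega(e)$, hence is non-empty, contradicting minimality.

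For the converse, suppose $\mathcal{T}'\subsetneq\mathcal{T}$ is a proper non-empty $G$-invariant subtree, and aim to produce a terminal vertex of $(\mathcal{G},\Gamma)$. Setting $\Gamma'\coloneq G\backslash\mathcal{T}'$, a connected proper subgraph of $\Gamma$, the restricted action endows $\Gamma'$ with an induced graph-of-groups structure $(\mathcal{G}',\Gamma')$ inheriting vertex and edge groups from $(\mathcal{G},\Gamma)$, and the Bass-Serre correspondence applied to the invariant subtree identifies $\pi_1(\mathcal{G}',\Gamma')$ with $G$. I would then build $\Gamma$ from $\Gamma'$ by adding one edge at a time while maintaining connectedness. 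Each intermediate subgraph carries the analogous inherited graph-of-groups structure, and the chain of injective inclusions of subgraphs of groups induces inclusions on fundamental groups whose overall composition is the identity on $G$, forcing the fundamental group at every stage to be $G$. Adding an edge whose two endpoints are both already present would create an HNN extension that strictly enlarges the fundamental group, which is forbidden; hence every added edge must attach a genuinely new vertex $v$ via a new edge $e$, and the resulting amalgamated free product $\pi_1(\text{previous})*_{\alpha_e(G_e)}G_v$ coincides with $\pi_1(\text{previous})$ only when $G_v=\alpha_e(G_e)$. Thus $\Gamma\setminus\Gamma'$ is a non-empty forest attached to $\Gamma'$ along boundary edges, and any leaf of this forest is a terminal vertex of $\Gamma$.

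The main obstacle will be the bookkeeping in the converse direction: verifying that the induced graph of groups $(\mathcal{G}',\Gamma')$ on the quotient of the invariant subtree genuinely recovers $G$ as its fundamental group with matching generators, and tracking how the fundamental group behaves under the incremental attachment of edges. Once these Bass-Serre identifications are secured, the amalgamation-versus-HNN dichotomy supplies the terminal vertex directly.
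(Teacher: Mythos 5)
The paper itself offers no proof of this lemma: it is quoted from Bass (Proposition~7.12 of the cited paper), so there is no in-paper argument to compare yours against. Judged on its own, your proposal is essentially the standard Bass--Serre argument and is correct in outline. The forward direction is complete as written: lifts of a terminal vertex are valence-one vertices of $\mathcal{T}$, their orbit is $G$-invariant, and deleting it leaves a non-empty, proper, invariant subtree (non-empty because $\alpha(e)\neq\omega(e)$ guarantees surviving lifts of $\omega(e)$).

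In the converse, two points need to be nailed down, and you flag the second one yourself. First, the reason $\Gamma'=G\backslash\mathcal{T}'$ is a \emph{proper} subgraph of $\Gamma$ --- and indeed the reason the whole reduction to $\Gamma'$ works --- is that a $G$-invariant subtree is a union of $G$-orbits of vertices and edges, hence is the full preimage of its image in $\Gamma$; this one-line observation should be made explicit. Second, the crux is that the isomorphism $G\cong\pi_1(\mathcal{G}',\Gamma')$ coming from the action on $\mathcal{T}'$ is induced by the inclusion of graphs of groups, i.e.\ that the inclusion-induced map is \emph{onto} $G$. This is true, but it is exactly where the work lies: choose a maximal tree $T'$ of $\Gamma'$ and extend it to a maximal tree of $\Gamma$ (legitimate, since the Bass--Serre tree is independent of the choice of maximal tree up to $G$-equivariant isomorphism over $\Gamma$, so minimality is unaffected), lift $T'$ inside the canonical lift of the maximal tree, and apply the structure theorem to the action of $G$ on $\mathcal{T}'$ with these compatible choices to conclude that $G$ is generated by the subgroups $G_w$, $w\in V\Gamma'$, together with the stable letters $t_e$, $e\in E\Gamma'$. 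Granting this, your chain argument (injections whose composite is surjective are all isomorphisms), the HNN/amalgam dichotomy for attaching a single edge, and the existence of a vertex of degree one in $\Gamma$ lying outside $\Gamma'$ (a leaf of the finite tree $\Gamma/\Gamma'$ other than the collapsed vertex, or simply the last vertex attached) do produce a terminal vertex, since at the moment such a vertex $v$ was attached along its unique incident edge $e$ the amalgam analysis forces $G_v=\alpha_e(G_e)$. So the proposal is sound, with the surjectivity/bookkeeping step being a genuine but standard verification rather than a flaw.
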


\begin{lemma}\label{lem:minimal_action_for_fi_sbgp}
If $G$ is a finitely generated group acting minimally on a tree $\mathcal T$ then every finite index subgroup $G'\leqslant_f G$    also acts minimally on $\mathcal T$.
\end{lemma}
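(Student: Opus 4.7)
My plan is to reduce to the case of a normal finite-index subgroup of $G$ contained in $G'$, and then exploit uniqueness of the minimal invariant subtree. Concretely, I would set $N \coloneq \bigcap_{g \in G} gG'g^{-1}$, which is a normal finite-index subgroup of $G$ contained in $G'$. Since every non-empty $G'$-invariant subtree of $\mathcal{T}$ is a fortiori $N$-invariant, it suffices to prove that $N$ acts minimally on $\mathcal{T}$; note that $N$ is finitely generated, being of finite index in the finitely generated group $G$.

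By Serre's fixed-point theorem (\cite[Section~I.6.5]{Serre}), applied after passing to the barycentric subdivision of $\mathcal{T}$ if needed to avoid edge inversions, either $N$ contains an element acting hyperbolically on $\mathcal{T}$, or $N$ has a global fixed vertex. In the hyperbolic case, the union $\mathcal{T}_N$ of the axes of all hyperbolic elements of $N$ is the unique minimal $N$-invariant subtree of $\mathcal{T}$ (a standard Bass--Serre fact). For each $g \in G$ the translate $g\cdot\mathcal{T}_N$ is a minimal $gNg^{-1}$-invariant subtree, but $gNg^{-1}=N$ by normality, so uniqueness forces $g\cdot\mathcal{T}_N=\mathcal{T}_N$; hence $\mathcal{T}_N$ is $G$-invariant, and by $G$-minimality $\mathcal{T}_N=\mathcal{T}$, which is precisely minimality of the $N$-action. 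In the elliptic case, the fixed-point set $F \coloneq \mathrm{Fix}(N) \subseteq \mathcal{T}$ is a non-empty subtree satisfying $gF=\mathrm{Fix}(gNg^{-1})=F$ for every $g \in G$; thus $F$ is $G$-invariant and $F=\mathcal{T}$ by $G$-minimality, i.e.\ $N$ acts trivially on $\mathcal{T}$. Then the finite group $G/N$ acts on $\mathcal{T}$, so (again possibly after barycentric subdivision) it fixes a vertex, forcing $\mathcal{T}$ to be a single vertex by $G$-minimality, on which $N$ trivially acts minimally.

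The only non-formal ingredient I rely on is the uniqueness of the minimal invariant subtree when hyperbolic elements are present, a standard result in Bass--Serre theory. The main bookkeeping nuisance is the possibility of edge inversions, uniformly dealt with by passing to the barycentric subdivision of $\mathcal{T}$ at the outset.
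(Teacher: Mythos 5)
Your argument is correct and essentially the paper's: reduce to a finite-index normal subgroup, use finite generation to split into the elliptic case (where the fixed-point set is $G$-invariant, forcing the action to degenerate) and the hyperbolic case (where the unique minimal invariant subtree is $G$-invariant by normality, hence equals $\mathcal{T}$ by $G$-minimality). The only cosmetic difference is your barycentric-subdivision hedge: in the paper's setting the actions are Bass--Serre actions without edge inversions, so no subdivision is needed, and in fact with inversions allowed the statement itself fails (take the group of order two swapping the two vertices of a single-edge tree, with the trivial subgroup), so it is the no-inversion convention, not the subdivision, that makes your elliptic case close.
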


\begin{proof} Without loss of generality we may assume that $\mathcal T$ has at least one edge (otherwise the statement is obvious).

Let $M \n_f G$ be a finite index normal subgroup contained in $G'$. If $M$ fixes a vertex of $\cT$ then the finite group $G/M$ acts on the subtree $\mathrm{Fix}(M)$, consisting of $M$-fixed points, hence $G/M$ must fix a vertex $v \in \mathrm{Fix}(M)$ (see \cite[Example~6.3.1 in Section~I.6]{Serre}). Therefore, $v$ is a global fixed vertex for the action of $G$ on $\mathcal T$, contradicting the assumption that $\mathcal T$ has at least one edge and the action of $G$ on it is minimal.

Thus, the action of $M$ does not fix a vertex of $\cT$. Since $M$ is finitely generated (because $G$ is), it must contain at least one hyperbolic element by \cite[Corollary~7.3]{BASS1993}. Therefore, $\cT$ contains a unique $M$-invariant subtree $\mathcal{S}$  such that the action of $M$ on $\mathcal{S}$ is minimal (see \cite[Proposition~7.5]{BASS1993}). Since $M \n G$, $\mathcal{S}$ will be invariant under the action of $G$, hence $\mathcal{S}=\cT$, by minimality. It follows that the action of $M$ on $\cT$ is minimal, yielding the same statement for the action of $G'$ on $\cT$.    
\end{proof}

The  following statement, observed by Bridson and Howie in \cite{Bri-How}, is the reason why minimality of the action is important for us. 

\begin{lemma}\label{lem:Bri-How}
Let $(\mathcal{G},\Gamma)$ be a finite graph of groups, whose fundamental group $G$ acts minimally on the corresponding Bass-Serre tree $\cT$ (this is equivalent to the absence of terminal vertices, by \Cref{lem:minmal_action<=>no_terminal_vertices}). If $N \n G$ is a finitely generated normal subgroup then at least one of the following is true: 
\begin{equation}\label{eq:N_is_in_G_e}
N \subseteq \alpha_e(G_e), ~\text{ for each }e \in E\Gamma,
\end{equation}
or 
\begin{equation}\label{eq:NG_e_has_f_i}
|G:N\alpha_e(G_e)|<\infty, ~\text{ for each }e \in E\Gamma. 
\end{equation}
\end{lemma}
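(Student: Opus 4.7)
The plan is to split on whether $N$ has a global fixed vertex in $\cT$, using normality of $N$ together with minimality of the $G$-action in each case. Throughout, I identify edge stabilizers in the Bass-Serre tree: the $G$-stabilizer of any edge of $\cT$ above $e\in E\Gamma$ is a $G$-conjugate of $\alpha_e(G_e)$, and in particular some chosen edge above $e$ has stabilizer exactly equal to $\alpha_e(G_e)$.

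Suppose first that the fixed-point set $\cT^N\subseteq\cT$ is non-empty. It is a subtree: for $v_1,v_2\in\cT^N$ and $n\in N$, the image $n\cdot[v_1,v_2]$ is a geodesic from $nv_1=v_1$ to $nv_2=v_2$, and uniqueness of geodesics in a tree forces $n$ to fix $[v_1,v_2]$ pointwise. Because $N$ is normal in $G$, $\cT^N$ is $G$-invariant. Minimality of the $G$-action then forces $\cT^N=\cT$, so $N$ fixes every edge of $\cT$. In particular $N\subseteq\alpha_e(G_e)$ for each $e\in E\Gamma$, yielding \eqref{eq:N_is_in_G_e}.

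Suppose now that $N$ fixes no vertex of $\cT$. By \cite[Corollary~7.3]{BASS1993} (already cited in the excerpt), the finite generation of $N$ guarantees a hyperbolic element in $N$, so $N$ admits a unique minimal invariant subtree $\mathcal{S}\subseteq\cT$. For any $g\in G$, $g\mathcal{S}$ is a minimal $(gNg^{-1})$-invariant subtree; normality gives $gNg^{-1}=N$, so uniqueness forces $g\mathcal{S}=\mathcal{S}$. Thus $\mathcal{S}$ is a non-empty $G$-invariant subtree, and minimality of the $G$-action forces $\mathcal{S}=\cT$, i.e.\ $N$ itself acts minimally on $\cT$. To see that $N\backslash\cT$ is a finite graph, fix a finite generating set $\{n_1,\ldots,n_k\}$ of $N$ and a vertex $v\in\cT$, and set $T_0=\bigcup_{i=1}^k[v,n_iv]$. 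For any $n=n_{i_1}^{\epsilon_1}\cdots n_{i_r}^{\epsilon_r}\in N$, concatenating the segments $[v,n_{i_1}^{\epsilon_1}v]$, $n_{i_1}^{\epsilon_1}[v,n_{i_2}^{\epsilon_2}v]$, $\ldots$ produces a path from $v$ to $nv$ inside $N\cdot T_0$, so $N\cdot T_0$ is a connected $N$-invariant subtree. Minimality of the $N$-action forces $N\cdot T_0=\cT$, and finiteness of $T_0$ yields finiteness of $N\backslash\cT$.

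Finally, to establish \eqref{eq:NG_e_has_f_i}, fix $e\in E\Gamma$ and an edge $\tilde e\in E\cT$ over $e$. The edges above $e$ form a single $G$-orbit $G\cdot\tilde e\cong G/\mathrm{Stab}_G(\tilde e)$; by finiteness of $N\backslash\cT$ this orbit carries only finitely many $N$-orbits, and their count equals $|N\backslash G/\mathrm{Stab}_G(\tilde e)|=[G:N\cdot\mathrm{Stab}_G(\tilde e)]$ by normality of $N$. Since $\mathrm{Stab}_G(\tilde e)$ is a $G$-conjugate of $\alpha_e(G_e)$ and $N$ is normal, $N\cdot\mathrm{Stab}_G(\tilde e)$ is a $G$-conjugate of $N\alpha_e(G_e)$, hence of the same index, giving $[G:N\alpha_e(G_e)]<\infty$. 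The only non-routine step is the finiteness of $N\backslash\cT$; the explicit $T_0$-argument above is elementary and uses no further Bass-Serre machinery beyond \cite[Corollary~7.3]{BASS1993}.
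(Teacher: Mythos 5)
Your proof is correct, and while the first case coincides with the paper's, your treatment of the second case takes a genuinely different (and more self-contained) route. The paper, after reducing to the case $N \not\subseteq \alpha_e(G_e)$ for all $e$, fixes one edge $e$, collapses all edges of $\cT$ outside the orbit of a lift of $e$ to get a minimal $G$-tree with a single edge orbit, and then invokes the argument of \cite[Proposition~2.2]{Bri-How} to get $|G:N\alpha_e(G_e)|<\infty$. You instead split on whether $N$ fixes a vertex (a complementary dichotomy, since your Case~1 shows a global fixed vertex forces $N$ to fix all of $\cT$, hence \eqref{eq:N_is_in_G_e}), and in the non-elliptic case you prove directly that $N$ acts minimally on $\cT$ (hyperbolic element from \cite[Corollary~7.3]{BASS1993} plus uniqueness of the minimal $N$-subtree and normality, exactly as in the paper's \Cref{lem:minimal_action_for_fi_sbgp}), that minimality plus finite generation gives cocompactness via the finite subtree $T_0$, and then you extract \eqref{eq:NG_e_has_f_i} from the double-coset count $|N\backslash G/\mathrm{Stab}_G(\tilde e)|=|G:N\,\mathrm{Stab}_G(\tilde e)|$, valid since $N\,\mathrm{Stab}_G(\tilde e)$ is a subgroup by normality. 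This buys you independence from the Bridson--Howie reference and from the edge-collapsing reduction, handling all edge orbits at once; it also interacts nicely with the paper's own \Cref{lem:normal_sbgp_in_graph_of_gps-prelim}, of which your cocompactness step is essentially a converse-direction argument. Two cosmetic points: when you concatenate segments to show $N\cdot T_0$ is connected, note that for a negative exponent $[v,n_i^{-1}v]=n_i^{-1}[n_iv,v]$ lies in $n_i^{-1}T_0\subseteq N\cdot T_0$ rather than in $T_0$ itself (your phrasing glosses over this, but the claim as stated -- that the path stays in $N\cdot T_0$ -- is what you use and is true); and you could shorten the last step by choosing $\tilde e$ with $\mathrm{Stab}_G(\tilde e)=\alpha_e(G_e)$ on the nose, avoiding the conjugation remark.
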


\begin{proof} If $N \subseteq \alpha_f(G_f)$, for some $f \in E\Gamma$, then $N$ fixes an edge of $\cT$, and the set of fixed points of $N$ forms a non-empty subtree of $\cT$. Since $N \n G$ this subtree is $G$-invariant, hence it must be the whole of $\cT$, by minimality. Therefore, $N$ must fix every edge of $\cT$, which shows that \eqref{eq:N_is_in_G_e} is true.

Thus we can assume that $N \not\subseteq \alpha_e(G_e)$, for every $e \in E\Gamma$. Choose any edge $e \in E\Gamma$. By the definition of the Bass-Serre tree, $\alpha_e(G_e)$ is the $G$-stabilizer of some edge $a$ of $\cT$. By contracting all edges outside the orbit $G.a$ to points, we obtain a new $G$-tree $\mathcal{S}$. Since the action of $G$ on $\cT$ was minimal, so is the action of $G$ on $\mathcal{S}$. 
The group $G$ acts on $\mathcal{S}$ with one orbit of edges and $N$ does not fix any edge. 
We can now apply the argument from \cite[Proposition~2.2]{Bri-How} to deduce that $|G:N\alpha_e(G_e)|<\infty$, as required.
\end{proof}

\begin{lemma}\label{lem:normal_sbgp_in_graph_of_gps-prelim}
Let $(\mathcal{G}, \Gamma)$ be a finite graph of groups with fundamental group $G$, and let $\mathcal{T}$ be the corresponding Bass-Serre tree. Suppose that $N \n G$ is a normal subgroup satisfying \eqref{eq:NG_e_has_f_i}.
Then the induced action of $N$ on $\mathcal{T}$ is cocompact (i.e., $N\backslash \mathcal{T}$ is a finite graph). 
\end{lemma}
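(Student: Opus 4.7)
The plan is to count $N$-orbits on $V\cT \sqcup E\cT$ directly using Bass--Serre theory and a double-coset argument. Since $\Gamma = G\backslash\cT$ is finite by assumption, there are only finitely many $G$-orbits of vertices and edges on $\cT$, so it suffices to show that each $G$-orbit splits into only finitely many $N$-orbits. The key elementary observation I would use is: for any subgroup $H \leqslant G$, the $N$-orbits on the transitive $G$-set $G/H$ are in bijection with the double cosets $N\backslash G/H$; since $N \n G$, each double coset satisfies $NgH = g(NH)$, so the number of double cosets equals $|G : NH|$.

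Applied to an edge $a \in E\cT$ lying in the $G$-orbit of some $e \in E\Gamma$, the stabilizer $\mathrm{Stab}_G(a)$ is a $G$-conjugate of $\alpha_e(G_e)$. Normality of $N$ then makes $N\cdot\mathrm{Stab}_G(a)$ a $G$-conjugate of $N\alpha_e(G_e)$, and in particular it has the same (finite, by hypothesis~\eqref{eq:NG_e_has_f_i}) index in $G$. Summing over the finitely many $G$-orbits of edges gives finitely many $N$-orbits on $E\cT$.

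For the vertex case I would argue as follows. If $E\Gamma = \emptyset$ then $\cT$ is a single vertex and the claim is immediate; otherwise, since $\Gamma$ is connected, every vertex $w \in V\Gamma$ is incident to some edge $e \in E\Gamma$, so $\alpha_e(G_e) \subseteq G_w$ (possibly after replacing $e$ by $\overline{e}$). Consequently, for any vertex $v$ of $\cT$ in the $G$-orbit of $w$, the stabilizer $\mathrm{Stab}_G(v)$ is a $G$-conjugate of $G_w$, and so $N\cdot\mathrm{Stab}_G(v)$ contains a $G$-conjugate of $N\alpha_e(G_e)$. Hence
\[
|G : N\cdot\mathrm{Stab}_G(v)| \,\leqslant\, |G : N\alpha_e(G_e)| \,<\, \infty,
\]
and the same double-coset argument applied at each vertex yields finitely many $N$-orbits on $V\cT$.

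No genuine obstacle is expected here; the argument is a routine application of the double-coset formula together with the Bass--Serre description of stabilizers. The only small subtlety worth flagging in the write-up is that the hypothesis~\eqref{eq:NG_e_has_f_i} is stated only for edge groups, which is why one needs the connectedness of $\Gamma$ to propagate finite index to vertex stabilizers.
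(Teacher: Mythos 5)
Your proposal is correct and is essentially the paper's argument: the coset decomposition $G=\bigsqcup_j g_{i,j}NE_i$ used in the paper is exactly your double-coset count $|N\backslash G/E_i|=|G:NE_i|$, made possible by normality of $N$. The only (immaterial) difference is the vertex step, where the paper simply notes that finitely many $N$-orbits of edges force finitely many $N$-orbits of vertices (every vertex of a tree with an edge is an endpoint of one), whereas you propagate finite index to vertex stabilizers via an incident edge group; both work.
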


\begin{proof} From Bass-Serre Theory \cite[Section~I.5.3]{Serre}, we know that $G$ acts on $\mathcal T$ with finitely many orbits of vertices, finitely many orbits of edges and without edge inversion. Every vertex stabilizer in this action is a conjugate of $G_v \leqslant G$, for some $v \in V\Gamma$, and every edge stabilizer is a conjugate of $\alpha_e(G_e)$, for some $e \in E\Gamma$.

Let $e_1,\dots,e_k \in E\mathcal{T}$ be a finite set of representatives of the $G$-orbits of edges in $\mathcal T$, and let $E_i \leqslant G$ be the $G$-stabilizer of $e_i$, $i=1,\dots,k$. For each $i=1,\dots,k$, condition \eqref{eq:NG_e_has_f_i} implies that $|G:NE_i|<\infty$, hence there is $n_i \in \N$ and a collection of elements  $g_{i,1}, \dots, g_{i,n_i} \in G$ such that \[G= \bigsqcup_{j=1}^{n_i} g_{i,j} NE_i.\]
If $e \in E\mathcal{T}$ is an arbitrary edge, then there is a unique $i \in \{1,\dots,k\}$ such that $e \in G.e_i$. Since $N \n G$, it follows that for some $j \in \{1,\dots,n_i\}$ we have
\[e \in (g_{i,j}NE_i).e_i=(g_{i,j}N).e_i=N.(g_{i,j}.e_i).\]
Thus the induced action of $N$  on $\mathcal{T}$ has at most $\sum_{i=1}^k n_i$ orbits of edges. Consequently, there are also finitely many $N$-orbits of vertices in $\mathcal T$, so the quotient $ N \backslash \mathcal{T}$ is a finite graph, as claimed.
\end{proof}

\begin{cor}\label{cor:normal_sbgp_in_graph_of_gps_is_fg} Let $G$ be the fundamental group of a finite graph of groups $(\mathcal{G},\Gamma)$ and suppose that $N \n G$ satisfies condition \eqref{eq:NG_e_has_f_i}.    
Then $N$ splits as the fundamental group of a finite graph of groups $(\mathcal{N},\Delta)$, where
for every $w \in V\Delta$ and each $f \in E\Delta$ there are $v \in V \Gamma$ and $e \in E\Gamma$ such that  $N_w \cong N\cap G_v \leqslant G$ and $N_f \cong N \cap \alpha_e(G_e) \leqslant G$. And conversely, for each $v \in V\Gamma$ there is $w \in V\Delta$ such that $N \cap G_v \cong G_w$.
\end{cor}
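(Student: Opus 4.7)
The plan is to apply the Structure Theorem of Bass-Serre Theory (\Cref{thm:kurosh}) directly to $N$ acting on the Bass-Serre tree $\cT$ of $(\mathcal{G},\Gamma)$, using cocompactness provided by the preceding \Cref{lem:normal_sbgp_in_graph_of_gps-prelim} to conclude finiteness of the quotient graph.

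First, I would fix a maximal tree $T\subseteq \Gamma$ and an orientation $E\Gamma=E\Gamma^+\sqcup E\Gamma^-$, so that $G=\pi_1(\mathcal{G},\Gamma,T,E\Gamma^+)$ acts on the associated Bass-Serre tree $\cT$ without edge inversions, with $G \backslash \cT \cong \Gamma$, and with vertex/edge stabilizers being the $G$-conjugates of $G_v$ (for $v \in V\Gamma$) and of $\alpha_e(G_e)$ (for $e \in E\Gamma$). Since $N \n G$ satisfies \eqref{eq:NG_e_has_f_i} by hypothesis, \Cref{lem:normal_sbgp_in_graph_of_gps-prelim} tells us that $\Delta \coloneq N\backslash \cT$ is a finite graph.

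Now I would apply \Cref{thm:kurosh} to the subgroup $N\leqslant G$. This yields a splitting of $N$ as the fundamental group of a finite graph of groups $(\mathcal{N},\Delta)$, in which each vertex group $N_w$ (for $w \in V\Delta$) is isomorphic to an intersection of the form $N\cap g G_v g^{-1}$, for some $g \in G$ and $v\in V\Gamma$, and each edge group $N_f$ (for $f \in E\Delta$) is isomorphic to an intersection $N \cap g \alpha_e(G_e) g^{-1}$, for some $g \in G$ and $e \in E\Gamma$. Because $N$ is \emph{normal} in $G$, we have
\[ N \cap g G_v g^{-1} = g (N \cap G_v) g^{-1} \cong N \cap G_v,\]
and likewise $N \cap g\alpha_e(G_e) g^{-1} \cong N \cap \alpha_e(G_e)$. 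This gives the first assertion of the corollary.

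For the converse direction, I would observe that the natural quotient map $\pi \colon N\backslash \cT \to G\backslash\cT \cong \Gamma$ is surjective on vertices: every vertex $v \in V\Gamma$ corresponds to a $G$-orbit $G.\tilde v \subseteq V\cT$, which partitions into $N$-orbits, at least one of which gives a vertex $w \in V\Delta$ with $\pi(w)=v$. Choosing such a $w$ and a lift $\tilde v \in V\cT$ in the corresponding $N$-orbit whose $G$-stabilizer is precisely $G_v$ (possible after replacing $\tilde v$ by a suitable $G$-translate in the original $G$-orbit representing $v$), the $N$-stabilizer of $\tilde v$ is $N \cap G_v$, which — by the construction of $(\mathcal{N},\Delta)$ in \Cref{thm:kurosh} — is isomorphic to $N_w$. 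I don't foresee any serious obstacle; the only point requiring minor care is the bookkeeping that allows normality of $N$ to remove the conjugating element $g$ from the descriptions of the vertex and edge groups, and the observation that the quotient map $N\backslash\cT \to \Gamma$ covers every vertex of $\Gamma$.
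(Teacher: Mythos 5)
Your proposal is correct and takes essentially the same route as the paper, whose proof consists precisely of combining \Cref{lem:normal_sbgp_in_graph_of_gps-prelim} (cocompactness of the $N$-action on $\cT$) with the Structure Theorem of Bass--Serre Theory. You merely spell out the bookkeeping the paper leaves implicit, namely using normality of $N$ to replace $N\cap gG_vg^{-1}$ by $N\cap G_v$ up to isomorphism and noting that the quotient map $N\backslash\cT\to\Gamma$ hits every vertex, both of which are fine (for the converse, just fix a lift $\tilde v$ with $G$-stabilizer exactly $G_v$ first and then let $w$ be its $N$-orbit).
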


\begin{proof} The statement follows from \Cref{lem:normal_sbgp_in_graph_of_gps-prelim} and the Structure Theorem of Bass-Serre Theory \cite[Theorem~13 in Section~I.5.4]{Serre}. 
\end{proof}

In the remainder of this section $R$ will denote a non-zero commutative ring with unity. 
Let us briefly recall the finiteness properties $F_m$ and $FP_m(R)$ (see \cite{Geogh} and \cite{Bieri-book} for details). A group $G$ is said to be of type $F_m$, if it admits an Eilenberg-Maclane space $K(G,1)$ with finite $m$-skeleton. In particular, $G$ is of type $F_1$ if and only if it is finitely generated, and of type $F_2$ if and only if it is finitely presented. A group $G$ is said to be of type $FP_m$ over $R$
if the trivial $RG$-module $R$ admits a resolution by projective $RG$-modules that are finitely generated up to degree $m$. 
Note that  $F_1$ coincides with $FP_1$ over $R$ (for any $R$), but for $m \ge 2$, $F_m$ is generally stronger and not always equivalent to $FP_m$ over $R$.

\begin{lemma}[{\cite[Corollary~7.2.4]{Geogh},\cite[Proposition~2.5 in Chapter~I]{Bieri-book}}]\label{lem:finiteness_props_of_fi_sbgps}
If $G$ is a group and $H \leqslant_f G$ is a subgroup of finite index then for each $m \in \N$
\begin{itemize}
\item    $G$ if of type $F_m$ if and only if $H$ is of type $F_m$;
\item   $G$ is of type $FP_m$ over $R$ if and only if $H$ is of  type $FP_m$ over $R$.
\end{itemize}
\end{lemma}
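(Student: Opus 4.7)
The plan is to establish the $FP_m$ equivalence by a direct module-theoretic argument, and then deduce the $F_m$ equivalence by combining it with Reidemeister--Schreier and the Bieri--Eckmann criterion. In both cases I would first reduce to the normal case: let $N \coloneq \bigcap_{g \in G} gHg^{-1}$, which is normal in $G$, of finite index, and contained in $H$. Two applications of the finite-index-normal statement (once for $N \n_f G$ and once for $N \n_f H$) then give the general assertion.

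For the $FP_m$ equivalence, the crucial observation is that $RG$ is a free $RH$-module of rank $|G:H|$, with basis any system of coset representatives. Consequently the restriction functor $RG\text{-Mod} \to RH\text{-Mod}$ is exact, preserves projectivity (a summand of a free $RG$-module restricts to a summand of a free $RH$-module of finite rank), and preserves finite generation. The ($\Rightarrow$) direction is then immediate: restrict a projective resolution $P_\bullet \to R$ of $RG$-modules with $P_i$ finitely generated for $i \leq m$, and read it as the required $RH$-resolution. For the ($\Leftarrow$) direction I would invoke the ring-change fact that, since $RG$ is finitely generated projective over $RH$ on both sides, an $RG$-module is $FP_m$ over $RG$ if and only if its restriction is $FP_m$ over $RH$; applied to the trivial module $R$, this gives the claim. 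A proof of this ring-change fact proceeds by characterizing $FP_m$ via commutation of $\mathrm{Tor}$ with directed colimits and comparing $\mathrm{Tor}$ over $RG$ with $\mathrm{Tor}$ over $RH$ using the adjunction $\mathrm{Ind}_H^G \dashv \mathrm{Res}_H^G$.

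For $F_m$ the ($\Rightarrow$) direction is geometric: a $K(G,1)$ with finite $m$-skeleton $Y$ lifts to a $|G:H|$-sheeted cover $\widetilde{Y}/H$ which is a $K(H,1)$ with finite $m$-skeleton. The ($\Leftarrow$) direction I would split into $F_1$ (Schreier's lemma), $F_2$ (Reidemeister--Schreier, producing a finite presentation of $G$ from one of $H$ together with coset representatives), and $F_m$ for $m \geq 3$ via the Bieri--Eckmann criterion $F_m \Leftrightarrow F_2 + FP_m$, with $FP_m$ supplied by the first part applied to $R = \Z$.

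The principal obstacle is the ($\Leftarrow$) direction of the $FP_m$ step. The naive induction $RG \otimes_{RH}(-)$ applied to an $RH$-resolution of $R$ produces an $RG$-resolution of $R[G/H]$, not of $R$ itself, and converting between these requires either the Lyndon--Hochschild--Serre spectral sequence applied to the finite quotient $G/H$, or the filtered-colimit characterization of $FP_m$ combined with Eckmann--Shapiro. This is precisely the technical content of Bieri's Proposition~2.5 in Chapter~I, which is why the statement is cited rather than reproved here.
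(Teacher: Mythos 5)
The paper does not prove this lemma; it simply cites \cite[Corollary~7.2.4]{Geogh} and \cite[Proposition~2.5 in Chapter~I]{Bieri-book}, so there is no in-paper argument to compare against. Your reconstruction is essentially correct and you rightly identify the genuine difficulty as the upward transfer of $FP_m$. Two remarks are worth making.

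The preliminary reduction to the normal core $N=\bigcap_{g\in G}gHg^{-1}$ is superfluous: nothing in the remainder of your argument uses normality. The ring $RG$ is a free $RH$-module of rank $|G:H|$ whether or not $H$ is normal (take any transversal as a basis), and the $|G:H|$-sheeted cover of a $K(G,1)$ corresponding to $H$ exists and is a $K(H,1)$ in any case. Also, "commutation of $\mathrm{Tor}$ with directed colimits'' is not the right characterization of $FP_m$: $\mathrm{Tor}$ commutes with filtered colimits for \emph{every} module, since both the tensor product and homology do. The Bieri--Eckmann criterion you want is commutation of $\mathrm{Tor}^S_k(-,M)$ with direct \emph{products} of copies of $S$ in degrees $k<m$, equivalently that $\varinjlim \mathrm{Ext}^k_S(M,N_\lambda)=0$ whenever $\varinjlim N_\lambda=0$. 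That said, the ring-change fact you cite admits a more elementary proof that bypasses these characterizations altogether: induct on $m$. The case $m=0$ holds since $RG$ is finitely generated over $RH$. For $m\ge 1$, choose a surjection $RG^k\twoheadrightarrow M$ of $RG$-modules with kernel $K$; since $RG^k$ is finitely generated free over $RH$ and $M$ is $FP_m$ over $RH$, the syzygy $K$ is $FP_{m-1}$ over $RH$, hence $FP_{m-1}$ over $RG$ by the inductive hypothesis, and therefore $M$ is $FP_m$ over $RG$. Taking $M=R$ gives the upward $FP_m$ transfer directly, with no spectral sequence or filtered-colimit machinery. The remaining steps of your sketch, namely restriction of resolutions for the downward $FP_m$ direction, passage to finite covers for the downward $F_m$ direction, and the equivalence $F_m\Leftrightarrow F_2$ and $FP_m$ over $\Z$ for the upward $F_m$ direction, are all sound.
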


We will explore the following well-known connection between finiteness properties of the fundamental group of a finite graph of groups $(\mathcal{G},\Gamma)$ and those of the vertex groups, assuming that the edge groups are sufficiently well-behaved. The latter will always be satisfied for virtually polycyclic edge groups, as such groups are of type $F_m$ (and $FP_m$ over $R$), for all $m \in \N$.

\begin{prop}\label{prop:fin_props_of_graphs_of_gps} Suppose that $G$ is the fundamental group of a finite graph of groups with virtually polycyclic edge groups. Then, given any $m \in \N$, $G$ is of  type $F_m$ ($FP_m$ over $R$) if and only if $G_v$ is of  type $F_m$ (respectively, $FP_m$ over $R$), for all $v \in V\Gamma$.    
\end{prop}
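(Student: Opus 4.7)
The plan is to deduce the proposition directly from the classical Bieri–Eckmann/Brown criterion for finiteness properties of groups acting on contractible CW complexes, applied to the Bass–Serre tree $\cT$ of $(\mathcal{G},\Gamma)$. Recall that $\cT$ is a contractible $1$-dimensional CW complex on which $G$ acts without edge inversions, with finitely many orbits of vertices and edges (one per element of $V\Gamma$ and $E\Gamma$, respectively, since $\Gamma$ is finite), and with vertex/edge stabilizers conjugate to the $G_v$'s and $\alpha_e(G_e)$'s respectively.

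The first key observation is that every finitely generated virtually polycyclic group admits a finite $K(\pi,1)$ (see \cite[Chapter~VII]{Bieri-book}), and is therefore of type $F_\infty$ and of type $FP_\infty$ over any ring $R$. In particular, every edge stabilizer in the action of $G$ on $\cT$ is of type $F_\infty$ and $FP_\infty$ over $R$.

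Next, I would invoke the following well-known criterion (a consequence of the equivariant skeletal filtration of a contractible $G$-CW complex; see, e.g., \cite[Proposition~1.1, Chapter~VIII]{Bieri-book} for the $FP_m$ version, and the analogous topological argument for $F_m$): if $G$ acts cocompactly on a contractible CW complex $X$ such that every $p$-cell stabilizer is of type $F_{m-p}$ (respectively $FP_{m-p}$ over $R$), then $G$ itself is of type $F_m$ (respectively $FP_m$ over $R$); conversely, if $G$ is of type $F_m$ (respectively $FP_m$) and every $p$-cell stabilizer with $p\ge 1$ is of type $F_{m-p}$ (respectively $FP_{m-p}$), then every $0$-cell stabilizer must be of type $F_m$ (respectively $FP_m$). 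Since $\cT$ is $1$-dimensional, the only non-trivial conditions involve $p=0,1$. The $p=1$ hypothesis is automatic by the previous paragraph, so the criterion collapses to the equivalence: $G$ is of type $F_m$ (respectively $FP_m$ over $R$) if and only if every vertex stabilizer is. Finally, each $G_v$ is conjugate in $G$ to the stabilizer of some vertex of $\cT$, and conjugate subgroups share the same finiteness type, so this is equivalent to every $G_v$, $v\in V\Gamma$, being of type $F_m$ (respectively $FP_m$).

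The main point requiring care is the correct invocation of the two-sided criterion in dimension $1$, particularly its converse half: arguing that $G$ being of type $F_m$ forces each vertex group to be of type $F_m$. In our setup this is painless because the edge-stabilizer hypothesis needed for the converse is automatic; the bulk of the proof reduces to citing the standard result and observing that the virtually polycyclic assumption is precisely what makes the edge condition vacuous.
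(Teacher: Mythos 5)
The weak point is the converse half of the ``well-known criterion'' you invoke. The sufficiency direction (cocompact action on a tree, edge stabilizers of type $F_\infty$, vertex stabilizers of type $F_m$ $\Rightarrow$ $G$ of type $F_m$) is indeed standard, and for the homological properties $FP_m$ the downward direction can also be extracted from a Mayer--Vietoris/spectral-sequence argument --- this is essentially \cite[Proposition~2.13 in Chapter~I]{Bieri-book}, which is exactly what the paper uses, by induction on the edges of $\Gamma$. But the statement you assert for the homotopical properties --- ``if $G$ is of type $F_m$ and every $p$-cell stabilizer with $p\ge 1$ is of type $F_{m-p}$, then every $0$-cell stabilizer is of type $F_m$'' --- is not a formal consequence of the equivariant skeletal filtration, and in the precise numerical form you state it is simply false. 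Take $G=F_2=\langle a,t\rangle$ and $K=\langle\!\langle a\rangle\!\rangle$: the ascending HNN splitting of $G$ with vertex and edge group $K$ gives a cocompact action of the finitely generated ($F_1$) group $G$ on a tree whose edge stabilizers are of type $F_0$ (an empty condition), yet the vertex stabilizer $K$ is not finitely generated. So the converse needs genuinely stronger hypotheses on the edge stabilizers than the ``$F_{m-p}$'' pattern, and cannot be quoted as the formal dual of the sufficiency statement.

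Even with the virtually polycyclic (hence $F_\infty$) edge groups of the proposition, the necessity direction for $F_m$ is the hard part and is precisely where ``the analogous topological argument'' breaks down: there is no Mayer--Vietoris principle for finite presentability, so already for $m=2$ the claim that a finitely presented fundamental group of a graph of groups with polycyclic edge groups has finitely presented vertex groups requires a real proof. The paper handles this by citing \cite[Theorem~1.2]{Hag-Wise-F-n} for all $m$ (and \cite[Lemma~4.7 and Proposition~4.9]{Gui-Lev} for $m\le 2$), while the sufficiency is quoted from \cite[Exercise~3 in Section~7.2]{Geogh}. To repair your proof you should either restrict the filtration/Mayer--Vietoris argument to the $FP_m$ statement (where it works, with edge groups of type $FP_\infty$) and then cite a dedicated result for the homotopical necessity, or supply an actual argument for why vertex stabilizers inherit $F_m$ from $G$; as written, that step is a citation to a theorem that does not exist in the stated generality.
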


\begin{proof}
In the case of the homological finiteness conditions $FP_m$ over $R$, the statement follows from \cite[Proposition~2.13 in Chapter~I]{Bieri-book}, using induction on the number of edges in $\Gamma$.

In the case of the homotopical finiteness properties $F_m$, the sufficiency is given by \cite[Exercise~3 in Section~7.2]{Geogh} and the necessity was proved in \cite[Theorem~1.2]{Hag-Wise-F-n} (see also \cite[Lemma~4.7 and Proposition~4.9]{Gui-Lev}, which give the necessity in the case $m \le 2$ and can be combined with \cite[Proposition~2.13 in Chapter~I]{Bieri-book} to deduce it for all $m \in \N$). 
\end{proof}

By combining \Cref{cor:normal_sbgp_in_graph_of_gps_is_fg} with \Cref{prop:fin_props_of_graphs_of_gps} we obtain the following.

\begin{cor} \label{cor:N_fp->interec_with_vertex_gps_are_fp}
Assume that $(\mathcal{G},\Gamma)$ is a finite graph of groups  with fundamental group $G$ and with virtually polycyclic edge groups. If $N \n G$ is a normal subgroup satisfying \eqref{eq:NG_e_has_f_i} then for each $m \in \N$
\begin{itemize}
    \item $N$ is of type $F_m$ if and only if  $N \cap G_v$ is of type $F_m$, for all $v \in V\Gamma$;
    \item $N$ is of type $FP_m$ over $R$ if and only if  $N \cap G_v$ is of type $FP_m$ over $R$, for all $v \in V\Gamma$.
\end{itemize}
\end{cor}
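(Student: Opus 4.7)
The plan is to combine \Cref{cor:normal_sbgp_in_graph_of_gps_is_fg} and \Cref{prop:fin_props_of_graphs_of_gps}, which almost immediately imply the statement once we handle one subtlety: the vertex groups produced by \Cref{cor:normal_sbgp_in_graph_of_gps_is_fg} are a priori of the form $N \cap g G_v g^{-1}$, not $N \cap G_v$.

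First I would apply \Cref{cor:normal_sbgp_in_graph_of_gps_is_fg} to decompose $N$ as the fundamental group of a finite graph of groups $(\mathcal{N},\Delta)$ whose vertex groups $N_w$ are of the form $N \cap g G_v g^{-1}$ and whose edge groups $N_f$ are of the form $N \cap g\alpha_e(G_e)g^{-1}$ (with $v \in V\Gamma$, $e \in E\Gamma$, and $g \in G$). Next I would observe that each edge group $N_f$ is a subgroup of a conjugate of the virtually polycyclic group $\alpha_e(G_e)$, and subgroups of virtually polycyclic groups are themselves virtually polycyclic; thus the splitting $(\mathcal{N},\Delta)$ satisfies the hypothesis of \Cref{prop:fin_props_of_graphs_of_gps}. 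Applying that proposition, $N$ is of type $F_m$ (respectively, $FP_m$ over $R$) if and only if every vertex group $N_w$ of $(\mathcal{N},\Delta)$ has the same property.

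The next step is to translate the condition on the $N_w$ into a condition on the subgroups $N \cap G_v$, $v \in V\Gamma$. Since $N \n G$, for any $g \in G$ and any $v \in V\Gamma$ we have
\[
N \cap g G_v g^{-1} = g\bigl(g^{-1}Ng \cap G_v\bigr)g^{-1} = g(N \cap G_v)g^{-1},
\]
so every vertex group $N_w$ is isomorphic to $N \cap G_v$ for some $v \in V\Gamma$. Hence, if each $N \cap G_v$ is of type $F_m$ (respectively, $FP_m$ over $R$), then so is every $N_w$. Conversely, the last sentence of \Cref{cor:normal_sbgp_in_graph_of_gps_is_fg} guarantees that for every $v \in V\Gamma$, some $N_w$ is isomorphic to $N \cap G_v$, so if all $N_w$ have the given finiteness property, then so does each $N \cap G_v$. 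Combining both directions with the equivalence from \Cref{prop:fin_props_of_graphs_of_gps} yields the corollary.

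There is no real obstacle: everything reduces to quoting the two preceding results, together with the elementary facts that subgroups of virtually polycyclic groups are virtually polycyclic, and that finiteness properties of type $F_m$ and $FP_m$ over $R$ are invariant under isomorphism (and, trivially, under conjugation inside $G$). The only point requiring a moment of care is the matching of vertex groups of $(\mathcal{N},\Delta)$ with the intersections $N \cap G_v$, which is handled by the normality of $N$ as displayed above.
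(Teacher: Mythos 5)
Your proposal is correct and follows exactly the paper's argument: the paper proves this corollary precisely by combining \Cref{cor:normal_sbgp_in_graph_of_gps_is_fg} with \Cref{prop:fin_props_of_graphs_of_gps}, and your extra bookkeeping (edge groups of $(\mathcal{N},\Delta)$ remain virtually polycyclic, and normality of $N$ identifies $N \cap gG_vg^{-1}$ with a conjugate of $N \cap G_v$) is just the careful spelling-out of what the paper leaves implicit.
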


\begin{rem} In the case when $m=1$ and $\Gamma$ has only one edge, \Cref{cor:N_fp->interec_with_vertex_gps_are_fp} was proved by Ratcliffe in \cite[Theorems~1 and 2]{Rat-paper}.
\end{rem}


\section{Background on BNSR invariants} \label{sec:BNSR}
In this section we will briefly summarize basic properties of the Bieri--Neumann--Strebel--Renz (BNSR) invariants that play a crucial role in the study of (virtual) fibering. 
Throughout this section we assume that $G$ is a finitely generated group and $R$ is a non-zero commutative ring with unity.

\begin{defn}\label{def:characters}
    We say that a homomorphism $\chi: G \to \R$  is a \emph{character} of $G$ and we use $\mathrm{Hom}(G,\R)$ to denote the set of all characters. If the $\Q$-rank of $\chi(G)$ is one (i.e., $\chi(G) \cong \Z$), we say that the character $\chi$ is \emph{discrete} (or \emph{rational}). Equivalently, $\chi$ is discrete if and only if there exists a homomorphism $\chi'\colon G \to \Z$  and a positive real number $r$ such that $\chi = r \chi'$. 
    
    We say that two characters $\chi_1$, $\chi_2$ of $G$ are \emph{equivalent} if $\chi_1 = r \chi_2$ for some positive real $r$. The set of equivalence classes
    \[
    S(G)\coloneq \{ [\chi] \mid \chi \in \mathrm{Hom}(G,\R)\setminus \{0\} \},
    \]
    with the induced structure coming from the finite dimensional normed real vector space $\mathrm{Hom}(G,\R)$, 
     will be called the \emph{character sphere} of $G$. For a subgroup $H\leqslant G$, we denote by $S(G,H)$ the \emph{great subsphere} of $S(G)$ consisting of equivalence classes of characters of $G$ that vanish on $H$.
\end{defn}

Note that the dimension  of the sphere $S(G)$ is $n-1$ (by a sphere if dimension $-1$ we mean the empty set), where $n \in \N_0$ is the $\Q$-rank of the abelianization $G/[G,G]$
(i.e., $G/[G,G] \cong \Z^n\times B$, for some finite abelian group $B$). If $H \leqslant G$ is a subgroup whose image in $G/[G,G]$ is infinite, then the great subsphere $S(G,H)$ has strictly lower dimension than the sphere $S(G)$. This observation implies claim (i) of the following lemma; claim (ii) is given by \cite[Lemma~B3.24]{Strebel-notes}.
\begin{lemma}\label{lem:discrete_chars_are_dense} Let $H$ be a subgroup of a finitely generated group $G$.
\begin{itemize}
    \item[(i)] If the natural image of $H$ in $G/[G,G]$ is infinite (equivalently, if there is a homomorphism $\varphi:G \to A$, for a torsion-free abelian group $A$, such that $\varphi(H) \neq \{0\}$) then $S(G,H)$ is a closed nowhere dense subset of $S(G)$.
    \item[(ii)] The set of equivalence classes of discrete characters is dense in ${S}(G)$. 
\end{itemize}
\end{lemma}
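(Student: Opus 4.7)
The plan is to identify the real vector space $\mathrm{Hom}(G,\R)$ explicitly with some $\R^n$, after which both claims reduce to elementary statements about Euclidean spheres. Since $G$ is finitely generated, its abelianization decomposes as $G/[G,G] \cong \Z^n \times T$ for some $n \in \N_0$ and finite abelian group $T$. Every character $\chi: G \to \R$ factors through $G/[G,G]$ (as $\R$ is abelian) and kills $T$ (as $\R$ is torsion-free), yielding a natural $\R$-linear isomorphism $\mathrm{Hom}(G,\R) \cong \mathrm{Hom}(\Z^n,\R) \cong \R^n$, which transports the norm topology on one side to the standard Euclidean topology on the other.

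For claim (i), I would first check the equivalence between the two conditions in parentheses. If the image of $H$ in $G/[G,G]$ is infinite, then its further projection $\bar H$ to $\Z^n$ must be non-trivial (since the kernel of $G/[G,G]\to\Z^n$ is the finite group $T$), so an appropriate coordinate projection $\pi_i:\Z^n\to\Z$ provides a homomorphism $G\to\Z$ that does not vanish on $H$; conversely, any homomorphism from $G$ to a torsion-free abelian group factors through $\Z^n$ and thus its non-vanishing on $H$ forces $\bar H$ to be non-trivial. Next, under the identification $\mathrm{Hom}(G,\R)\cong \R^n$, the character $\chi_v$ corresponding to $v\in\R^n$ vanishes on $H$ if and only if $v$ is orthogonal (with respect to the standard pairing) to every element of $\bar H\leqslant \Z^n$. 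Hence the preimage of $S(G,H)$ in $\R^n$ is the linear subspace $V\coloneq \bar H^{\perp}\leqslant \R^n$, which is \emph{proper} because $\bar H$ contains a non-zero vector. Thus $S(G,H)$ is the image of $V\setminus\{0\}$ in $S(G)=(\R^n\setminus\{0\})/\R_{>0}$, which is a closed subset of strictly smaller dimension than $S(G)$, hence closed and nowhere dense.

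For claim (ii), I would simply invoke \cite[Lemma~B3.24]{Strebel-notes} as indicated in the excerpt; alternatively, the same identification shows that $\chi_v$ is discrete precisely when the additive subgroup $v_1\Z+\cdots+v_n\Z$ of $\R$ is cyclic, which happens if and only if $v$ is a positive real multiple of an element of $\Q^n\setminus\{0\}$, and the image of $\Q^n\setminus\{0\}$ in $S(G)$ is dense because $\Q^n\setminus\{0\}$ is dense in $\R^n\setminus\{0\}$. There is no substantive obstacle here: the only nontrivial step is setting up the identification $\mathrm{Hom}(G,\R)\cong\R^n$ and correctly translating vanishing on $H$ into orthogonality with $\bar H$; the remainder is elementary linear algebra and topology in $\R^n$.
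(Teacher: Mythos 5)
Your argument is correct and takes essentially the same route as the paper, which handles (i) by exactly the same observation that $S(G,H)$ is a great subsphere of strictly smaller dimension than $S(G)$ (hence closed and nowhere dense) and cites \cite[Lemma~B3.24]{Strebel-notes} for (ii); your alternative direct argument for (ii), via the density of rational directions in $\R^n\setminus\{0\}$, is just the standard proof of that cited fact. Nothing further is needed.
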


Within the character sphere $S(G)$, the \emph{invariant $\Sigma^1(G)$} was defined by Bieri, Neumann and Strebel \cite{BNS}. For the higher invariants ($m \ge 2$), one distinguishes between the \emph{homotopical BNSR invariants  $\Sigma^m(G) \subseteq S(G)$}, introduced by Renz in \cite{RenzThesis}, and the \emph{homological BNSR invariants $\Sigma^m(G,R) \subseteq S(G)$}, defined by Bieri and Renz in \cite{BR88}. We will not give formal definitions of these invariants here, since we will only need their basic properties stated in \Cref{cor:main_props_of_BNSR_invar} below.
A nice introduction to $\Sigma^1(G)$ is given in Strebel's notes \cite{Strebel-notes}, and main properties of the higher invariants are summarized in \cite[Sections~1.2 and 1.3]{BGK}.

BNSR invariants are important because they control finiteness properties of co-abelian normal subgroups.

\begin{thm}[{\cite{BR88,RenzThesis,Renz89}}]\label{thm:BNSR} 
    Let $G$ be a group of type $F_m$ ($FP_m$ over $R$), for some $m \in \N$, with a normal subgroup $N$ such that $G/N$ is abelian. Then $N$ is of type $F_m$ (respectively, $FP_m$ over $R$) if and only if $S(G,N) \subseteq \Sigma^m(G)$ (respectively, $S(G,N) \subseteq \Sigma^m(G,R)$).
\end{thm}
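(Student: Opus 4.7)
The plan is to follow the standard three-step Bieri--Renz strategy: reduce to a free abelian quotient, handle the rank-one case, and then assemble via density and compactness.

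First I would reduce to the case where $G/N$ is free abelian. Since $G$ is of type $F_1$, $G/N$ is finitely generated abelian, hence $G/N \cong \Z^n \oplus T$ with $T$ finite. Let $\pi\colon G \to G/N$ be the quotient map and set $N' \coloneq \pi^{-1}(T) \leqs G$. Then $|N':N|=|T|<\infty$ and $G/N' \cong \Z^n$. By \Cref{lem:finiteness_props_of_fi_sbgps}, $N$ has type $F_m$ (resp.\ $FP_m$ over $R$) if and only if $N'$ does; moreover $S(G,N) = S(G,N')$, as real-valued characters annihilate torsion. Thus I may replace $N$ by $N'$ and assume $G/N \cong \Z^n$.

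The essential input is the rank-one version of the theorem: for a discrete character $\chi\colon G \twoheadrightarrow \Z$ with kernel $M$, the subgroup $M$ has type $F_m$ (resp.\ $FP_m$ over $R$) if and only if both $[\chi]$ and $[-\chi]$ belong to $\Sigma^m(G)$ (resp.\ $\Sigma^m(G,R)$). This is the defining content of the higher BNSR invariants, and is proved directly from their algebraic characterization via Novikov-type rings, or via connectivity of superlevel sets of a height function on a $G$-equivariant classifying space. Using this, the forward direction for general $n$ is short: for any discrete $\chi \in S(G,N)$ the subgroup $K_\chi \coloneq \ker\chi \n G$ contains $N$ with $K_\chi/N$ free abelian of rank $n-1$, so $K_\chi$ is an iterated $\Z$-extension of $N$ and inherits the finiteness property, since $\Z$-extensions preserve $F_m$ and $FP_m$ over $R$. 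The rank-one case applied to $K_\chi$ gives $[\chi] \in \Sigma^m(G)$; density of discrete characters (\Cref{lem:discrete_chars_are_dense}(ii)) together with the openness of the BNSR invariants then extends the inclusion to the whole great subsphere $S(G,N)$.

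The converse is the main obstacle and resists the single-character reduction. The difficulty is that membership of every individual direction in $S(G,N)$ in $\Sigma^m$ encodes ascending one-dimensional connectivity only, whereas $F_m$ (or $FP_m$ over $R$) for $N$ requires simultaneous control in every direction. The classical argument exploits compactness of $S(G,N) \cong S^{n-1}$ together with openness of $\Sigma^m$ to extract a finite set of discrete characters whose positive half-spaces cover the sphere, and then amalgamates the corresponding ascending filtrations of a $G$-CW classifying space in these finitely many directions, via a careful cell-attaching and Morse-theoretic construction, into a finite-type resolution (or a $K(N,1)$ with finite $m$-skeleton) for $N$. This geometric construction is the technical core of \cite{BR88,RenzThesis,Renz89}, and I would cite their proof rather than reproduce it here.
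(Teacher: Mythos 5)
First, note that the paper does not prove \Cref{thm:BNSR} at all: it is quoted verbatim from Bieri--Renz and Renz (\cite{BR88,RenzThesis,Renz89}), so there is no in-paper argument for you to match, and deferring the hard geometric/algebraic core to those references (as you do for the ``if'' direction) is consistent with the paper's treatment. Your preliminary reduction to $G/N$ free abelian is fine, and so is the observation that, for a discrete $\chi$ vanishing on $N$, the kernel $K_\chi=\ker\chi$ is an extension of $N$ by $\Z^{n-1}$ and hence of type $F_m$, so that Renz's rank-one criterion (\Cref{cor:main_props_of_BNSR_invar}.(ii)) puts $[\pm\chi]$ into $\Sigma^m(G)$.

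However, your ``forward'' direction contains a genuine gap at the final step: from ``every \emph{discrete} point of $S(G,N)$ lies in $\Sigma^m(G)$'' together with openness of $\Sigma^m(G)$ you cannot conclude $S(G,N)\subseteq\Sigma^m(G)$. An open subset of $S(G,N)$ containing a dense set of points need not be all of $S(G,N)$: the complement $S(G,N)\setminus\Sigma^m(G)$ is closed but could a priori consist entirely of irrational directions, which your argument never touches (density, as in \Cref{lem:discrete_chars_are_dense}, runs in exactly the unhelpful direction here). Irrational characters cannot be handled through their kernels at all, and this is precisely why Bieri--Renz prove this implication directly from the characterization of $\Sigma^m$: if $\chi(N)=0$ and $N$ is of type $FP_m$ over $R$, one inducts a finite-type resolution of $R$ from $RN$ up to the monoid ring $RG_\chi$ (using that $G_\chi$ is a union of $N$-cosets and that $\Sigma^m(G/N,R)$ is the whole sphere for the finitely generated abelian quotient), with a homotopical refinement for $F_m$; no discreteness of $\chi$ is assumed. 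The gap matters for this paper, since it is exactly this implication of \Cref{thm:BNSR} that is invoked (e.g.\ to get $S(H_v,K_v)\subseteq\Sigma^m(H_v)$ in the proofs of \Cref{thm:fib_crit_for_graph_of_gps} and \Cref{thm:virt_fib_for_amalg-sufficiency}), and the characters produced there need not be positive multiples of rational ones on the relevant vertex groups. So either quote this direction from \cite{BR88,RenzThesis,Renz89} as the paper does, or reproduce their monoid-ring/Novikov-type argument; the density-plus-openness shortcut does not work.
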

The main fact about these invariants is that they form open subsets of the character sphere $S(G)$, see \cite[Theorem~A (2.7) in Section~IV.2]{RenzThesis} and \cite[Theorem~A]{BR88}.

Note that in the case when $N=\ker\chi$, for a discrete character $\chi:G \to \R$, we have $S(G,N)=\{[\chi],-[\chi]\}$. Since we are interested in studying finiteness properties of the kernels of discrete characters, it makes sense to consider the \emph{symmetric BNSR invariants} in $S(G)$:
\[ \Sigma_{\pm}^m(G)\coloneq \Sigma^m(G) \cap \left(-\Sigma^m(G)\right)~\text{ and }~ \Sigma_{\pm}^m(G,R)\coloneq \Sigma^m(G,R) \cap \left(-\Sigma^m(G,R)\right).\]
A combination of the above results gives the following. 

\begin{cor}[{\cite{BR88,RenzThesis,Renz89}}]
\label{cor:main_props_of_BNSR_invar}
Let $m \in \N$ and let $G$ be a group of type $F_m$ ($FP_m$ over $R$). Then 
\begin{itemize}
    \item[(i)] the invariant $\Sigma_{\pm}^m(G)$ (respectively, $\Sigma_{\pm}^m(G,R)$) is open in $S(G)$;
    \item[(ii)] if $\chi:G \to \R$ is a discrete character then $\ker \chi$ is of type $F_m$ (respectively, $FP_m$ over $R$) if and only if $[\chi] \in \Sigma_{\pm}^m(G)$ (respectively, $[\chi] \in \Sigma_{\pm}^m(G,R)$).
\end{itemize}
\end{cor}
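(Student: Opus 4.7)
The plan is to deduce both claims directly from \Cref{thm:BNSR} together with the openness of $\Sigma^m(G)$ and $\Sigma^m(G,R)$ in the character sphere $S(G)$ — these being the results cited from \cite{BR88,RenzThesis,Renz89} in the paragraph immediately preceding the corollary.

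For claim (i), I would first observe that the antipodal map $[\chi] \mapsto -[\chi]$ is a self-homeomorphism of $S(G)$. Hence if $\Sigma^m(G)$ is open in $S(G)$, then so is its image $-\Sigma^m(G)$ under this homeomorphism, and therefore their intersection $\Sigma_{\pm}^m(G)=\Sigma^m(G)\cap(-\Sigma^m(G))$ is also open. The same reasoning applies verbatim with $\Sigma^m(G,R)$ in place of $\Sigma^m(G)$.

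For claim (ii), I would apply \Cref{thm:BNSR} to the normal subgroup $N\coloneq \ker\chi$. Since $\chi$ is a non-zero discrete character with $\chi(G)\cong\Z$, the quotient $G/N\cong\Z$ is abelian, so the hypothesis of \Cref{thm:BNSR} is satisfied. The key observation is that $S(G,N)=\{[\chi],-[\chi]\}$: indeed, any non-zero character $\chi':G\to\R$ vanishing on $N$ factors through $G/N\cong\Z$, and must therefore equal $r\chi$ for some $r\in\R\setminus\{0\}$, contributing up to equivalence only the point $[\chi]$ (if $r>0$) or $-[\chi]$ (if $r<0$). Consequently, $S(G,N)\subseteq\Sigma^m(G)$ is equivalent to $[\chi]\in\Sigma^m(G)\cap(-\Sigma^m(G))=\Sigma_{\pm}^m(G)$, and combining this with the equivalence supplied by \Cref{thm:BNSR} yields the desired characterisation. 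The argument for $FP_m$ over $R$ is identical, with $\Sigma^m(G,R)$ replacing $\Sigma^m(G)$ throughout.

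There is essentially no technical obstacle here: the corollary is a routine consequence of the quoted results, and the only step requiring a moment of thought is the short verification that $S(G,N)=\{[\chi],-[\chi]\}$ when $\chi$ is a discrete character.
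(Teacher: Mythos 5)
Your proposal is correct and follows exactly the argument the paper has in mind: the paper itself records the openness of $\Sigma^m(G)$ and the identity $S(G,\ker\chi)=\{[\chi],-[\chi]\}$ for discrete $\chi$ in the two sentences immediately preceding the corollary, and then states the corollary as "a combination of the above results." Your filling-in of the details (antipodal map for (i), factoring through $G/N\cong\Z$ for (ii)) is precisely what is intended.
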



\begin{lemma}\label{lem:lemma_3} Let $H$ and $L$ be finitely generated groups, and let
$\psi\colon H\rightarrow L$ be a homomorphism. Then for any open subset $\Upsilon \subseteq S(H)$ the set 
    \[
    A\coloneq \{ [\xi] \mid \xi \in \mathrm{Hom}(L,\R) \text{ s.t. } [\xi \circ \psi] \in \Upsilon\}
    \]
    is open in $S(L)$. In particular, if $H$ is of type $F_m$, for some $m \in \N$, then 
        \[
\{ [\xi] \mid \xi \in \mathrm{Hom}(L,\R) \text{ s.t. } [\xi \circ \psi] \in \Sigma^m_{\pm}(H) \}
    \] is an open subset of $S(L)$.
\end{lemma}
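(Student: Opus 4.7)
The plan is to lift everything from the character spheres to the ambient vector spaces $\mathrm{Hom}(H,\R)$ and $\mathrm{Hom}(L,\R)$, and then exploit linearity of precomposition with $\psi$. Let $\pi_H\colon \mathrm{Hom}(H,\R)\setminus\{0\} \to S(H)$ and $\pi_L\colon \mathrm{Hom}(L,\R)\setminus\{0\} \to S(L)$ denote the natural quotient maps defining the character spheres. The pullback map $\psi^*\colon \mathrm{Hom}(L,\R) \to \mathrm{Hom}(H,\R)$, given by $\psi^*(\xi) \coloneq \xi \circ \psi$, is $\R$-linear between finite-dimensional real vector spaces (hence continuous) and commutes with multiplication by positive scalars.

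First I would set $U \coloneq \pi_H^{-1}(\Upsilon)$. Since $\Upsilon$ is open in $S(H)$ and $\pi_H$ is continuous, $U$ is an open cone in $\mathrm{Hom}(H,\R)$ that avoids $0$. By continuity of $\psi^*$, the preimage $V \coloneq (\psi^*)^{-1}(U)$ is then open in $\mathrm{Hom}(L,\R)$; it does not contain $0$ (as $\psi^*(0) = 0 \notin U$), and it is invariant under multiplication by positive reals (as both $\psi^*$ and $U$ are). Unwinding the definition of $A$ shows that $A = \pi_L(V)$: a character $\xi \in \mathrm{Hom}(L,\R)$ satisfies $[\xi \circ \psi] \in \Upsilon$ if and only if $\psi^*(\xi) \in U$, and this in particular forces $\xi \neq 0$, so $[\xi]$ makes sense in $S(L)$.

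To finish, I would observe that the quotient map $\pi_L$ is open (for the quotient topology on $S(L)$): a subset $W \subseteq S(L)$ is open precisely when $\pi_L^{-1}(W)$ is open in $\mathrm{Hom}(L,\R)\setminus\{0\}$, and since $\pi_L^{-1}(A) = V$ is open, $A$ is open in $S(L)$. The ``In particular'' statement then follows immediately upon taking $\Upsilon = \Sigma^m_{\pm}(H)$, which is an open subset of $S(H)$ by \Cref{cor:main_props_of_BNSR_invar}(i). I do not foresee any serious obstacle: the argument is essentially a continuity/linearity check. The only subtlety worth spelling out is that membership $[\xi \circ \psi] \in \Upsilon$ tacitly requires $\xi \circ \psi \neq 0$, which forces $\xi \neq 0$ and hence makes $[\xi] \in S(L)$ well-defined throughout.
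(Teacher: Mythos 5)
Your proposal is correct and follows essentially the same route as the paper's proof: both use the linearity (hence continuity) of the precomposition map $\xi \mapsto \xi\circ\psi$ between the finite-dimensional spaces $\mathrm{Hom}(L,\R)$ and $\mathrm{Hom}(H,\R)$, pull back the open set to $\mathrm{Hom}(L,\R)\setminus\{0\}$, and conclude via the openness of the quotient map onto $S(L)$, with the ``in particular'' part coming from the openness of $\Sigma^m_{\pm}(H)$. Your extra remark that the preimage cone is saturated under positive scalars (so that $\pi_L^{-1}(A)=V$) is a fine, if slightly more explicit, justification of the same step.
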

\begin{proof}
Let $ \Phi \colon \mathrm{Hom}(L, \R)\rightarrow \mathrm{Hom} (H,\R)$ be the linear map defined by precomposition with $\psi$. Since these vector spaces are finite dimensional, the map $\Phi$ is necessarily continuous, so we have the following commutative diagram of continuous maps:
\begin{equation}
    \begin{tikzcd}
          \mathrm{Hom}(L,\R) \arrow[r,"\Phi"]& \mathrm{Hom}(H,\R) &\\
         B\coloneq \Phi^{-1}\left(\mathrm{Hom}(H,\R) \setminus \{0\}\right) \arrow[u,hookrightarrow] \arrow[r,"\Phi_B"]& \mathrm{Hom}(H,\R)\setminus \{0\}\arrow[u,hookrightarrow]
         \arrow[r,"q_H"]         & \mathrm{S}(H)
    \end{tikzcd},
\end{equation}
where $B$ is an open subset of $\mathrm{Hom}(L,\R)\setminus\{0\}$ (as $\Phi$ is continuous), $\Phi_B$ is the restriction of $\Phi$ to $B$, and $q_H$ is the quotient map sending each character to its equivalence class.

The continuity of $\Phi_B \circ q_H$ implies that
the full preimage 
\[C \coloneq (q_H \circ \Phi_B)^{-1}(\Upsilon)=\{ \xi\in  \mathrm{Hom}(L,\R)\mid [\xi \circ \psi] \in \Upsilon \}\] is open in $B$, hence it is also open in $\mathrm{Hom}(L,R)\setminus\{0\}$. Observe that the quotient map $q_L: \mathrm{Hom}(L,\R)\setminus\{0\}\to S(L)$ is an open map, by definition, therefore $A=q_L(C)$ is open in $S(L)$, and the first claim of the lemma is proved. The second claim now follows from \Cref{cor:main_props_of_BNSR_invar}.(i).
\end{proof}

\begin{rem} The same argument can also be used to get a homological version of \Cref{lem:lemma_3}, where $F_m$ and $\Sigma^m_{\pm}(H)$ are replaced by $FP_m$ over $R$ and $\Sigma^m_{\pm}(H,R)$, respectively.    
\end{rem}


\section{Fibering  graphs of groups}\label{sec:fib_graphs_of_gps}
In this section we establish necessary and sufficient criteria for $F_m$-fibering of fundamental groups of graphs of groups with virtually polycyclic edge groups, and we use it to prove \Cref{prop:crit_for_fibering_of_amalg}  from the Introduction.

The following observation stems from the fact that
every non-trivial subgroup of $\Z$ has finite index.

\begin{rem}\label{rem:non-triv_image_in_Z<=>NG_e_has_fi} Let $G$ be the fundamental group of a finite graph of groups $(\mathcal{G},\Gamma)$. If $\chi:G \to \R$ is a non-zero discrete character and $N \coloneq \ker\chi \n G$ then condition \eqref{eq:NG_e_has_f_i} is equivalent to the condition 
\begin{equation}\label{eq:non-triv_image_in_Z} 
\chi(\alpha_e(G_e)) \neq 0,~\text{ for all } e \in E\Gamma.    
\end{equation}    
\end{rem}

The following statement can be regarded as an analogue of \cite[Theorem~1.2]{Cash-Lev} for higher BNSR invariants. Instead of assuming that the graph of groups is reduced (as it is done in \cite{Cash-Lev}) we suppose that it has no terminal vertices, because the latter behaves well under passing to finite index subgroups (see Lemmas~\ref{lem:minmal_action<=>no_terminal_vertices} and \ref{lem:minimal_action_for_fi_sbgp}), while the former does not.

\begin{prop}\label{prop:graph_of_gps_crit_for_chi_to_be_in_higher_invar}
Let $G$ be the fundamental group of a finite graph of groups $(\mathcal{G},\Gamma)$ with virtually polycyclic edge groups and no terminal vertices. Suppose that $G$ is of type $F_m$, for some $m \in \N$,  and 
$\chi:G \to \R$ is a non-zero discrete character.

Then $[\chi] \in \Sigma^m_{\pm}(G)$ provided the following two conditions hold:
\begin{itemize}
    \item[(i)] $\chi(\alpha_e(G_e)) \neq \{0\}$, for all $e \in E\Gamma$;
    \item[(ii)] $\left[ \chi|_{G_v}\right] \in \Sigma_{\pm}^m(G_v)$, for each $v \in V\Gamma$.
\end{itemize}  

And conversely, if $[\chi] \in \Sigma^m_{\pm}(G)$ and  $\ker\chi \neq \alpha_e(G_e)$, for every $e \in E\Gamma$, then both conditions (i) and (ii) are satisfied.
\end{prop}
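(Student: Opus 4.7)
The plan is to deduce both directions from the characterization of $\Sigma_{\pm}^m$ via finiteness properties of kernels of discrete characters (\Cref{cor:main_props_of_BNSR_invar}), the vertex-group reduction of \Cref{cor:N_fp->interec_with_vertex_gps_are_fp}, the equivalence of (i) with \eqref{eq:NG_e_has_f_i} supplied by \Cref{rem:non-triv_image_in_Z<=>NG_e_has_fi}, and the Bridson--Howie dichotomy \Cref{lem:Bri-How}. Set $N\coloneq \ker\chi$, so that $G/N\cong\Z$; since any non-zero subgroup of $\Z$ is itself infinite cyclic, the restriction $\chi|_{G_v}$ is discrete whenever it is non-zero, and then $[\chi|_{G_v}]$ is a well-defined point of $S(G_v)$ to which \Cref{cor:main_props_of_BNSR_invar} applies.

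For the sufficiency, assuming (i) and (ii), I first use \Cref{rem:non-triv_image_in_Z<=>NG_e_has_fi} to translate (i) into \eqref{eq:NG_e_has_f_i}, which allows me to invoke \Cref{cor:N_fp->interec_with_vertex_gps_are_fp} and reduce to showing that $N\cap G_v$ is of type $F_m$ for every $v\in V\Gamma$. Splitting into cases: if $\chi|_{G_v}=0$, then $N\cap G_v=G_v$ is of type $F_m$ by \Cref{prop:fin_props_of_graphs_of_gps} applied to the ambient graph of groups; if $\chi|_{G_v}\neq 0$, then $\chi|_{G_v}$ is discrete and (ii) together with \Cref{cor:main_props_of_BNSR_invar} yields that $N\cap G_v=\ker(\chi|_{G_v})$ is of type $F_m$. \Cref{cor:N_fp->interec_with_vertex_gps_are_fp} then gives $N$ of type $F_m$, and a final application of \Cref{cor:main_props_of_BNSR_invar} concludes $[\chi]\in\Sigma^m_{\pm}(G)$.

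For the necessity, the key tool is the dichotomy of \Cref{lem:Bri-How}, available because the absence of terminal vertices is equivalent to minimality of the Bass--Serre action by \Cref{lem:minmal_action<=>no_terminal_vertices}. Applied to $N$, it says that either $N\subseteq\alpha_e(G_e)$ for every $e$, or $|G:N\alpha_e(G_e)|<\infty$ for every $e$. The second alternative immediately gives (i) via \Cref{rem:non-triv_image_in_Z<=>NG_e_has_fi}; in the first, the extra hypothesis $\ker\chi\neq\alpha_e(G_e)$ promotes the inclusion to $N\subsetneq\alpha_e(G_e)$, so $\alpha_e(G_e)/N$ is a non-trivial (hence infinite cyclic) subgroup of $G/N\cong\Z$, forcing $\chi(\alpha_e(G_e))\neq 0$. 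Thus (i) holds in either case. To obtain (ii), I note that (i) forbids $\chi|_{G_v}$ from vanishing at any vertex $v$ incident to an edge (since $\alpha_e(G_e)\subseteq G_{\alpha(e)}$), while the degenerate case in which $\Gamma$ consists of a single vertex and no edges reduces to $G=G_v$ and $[\chi]=[\chi|_{G_v}]$ tautologically. Then $[\chi]\in\Sigma^m_{\pm}(G)$ combined with \Cref{cor:main_props_of_BNSR_invar} gives $N$ of type $F_m$, \Cref{cor:N_fp->interec_with_vertex_gps_are_fp} upgrades this to $N\cap G_v$ of type $F_m$ for every $v$, and \Cref{cor:main_props_of_BNSR_invar} once more converts this into $[\chi|_{G_v}]\in\Sigma^m_{\pm}(G_v)$.

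The most delicate point is the application of the dichotomy in the necessity direction: without the no-terminal-vertices assumption minimality can fail and the dichotomy breaks down, while without the hypothesis $\ker\chi\neq\alpha_e(G_e)$ the first alternative of the dichotomy may genuinely occur and violate (i). Aside from this, the proof is mainly organised bookkeeping between \Cref{cor:main_props_of_BNSR_invar}, \Cref{cor:N_fp->interec_with_vertex_gps_are_fp} and \Cref{rem:non-triv_image_in_Z<=>NG_e_has_fi}, with the two ``degenerate'' sub-cases (vertices where $\chi$ is zero, and graphs with no edges) handled separately by direct inspection.
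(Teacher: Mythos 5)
Your proof is correct and follows essentially the same route as the paper's: sufficiency by translating condition (i) into \eqref{eq:NG_e_has_f_i} via \Cref{rem:non-triv_image_in_Z<=>NG_e_has_fi}, reducing to the vertex groups with \Cref{cor:N_fp->interec_with_vertex_gps_are_fp}, and converting back and forth between finiteness of kernels and membership in $\Sigma^m_\pm$ via \Cref{cor:main_props_of_BNSR_invar}; and necessity by running the Bridson--Howie dichotomy (\Cref{lem:Bri-How}), ruling out the first alternative using the hypothesis $\ker\chi\neq\alpha_e(G_e)$, and then landing in \eqref{eq:NG_e_has_f_i} to deduce both (i) and (ii). The two small additions you make beyond the paper's argument are harmless: the sub-case $\chi|_{G_v}=0$ in the sufficiency direction is actually vacuous, because condition (ii) presupposes that $[\chi|_{G_v}]$ is a well-defined point of $S(G_v)$, which already forces $\chi|_{G_v}\neq 0$; and your explicit observation in the necessity direction that (i) forbids $\chi|_{G_v}$ from vanishing (together with the edgeless one-vertex degenerate case) makes precise a point the paper leaves implicit before invoking \Cref{cor:main_props_of_BNSR_invar} on the vertex groups.
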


\begin{proof} Note that, by \Cref{prop:fin_props_of_graphs_of_gps}, each vertex group $G_v$ has type $F_m$, so it makes sense to talk about its BNSR invariant $\Sigma_{\pm}^m(G_v)$. Moreover, $G$ acts minimally on its Bass-Serre tree $\cT$, by \Cref{lem:minmal_action<=>no_terminal_vertices}.

Assume, first, that conditions (i) and (ii) hold, and denote $N \coloneq \ker\chi \n G$. Then $N \cap G_v$ has type $F_m$ by \Cref{cor:main_props_of_BNSR_invar}.(ii), for every $v \in V\Gamma$. \Cref{rem:non-triv_image_in_Z<=>NG_e_has_fi} allows us to apply \Cref{cor:N_fp->interec_with_vertex_gps_are_fp} to conclude that $N=\ker\chi$ is of type $F_m$, thus $[\chi] \in \Sigma^m_{\pm}(G)$ by \Cref{cor:main_props_of_BNSR_invar}.(ii).

Conversely, suppose that 
$[\chi] \in \Sigma^m_{\pm}(G)$ and  $\ker\chi \neq \alpha_e(G_e)$, for every $e \in E\Gamma$. In view of \Cref{cor:main_props_of_BNSR_invar}.(ii), this means that the normal subgroup $N=\ker\chi \n G$ is of type $F_m$, in particular, it is finitely generated. Therefore, we can apply \Cref{lem:Bri-How} to deduce that either 
\eqref{eq:N_is_in_G_e} or \eqref{eq:NG_e_has_f_i} is true. If the former is true then $N=\ker\chi \subseteq \alpha_e(G_e)$, for all $e \in E\Gamma$, which, combined with the assumption that $N \neq \alpha_e(G_e)$, implies condition \eqref{eq:non-triv_image_in_Z}, hence \eqref{eq:NG_e_has_f_i} is satisfied by  \Cref{rem:non-triv_image_in_Z<=>NG_e_has_fi}.
Thus we can assume that \eqref{eq:NG_e_has_f_i} is true. Then (i) holds by \Cref{rem:non-triv_image_in_Z<=>NG_e_has_fi} and (ii) holds by Corollaries~\ref{cor:N_fp->interec_with_vertex_gps_are_fp} and \ref{cor:main_props_of_BNSR_invar}.(ii).
\end{proof}

\begin{rem} The assumption that $\ker\chi \neq \alpha_e(G_e)$, for every $e \in E\Gamma$, in the converse direction of \Cref{prop:graph_of_gps_crit_for_chi_to_be_in_higher_invar} is important. Indeed, any semidirect product $G \coloneq H \rtimes \Z$ can be considered as an HNN-extension over $H$. If $H$ is of type $F_m$ then for the natural projection 
$\chi: G \to \Z$, with $\ker\chi=H$, we have $[\chi] \in \Sigma^m_{\pm}(G)$ and $\chi(H)=\{0\}$, in particular, $[\chi|_H] \notin \Sigma^m_{\pm}(H)$.  \end{rem}

It is well-known that every infinite virtually cyclic group $G_0$ has a finite normal subgroup $K$ such that $G_0/K$ is either infinite cyclic or infinite dihedral (see, for example, \cite[Lemma~2.5]{Far-Jon}). Clearly, in the latter case $G_0$ cannot map onto $\Z$, therefore we can make the following observation.

\begin{rem}\label{rem:struct_of_virt_cyc_gp} Let $G_0$ be a virtually cyclic group admitting a non-zero homomorphism to $\Z$. Then there is a finite normal subgroup $M \n G_0$ and an infinite order element $c \in G_0$ such that $G_0=M \langle c \rangle \cong M \rtimes \langle c \rangle$. Every homomorphism $\chi:G_0 \to \Z$ sends $M$ to $\{0\}$, thus $\chi$ is completely determined by the image  $\chi(c) \in \Z$.  
\end{rem}

Let us now prove the criterion for $F_m$-fibering of amalgamated free products mentioned in the Introduction.

\begin{proof}[Proof of \Cref{prop:crit_for_fibering_of_amalg}] We can treat $G$ as the fundamental group of a graph of groups $(\mathcal{G},\Gamma)$, where $\Gamma$ consists of two vertices and two mutually inverse edges joining them, with vertex groups $G_1$ and $G_2$ and with the edge group $G_0$. Note that $(\mathcal{G},\Gamma)$ has no terminal vertices because $G_0$ embeds as a proper subgroup of $G_1$ and $G_2$, and 
$G$ is of type $F_m$ by  \Cref{prop:fin_props_of_graphs_of_gps}.

Observe that $G_0$ cannot be the kernel of a character $G \to \R$ because if $G_0 \n G$ then $G/G_0 \cong G_1/G_0*G_2/G_0$ splits as a non-trivial free product, so $G/G_0$ is necessarily non-abelian.
Therefore, if $G$ $F_m$-fibers then (i) and (ii) hold by \Cref{prop:graph_of_gps_crit_for_chi_to_be_in_higher_invar} and \Cref{cor:main_props_of_BNSR_invar}.(ii).

Thus, it remains to prove the sufficiency, so suppose that (i) and (ii) are true.
Take any $i \in \{1,2\}$. Condition (i), together with \Cref{cor:main_props_of_BNSR_invar}, imply that $\Sigma^m_{\pm}(G_i)$   is a non-empty open subset of the sphere $S(G_i)$. In view of condition (ii), we can apply \Cref{lem:discrete_chars_are_dense} to deduce that there exists a non-zero homomorphism $\chi_i:G_i \to \Z$ such that $[\chi_i] \in \Sigma^m_{\pm}(G_i)$ and $\chi_i(G_0) \neq \{0\}$.
Let $c \in G_0$ be an infinite order element provided by \Cref{rem:struct_of_virt_cyc_gp}, and denote $n_i \coloneq \chi_i(c) \in \Z \setminus\{0\}$. Since $\Sigma^m_{\pm}(G_i)=-\Sigma^m_{\pm}(G_i)$, we can replace $\chi_i$ by $-\chi_i$, if necessary, to assume that $n_i>0$.

Observe that \[n_2\chi_1(c)=n_2 n_1=n_1 \chi_2(c),\] so, in view of \Cref{rem:struct_of_virt_cyc_gp}, we see that 
the homomorphisms \[n_2 \chi_1:G_1 \to \Z~\text{ and }~n_1\chi_2:G_2 \to \Z\] agree on the subgroup $G_0$. Therefore, we can define a homomorphism $\chi:G \to \Z$ by $\chi|_{G_1} \coloneq n_2 \chi_1$ and $\chi|_{G_2} \coloneq n_1 \chi_2$. 

By construction, $\chi(G_0) \neq \{0\}$ and $[\chi|_{G_i}]=[\chi_i] \in \Sigma^m_{\pm}(G_i)$, for $i=1,2$, hence $[\chi] \in \Sigma^m_{\pm}(G)$ by \Cref{prop:graph_of_gps_crit_for_chi_to_be_in_higher_invar}. Thus $G$ $F_m$-fibers, by \Cref{cor:main_props_of_BNSR_invar}.(ii).    
\end{proof}

\begin{defn}
Let $H$ be a group with a subgroup $K \leqslant H$. 
We will say that $H$ \emph{$F_m$-fibers relative to} $K$ if there is a non-zero homomorphism $\chi:H \to \Z$ such that $K \subseteq \ker \chi$ and $\ker\chi$ is of type $F_m$. 
\end{defn}

The next theorem gives a new fibering criterion for fundamental groups of graphs of groups and may be of independent interest.

\begin{thm} \label{thm:fib_crit_for_graph_of_gps}
Let $(\mathcal{H},\Delta)$ be a finite graph of groups with fundamental group $H$, such that   $H$ is of type $F_m$, for some $m \in \N$, and all edge groups are virtually polycyclic.
Suppose that there exist a free abelian group $L$ of rank $n \in \N$ and homomorphisms $\psi:H \to L$ and  $\varphi:H \to \Z$ such that all of the following conditions hold:
\begin{itemize}
    \item[(a)] $\ker\psi \subseteq \ker\varphi$;
    \item[(b)] $\psi(\alpha_e(H_e))$ is non-trivial in $L$, for each $e \in E\Delta$;
    \item[(c)] for every $v \in V\Delta$ either $[\varphi|_{H_v}] \in \Sigma^m_{\pm}(H_v)$ or $S(H_v, K_v) \subseteq \Sigma^m(H_v)$, where $K_v \coloneq H_v \cap \ker\psi$.
\end{itemize}
Then the group $H$ $F_m$-fibers relative to $\ker\psi$.    
\end{thm}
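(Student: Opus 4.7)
The plan is to construct a non-zero discrete character $\chi\colon H\to\Z$ vanishing on $\ker\psi$ that satisfies the two sufficient conditions of \Cref{prop:graph_of_gps_crit_for_chi_to_be_in_higher_invar}, so that $[\chi]\in \Sigma^m_\pm(H)$ and hence $\ker\chi$ has type $F_m$ by \Cref{cor:main_props_of_BNSR_invar}.(ii). Iteratively contracting terminal vertices (which leaves $H$ unchanged and only removes vertices and edges from $\Delta$) preserves conditions (b) and (c), so I may assume $(\mathcal{H},\Delta)$ has no terminal vertices.

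Let $\bar H\coloneq \psi(H)\leqslant L$, a finitely generated free abelian group. The great subsphere $S(H,\ker\psi)\subseteq S(H)$ of classes of characters vanishing on $\ker\psi$ is naturally homeomorphic to $S(\bar H)$, and $[\varphi]\in S(H,\ker\psi)$ by condition (a). My strategy is to carve out a non-empty open subset of $S(H,\ker\psi)$ every point of which satisfies the hypotheses of \Cref{prop:graph_of_gps_crit_for_chi_to_be_in_higher_invar}, and then select a discrete character inside it.

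For each edge $e\in E\Delta$, set $\Omega_e\coloneq \{[\chi]\in S(H,\ker\psi)\mid \chi(\alpha_e(H_e))\neq 0\}$. By condition (b), $\psi(\alpha_e(H_e))$ is a non-trivial subgroup of $\bar H$, so $\Omega_e$ is the complement of a proper closed subsphere in $S(\bar H)\cong S(H,\ker\psi)$ and is therefore open and dense by \Cref{lem:discrete_chars_are_dense}.(i). For each vertex $v$, absence of terminal vertices gives an incident edge $e$, and hence $\psi(H_v)\supseteq \psi(\alpha_e(H_e))\neq 0$. Using a choice of option from (c), I define $\Omega_v\subseteq S(H,\ker\psi)$ as follows. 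If $[\varphi|_{H_v}]\in \Sigma^m_\pm(H_v)$, I set $\Omega_v\coloneq \{[\chi]\mid \chi|_{H_v}\neq 0,\ [\chi|_{H_v}]\in \Sigma^m_\pm(H_v)\}$, which is open by \Cref{lem:lemma_3} applied to the inclusion $H_v\hookrightarrow H$ and contains $[\varphi]$. Otherwise $S(H_v,K_v)\subseteq \Sigma^m(H_v)$, and I set $\Omega_v\coloneq \{[\chi]\mid \chi|_{H_v}\neq 0\}$, which is open and dense; on this set $[\pm\chi|_{H_v}]\in S(H_v,K_v)\subseteq \Sigma^m(H_v)$ forces $[\chi|_{H_v}]\in\Sigma^m_\pm(H_v)$.

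Let $\Omega\coloneq \bigcap_{v\in V\Delta}\Omega_v\cap \bigcap_{e\in E\Delta}\Omega_e$. Splitting $V\Delta$ into $V_1$ (vertices using the first option) and $V_2$ (vertices using the second option), the intersection $\bigcap_{v\in V_1}\Omega_v$ is open and contains $[\varphi]$, and further intersecting with the finitely many open dense sets $\Omega_v$ for $v\in V_2$ and $\Omega_e$ for $e\in E\Delta$ preserves non-emptiness (a non-empty open set minus finitely many nowhere dense closed subsets is still non-empty). Discrete characters of $\bar H$ are dense in $S(\bar H)\cong S(H,\ker\psi)$ by \Cref{lem:discrete_chars_are_dense}.(ii), so I can pick a discrete $[\chi]\in\Omega$, corresponding (after rescaling) to a non-zero homomorphism $\chi\colon H\to\Z$ with $\ker\psi\subseteq\ker\chi$. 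By construction $\chi$ satisfies both conditions (i) and (ii) of \Cref{prop:graph_of_gps_crit_for_chi_to_be_in_higher_invar}, so $[\chi]\in\Sigma^m_\pm(H)$ and $\ker\chi$ has type $F_m$, as required. The main obstacle is establishing that $\Omega$ is non-empty: this is precisely where the dichotomy in condition (c) must be used uniformly, with $V_1$ vertices pinning down $[\varphi]$ as a reference point while $V_2$ vertices together with all edges merely excise nowhere dense subsets.
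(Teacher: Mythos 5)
Your proof is correct and takes essentially the same approach as the paper: both reduce to selecting a discrete character in a non-empty open subset of the sphere $S(\bar H)\cong S(L)$ (the paper fixes $\psi$ surjective and works in $S(L)$, you work directly in $S(H,\ker\psi)$, which is the same thing), carving this subset out using \Cref{lem:lemma_3} for vertices in the first case of (c), nowhere-density from \Cref{lem:discrete_chars_are_dense}.(i) for the remaining vertices and all edges, density of discrete classes from \Cref{lem:discrete_chars_are_dense}.(ii), and then invoking \Cref{prop:graph_of_gps_crit_for_chi_to_be_in_higher_invar}.
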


\begin{proof} Without loss of generality, we can assume that $\Delta$ has no terminal vertices and $\psi$ is surjective. 
Let $U \subseteq V\Delta$ be the subset consisting of all vertices $u \in V\Delta$ such that $[\varphi|_{H_u}] \in \Sigma^m_{\pm}(H_u)$. Then \Cref{lem:lemma_3} tells us that the subset 
\[C \coloneq \bigcap_{u \in U} \{[\xi] \in S(L) \mid \xi\in \mathrm{Hom}(L,\R) \text{ s.t. } [\xi \circ \psi|_{H_u}] \in \Sigma^m_{\pm}(H_u)\}\] is open in $S(L)$ (if $U=\emptyset$ then we set $C\coloneq S(L)$).
According to (a), there is a homomorphism $\eta: L \to \Z$ such that $\varphi=\eta \circ \psi$. If $U \neq \emptyset$ then $[\eta] \in C$, by the definition of $C$, otherwise $C=S(L)$. In either case, we have verified that $C$ is a non-empty open subset of $S(L)$.     

Now, since $L \cong \Z^n$, condition (b) together with \Cref{lem:discrete_chars_are_dense}.(i) tell us that for each $e \in E\Delta$, the subset 
\[D_e \coloneq S\bigl(L,\psi(\alpha_e(H_e))\bigr)=\{[\xi] \in S(L) \mid (\xi\circ \psi)(\alpha_e(H_e))=\{0\} \} \] is closed and nowhere dense in $S(L)$. Consequently, the finite union $\cup_{e \in E\Delta} D_e$ is also closed and nowhere dense in $S(L)$. On the other hand, the set of discrete characters is dense in $S(L)$ by \Cref{lem:discrete_chars_are_dense}.(ii), hence there exists a non-zero discrete character $\xi_0:L \to \Z$ such that $[\xi_0] \in C$ and $[\xi_0] \notin D_e$, for all $e \in E\Delta$. It follows that the character $\chi\coloneq \xi_0 \circ \psi:H \to \Z$ satisfies 
\begin{itemize}
    \item $[\chi|_{H_u}] \in \Sigma^m_{\pm}(H_u)$, for each $u \in U$, and 
    \item $\chi(\alpha_e(H_e)) \neq \{0\}$, for all $e \in E\Delta$.
\end{itemize}

Since each vertex of $\Delta$ is incident to some edge, the latter condition implies 
that the restriction of $\chi$ to $H_v$ is non-zero, for all $v \in V\Delta$. To apply \Cref{prop:graph_of_gps_crit_for_chi_to_be_in_higher_invar}, it remains to check that $[\chi|_{H_v}] \in \Sigma^m_{\pm}(H_v)$, for each $v \in V\Delta \setminus U$. Note that $\ker \psi \subseteq \ker\chi $, by construction, hence $N_v \coloneq H_v \cap \ker\chi$ contains $K_v=H_v \cap \ker\psi$, so 
\[S(H_v,N_v) \subseteq S(H_v,K_v) \subseteq \Sigma^m(H_v), ~\text{ for every } v \in V\Delta \setminus U, \]
by the second part of condition (c). Thus, in view of \Cref{thm:BNSR} and \Cref{cor:main_props_of_BNSR_invar}.(ii), we see that $[\chi|_{H_v}] \in \Sigma^m_{\pm}(H_v)$, for all $v \in V\Delta$, so $[\chi] \in \Sigma^m_{\pm}(H)$ by
\Cref{prop:graph_of_gps_crit_for_chi_to_be_in_higher_invar}. \Cref{cor:main_props_of_BNSR_invar}.(ii) now tells us that $\ker\chi$ is of type $F_m$, showing that $H$ $F_m$-fibers relative to $\ker\psi$.
\end{proof}

\begin{rem} Essentially the same  argument proves the homological versions of \Cref{prop:graph_of_gps_crit_for_chi_to_be_in_higher_invar} and \Cref{thm:fib_crit_for_graph_of_gps}, where $F_m$ replaced by $FP_m$ over $R$ and $\Sigma^m_{\pm}(G)$ replaced by $\Sigma^m_{\pm}(G,R)$ (for some non-zero commutative ring $R$ with unity).    
\end{rem}


\section{Virtual fibering}\label{sec:virt_fibering}
In this section we prove \Cref{thm:virt_fib_of_amalg}. 
The following statement, giving necessary conditions for virtual $F_m$-fibering of the fundamental group of a graph of groups with virtually cyclic edge groups, is a straightforward consequence of Bass-Serre Theory and \Cref{prop:graph_of_gps_crit_for_chi_to_be_in_higher_invar}.

\begin{prop}\label{prop:necessary_crit_for_virt_fib} Let $G$ be the fundamental group of a  finite graph of groups $(\mathcal{G},\Gamma)$ without terminal vertices and with virtually cyclic edge groups. Suppose that 
 $G$ is of type $F_m$, for some $m \in \N$. If $G$ virtually $F_m$-fibers and is not virtually abelian then every vertex group $G_v$ virtually $F_m$-fibers, $v \in V\Gamma$, and for each $e \in E\Gamma$, the edge group $G_e$ is infinite and satisfies $\alpha_e(G_e) \avr G$.     
\end{prop}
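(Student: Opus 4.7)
The plan is to reduce to applying the converse direction of \Cref{prop:graph_of_gps_crit_for_chi_to_be_in_higher_invar}. Since $G$ virtually $F_m$-fibers, I would pick a finite index subgroup $H \leqslant_f G$ together with a non-zero discrete character $\chi: H \to \Z$ whose kernel has type $F_m$; by \Cref{cor:main_props_of_BNSR_invar}.(ii) this gives $[\chi] \in \Sigma^m_{\pm}(H)$. By \Cref{thm:kurosh}, $H$ is the fundamental group of a finite graph of groups $(\mathcal{H},\Delta)$ with vertex/edge groups equal to intersections of $H$ with $G$-conjugates of the vertex/edge groups of $(\mathcal{G},\Gamma)$. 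In particular, the edge groups of $(\mathcal{H},\Delta)$ remain virtually cyclic, $H$ is of type $F_m$ by \Cref{lem:finiteness_props_of_fi_sbgps}, and since $G$ acts minimally on the Bass-Serre tree $\cT$ (by \Cref{lem:minmal_action<=>no_terminal_vertices}), so does $H$ (by \Cref{lem:minimal_action_for_fi_sbgp}); thus $(\mathcal{H},\Delta)$ also has no terminal vertices.

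The main obstacle will be verifying the hypothesis ``$\ker\chi \neq \alpha_f(H_f)$ for every $f \in E\Delta$'' that is needed to invoke the converse direction of \Cref{prop:graph_of_gps_crit_for_chi_to_be_in_higher_invar}; this is precisely where the assumption that $G$ is not virtually abelian enters. I plan to argue by contradiction: if $\ker\chi = \alpha_f(H_f)$ for some $f$, then $\ker\chi$ fixes an edge of $\cT$, so its fixed set is a non-empty $H$-invariant subtree of $\cT$, which by minimality must be all of $\cT$. Hence $\ker\chi$ is contained in every edge stabilizer, forcing $\ker\chi$ to be virtually cyclic. Since $H/\ker\chi \cong \Z$, replacing $\ker\chi$ by its characteristic cyclic (possibly trivial) finite index subgroup $C \n H$ shows that $H/C$ is virtually $\Z$, whence $H$ has a finite index subgroup that is either $\Z$ or a $\Z$-by-$\Z$ extension, and in both cases virtually abelian. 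This would contradict the assumption that $G$ is not virtually abelian.

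Granted the hypothesis, the converse direction of \Cref{prop:graph_of_gps_crit_for_chi_to_be_in_higher_invar} yields $\chi(\alpha_f(H_f)) \neq \{0\}$ for every $f \in E\Delta$ and $[\chi|_{H_w}] \in \Sigma^m_{\pm}(H_w)$ for every $w \in V\Delta$. Translating back to $G$: for each $e \in E\Gamma$ there is some $f \in E\Delta$ with $H_f = H \cap g\alpha_e(G_e)g^{-1}$ for some $g \in G$ and $|g\alpha_e(G_e)g^{-1} : H_f| < \infty$, and non-triviality of $\chi(H_f)$ forces $H_f$, and hence $G_e$, to be infinite; conjugating $\chi$ to the finite index subgroup $g^{-1}Hg \leqslant_f G$ produces a character that is non-zero on $g^{-1}Hg \cap \alpha_e(G_e)$, so \Cref{lem:virt_cyc-avr_crit} (via the implication (iv)$\Rightarrow$(i)) gives $\alpha_e(G_e) \avr G$. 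Similarly, each $v \in V\Gamma$ has a representative $w \in V\Delta$ with $H_w$ commensurable with $G_v$; since $\Delta$ has no terminal vertices, some edge of $\Delta$ is incident to $w$, so $\chi|_{H_w}$ is a non-zero discrete character whose kernel has type $F_m$ by \Cref{cor:main_props_of_BNSR_invar}.(ii), making $H_w$ $F_m$-fibered and therefore $G_v$ virtually $F_m$-fibered.
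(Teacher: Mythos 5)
Your proposal is correct and follows essentially the same route as the paper: pass to a fibered finite index subgroup $H$ with its induced splitting $(\mathcal{H},\Delta)$, use the non-virtually-abelian hypothesis to rule out $\ker\chi=\alpha_f(H_f)$, invoke the converse direction of \Cref{prop:graph_of_gps_crit_for_chi_to_be_in_higher_invar}, and translate back via \Cref{lem:virt_cyc-avr_crit}. The only differences are cosmetic: your fixed-subtree detour to see that $\ker\chi$ is virtually cyclic is unnecessary (it equals an edge group of $(\mathcal{H},\Delta)$, already virtually cyclic), and the paper notes the conjugators can be chosen in $H$ so that $\chi$ itself is non-zero on $\alpha_e(G_e)\cap H$, whereas you conjugate the character to $g^{-1}Hg$ — both yield condition (iv) of \Cref{lem:virt_cyc-avr_crit}.
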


\begin{proof} Let $H \leqslant_f G$ be a finite index subgroup admitting a non-zero character $\chi:H \to \Z$ with $\ker\chi$ of type $F_m$, so that $[\chi] \in \Sigma^m_{\pm}(H)$, by \Cref{cor:main_props_of_BNSR_invar}.

The given splitting of $G$ induces a splitting of $H$ as the fundamental group of a finite graph of groups $(\mathcal{H},\Delta)$, which will have virtually cyclic edge groups and will be without terminal vertices by Lemmas~\ref{lem:minmal_action<=>no_terminal_vertices} and \ref{lem:minimal_action_for_fi_sbgp}. 

If $\ker\chi=\alpha_f(H_f)$, for some $f \in E\Delta$, then $H$ is virtually abelian (because it will be an extension of the virtually cyclic group $\alpha_f(H_f)$ by $\Z$). Hence, the group $G$  will also be virtually abelian, contradicting the assumptions. 

Thus we must have $\ker\chi \neq \alpha_f(H_f)$, for all $f \in E\Delta$.
The group $H$ is of type $F_m$, as a finite index subgroup of $G$, by \Cref{lem:finiteness_props_of_fi_sbgps}, so \Cref{prop:graph_of_gps_crit_for_chi_to_be_in_higher_invar} tells us that $\chi(\alpha_f(H_f)) \neq \{0\}$, for all $f \in E\Delta$, and $[\chi|_{H_u}] \in \Sigma^m_{\pm}(H_u)$, for all $u \in V\Delta$. The latter means that every vertex group $H_u$ is $F_m$-fibered (see \Cref{cor:main_props_of_BNSR_invar}.(ii)).

For each $v \in V\Gamma$, $G_v$ is the $G$-stabilizer of some vertex $\tilde{v}$ of the Bass-Serre tree $\cT$ for $G$. 
The graph of groups $(\mathcal{H},\Delta)$ is constructed from the induced action of $H$ on $\cT$ (see the paragraphs below \Cref{thm:kurosh} in Subsection~\ref{subsec:graphs_of_gps}), so for all $v \in V\Gamma$ there exist $g=g(v) \in H$ and $u=u(v) \in V\Delta$ such that $G_v \cap H=\mathrm{St}_H(\tilde{v})=gH_ug^{-1}$. Since $G_v \cap H \leqslant_f G_v$, we can conclude that $G_v$ virtually $F_m$-fibers.

Similarly, for each $e \in E\Gamma$ there are $h=h(e) \in H$ and
$f=f(e) \in E\Delta$ such that  $\alpha_e(G_e) \cap H=h \alpha_f(H_f) h^{-1} \leqslant_f \alpha_e(G_e)$ in $G$. Since 
\[\chi(h\alpha_f(H_f)h^{-1})=\chi(\alpha_f(H_f)) \neq \{0\},\]
we see that $|G_e|=\infty$ and, by  \Cref{lem:virt_cyc-avr_crit},  $\alpha_e(G_e) \avr G$, as desired. 
\end{proof}

In view of \Cref{prop:fin_props_of_graphs_of_gps} and \Cref{lem:basic_props_of_virt_retr}.(c), part (a) of \Cref{thm:virt_fib_of_amalg} from the Introduction is a special case of \Cref{prop:necessary_crit_for_virt_fib}. For the second part of this theorem we will need an auxiliary lemma.

\begin{lemma}\label{lem:aux_for_virt_fib}
Let $G=G_1*_{G_0} G_2$, where $G_0$ is infinite virtually cyclic and $G_0 \vr G_i$, for $i=1,2$. Suppose that we have  homomorphisms $\chi_i:G_i \to \Z$, $i=1,2$, such that $\chi_1(G_0)=\{0\}$ and $\chi_2(G_0) \neq \{0\}$. Then there is a finitely generated virtually abelian group $P$ and homomorphisms $\psi:G \to P$ and $\varphi:G \to \Z$ such that all of the following conditions are true:
\begin{itemize}
    \item $\ker\psi \subseteq \ker\varphi$;
      \item $\varphi|_{G_1}=\chi_1$;
    \item $\ker\psi \cap G_2$ has finite index in $\ker\chi_2$; 
    \item  $\psi$ is injective on $G_0$.
\end{itemize}
\end{lemma}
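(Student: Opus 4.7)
The plan is to produce $\varphi$ directly by the universal property of the amalgamated product, and then build $\psi$ by combining a homomorphism constructed via \Cref{thm:amalg_of_virt_ab} with $\varphi$ itself as an extra coordinate. To begin, observe that since $\chi_1(G_0)=\{0\}$, the homomorphism $\chi_1\colon G_1\to \Z$ and the zero map $G_2\to \Z$ agree on $G_0$, so they induce a character $\varphi\colon G\to \Z$ with $\varphi|_{G_1}=\chi_1$ and $\varphi|_{G_2}=0$; this immediately handles the second bullet of the conclusion.

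Next, I would construct auxiliary homomorphisms $\varphi_i\colon G_i\to P_i$ into finitely generated virtually abelian groups $P_i$, both injective on $G_0$, with the additional feature that $[\ker\chi_2:\ker\varphi_2]<\infty$. For $\varphi_1$, any map supplied by \Cref{lem:further_props_of_VRC} applied with $H_1=G_0\vr G_1$ works. For $\varphi_2$, I take $\varphi_2\coloneq\chi_2\times\alpha_2\colon G_2\to \Z\times A_2$, where $\alpha_2\colon G_2\to A_2$ lands in a \emph{finite} group $A_2$ and is injective on the finite subgroup $F\coloneq G_0\cap\ker\chi_2$. The crucial step is the construction of such an $\alpha_2$: using $G_0\vr G_2$, pick $K\leqslant_f G_2$ and a retraction $\rho\colon K\to G_0$; since $G_0$ is virtually cyclic and hence residually finite, there is a finite quotient $\pi\colon G_0\twoheadrightarrow Q$ injective on $F$; then $\pi\circ\rho\colon K\to Q$ is a finite quotient of $K$ whose kernel $N$ avoids $F$; finally \Cref{lem:extend_homom_from_K} extends this to a homomorphism $\alpha_2\colon G_2\to A_2$ with $\ker\alpha_2\subseteq N$. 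Inspection of the proof of that lemma shows that when $K/N$ is finite (as it is here), the output $A_2=G_2/O$ is also finite, because $L/O$ embeds into a finite product of copies of $L/M\leqslant K/N$. Consequently $\ker\varphi_2=\ker\chi_2\cap\ker\alpha_2$ has index at most $|A_2|$ in $\ker\chi_2$, while $\ker\varphi_2\cap G_0=F\cap\ker\alpha_2=\{1\}$ yields injectivity of $\varphi_2$ on $G_0$.

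With $\varphi_1,\varphi_2$ in hand, the universal property of the amalgamated free product (identifying $G_0$ with $\varphi_i(G_0)\leqslant P_i$) yields a homomorphism $\varphi'\colon G\to P_1*_{G_0}P_2$ with $\varphi'|_{G_i}=\varphi_i$. \Cref{thm:amalg_of_virt_ab} then provides a homomorphism $\nu\colon P_1*_{G_0}P_2\to E$ to a finitely generated virtually abelian group $E$, injective on each $P_i$. Setting $\psi_0\coloneq\nu\circ\varphi'$, injectivity of $\nu$ on $P_2$ gives $\ker\psi_0\cap G_2=\ker\varphi_2$ (of finite index in $\ker\chi_2$), and $\psi_0$ inherits injectivity on $G_0$ from $\varphi_1$. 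I then take $P\coloneq E\times\Z$ and $\psi\coloneq(\psi_0,\varphi)\colon G\to P$; then $\ker\psi=\ker\psi_0\cap\ker\varphi\subseteq\ker\varphi$, and since $\varphi$ vanishes on $G_2$ the equality $\ker\psi\cap G_2=\ker\psi_0\cap G_2=\ker\varphi_2$ holds, while $\psi$ is injective on $G_0$ because $\psi_0$ is. The main technical hurdle is the finite-target construction of $\alpha_2$, which forces a careful application of \Cref{lem:extend_homom_from_K} with a finite $K/N$.
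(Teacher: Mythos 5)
Your proof is correct, and it reaches the conclusion by a route that differs from the paper's in its intermediate bookkeeping, though both ultimately rest on \Cref{thm:amalg_of_virt_ab} and the same structural idea of rendering $G_0$ visible in a virtually abelian image while killing nothing more than necessary of $\ker\chi_2$ inside $G_2$. The paper replaces only $G_2$ by the virtually cyclic quotient $A\coloneq G_2/O$ (where $O=\ker\chi_2\cap N$ for a finite index $N\n_f G_2$ avoiding the finite normal subgroup $M=\ker\chi_2\cap G_0$, obtained via \Cref{lem:vab+avr+t-f=>vr}), forms $F\coloneq G_1*_{G_0}A$, and then applies \Cref{prop:maps_from_amalgams_to_vab_gps} to $F$, which hands over both $\psi=\delta\beta$ and the inclusion $\ker\delta\subseteq\ker\gamma$ in one stroke; $\varphi$ is simply $\gamma\beta$. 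You instead replace \emph{both} factors by virtually abelian quotients $P_1,P_2$ and apply \Cref{thm:amalg_of_virt_ab} directly to $P_1*_{G_0}P_2$, then recover $\ker\psi\subseteq\ker\varphi$ by appending $\varphi$ as an extra coordinate; your finite-target reading of \Cref{lem:extend_homom_from_K} (keeping $A_2$ finite because $K/N$ is) plays the role the paper's use of the finite residual plays in producing $N$, and your $F=G_0\cap\ker\chi_2$ is exactly the paper's $M$. The paper's version is more modular, leaning on \Cref{prop:maps_from_amalgams_to_vab_gps} so that the kernel inclusion comes out for free rather than being patched in at the end; yours is more hands-on and makes explicit the useful observation that \Cref{lem:extend_homom_from_K} preserves finiteness of the quotient. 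Both arguments are sound.
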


\begin{proof}
Since $\chi_2(G_0) \neq \{0\}$, \Cref{rem:struct_of_virt_cyc_gp} tells us that $G_0$ decomposes as a semidirect product $M \rtimes C$, where $M \n G_0$ is a finite normal subgroup and $C$ is infinite cyclic. As $G_0 \vr G_2$,  we know, by \Cref{lem:vab+avr+t-f=>vr}, that for each $x \in M\setminus\{1\}$ there is a finite index normal subgroup $N_x \n_f G_2$ such that $x \notin N_x$. Then $N \coloneq \bigcap_{x \in M\setminus\{1\}} N_x$ is a finite index normal subgroup of $G_2$ intersecting $M$ trivially.

The assumption $\chi_2(G_0) \neq \{0\}$ implies that $M = \ker\chi_2 \cap G_0$ (see \Cref{rem:struct_of_virt_cyc_gp}). It follows that \[O \coloneq \ker\chi_2 \cap N \n G_2\] is a normal subgroup in $G_2$, satisfying $O \cap G_0=\{1\}$.

Observe that the group $A \coloneq G_2/O$ is virtually cyclic, as $O \n_f \ker\chi_2$ and $G_2/\ker\chi_2 \cong \Z$. Since $O \cap G_0=\{1\}$ the natural homomorphism $\alpha: G_2 \to A$ is injective on $G_0$. Therefore, after identifying $G_0$ with its image in $A$ and using the  universal property of amalgamated free products, we obtain a homomorphism \[\beta:G=G_1*_{G_0} G_2 \to F \coloneq G_1*_{G_0} A,~\text{ such that } \beta|_{G_1}=\mathrm{Id}_{G_1} \text{ and } \beta|_{G_2}=\alpha.\]

Since $\chi_1(G_0)=\{0\}$, we have a homomorphism $\gamma: F \to \Z$ whose restriction to $G_1$ is $\chi_1$ and whose restriction to $A$ is the zero map. Since $G_0 \vr G_1$, by the assumptions, and $G_0 \vr A$ (in fact, $|A:G_0|<\infty$ as $|G_0|=\infty$ and $A$ is virtually cyclic), we can use \Cref{prop:maps_from_amalgams_to_vab_gps} to find a finitely generated virtually abelian group $P$ and an epimorphism $\delta: F \to P$ such that $\ker\delta \subseteq \ker \gamma$ and $\delta$ is injective on $A$. Denote 
\[\psi\coloneq \delta \circ \beta:G \to P~\text{ and }~\varphi \coloneq \gamma \circ \beta: G \to \Z, \]
so that we have the commutative diagram \eqref{eq:diagram_of_maps}.
\begin{equation}\label{eq:diagram_of_maps}
\begin{tikzcd}
                     & G=G_1*_{G_0} G_2 \arrow[ldd, "\psi"', bend right] \arrow[rdd, "\varphi", bend left] \arrow[d, "\beta"] &            \\
                     & F=G_1*_{G_0} A \arrow[ld, "\delta"'] \arrow[rd, "\gamma"]                                              &            \\
P \arrow[rr, dashed] &                                                                                                        & \mathbb{Z}
\end{tikzcd}    
\end{equation}
By construction, we have $\ker\psi \subseteq \ker\varphi$ and $\varphi|_{G_1}=\chi_1$. We also have 
\[\ker\psi \cap G_2=\ker\beta \cap G_2=\ker\alpha=O \n_f \ker\chi_2,\]
which implies that $\ker\psi \cap G_0=O \cap G_0=\{1\}$, as required.    
\end{proof}

Statement (b) of \Cref{thm:virt_fib_of_amalg} is given by the following.

\begin{thm}\label{thm:virt_fib_for_amalg-sufficiency} Suppose that $G=G_1*_{G_0} G_2$, where $G_0$ is infinite virtually cyclic and $G_1$, $G_2$ are of type $F_m$, for some $m \in \N$. If  the group $G_i$  $F_m$-fibers and $G_0 \vr G_i$, for every $i=1,2$, then $G$ virtually $F_m$-fibers.    
\end{thm}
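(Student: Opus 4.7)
The plan is to split the argument into three cases according to whether the image of $G_0$ in each abelianization $G_i/[G_i,G_i]$ is infinite, for $i=1,2$.

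Case A (both infinite): By \Cref{lem:discrete_chars_are_dense}, the great subsphere $S(G_i,G_0)$ is closed and nowhere dense in $S(G_i)$, so its complement meets the non-empty open set $\Sigma_{\pm}^m(G_i)$. Replacing each $\chi_i$ by a discrete character in this intersection produces new $F_m$-fiberings of $G_i$ with $\chi_i(G_0)\neq\{0\}$. Then \Cref{prop:crit_for_fibering_of_amalg} applies directly and yields that $G$ itself (hence virtually) $F_m$-fibers.

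Case B (exactly one infinite; WLOG $G_0$ has finite image in $G_1^{\mathrm{ab}}$ and infinite image in $G_2^{\mathrm{ab}}$): Then $\chi_1$ automatically vanishes on $G_0$, and we adjust $\chi_2$ as in Case A so that $\chi_2(G_0)\neq\{0\}$. Invoke \Cref{lem:aux_for_virt_fib} to produce $P$, $\psi\colon G\to P$ and $\varphi\colon G\to\Z$ with the stated properties. Pick a free abelian finite-index normal subgroup $A\n_f P$, set $L\coloneq A$ and $H\coloneq\psi^{-1}(A)\n_f G$. By \Cref{thm:kurosh}, $H$ is the fundamental group of a finite graph of groups $(\mathcal{H},\Delta)$ with vertex groups conjugate to $H\cap G_i$ and edge groups conjugate to $H\cap G_0$. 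Apply \Cref{thm:fib_crit_for_graph_of_gps} to $H$, $\psi|_H$ and $\varphi|_H$: condition (a) is automatic; (b) uses injectivity of $\psi$ on $G_0$ together with normality of $A$ in $P$; condition (c) is verified by vertex type. Type-$1$ vertices satisfy the first alternative because the pullback of $\varphi|_{H_v}$ along the conjugation isomorphism is $\chi_1|_{H\cap G_1}$, whose class lies in $\Sigma_{\pm}^m(H\cap G_1)$ (since $\ker\chi_1\cap H\leqslant_f\ker\chi_1$ is of type $F_m$ by \Cref{lem:finiteness_props_of_fi_sbgps}). Type-$2$ vertices satisfy the second alternative: $\varphi|_{G_2}=0$, and the finite-index inclusion $\ker\psi\cap G_2\leqslant_f\ker\chi_2$ supplied by \Cref{lem:aux_for_virt_fib} forces every character in $S(H_v,K_v)$ to vanish on $\ker\chi_2\cap H$, hence to be a non-zero scalar multiple of $\chi_2|_{H\cap G_2}$, whose class is in $\Sigma^m(H\cap G_2)$.

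Case C (both finite): Both $\chi_i$ vanish on $G_0$, so they combine into a single character $\chi\colon G\to\Z$ with $\chi|_{G_i}=\chi_i$. Apply \Cref{prop:maps_from_amalgams_to_vab_gps} with $N_i\coloneq\ker\chi_i$, $N\coloneq\ker\chi$ and $H_i\coloneq G_0$ to obtain $\psi\colon G\to P$ with $\ker\psi\subseteq\ker\chi$, $\ker\psi\cap G_i\subseteq\ker\chi_i$, and $\psi$ injective on $G_0$. Form $A$, $L$ and $H$ exactly as in Case B and apply \Cref{thm:fib_crit_for_graph_of_gps} with $\varphi\coloneq\chi|_H$; now condition (c) holds via the first alternative at every vertex $v$ since the pullback of $\varphi|_{H_v}$ is $\chi_{i(v)}|_{H\cap G_{i(v)}}\in\Sigma_{\pm}^m(H\cap G_{i(v)})$. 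In Cases B and C, \Cref{thm:fib_crit_for_graph_of_gps} produces a character of $H$ with $F_m$-kernel, so $G$ virtually $F_m$-fibers.

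The most delicate step will be verifying condition (c) of \Cref{thm:fib_crit_for_graph_of_gps} at type-$2$ vertices in Case B: one must show that $S(H_v,K_v)$ reduces to the two classes $\pm[\chi_2|_{H\cap G_2}]$, which combines the finite-index containment $\ker\psi\cap G_2\leqslant_f\ker\chi_2$ with the fact that any $\R$-valued character annihilates finite quotients.
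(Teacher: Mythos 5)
Your proposal is correct and takes essentially the same route as the paper: the paper also argues by a three-way case analysis (organized by whether the chosen characters $\chi_i$ vanish on $G_0$ rather than by the abelianization images of $G_0$, so your Case A just adds the standard openness/density perturbation), and in the remaining cases it likewise combines \Cref{prop:maps_from_amalgams_to_vab_gps}, \Cref{lem:aux_for_virt_fib}, passage to $H=\psi^{-1}(L)\n_f G$ and \Cref{thm:fib_crit_for_graph_of_gps}. The only cosmetic difference is at the type-$2$ vertices of your Case B, where the paper deduces $S(H_v,K_v)\subseteq\Sigma^m(H_v)$ directly from \Cref{thm:BNSR} using that $K_v$ is of type $F_m$, rather than identifying $S(H_v,K_v)$ with the pair of classes $\{\pm[\chi_2|_{H\cap G_2}]\}$ as you do.
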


\begin{proof}  By the assumptions and \Cref{prop:fin_props_of_graphs_of_gps}, the group $G$ is of type $F_m$.  We also know that for each $i=1,2$ there exists a non-zero homomorphism $\chi_i:G_i \to \Z$ such that $\ker\chi_i$ is of type $F_m$. We will consider $3$ different cases depending on the behaviors of these homomorphisms on $G_0$.

\medskip
\noindent \emph{Case 1.} $\chi_i(G_0) \neq \{0\}$, for $i=1,2$. Then $G$ $F_m$-fibers by \Cref{prop:crit_for_fibering_of_amalg}.

\medskip
\noindent \emph{Case 2.}  $\chi_i(G_0) = \{0\}$, for $i=1,2$. Then these homomorphisms agree on $G_0$, so there is a homomorphism $\varphi:G \to \Z$ such that $\varphi|_{G_i}=\chi_i$, for $i=1,2$.

Since $G_0 \vr G_i$, for $i=1,2$, by \Cref{prop:maps_from_amalgams_to_vab_gps}, there is a finitely generated virtually abelian group $P$ and a homomorphism $\psi:G \to P$ such that $\ker \psi \subseteq \ker\varphi$ and $\psi$ is injective on $G_0$. Let $L \n_f P$ be a free abelian finite index normal subgroup of finite rank, and let $H \coloneq \psi^{-1}(L) \n_f G$.

The decomposition of $G$ as the amalgamated free product of $G_1$ and $G_2$ over $G_0$ induces a splitting of $H$ as the fundamental group of a finite graph of groups $(\mathcal{H},\Delta)$, as described in \Cref{thm:kurosh}. We will now aim to apply \Cref{thm:fib_crit_for_graph_of_gps} to show that $H$ $F_m$-fibers relative to $\ker\psi \cap H$.

Let $\psi':H \to L$ and $\varphi':H \to \Z$ denote the restrictions of $\psi$ and $\varphi$ to $H$, respectively. Clearly we still have $\ker \psi' \subseteq \ker \varphi'$. Since $H \n_f G$, for each edge $e \in E\Delta$ there exists $g \in G$ and such that 
\[\alpha_e(H_e)=H \cap g G_0 g^{-1}\n_f gG_0 g^{-1}~ \text{ in  } G.\]
Thus $H_e$ is infinite virtually cyclic and $\psi'$ is injective on $\alpha_e(H_e)$, as $\psi$ is injective on $G_0$. In particular, $\psi'(\alpha_e(G_e))$ is non-trivial in $L$, for all $e \in E\Delta$.

Similarly, for every $v \in V\Delta$ there are $g \in G$ and $i\in \{1,2\}$ such that
$H_v=H \cap gG_ig^{-1} \n_f gG_i g^{-1}$ in  $G$. Hence, 
\begin{equation}\label{eq:kernel_of_varphi'}
\ker(\varphi'|_{H_v})=\ker\varphi \cap gG_ig^{-1} \cap H \n_f g (\ker\varphi \cap G_i)g^{-1}=g (\ker\chi_i) g^{-1}.\end{equation} Therefore, according to  \Cref{lem:finiteness_props_of_fi_sbgps}, $H_v$ and $\ker(\varphi'|_{H_v})$ are of type $F_m$. In view of \Cref{cor:main_props_of_BNSR_invar}.(ii), the latter shows that $[\varphi'|_{H_v}] \in \Sigma^m_{\pm}(H_v)$, for every $v \in V\Delta$. We can now apply \Cref{thm:fib_crit_for_graph_of_gps} to conclude that $H$ $F_m$-fibers relative to $\ker\psi'$, so $G$ virtually $F_m$-fibers.

\medskip
\noindent \emph{Case 3.} $\chi_j(G_0) =\{0\}$ and $\chi_k(G_0) \neq \{0\}$, where $\{j,k\}=\{1,2\}$. Without loss of generality, we can assume that $j=1$ and $k=2$.
Then we can apply \Cref{lem:aux_for_virt_fib} to find a finitely generated virtually abelian group $P$ and homomorphisms $\psi:G \to P$ and $\varphi:G \to \Z$ from its claim.

Now, as in Case 2, we let $L \n_f P$ be a finitely generated free abelian normal subgroup of finite index, denote $H \coloneq \psi^{-1}(L) \n_f G$, and let $\psi':H \to L$ and $\varphi':H \to \Z$ be the restrictions of the homomorphisms $\psi$ and $\varphi$ to $H$, respectively. Then $\ker\psi' \subseteq \ker\varphi'$, by construction. As before, $H$ decomposes as the fundamental group of a finite graph of groups $(\mathcal{H},\Delta)$, and we can check that for each $e \in E\Delta$, $H_e$ is infinite cyclic and $\psi'$ is injective on $\alpha_e(H_e)$. In particular, $\psi'(\alpha_e(H_e))$ is non-trivial in $L$.

Consider any vertex $v \in V\Delta$. By \Cref{thm:kurosh}, there are $i \in \{1,2\}$ and $g \in G$ such that $H_v=H \cap g G_i g^{-1}$ (in particular, $H_v$ is of type $F_m$, by \Cref{lem:finiteness_props_of_fi_sbgps}). If $i=1$ then, as in \eqref{eq:kernel_of_varphi'}, we see that 
\[\ker(\varphi'|_{H_v}) \n_f g (\ker\chi_1) g^{-1},\] which implies that $[\varphi'|_{H_v}] \in \Sigma^m_{\pm}(H_v)$. 

Let us now suppose that $i=2$, i.e., $H_v=H \cap gG_2g^{-1}$. Then 
\begin{equation}\label{eq:prop_of_K_v}
K_v \coloneq \ker\psi' \cap H_v \n_f \ker\psi \cap gG_2g^{-1}=g(\ker\psi \cap G_2) g^{-1}.    
\end{equation} But $\ker\psi \cap G_2$ has finite index in $\ker\chi_2$, by \Cref{lem:aux_for_virt_fib}, so \eqref{eq:prop_of_K_v} implies that
$K_v$ is isomorphic to a finite index subgroup of $\ker\chi_2$. In view of \Cref{lem:finiteness_props_of_fi_sbgps}, we deduce that $K_v$ is of type $F_m$. Hence, $S(H_v,K_v) \subseteq \Sigma^m(H_v)$, by \Cref{thm:BNSR}.

Thus, we have verified that the maps $\psi'$ and $\varphi'$ satisfy all the conditions of \Cref{thm:fib_crit_for_graph_of_gps}. We can therefore conclude that $H$ $F_m$-fibers (relative to $\ker\psi'$), and so $G$ virtually $F_m$-fibers.
\end{proof} 


\section{An adaptation to general open  invariants}\label{sec:adapt}
Given a group $G$, one may be interested in studying properties of the kernels of homomorphisms $G \to \Z$ other than $F_m$ or $FP_m$. To this end, it makes sense to introduce invariants corresponding to any such property, in a similar spirit to the BNSR invariants $\Sigma^m (G)$. Here we will restrict ourselves to the symmetric rational (i.e., discrete) invariants.

For a finitely generated group $G$, we let \[S_{\Q}(G)\coloneq \{[\chi] \mid \chi \in \mathrm{Hom}(G,\Q)\setminus\{0\}\}\] be the \emph{rational character sphere of $G$}. The equivalence of two characters is defined as in \Cref{def:characters}. Of course, $S_{\Q}(G)$ can be naturally identified with a dense subset of $S(G)$ (see \Cref{lem:discrete_chars_are_dense}.(ii)), and so it has a natural topology induced by the topology of $S(G)$.
Given a property of groups  $\cP$ we define the invariant $\Sigma^{\cP}_{\Q}(G) \subseteq S_{\Q}(G)$ by
\[\Sigma^{\cP}_{\Q}(G) \coloneq \{[\chi] \in S_{\Q}(G) \mid \ker\chi \text{ has } \cP\}.\]
Equivalent characters have the same kernel, so this invariant is well-defined.

\begin{defn}\label{def:rationally_open_prop} Let $\cP$ be a property of groups and let $\mathfrak G$ be a class of finitely generated groups.
We will say that \emph{$\cP$ is rationally open in $\fG$} if $\Sigma^{\cP}_{\Q}(G)$ is open in $S_{\Q}(G)$, for every $G \in \fG$.    
\end{defn}

Of course, having type $F_m$ (or $FP_m$ over a ring $R$) are examples of rationally open properties in the class of all finitely generated groups, by \Cref{cor:main_props_of_BNSR_invar}. However, by the recent work of Fisher  \cite{Fisher-Novikov_cohom}, there are also other interesting rationally open properties in the class of groups of type $FP$, such as being a free group (possibly of infinite rank) or having cohomological dimension strictly smaller than that of the whole group.

We will look at group properties $\cP$ satisfying the following conditions.

 \begin{enumerate}[label={\normalfont (C\arabic*)}]
    \item\label{cond:C1} $\Z$ has $\cP$;
   \item\label{cond:C2} if $A$ and $B$ have $\cP$ then so does $A*B$;
    \item\label{cond:C3} if $G$ has $\cP$ and $H\n_f G$ then $H$ has $\cP$.
\end{enumerate}

We will say that a finitely generated group $G$ is \emph{$\cP$-fibered} if $\Sigma^{\cP}_{\Q}(G) \neq \emptyset$, i.e.,  if 
there is a non-zero homomorphism $\chi:G \to \Z$ such that $\ker\chi$ has $\cP$. A group $G$ is \emph{virtually $\cP$-fibered} if it has a $\cP$-fibered subgroup of finite index.
We can now generalize one direction of \Cref{prop:crit_for_fibering_of_amalg} to rationally open properties (here we need to assume that $G_0 \cong \Z$).

\begin{prop}\label{prop:amalg_P-fibers} Let $\cP$ be property of groups satisfying \ref{cond:C1}--\ref{cond:C2}. Suppose that 
$G=G_1 *_{G_0} G_2$, where $G_0$ is infinite cyclic and $G_i$ is finitely generated  and $\cP$-fibered, for $i=1,2$. If the image of $G_0$ in the abelianization $G_i/[G_i,G_i]$ is infinite, for $i=1,2$,
and $\cP$ is rationally open in the class $\{G_1,G_2\}$ then $G$ is $\cP$-fibered. \end{prop}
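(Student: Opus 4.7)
The plan is to adapt the proof of \Cref{prop:crit_for_fibering_of_amalg} to the present setting. Since the hypothesis supplies, for each $i=1,2$, only \emph{some} character $\chi_i\colon G_i\to\Z$ whose kernel has $\cP$, with no control on $\chi_i(G_0)$, the main new ingredient is to use rational openness of $\cP$ to perturb $\chi_i$ to a nearby rational character $\chi_i'$ that still lies in $\Sigma^{\cP}_{\Q}(G_i)$ but additionally does not vanish on $G_0$. Once such $\chi_1',\chi_2'$ are in hand, we rescale them to agree on $G_0$ and glue by the universal property of the amalgamated free product, exactly as in the last paragraph of the proof of \Cref{prop:crit_for_fibering_of_amalg}.

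For the perturbation, note that $\Sigma^{\cP}_{\Q}(G_i)$ is a non-empty open subset of $S_{\Q}(G_i)$ by hypothesis. Since the image of $G_0$ in $G_i/[G_i,G_i]$ is infinite, \Cref{lem:discrete_chars_are_dense}.(i) tells us that $S(G_i,G_0)\subseteq S(G_i)$ is closed and nowhere dense, while \Cref{lem:discrete_chars_are_dense}.(ii) says that $S_{\Q}(G_i)$ is dense in $S(G_i)$. Combining these, the set of equivalence classes of rational characters of $G_i$ that do not vanish on $G_0$ is a dense subset of $S_{\Q}(G_i)$ and therefore meets the non-empty open set $\Sigma^{\cP}_{\Q}(G_i)$, providing the desired $[\chi_i']$.

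After rescaling $\chi_1'$ and $\chi_2'$ so that they take a common non-zero value on a generator $c$ of $G_0\cong\Z$, and gluing them into a single character $\chi\colon G\to\Z$, the restriction of $\chi$ to $G_i$ is a non-zero integer multiple of $\chi_i'$, so $\ker\chi\cap G_i=\ker\chi_i'$. Moreover $\chi(G_0)\neq 0$, hence $\chi$ is injective on every $G$-conjugate of $G_0$, and $|G:\ker\chi\cdot G_0|<\infty$ because $G/\ker\chi\cong\Z$ and $\chi(G_0)$ has finite index in $\Z$. Thus condition \eqref{eq:NG_e_has_f_i} holds for the normal subgroup $\ker\chi\n G$, so \Cref{cor:normal_sbgp_in_graph_of_gps_is_fg} presents $\ker\chi$ as the fundamental group of a finite graph of groups whose edge groups are all trivial and whose vertex groups are isomorphic to $\ker\chi_1'$ or $\ker\chi_2'$. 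Equivalently, $\ker\chi$ is a finite free product of copies of $\ker\chi_1'$, $\ker\chi_2'$ and a finitely generated free group. By (C1), $\Z$ has $\cP$, so every finitely generated free group has $\cP$ by an iterated application of (C2), and a further iteration of (C2) shows that the whole free product, and hence $\ker\chi$, has $\cP$. The only delicate step is the perturbation in the second paragraph, and this is the unique place where rational openness of $\cP$ is used; the remainder is elementary Bass--Serre theory together with the free-product decomposition granted by trivial edge groups.
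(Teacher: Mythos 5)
Your proof is correct and follows essentially the same route as the paper: the paper's own argument simply says the sufficiency part of \Cref{prop:crit_for_fibering_of_amalg} ``goes through verbatim'' to produce a character $\chi$ with $\chi(G_0)\neq\{0\}$ and $[\chi|_{G_i}]\in\Sigma^{\cP}_{\Q}(G_i)$, and then applies \Cref{cor:normal_sbgp_in_graph_of_gps_is_fg} together with \ref{cond:C1}--\ref{cond:C2}, exactly as you do. The only difference is that you spell out explicitly the rational-sphere version of the perturbation step (density of rational classes off $S(G_i,G_0)$ meeting the non-empty open set $\Sigma^{\cP}_{\Q}(G_i)$), which the paper leaves implicit.
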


\begin{proof} The ``sufficiency'' part of the argument from the proof of \Cref{prop:crit_for_fibering_of_amalg}, given in \Cref{sec:fib_graphs_of_gps}, goes through verbatim to show that there is a character $\chi:G \to \Z$ such that $\chi(G_0) \neq \{0\}$ and $[\chi|_{G_i}] \in \Sigma^{\cP}_{\Q}(G_i)$, for $i=1,2$. The latter condition shows that $\ker\chi \cap G_i$ has property $\cP$, for $i=1,2$, and the former condition implies that $\ker\chi \cap G_0=\{1\}$ (because $G_0$ is infinite  cyclic). 

In view of \Cref{rem:non-triv_image_in_Z<=>NG_e_has_fi}, we can apply \Cref{cor:normal_sbgp_in_graph_of_gps_is_fg} to conclude that $\ker\chi$ splits as the fundamental group of a finite graph of groups, where all edge groups are trivial and all vertex groups have $\cP$. Thus $\ker\chi$ is the free product of finitely many groups with $\cP$ and a finitely  generated free group, so $\ker\chi$ satisfies $\cP$ by conditions \ref{cond:C1} and \ref{cond:C2}.
\end{proof}

For finitely generated groups $G_1$, $G_2$ we denote by $\mathfrak{FI}(G_1,G_2)$ the class of groups consisting of finite index subgroups of $G_1$ and $G_2$. The following statement generalizes \Cref{thm:virt_fib_for_amalg-sufficiency}.

\begin{thm}\label{thm:amalg_virt_P-fibers}
Let $\cP$ be property of groups satisfying \ref{cond:C1}--\ref{cond:C3}. Suppose that 
$G=G_1 *_{G_0} G_2$, where $G_0$ is infinite virtually cyclic and $G_i$ is finitely generated  and $\cP$-fibered, for $i=1,2$. If $G_0 \vr G$, for $i=1,2$, 
and $\cP$ is rationally open in the class $\mathfrak{FI}(G_1,G_2)$ then $G$ is virtually $\cP$-fibered.
\end{thm}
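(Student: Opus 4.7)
The plan is to adapt the proof of \Cref{thm:virt_fib_for_amalg-sufficiency} to the rationally open property $\cP$, replacing the use of openness of the symmetric BNSR invariants $\Sigma^m_\pm$ in the continuous sphere by the rational openness of $\Sigma^{\cP}_\Q$ in $\mathfrak{FI}(G_1,G_2)$. I would start by fixing $\cP$-fibering characters $\chi_i\colon G_i\to\Z$ with $\ker\chi_i$ having $\cP$ (for $i=1,2$) and splitting into the same three cases as in the proof of \Cref{thm:virt_fib_for_amalg-sufficiency}, according to whether $\chi_i(G_0)=\{0\}$. In each case, I would use either \Cref{prop:maps_from_amalgams_to_vab_gps} (when $\chi_1$ and $\chi_2$ agree on $G_0$, i.e.\ Cases~1 and~2) or \Cref{lem:aux_for_virt_fib} (Case~3) to produce a homomorphism $\psi\colon G\to P$ into a finitely generated virtually abelian group $P$, injective on $G_0$, together with $\varphi\colon G\to\Z$ satisfying $\ker\psi\subseteq\ker\varphi$. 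Passing to $H\coloneq\psi^{-1}(L)\leqslant_f G$ for a free abelian $L\leqslant_f P$, Bass--Serre theory (\Cref{thm:kurosh}) decomposes $H$ as the fundamental group of a finite graph of groups $(\mathcal{H},\Delta)$ with infinite cyclic edge groups (since $\psi$ is injective on every $G$-conjugate of $G_0$ and $L$ is torsion-free) and vertex groups $H_v\in\mathfrak{FI}(G_1,G_2)$.

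The heart of the argument is to find, by analogy with the proof of \Cref{thm:fib_crit_for_graph_of_gps}, a rational character $\xi\colon L\to\Q$ such that $\chi\coloneq \varphi|_H+\xi\circ\psi|_H$ (cleared of denominators) satisfies $[\chi|_{H_v}]\in\Sigma^{\cP}_\Q(H_v)$ for every $v\in V\Delta$ and $\chi(\alpha_e(H_e))\neq\{0\}$ for every $e\in E\Delta$. At vertices $v$ where $\varphi|_{H_v}$ is already $\cP$-fibering (which covers Cases~1 and~2, and the type-$G_1$ vertices in Case~3), the first condition is a rationally open condition on $\xi$ around $\xi=0$, using \Cref{lem:lemma_3} together with rational openness of $\cP$ in $H_v\in\mathfrak{FI}(G_1,G_2)$. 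At the troublesome type-$G_2$ vertices in Case~3, where $\varphi|_{H_v}=0$, I would exploit the fact built into \Cref{lem:aux_for_virt_fib} that $\psi|_{G_2}$ factors through the virtually cyclic quotient $G_2/O$: this forces $\psi(H_v)\leqslant L$ to be a torsion-free abelian subgroup of a virtually cyclic group, hence infinite cyclic, so any $\xi$ with $\xi|_{\psi(H_v)}\neq 0$ yields $\ker(\chi|_{H_v})=\ker(\psi|_{H_v})$, which is a finite-index subgroup of a conjugate of $\ker\chi_2$ and therefore has $\cP$ by condition \ref{cond:C3}. The remaining non-vanishing conditions on $\xi$ (at each such $\psi(H_v)$ and at each $\psi(\alpha_e(H_e))$) amount to avoiding finitely many proper $\Q$-subspaces of $\mathrm{Hom}(L,\Q)$, and density of rational characters (\Cref{lem:discrete_chars_are_dense}(ii)) delivers a $\xi$ simultaneously meeting all requirements.

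To finish, I would apply \Cref{cor:normal_sbgp_in_graph_of_gps_is_fg} to $\ker\chi\n H$: since $\chi$ is injective on each infinite cyclic edge group, the resulting graph-of-groups decomposition of $\ker\chi$ has trivial edge groups and vertex groups that are conjugates of $\ker(\chi|_{H_v})$, each with $\cP$ by the previous step. Hence $\ker\chi$ is a free product of groups with $\cP$ and a finitely generated free group, and conditions \ref{cond:C1} and \ref{cond:C2} give that $\ker\chi$ has $\cP$, so $H$ is $\cP$-fibered and $G$ is virtually $\cP$-fibered. The step I expect to be the main obstacle is Case~3: one must use a genuinely new input (the infinite cyclicity of $\psi(H_v)$ at type-$G_2$ vertices) to substitute for the weaker alternative clause of condition (c) in \Cref{thm:fib_crit_for_graph_of_gps}, because for a general rationally open $\cP$ (such as ``being a free group'', which is what drives \Cref{cor:amalg_is_F-by_Z}) there is no analogue of the BNSR fact that $S(H_v,K_v)\subseteq\Sigma^m(H_v)$, hence no way to force $\cP$ through extensions by $\Z^n$ in general; the virtually cyclic factoring of $\psi|_{G_2}$ is what lets us sidestep this.
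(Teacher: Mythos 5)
Your proposal is correct and follows essentially the same route as the paper's proof: the same three-case reduction using \Cref{prop:maps_from_amalgams_to_vab_gps} and \Cref{lem:aux_for_virt_fib}, passage to $H=\psi^{-1}(L)\leqslant_f G$ with its induced graph-of-groups decomposition having infinite cyclic edge groups on which $\psi$ is injective, a rationalized version of the openness/density argument of \Cref{thm:fib_crit_for_graph_of_gps}, and the final free-product decomposition of $\ker\chi$ via \Cref{cor:normal_sbgp_in_graph_of_gps_is_fg} together with \ref{cond:C1}--\ref{cond:C3}. The ``new input'' you flag at the type-$G_2$ vertices in Case~3 --- that $\psi(H_v)\cong H_v/K_v\cong \Z$ forces $\ker(\chi|_{H_v})=K_v$, which has $\cP$ by \ref{cond:C3} --- is exactly how the paper itself verifies the rational analogue of condition (c) of \Cref{thm:fib_crit_for_graph_of_gps} (stated there as $S(H_v,K_v)\subseteq \Sigma^{\cP}_{\Q}(H_v)$), so no genuinely different mechanism is involved.
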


\begin{proof} We only sketch the argument, leaving the details to the reader.

By the assumptions, for each $i \in \{1,2\}$ there is a non-zero character $\chi_i:G_i \to \Z$, where $\ker\chi_i$ satisfies property $\cP$.
We can argue as in each of the three cases of the proof of \Cref{thm:virt_fib_for_amalg-sufficiency} (using \Cref{prop:maps_from_amalgams_to_vab_gps}), to produce a finitely generated virtually abelian group $P$ and homomorphisms $\varphi:G \to \Z$ and $\psi:G \to P$ such that 
\begin{itemize}
    \item $\ker\psi \subseteq \ker\varphi$; 
    \item $\psi$ is injective on $G_0$;
    \item for each $i \in \{1,2\}$, either $\varphi|_{G_i}=\chi_i$ or $\ker\psi \cap G_i$ has finite index in $\ker\chi_i$.
\end{itemize}
Next, we choose a free abelian subgroup $L \n_f P$, set $H\coloneq \psi^{-1}(N) \n_f G$, and denote by $\psi':H \to L$ and $\varphi':H \to \Z$ the restrictions of $\psi$ and $\varphi$ to $H$, respectively. 
As before, $H$ will have an induced decomposition as a fundamental group of a finite graph of groups $(\mathcal{H},\Delta)$, and $\psi'$ will be injective on the edge groups. Since all the edge groups are infinite virtually cyclic and $L$ is torsion-free, we deduce that each $H_e$ is infinite cyclic. 

For any vertex group $H_v$, there will be $g \in G$ and $i \in \{1,2\}$ such that  $H_v=g(H \cap G_i)g^{-1}$.
If $i \in \{1,2\}$ is such that $\varphi|_{G_i}=\chi_i$, then $\ker(\varphi'|_{H_v}) \n_f g (\ker\chi_i) g^{-1}$ has property $\cP$ by \ref{cond:C3}, thus $[\varphi'|_{H_v}] \in \Sigma^{\cP}_{\Q}(H_v)$. In the other case, when $\ker\psi \cap G_i$ has finite index in $\ker\chi_i$, we have 
\begin{equation}\label{eq:prop_of_K_v-2}
 K_v \coloneq H_v \cap \ker\psi' \n_f g(\ker\chi_i)g^{-1},   
\end{equation}
thus $K_v$ has $\cP$ by \ref{cond:C3}. On the other hand, \eqref{eq:prop_of_K_v-2} also shows that $H_v/K_v$ is infinite virtually cyclic, as it is commensurable up to finite kernels with $G_i/\ker\chi_i \cong \Z$. But $H_v/K_v$ is a subgroup of the torsion-free group $L$, hence $H_v/K_v \cong \Z$. It follows that $S(H_v,K_v) \subseteq \Sigma^{\cP}_{\Q}(H_v)$.

The argument from \Cref{thm:fib_crit_for_graph_of_gps} now goes through with minimal adaptation (after ``rationalizing'' everything and using the assumption that $ \Sigma^{\cP}_{\Q}(H_v)$ is open in $S_{\Q}(H_v)$, for each $v \in V\Delta$), to show that there is a non-zero character $\chi:H \to \Z$ such that $[\chi|_{H_v}] \in \Sigma^{\cP}_{\Q}(H_v)$, for all $v \in V\Delta$, and $\chi(\alpha_e(H_e)) \neq \{0\}$, for each $e \in E\Delta$. The latter condition implies that $\ker\chi \cap \alpha_e(H_e)=\{1\}$, for all $e \in E\Delta$ (because $H_e \cong \Z$). As in \Cref{prop:amalg_P-fibers}, we can now apply \Cref{cor:normal_sbgp_in_graph_of_gps_is_fg} to deduce that $\ker\chi$ splits as a free product of finitely many groups satisfying $\cP$ with a free group of finite rank. Conditions \ref{cond:C1} and \ref{cond:C2} then imply that $\ker\chi$ has $\cP$, thus $H$ $\cP$-fibers and $G$ virtually $\cP$-fibers.
\end{proof}

\begin{proof}[Proof of \Cref{cor:amalg_is_F-by_Z}] In \cite[Corollary C]{Fisher-Novikov_cohom} Fisher proved that being free is a rationally open property in the class of  groups of type $FP$. Finitely generated $F$-by-$\Z$ groups are of type $FP$ (and even of type $F$) by the work of Feighn and Handel \cite[Theorem~1.2]{FH}, and this class of groups is obviously closed under taking finite index subgroups. Therefore, statements (i) and (ii) of 
\Cref{cor:amalg_is_F-by_Z} follow from \Cref{prop:amalg_P-fibers} and 
\Cref{thm:amalg_virt_P-fibers} respectively.
\end{proof}


\section{Examples and open questions}\label{sec:examples}
If one analyzes the proof of \Cref{thm:amalg_of_virt_ab}, one will notice that we have a good control over the intersection of the images $(\beta_1\circ \alpha_1)(G_1)$ and $(\beta_2\circ \alpha_2)(G_2)$ in $E$, but we may lose this control after replacing $\beta_1$ by $\delta \circ \beta_1$. Therefore, the next question arises naturally, and the positive answer to it may be useful in future applications.
\begin{question} Given an amalgamated free product $G=G_1*_{G_0} G_2$ of two finitely generated virtually abelian groups $G_1$, $G_2$ over a common virtually cyclic subgroup $G_0$,  is it always possible to find a finitely generated virtually abelian group $E$ and a homomorphism $\nu:G \to E$ such that $\nu$ is injective on each $G_i$, $i=1,2$, and the intersection 
$\nu(G_1) \cap \nu(G_2)$ is virtually cyclic?  \end{question}

\begin{ex} \Cref{thm:amalgam_of_(VRC)_gps} and \Cref{lem:vabs_have_LR} imply that any amalgamated free product $G$ of two finitely generated virtually abelian groups over a common virtually cyclic subgroup has (VRC). However, such $G$ does not necessarily have the stronger property (LR) (i.e., some finitely generated subgroup may not be a virtual retract of $G$). Indeed,
Long and Niblo \cite[Theorem~4]{LN} gave an example of a double  of the $(4,4,2)$ triangle group (which is virtually $\Z^2$), over an infinite cyclic subgroup, that is not LERF, so this double does not have (LR) by \cite[Lemma~5.1.(iii)]{virtprops}. 
\end{ex}

The next example shows that amalgamated free products of solvable groups over virtually cyclic virtual retracts need not  be residually solvable, so one cannot drop the word ``virtually'' from claim (ii) of  \Cref{cor:virt_res_solv}.

\begin{ex}\label{ex:amalg_not_res_solv}
    Let $C_3$ denote the cyclic group of order $3$, $G_0 \coloneq (C_3)^5$, and let $G_1$ and $G_2$ be of the form $G_0 \rtimes C_3$, where each $C_3$ acts on $G_0$ by shuffling its factors using the permutations $(123)$ and $(345)$ respectively. Then $G_1$, $G_2$ are finite metabelian groups, and, of course, $G_0 \vr G_i$, for $i=1,2$. However,   the group $G = G_1 *_{G_0} G_2$ is not residually solvable. Indeed, if $G$ were residually solvable, there would exist a solvable group $H$ and an epimorphism $\varphi\colon G \to H$ that is  injective on $G_0$ (because $G_0$ is finite and the direct product of a finite number of solvable groups is solvable).
    Then $H$ acts on $\varphi(G_0) \n H$ by conjugation in the same way as $G$ acts on $G_0$, i.e., we have the following commutative diagram:
    \begin{equation}
        \begin{tikzcd}
            G \arrow[r, "\varphi"]\arrow[d] & H  \arrow[d] \\
            \Aut(G_0) \arrow[r, " \phi "]& \Aut(\varphi(G_0)) 
        \end{tikzcd},
    \end{equation}
where the vertical arrows come from the actions of $G$ and $H$ on $G_0$ and $\varphi(G_0)$ by conjugation, and  $\phi$ is the isomorphism defined by  $\psi \mapsto \varphi \circ \psi \circ \varphi^{-1}$, for all $\psi \in \Aut(G_0)$. By construction of $G$, the image of the left vertical map is isomorphic to the alternating group  $A_5$, and since $\phi$ is an isomorphism, the same holds for the image of $H$ under the right vertical map. Since $H$ is solvable, the latter would mean that $A_5$ is solvable, giving a contradiction.
\end{ex}


\begin{question}\label{q:res_amen} Suppose that $G=G_1*_{G_0} G_2$, where $G_1$, $G_2$ are residually amenable, $G_0$ is virtually cyclic and $G_0 \vr G_i$, for $i=1,2$. Is $G$ residually amenable?    
\end{question}

Berlai \cite[Proposition~1.4]{Berlai} showed that there exist (residually ame\-nable group)-by-amenable groups that are not residually amenable (in other words, amenability is not a root property in the sense of Gruenberg \cite{Gruenberg-root}). Hence, our argument for proving \Cref{cor:virt_res_solv}.(ii) (see \Cref{cor:claim_(ii)-virt_res_solv}) does not  give an answer to \Cref{q:res_amen}.

Since virtually special groups are prime examples of groups with (VRC), \Cref{thm:amalgam_of_(VRC)_gps} naturally prompts the following.
\begin{question}\label{q:vir_special}
    Is the amalgam of two finitely generated virtually special groups over a virtually cyclic subgroup virtually special?
\end{question}

The next question addresses the gap between conditions in parts (a) and (b) of \Cref{thm:virt_fib_of_amalg}.

\begin{question}\label{q:vir_fibered}
   Let $G=G_1*_{G_0} G_2$, with $G_0$ infinite virtually cyclic. Suppose that $G_i$ is virtually fibered and $G_0 \vr G_i$, for $i=1,2$.  Must $G$ be virtually fibered? 
\end{question}

We note that a positive answer to \Cref{q:vir_special} implies a positive answer to \Cref{q:vir_fibered} in the case when $G_1$ and $G_2$ are virtually special. Indeed, virtually special groups are virtually RFRS by \cite[Theorem~2.2]{Agol-virt_fib}, and $\beta_1^{(2)}(G_i)=0$, for $i=1,2$, by \cite[Theorem~2.27]{Kielak2020}. Then $\beta_1^{(2)}(G)=0$
by \cite[Theorem~1]{Fernos-Valette2017}, so we can apply Kielak's result \cite[Theorem~5.3]{Kielak2020} to see that $G=G_1*_{G_0} G_2$ is virtually fibered.  

We don't even know the answer to the following weaker version of \Cref{q:vir_fibered} in the case when $\Gamma$ is the path of length $2$.

\begin{question}
Let $G$ be the fundamental group of a finite graph of groups $(\mathcal{G},\Gamma)$. Suppose that $\Gamma$ is a tree, each vertex group $G_v$ is fibered, each edge group $G_e$ is infinite virtually cyclic and $\alpha_e(G_e) \vr G_{\alpha(e)}$, for all $e \in E\Gamma$. Does it follow that $G$ is virtually fibered?
\end{question}

The following example shows that the assumption that $G_0 \vr G_i$ in part (b) of \Cref{thm:virt_fib_of_amalg} cannot be weakened to $G_0 \avr G_i$, for $i=1,2$.

\begin{ex}\label{ex:ind_avr_but_not_virt_fibered}
Let $D=\langle a,b \mid a^2=b^2=1 \rangle$ be the infinite dihedral group, let $C=\langle c \mid c^2=1\rangle$ be cyclic of order $2$, and let 
\[F \coloneq \langle D \times C ,t \mid ~tat^{-1}=c\rangle\] be the HNN-extension of $D \times C$ with associated subgroups $\langle a \rangle$ and $\langle c \rangle$. We define $G_1 \coloneq F \times E$, where 
$E=\langle e \mid~\rangle$ is infinite cyclic.
Evidently, $G_1$ is fibered and it has (VRC) by \cite[Corollary~6.5 and Lemma~5.2]{virtprops}, therefore the virtually cyclic subgroup $\langle [a,b],c \rangle \cong C_\infty \times C_2$ is a virtual retract of $G_1$.

Now, let $G_2 \coloneq Z \times Q$, where $Z=\langle z \rangle$  is infinite cyclic and $Q$ is a finitely generated group such that the finite residual of $Q$ contains an involution $q \in R(Q)$.
Thus $G_2$ is fibered and $Z$ is a retract, so $\langle z,q \rangle \avr G_2$.

We define $G$ as the free product of $G_1$ and $G_2$ amalgamated along their infinite virtually cyclic subgroups $H_1 \coloneq \langle [a,b],c \rangle$ and $H_2 \coloneq  \langle z,q \rangle $, namely
\[G\coloneq \langle G_1,G_2 \mid [a,b]=z,~c=q\rangle. \]

Let us show that $G$ is not virtually fibered. 
Note that, by \Cref{lem:fin_residual_of_fin_ind_sbgp}, $c=q \in R(Q) \subseteq R(G)$, hence $a=t^{-1}ct \in R(G)$, so $[a,b] \in R(G)$, as $R(G)\n G$, hence $H_1 \subseteq R(G)$. Since every finitely generated virtually abelian group $P$ is residually finite,  \Cref{rem:props_of_finite_residual} implies that 
for any homomorphism $\varphi:G \to P$, we must have $\varphi(H_1)=\{1\}$.  \Cref{lem:virt_cyc-avr_crit} now tells us that $H_1$ cannot be an almost virtual retract of $G$, so $G$ does not virtually fiber by \Cref{prop:necessary_crit_for_virt_fib}.  
\end{ex}

In the next example, we construct an amalgamated free product as in \Cref{not:G} such that $G_1$ is virtually fibered, $G_2$ is fibered and $G_0 \avr G$ has infinite image in the abelianization of $G$, but $G$ is not virtually fibered. This example is in contrast with  \Cref{prop:crit_for_fibering_of_amalg} and \Cref{thm:virt_fib_of_amalg}.

\begin{ex}\label{ex:second_non_rf_example} Let $G_1 \coloneq D \times F$, where $D$ is the infinite dihedral group generated by involutions $a, b$ and $F$ is the free group with basis $\{x,y\}$. Let $G_2=Z \times Q$ be the group from \Cref{ex:ind_avr_but_not_virt_fibered}, with an involution $q \in R(Q)$.
Clearly, $G_1$  is virtually fibered (it has an index $2$ subgroup isomorphic to $C_\infty \times F$) and $G_2$ is fibered. 

We define $G$ as the free product of $G_1$ and $G_2$ amalgamated along their infinite virtually cyclic subgroups $H_1=\langle a,x \rangle \cong C_\infty \times C_2$ and $H_2= \langle z,q \rangle \cong C_\infty \times C_2$. Thus
\[G\coloneq\langle G_1,G_2 \mid a=q,~x=z\rangle. \]
Evidently, $G$ retracts onto its infinite cyclic subgroup $\langle x \rangle=\langle z \rangle$, with the kernel of the retraction normally generated by $a,b,y$ and $Q$, hence the edge group $G_0\coloneq H_1=H_2$ is an almost virtual retract of $G$, and it also has infinite images in the abelianizations of $G$, $G_1$ and $G_2$. 

Arguing as in \Cref{ex:ind_avr_but_not_virt_fibered}, one can see that  $a=q \in R(G)$, so  $D$ has finite image in every virtually abelian quotient of $G$, thus $D$ is not an almost virtual retract of $G$ by \Cref{lem:virt_cyc-avr_crit}. However, it is easy to see that $G$ splits as the amalgamated free product
$G=\langle D,x,G_2 \rangle *_{D} \langle D, y \rangle$,
so $G$ does not virtually fiber by \Cref{prop:necessary_crit_for_virt_fib}, as $D \nleqslant_{avr} G$.
\end{ex}

The following example shows that there is no converse to \Cref{prop:necessary_crit_for_virt_fib} for HNN-extensions even in the case when all edge groups are retracts.

\begin{ex}\label{ex:Gerstens_amalgam}  Consider the tubular group $G$ given by the presentation
\begin{equation}\label{eq:amalg_of_Gersten}
G \coloneq \langle a,b,d,s,t \mid [a,b]=[a,d]=1,~sbs^{-1}=ab,~tbt^{-1}=a^2b\rangle.    
\end{equation}

On one hand,  $G=\langle K,s,t \mid sbs^{-1}=ab,~tbt^{-1}=a^2b \rangle$ is a double HNN-extension of the fibered subgroup $K \coloneq \langle a,b,d \rangle \cong C_\infty \times F_2$. And presentation \eqref{eq:amalg_of_Gersten} shows that the associated cyclic subgroups $\langle b\rangle$, $\langle ab \rangle$ and $\langle a^2 b \rangle$, of this double HNN-extension, are all retracts of $G$, with the kernel of the retraction normally generated by $a,d,s$ and $t$ (in particular, $\langle b\rangle$, $\langle ab \rangle$ and $\langle a^2 b \rangle$ have infinite images in the abelianization of $G$).

Alternatively, $G$ is also an HNN-extension 
$\langle J, t \mid tbt^{-1}=a^2b \rangle$, 
of the tubular group \[J\coloneq \langle a,b,d,s \mid [a,b]=[a,d]=1,~sbs^{-1}=ab\rangle .\]

Now, the group $J$ has (VRC) by \cite[Corollary~10.3 and Remark~10.4]{MM-vr_in_free_constr}, because the images of the cyclic subgroups $\langle b \rangle$ and  $\langle ab \rangle=\{1\}$ have trivial intersection in the abelianization of the subgroup $\langle a,b,d \rangle$. Therefore, according to \cite[Proposition~12.3]{MM-vr_in_free_constr}, $J$ is virtually free-by-cyclic, in particular it virtually fibers.

On the other hand, $G$ can be viewed as an amalgamated free product of Gersten's free-by-cyclic group $H$, given by \eqref{eq:Gerstens_gp},
with the free abelian group $\langle a,d \rangle \cong \Z^2$ over $\langle a \rangle$. In \cite[Lemma~5.18]{Wu-Ye} it is shown that $\langle a \rangle$ is not a virtual retract of $H$, hence $\langle a \rangle  \nleqslant_{vr} G$, so $\langle a \rangle  \nleqslant_{avr} G$, by \Cref{lem:vab+avr+t-f=>vr} (as $a$ has infinite order in $G$). Therefore, $G$ does not virtually fiber by \Cref{prop:necessary_crit_for_virt_fib}.
\end{ex}

In our final example we construct an amalgam $G$ of two virtually abelian RFRS $\Z^2\rtimes \Z$ groups over a common normal virtual retract such that $G$ does not virtually fiber. This serves as further evidence that \Cref{thm:virt_fib_of_amalg}.(b) may fail when $G_0$ is not virtually cyclic.

\begin{ex}\label{ex:amalg_of_virt_ab_not_fibered} 
Let $G_0 \coloneq \Z^2$,  $G_1 \coloneq G_0\rtimes_{x} \Z$ and $G_2 \coloneq G_0 \rtimes_{y} \Z$, where $x,y$ are two finite order matrices, such that $ \langle x,y \rangle = \mathrm{SL}(2,\Z)$. Then $G_1$ and $G_2$ are finitely generated, locally indicable and virtually abelian, hence both of them are RFRS by \cite[Lemma~6.1]{Ok-Sch}. Moreover, $G_0 \vr G_i$, for $i=1,2$, as $x$ and $y$ have finite orders. 

Let us prove that  $G=G_1 *_{G_0} G_2$ is not virtually fibered. 
Indeed, suppose there exists $ H \leqslant_f G $ and a nontrivial homomorphism $\varphi\colon H \to \Z$ such that $\ker \varphi$ is finitely generated (so $[\varphi]\in \Sigma^1_{\pm}(H)$ by \Cref{cor:main_props_of_BNSR_invar}.(ii)). By \Cref{thm:kurosh}, $H$ is the fundamental group of a finite graph of groups $(\mathcal{H},\Delta)$ without terminal vertices, and since $G_0 \n G$,  we have
\begin{equation*}\label{eq:same proper edge group}
    \alpha_e(H_e) = H \cap G_0\n H, \text{ for all } e\in E\Delta.
\end{equation*}
Note that $G/G_0$ is the free group of rank $2$, by construction, so $H/\alpha_e(H_e) \leqslant_f G/G_0$ is non-abelian, in particular $\ker \varphi \neq \alpha_e(H_e)$, for all $e\in E\Delta$. Therefore, we can apply the converse direction of \Cref{prop:graph_of_gps_crit_for_chi_to_be_in_higher_invar} to deduce that $\varphi(\alpha_e(H_e)) \ne \{0\}$, for some $e\in E\Delta$. Since $\alpha_e(H_e) \cong \Z^2$, this implies that $\ker \varphi|_{\alpha_e(H_e)} \cong \Z$, so 
\begin{equation}\label{eq:v normalized}
\ker \varphi \cap \alpha_e(H_e) =\langle \vec v \rangle \n H,
\end{equation}
for some non-zero  vector $\vec v \in \Z^2$. Since  $ \langle x,y \rangle = \mathrm{SL}(2,\Z)$, the natural homomorphism $\psi \colon G \to \Aut(G_0)$ has image $\mathrm{SL}(2,\Z)$. Then \eqref{eq:v normalized} implies that all matrices from $\psi(H)$ must have $ \vec v $ as an eigenvector, which is clearly impossible 
as $\psi(H)\leqslant_f \mathrm{SL}(2,\Z)$.
Thus $G$ does not virtually fiber.
\end{ex}

Since the conditions given in \Cref{cor:amalg_is_F-by_Z} are not necessary, the following problem remains open.

\begin{problem} Find a condition that is both necessary and sufficient for an amalgamated free product of two finitely generated $F$-by-$\Z$ groups over an infinite cyclic subgroup to be $F$-by-$\Z$ (or virtually $F$-by-$\Z$).     
\end{problem}

The discussion in \Cref{sec:adapt} naturally suggests the following.
\begin{problem}
Find other interesting examples of rationally open properties in natural classes of groups.
\end{problem}

\printbibliography 
\end{document}